\documentclass{amsart}
\usepackage{amssymb,stmaryrd}
\usepackage[shortlabels]{enumitem}

\newtheorem{thm}{Theorem}[section]
\newtheorem{mainthm}{Theorem}

\newtheorem{lemma}[thm]{Lemma}
\newtheorem{cor}[thm]{Corollary}
\newtheorem{claim}{Claim}[thm]
\newtheorem{prop}[thm]{Proposition}
\newtheorem{fact}[thm]{Fact}

\newtheorem{theorem}[thm]{Theorem}

\theoremstyle{definition}
\newtheorem{defn}[thm]{Definition}
\newtheorem{question}[thm]{Question}

\theoremstyle{remark}
\newtheorem{recall}[thm]{Recall}
\newtheorem{remark}[thm]{Remark}
\newtheorem{conv}[thm]{Convention}

\newcommand\diagonal{\bigtriangleup}
\newcommand\s{\subseteq}
\newcommand\sq{\sqsubseteq}
\newcommand*\sqleft[1]{\mathrel{_{#1}{\sqsubseteq}}}
\newcommand\sqx{\sqleft{\chi}}
\newcommand\sqw{\sqleft{\omega}}
\newcommand\br{\blacktriangleright}
\newcommand\symdiff{\mathbin\triangle}
\DeclareMathOperator\suc{succ}
\DeclareMathOperator\ord{Ord}
\DeclareMathOperator\cl{cl}
\newcommand\ubd{\textsf{unbounded}}
\newcommand\onto{\textsf{onto}}
\newcommand\half{\frac{1}{2}}

\hyphenation{post-processing}
\renewcommand\restriction{\mathbin\upharpoonright}
\renewcommand\mid{\mathrel{|}\allowbreak}
\DeclareMathOperator{\non}{non}
\DeclareMathOperator{\cf}{cf}

\DeclareMathOperator{\otp}{otp}
\DeclareMathOperator{\acc}{acc}
\DeclareMathOperator{\cg}{CG}
\DeclareMathOperator{\nacc}{nacc}
\DeclareMathOperator{\Tr}{Tr}

\DeclareMathOperator{\p}{P}
\DeclareMathOperator\ssup{ssup}

\newcommand*\axiomfont[1]{\textsf{\textup{#1}}}
\newcommand\zfc{\axiomfont{ZFC}}
\newcommand\gch{\axiomfont{GCH}}
\newcommand\ssh{\axiomfont{SSH}}

\newcommand\ns{\textup{NS}}
\newcommand\bd{\textup{bd}}
\newcommand\stick{{{\ensuremath \mspace{2mu}\mid\mspace{-12mu} {\raise0.6em\hbox{$\bullet$}}}}}
\setlist[enumerate,1]{label={(\roman*)}}
\newenvironment{why}[1][Proof]{\proof[#1]\mbox{}}{\endproof}

\title{A club guessing toolbox I}
\date{Preliminary preprint as of July 8, 2022. Comments are most welcome! For the latest version, visit \textsf{http://p.assafrinot.com/46}.}
\author{Tanmay Inamdar}

\address{Department of Mathematics, Bar-Ilan University, Ramat-Gan 5290002, Israel.}
\author{Assaf Rinot}
\address{Department of Mathematics, Bar-Ilan University, Ramat-Gan 5290002, Israel.}
\urladdr{http://www.assafrinot.com}

\begin{document} 
\begin{abstract} Club guessing principles were introduced by Shelah as a weakening of Jensen's diamond. Most spectacularly, they were used to prove Shelah's $\zfc$ bound on $2^{\aleph_\omega}$.
These principles have found many other applications: in cardinal arithmetic and PCF theory; in the construction of combinatorial objects on uncountable cardinals such as J\'onsson algebras, strong colourings, Souslin trees, 
and pathological graphs; to the non-existence of universals in model theory; to the non-existence of forcing axioms at higher uncountable cardinals; and many more. 

In this paper, the first part of a series, 
we survey various forms of club-guessing that have appeared in the literature,
and then systematically study the various ways in which a club-guessing sequences can be improved,
especially in the way the frequency of guessing is calibrated.

We include an expository section intended for those unfamiliar with club-guessing and which can be read independently of the rest of the article.
\end{abstract}
\maketitle
\section{Introduction}
\subsection{Motivation}\label{subsectionsurvey}
In this paper, the first of a series, we initiate a study of various aspects and forms of \emph{club guessing}. Our definitions are quite general, and in order to motivate them we start with a brief survey of the various forms of club guessing that have appeared in the literature as well as their applications. All undefined notation can be found in Section~\ref{notation}, but we remind the reader right away that for a pair $\lambda < \kappa$ of infinite regular cardinals, $E^\kappa_\lambda:= \{\delta < \kappa \mid \cf(\delta) = \lambda\}$, and $E^\kappa_{\neq \lambda}:= \{\delta < \kappa \mid \cf(\delta) \neq \lambda\}$.

Shortly after Jensen constructed Souslin trees in $L$ \cite{MR3618579}, he isolated a combinatorial principle named \emph{diamond}, which is sufficient for the construction. 

\begin{fact}[\cite{jensen}] In G{\"o}del's constructible universe $L$, for every regular uncountable cardinal $\kappa$ and every stationary $S\s \kappa$, 
there is a sequence $\vec A=\langle A_\delta \mid \delta \in S\rangle$ which is a $\diamondsuit(S)$-sequence, that is:
\begin{enumerate}
\item for every $\delta \in S$, $A_\delta$ is subset of $\delta$;
\item for every subset $A \s \kappa$, the set $\{\delta \in S \mid A_\delta=A\cap \delta\}$ is stationary in $\kappa$.
\end{enumerate}
\end{fact}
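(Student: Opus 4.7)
The plan is to follow Jensen's original argument: in $L$, exploit the canonical well-order $<_L$ to define $\vec A$ greedily, and then leverage condensation to derive a contradiction from any purported counterexample.

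First, I would define the sequence by recursion: for each $\delta \in S$, let $(A_\delta, C_\delta)$ be the $<_L$-least pair such that $A_\delta \subseteq \delta$, $C_\delta$ is a club in $\delta$, and $A_\alpha \neq A_\delta \cap \alpha$ for every $\alpha \in C_\delta \cap S$; if no such pair exists, set both to $\emptyset$. Now suppose toward contradiction that $\vec A := \langle A_\delta \mid \delta \in S\rangle$ fails to be a $\diamondsuit(S)$-sequence, and let $(A, C)$ be the $<_L$-least pair with $A \subseteq \kappa$, $C$ a club in $\kappa$, and $A \cap \delta \neq A_\delta$ for every $\delta \in C \cap S$. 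Fix a regular $\theta > \kappa$ large enough, and an elementary substructure $M \prec L_\theta$ of size less than $\kappa$, containing $\{\kappa, S, \vec A, A, C\}$, such that $\delta := M \cap \kappa$ lies in $S$; such an $M$ exists because the set of $M \cap \kappa$ realised by a continuous elementary chain forms a club in $\kappa$, which meets $S$. By elementarity, $M \cap C$ is cofinal in $\delta$, and the closure of $C$ then gives $\delta \in C$.

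The heart of the argument is Jensen's condensation lemma: the transitive collapse $\pi : M \to \bar M$ yields $\bar M = L_\beta$ for some $\beta < \kappa$. Under $\pi$, $\kappa \mapsto \delta$, $A \mapsto A \cap \delta$, $C \mapsto C \cap \delta$, and $\vec A \mapsto \vec A \restriction \delta$. By elementarity, $L_\beta$ satisfies that $(A \cap \delta, C \cap \delta)$ is the $<_L$-least pair witnessing the failure of $\vec A \restriction \delta$ on $S \cap \delta$. Since that witnessing property is $\Delta_0$ in $\vec A \restriction \delta$ and $\delta$, and since $<_L$ respects the $L$-hierarchy (every $<_L$-predecessor of an element of $L_\beta$ also lies in $L_\beta$), the pair $(A \cap \delta, C \cap \delta)$ is the genuine $<_L$-least such witness. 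Consequently $(A_\delta, C_\delta) = (A \cap \delta, C \cap \delta)$, contradicting $\delta \in C \cap S$ and $A \cap \delta \neq A_\delta$.

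The principal obstacle is this final absoluteness step: one has to verify that what $L_\beta$ sees as the $<_L$-least witness coincides with the true $<_L$-least witness. This rests on condensation together with the compatibility of $<_L$ with the $L$-hierarchy, both of which I would take for granted as standard fine-structural facts about $L$. Once those are in hand, the argument is uniform across all regular uncountable $\kappa$ and all stationary $S \subseteq \kappa$.
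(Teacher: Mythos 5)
The paper does not prove this statement --- it is quoted as a Fact with a citation to Jensen --- so there is no internal proof to compare against. Your argument is the standard Jensen condensation proof of $\diamondsuit(S)$ in $L$ and is correct: the recursive choice of the $<_L$-least local counterexample pair $(A_\delta,C_\delta)$, the continuous elementary chain producing $M\prec L_\theta$ with $\delta:=M\cap\kappa\in S$, the collapse $\pi:M\to L_\beta$ sending $(\kappa,S,A,C,\vec A)$ to $(\delta,S\cap\delta,A\cap\delta,C\cap\delta,\vec A\restriction\delta)$, and the absoluteness of both the witnessing predicate and the order $<_L$ together give $A_\delta=A\cap\delta$ and $\delta\in C\cap S$, contradicting the choice of $(A,C)$; the one detail you elide, that $\pi$ really sends $\vec A$ to $\vec A\restriction\delta$, holds because $A_\alpha\subseteq\alpha<\delta\subseteq M$ and $\pi\restriction\delta=\mathrm{id}$.
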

It is easy to see that for $S\s\kappa$ as above, $\diamondsuit(S)$ implies that $2^{<\kappa} = \kappa$,
and hence diamond is not a consequence of $\zfc$. In contrast, the following result of Shelah \cite{Sh:365}, which is the most well-known club-guessing result, is a theorem of $\zfc$. 
\begin{fact}[\cite{Sh:365}]\label{shelahcg}
Suppose that $\lambda< \lambda^+ < \kappa$ are infinite regular cardinals.
 Then there is a sequence $\vec C=\langle C_\delta \mid \delta \in E^\kappa_\lambda\rangle$ such that
\begin{enumerate}
\item for every $\delta \in E^\kappa_\lambda$, $C_\delta$ is a club in $\delta$;
\item for every club $D \s \kappa$, 
the set $\{\delta \in S \mid C_\delta \s D\}$ is stationary in $\kappa$.
\end{enumerate}
\end{fact}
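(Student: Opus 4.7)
The plan is to argue by contradiction, iteratively refining a candidate sequence, and to use the hypothesis $\lambda^+<\kappa$ critically at the end. The key observation is that any club of an ordinal $\delta\in E^\kappa_\lambda$ has cardinality $\lambda$, so a strictly $\subseteq$-decreasing sequence of subsets indexed by $\lambda^+$ cannot live inside such a club.

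Fix any $\vec{c}^{\,0}=\langle c^0_\delta\mid\delta\in E^\kappa_\lambda\rangle$ with each $c^0_\delta$ a club of $\delta$, and assume for contradiction that no such candidate satisfies (ii). By recursion on $i<\lambda^+$, I construct a $\subseteq$-decreasing continuous sequence of clubs $\langle E_i\mid i<\lambda^+\rangle$ of $\kappa$, with $E_0:=\kappa$ and, at limits, $E_i:=\bigcap_{j<i}E_j$ (a club of $\kappa$ since $|i|\leq\lambda<\cf(\kappa)$). For each $i$, put $c^i_\delta:=c^0_\delta\cap E_i$ when $\delta\in\acc(E_i)$, and $c^i_\delta:=c^0_\delta$ otherwise, so that $\vec{c}^{\,i}$ is a valid candidate. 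By the failure of (ii) at $\vec{c}^{\,i}$, pick a club $D_i\subseteq\kappa$ together with a club $F_i\subseteq\kappa$ such that $c^i_\delta\not\subseteq D_i$ for every $\delta\in F_i\cap E^\kappa_\lambda$, and set $E_{i+1}:=E_i\cap D_i\cap F_i$.

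For the contradiction, using $\lambda^+<\cf(\kappa)$, the set $\bigcap_{i<\lambda^+}(F_i\cap\acc(E_i))$ is a club of $\kappa$, so I can pick $\delta^*\in E^\kappa_\lambda$ inside it. For this $\delta^*$, every $i<\lambda^+$ satisfies both $\delta^*\in\acc(E_i)$ (so $c^i_{\delta^*}=c^0_{\delta^*}\cap E_i\subseteq E_i$ is a genuine club of $\delta^*$) and $\delta^*\in F_i$ (so $c^i_{\delta^*}\not\subseteq D_i$). Since $c^{i+1}_{\delta^*}\subseteq E_{i+1}\subseteq D_i$, I may choose $\alpha_i\in c^i_{\delta^*}\setminus D_i$, and then $\alpha_i\notin c^j_{\delta^*}$ for every $j>i$ because the $c^j_{\delta^*}$'s are $\subseteq$-decreasing and all contained in $D_i$ from stage $i+1$ onward. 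Hence $\{\alpha_i\mid i<\lambda^+\}$ yields $\lambda^+$ distinct elements of the $\lambda$-sized set $c^0_{\delta^*}$, a contradiction.

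The main obstacle I anticipate is the bookkeeping needed to ensure that each $\vec{c}^{\,i}$ remains a genuine candidate, so that the failure assumption keeps delivering a witnessing club $D_i$; this is what forces the continuity of $\langle E_i\mid i<\lambda^+\rangle$ and what forces $\delta^*$ to be an accumulation point of \emph{every} $E_i$ simultaneously. The hypothesis $\lambda^+<\kappa$ enters twice: mildly at limit stages of the recursion (where $\lambda<\cf(\kappa)$ would already suffice), and critically at the end, when $\lambda^+$ clubs must be intersected in $\kappa$ to localize $\delta^*$.
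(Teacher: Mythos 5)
Your argument is essentially sound for \emph{uncountable} $\lambda$, and for that case it runs close to the paper's Lemma~\ref{weaktotail} (collect counterexample clubs $F^{D}$, build a $\lambda^+$-chain, localize at a single $\delta^*$, and contradict $|c^0_{\delta^*}|\le\lambda$), just taken in one shot rather than factored through the intermediate principles $\cg_\lambda(S,T,-)$ and $\cg_\lambda(S,T,\kappa)$. Two things need attention. First, a minor slip: it is not true that ``any club of an ordinal $\delta\in E^\kappa_\lambda$ has cardinality $\lambda$''; a club of $\delta$ has cardinality \emph{at least} $\cf(\delta)=\lambda$, but may be much larger if $|\delta|>\lambda$. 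This is harmless: simply take each $c^0_\delta$ of ordertype exactly $\lambda$ at the outset, which is all your pigeonhole at the end actually requires.

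The genuine gap is the case $\lambda=\omega$, which the hypothesis of the Fact permits. Your construction sets $c^i_\delta:=c^0_\delta\cap E_i$ for $\delta\in\acc(E_i)$ and relies on this being a club of $\delta$. When $\cf(\delta)>\omega$ the intersection of two clubs of $\delta$ is a club, but when $\cf(\delta)=\omega$ this fails: two clubs of an $\omega$-cofinal ordinal can be disjoint (e.g.\ $\{\omega\cdot n\mid 1\le n<\omega\}$ and $\{\omega\cdot n+n\mid 1\le n<\omega\}$ in $\omega^2$). So for $\lambda=\omega$ the sequence $\vec{c}^{\,i}$ need not be a candidate at all, and the appeal to ``failure of (ii) at $\vec{c}^{\,i}$'' has nothing to apply to. The paper handles this case separately (Lemma~\ref{tailomega}) by replacing intersection with the operator $\Phi_D$ of Definition~\ref{dropdefn}, which maps each point of $c^0_\delta$ to $\sup(D\cap\eta)$ and hence always produces a club of $\delta$ of the same ordertype; this change also means the clubs $\Phi_{D_i}(c^0_\delta)$ are no longer $\subseteq$-decreasing in $i$, so the contradiction is reached by a well-foundedness stabilization for each $\eta\in c^0_\delta$ rather than by counting distinct $\alpha_i$'s. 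Your argument needs that case, or an analogous replacement, to cover $\lambda=\omega$.
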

In particular, unlike the $\diamondsuit$ principle or its descendants the $\clubsuit$ and $\stick$ principles (see \cite{Ostaszewski_space,MR511564}),
the focus is not on predicting arbitrary or even just cofinal subsets of $\kappa$, but rather only the closed and unbounded subsets of $\kappa$. 
This makes the task of guessing easier, since the collection of club subsets of $\kappa$ generate a normal $\kappa$-complete filter. 

The most famous application of Fact~\ref{shelahcg} is Shelah's PCF bound  (see \cite{Sh:400} or \cite[Theorem~7.3]{MR2768693}): $$2^{\aleph_\omega} \leq \max\{2^{\aleph_0}, \aleph_{\omega_4}\}.$$
 Apart from the upper bound on cardinal exponentiation, Fact~\ref{shelahcg} has many other uses in PCF theory. 
As an example from the basic theory, in obtaining exact upper bounds for sequences of ordinal functions (see \cite[Lemma~2.19]{MR2768693}), 
in fact showing that there are stationary sets consisting of points of large cofinality in the approachability ideal $I[\lambda]$ (see \cite[\S3]{MR2768694}). 
Outside of PCF theory, there are applications of Fact~\ref{shelahcg} to the universality spectrum of models \cite{MR1187454,MR2160658, Sh:1151}, 
cardinal invariants of the continuum \cite{MR2023448, MR2391923}, cardinal invariants at uncountable cardinals \cite{hbz, MR4144135}, to the study of the Boolean algebra $\p(\lambda)/[\lambda]^{<\lambda}$ for $\lambda$ singular of countable cofinality \cite{MR1835242}, to showing the incompactness of chromatic number \cite{Sh:1006}, 
to obtain a refinement of the downwards L{\"o}wenheim-Skolem Theorem \cite{MR2115072}, to study the saturation of the non-stationary ideal on $\p_\lambda(\kappa)$ \cite{MR1738689}, 
to obtain two-cardinal diamond principles in $\zfc$ \cite{MR1903856,MR2449470,MR2502486, Sh:e},
to obtaining consequence of forcing axioms \cite{Sh:794}, to obtain limitative results on forcing \cite{MR1420260, MR3749406}, to constructing graphs with a prescribed rate of growth of the chromatic number of its finite subgraphs \cite{MR4092983}. That Fact~\ref{shelahcg} is a theorem of $\zfc$ also imposes important limitations on the theory of forcing axioms at $\aleph_2$ (see for example \cite{MR3201836}).

While club guessing was motivated by finding a weak substitute for the diamond principle, in \cite{Sh:922}, Shelah, using arguments that materialized through the development of the theory of club guessing,
proved the next theorem on diamond, concluding a 40 year old search for such a result (see \cite{paper_r01}).
\begin{fact}[\cite{Sh:922}]
Let $\lambda$ be an uncountable cardinal, and let $S \s E^{\lambda^+}_{\neq\cf(\lambda)}$ be stationary. The following are equivalent:
\begin{enumerate}[(1)]
\item $2^\lambda= \lambda^+$;
\item $\diamondsuit(S)$.
\end{enumerate}
\end{fact}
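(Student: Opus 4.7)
The direction $(2) \Rightarrow (1)$ is the routine one, and I would dispatch it first. Given a $\diamondsuit(S)$-sequence $\langle A_\delta \mid \delta \in S\rangle$, for any $A \s \lambda$ regard $A$ as a subset of $\lambda^+$; since $\{\delta \in S \mid \delta \geq \lambda\}$ is stationary in $\lambda^+$ and the guessing property applies to $A$, some such $\delta$ satisfies $A_\delta = A \cap \delta = A$. This gives a surjection from $S$ onto $\mathcal{P}(\lambda)$, so $|\mathcal{P}(\lambda)| \leq |S| \leq \lambda^+$, and combined with $2^\lambda > \lambda$ we conclude $2^\lambda = \lambda^+$.

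For the substantial direction $(1) \Rightarrow (2)$, assume $2^\lambda = \lambda^+$. My plan is to convert a club-guessing sequence on $S$ into a full diamond sequence, using the enumeration of $\mathcal{P}(\lambda)$ furnished by $(1)$ as a dictionary. Fix such an enumeration $\langle X_\alpha \mid \alpha < \lambda^+\rangle$ of $\mathcal{P}(\lambda)$, and invoke Fact~\ref{shelahcg} (when $\lambda$ is regular) or the corresponding generalization guessing on $E^{\lambda^+}_\mu$ for regular $\mu < \lambda$ (when $\lambda$ is singular) to obtain $\vec C = \langle C_\delta \mid \delta \in S\rangle$ with each $C_\delta$ a club in $\delta$ of order type $\cf(\delta)$, such that $\{\delta \in S \mid C_\delta \s D\}$ is stationary for every club $D \s \lambda^+$. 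For each $\delta \in S$, I would then define the diamond candidate $A_\delta \s \delta$ as a combinatorial function of the data $\langle X_\alpha \cap \delta \mid \alpha \in C_\delta\rangle$, designed so that whenever $C_\delta$ lies inside a sufficiently closed club and $A \s \lambda^+$ is encoded appropriately by a tail of the enumeration, one has $A_\delta = A \cap \delta$.

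The main obstacle is verifying the diamond property: given $A \s \lambda^+$ and a club $D \s \lambda^+$, produce $\delta \in S \cap D$ with $A_\delta = A \cap \delta$. This is where the hypothesis $S \s E^{\lambda^+}_{\neq \cf(\lambda)}$ is indispensable. Were $\cf(\delta) = \cf(\lambda)$ permitted, a cofinal sequence of length $\cf(\delta)$ in $\delta$ could be ``shadowed'' by a cofinal sequence in $\lambda$ already coded within the enumeration, and a single pathological pattern could defeat the guessing on a club of such $\delta$. When $\cf(\delta) \neq \cf(\lambda)$, no such coincidence is possible, and a reflection argument---feeding the ``bad club'' $D$ into the guessing property of $\vec C$ and combining this with a careful diagonal choice of the decoding function---should yield the required $\delta$. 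I expect this reflection-and-diagonal step, which is the technical core of Shelah's argument, to be the most delicate part of the proof.
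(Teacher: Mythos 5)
The $(2) \Rightarrow (1)$ direction is fine. For $(1) \Rightarrow (2)$, note first that the paper does not supply a proof of this Fact — it is quoted from \cite{Sh:922} — so there is no in-paper argument to compare against; I can only assess your sketch on its own terms, and it is not yet a proof. You write that $A_\delta$ is to be ``a combinatorial function of the data $\langle X_\alpha \cap \delta \mid \alpha \in C_\delta\rangle$, designed so that whenever $C_\delta$ lies inside a sufficiently closed club \ldots one has $A_\delta = A \cap \delta$.'' That is the conclusion to be proved, restated as a design objective: no construction of $A_\delta$ is actually given, and the verification is deferred to a ``reflection-and-diagonal step'' that you yourself flag as ``the technical core'' but do not carry out. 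That step is the whole theorem, and leaving it open is a genuine gap, not a routine detail.

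Beyond the missing construction, the plan itself is shaky at two points. First, Fact~\ref{shelahcg} with $\kappa := \lambda^+$ yields guessing only on a single $E^{\lambda^+}_\mu$ for a fixed regular $\mu < \lambda$, whereas $S \s E^{\lambda^+}_{\neq\cf(\lambda)}$ need not concentrate on one cofinality, so your single invocation of club guessing does not cover $S$; you would have to glue guessing sequences across cofinalities, which you do not address. Second, and more fundamentally, the real bottleneck in Shelah's argument is not predicting clubs but \emph{refining} a $\lambda$-sized family of candidates at each $\delta$ (available essentially for free from $2^\lambda=\lambda^+$ together with fixed surjections $e_\delta:\lambda\to\delta$) down to a single $A_\delta$; the hypothesis $\cf(\delta)\neq\cf(\lambda)$ is precisely what lets one diagonalize through that $\lambda$-indexed family while approaching $\delta$ in $\cf(\delta)$-many steps. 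Your ``shadowing'' heuristic gestures at why the cofinality restriction matters, but it does not exhibit the mechanism, and a club-guessing sequence alone does not perform this refinement. As it stands, the proposal records the shape of an intended argument without supplying its content.
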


Apart from Fact~\ref{shelahcg}, there are other, finer, forms of club guessing which are less well-known and yet altogether have a variety of applications. 
For instance, Fact~\ref{shelahcg} says nothing about the case $\kappa=\lambda^+$. For this, we have the following result of Shelah.

\begin{fact}[{\cite[Claim~2.4]{Sh:365}}]\label{shelahs21relative}
There is a sequence $\vec C=\langle C_\delta \mid \delta \in E^{\aleph_2}_{\aleph_1}\rangle$ such that 
\begin{enumerate}
\item for every $\delta \in E^{\aleph_2}_{\aleph_1}$, $C_\delta$ is a club in $\delta$ of ordertype $\omega_1$;
\item for every club $D \s \aleph_2$, there is a $\delta \in E^{\aleph_2}_{\aleph_1}$ such that the following set is cofinal in $\delta$: 
$$\{\beta <\delta \mid \min(C_\delta \setminus (\beta+1)) \in D \cap E^{\aleph_2}_{\aleph_1}\}.$$
\end{enumerate}
\end{fact}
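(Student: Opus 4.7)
The strategy is Shelah's iterative improvement of a club-guessing sequence, specialized to $\kappa=\aleph_2$ with $\lambda=\aleph_1$. Suppose toward a contradiction that no sequence $\vec C$ as in the conclusion exists. Fix an initial sequence $\vec C^0 = \langle C^0_\delta \mid \delta \in E^{\aleph_2}_{\aleph_1}\rangle$ with each $C^0_\delta$ a club in $\delta$ of ordertype $\omega_1$, chosen so that, whenever $E^{\aleph_2}_{\aleph_1}\cap\delta$ is stationary in $\delta$, the non-accumulation points of $C^0_\delta$ lie in $E^{\aleph_2}_{\aleph_1}$; this is possible since the latter condition holds on a stationary set of $\delta \in E^{\aleph_2}_{\aleph_1}$.

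Recursively construct, for $\xi < \omega_2$, clubs $E_\xi \subseteq \aleph_2$ and candidate sequences $\vec C^\xi$ as follows. Set $E_0 = \aleph_2$ and define $C^\xi_\delta := \cl_\delta(C^0_\delta \cap E_\xi)$ whenever this set is cofinal in $\delta$, which is the case for a club of $\delta$'s in $E^{\aleph_2}_{\aleph_1}$. By the contradictory hypothesis applied to $\vec C^\xi$, obtain a club $F_\xi \subseteq \aleph_2$ witnessing failure of guessing; set $E_{\xi+1} := E_\xi \cap F_\xi$, and take intersections at limit stages. Now select a ``diagonal good'' $\delta^* \in E^{\aleph_2}_{\aleph_1}$ for which $\delta^* = \sup(E_\xi \cap \delta^*)$ for every $\xi < \delta^*$ and $E^{\aleph_2}_{\aleph_1} \cap \delta^*$ is stationary in $\delta^*$; such $\delta^*$ exist by diagonal intersection together with a reflection-type argument. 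At $\delta^*$, the non-accumulation points of $C^\xi_{\delta^*}$ lie in $C^0_{\delta^*} \cap E_\xi$, hence in $E^{\aleph_2}_{\aleph_1} \cap F_\eta$ for all $\eta < \xi$. For an appropriate $\xi < \delta^*$ these nacc points are cofinal in $\delta^*$, contradicting the way $F_\eta$ was chosen and yielding the desired contradiction.

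The main obstacle I anticipate is maintaining that the non-accumulation points of $C^\xi_{\delta^*}$ both remain cofinal in $\delta^*$ and lie inside $E^{\aleph_2}_{\aleph_1}$ throughout the iteration. Since the accumulation points of a club of ordertype $\omega_1$ have cofinality $\omega$ (being indexed by limit ordinals below $\omega_1$), the entire guessing burden falls on the nacc points, and preserving their cofinality $\aleph_1$ after successive closures requires both the careful initial choice of $\vec C^0$ described above and a sufficiently strong reflection property at $\delta^*$. Arranging these two properties simultaneously, and verifying that the resulting iteration closes off within $\omega_2$ steps, is the essential technical difficulty of the argument.
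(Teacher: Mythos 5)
Your plan correctly identifies Shelah's "collect counterexamples and iterate" template and even names the central obstacle, but the mechanism you propose for resolving it does not work, and the paper's proof (Theorem~\ref{weaksquarethm}, specialized as in Subsection~\ref{subsectionnocoherence}) uses a genuinely different device that you are missing.

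First, a factual slip: for $\delta$ of cofinality $\omega_1$, the set $E^{\aleph_2}_{\aleph_1}\cap\delta$ is \emph{never} stationary in $\delta$. If $A$ is any club in $\delta$ of ordertype $\omega_1$, then every $\gamma\in\acc(A)$ satisfies $\cf(\gamma)=\cf(\otp(A\cap\gamma))=\omega$ (a countable limit ordinal), so $\acc(A)$ is a club in $\delta$ disjoint from $E^{\aleph_2}_{\aleph_1}$. Hence your stipulation on $\vec C^0$ is vacuous, and your "reflection-type" selection of $\delta^*$ cannot be carried out in the form stated. (Presumably you meant cofinal; $E^{\aleph_2}_{\aleph_1}\cap\delta$ is always cofinal in $\delta$, and one can indeed arrange $\nacc(C^0_\delta)\subseteq E^{\aleph_2}_{\aleph_1}$.)

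Second, and more seriously, shrinking via $C^\xi_\delta:=\cl_\delta(C^0_\delta\cap E_\xi)$ destroys control over the cofinalities of the new non-accumulation points: $\nacc(C^\xi_\delta)\subseteq C^0_\delta\cap E_\xi$, and the latter contains accumulation points of $C^0_\delta$, all of which have cofinality $\omega$. So after even one step of shrinking you cannot conclude that the surviving nacc points lie in $E^{\aleph_2}_{\aleph_1}$, which is exactly the relativized guessing the statement demands. You acknowledge this obstacle in your final paragraph, but the plan offers no device to overcome it; the initial choice of $\vec C^0$ does not help once you intersect and re-close. Moreover, the length-$\omega_2$ iteration is itself problematic: the stabilization in Lemma~\ref{weaktotail} (which your plan mimics) rests on the pigeonhole that $\otp(C_\delta)\le\lambda$ while the iteration runs $\lambda^+$ steps, and this only produces a contradiction when $\lambda^+<\kappa$ — not here, where $\kappa=\lambda^+=\aleph_2$.

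The paper overcomes both issues at once with a different operator and a "filling-in" step (see Theorem~\ref{weaksquarethm} and Definition~\ref{dropdefn}): instead of $\cl(\,\cdot\cap E)$ it applies the drop operator $\Phi_{D}(x)=\{\sup(D\cap\eta)\mid\eta\in x,\ \eta>\min(D)\}$ to produce $e^{n+1}_\delta$, and then fills in below every $\gamma\in\nacc(e^{n+1}_\delta)$ of \emph{small} cofinality by splicing in an end segment of a fixed club $e_\gamma$ of ordertype $\cf(\gamma)<\lambda$. This keeps ordertypes $\le\lambda$, requires only $\omega$-many iterations, and — crucially — in the final fixed-point analysis forces the relevant stable point $\gamma_{n+1}=\gamma_n$ to lie in $\nacc(C^{n+1}_\delta)\cap D_{n+1}\cap E^{\lambda^+}_\lambda$: because $C^{n+1}_\delta\cap[\beta,\gamma_{n+1})=\emptyset$, the definition of $C^{n+1}_\delta$ rules out $\cf(\gamma_{n+1})<\lambda$. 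This filling-in step, not an initial choice of $\vec C^0$, is what makes the cofinality control go through, and it is absent from your proposal.
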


To compare the preceding with Fact~\ref{shelahcg},
we see two differences in the corresponding Clause~(ii).
The first, here we require a single $\delta$ instead of stationarily many, however,
this is easily seen to be equivalent.\footnote{This equivalence is true in greater generality; see the discussion at the end of Section~\ref{sectionwarmingup}.} Second, which is more important,
instead of requiring $C_\delta$ to be a subset of $D$,
we now merely require that the intersection of $D$ with the set $\nacc(C_\delta)$ of all non-accumulation points of $C_\delta$ be cofinal in $\delta$. 
This choice is not arbitrary. For club many $\delta\in E^{\aleph_2}_{\aleph_1}$, 
both the set $\acc(C_\delta)$ of all accumulation points of $C_\delta$ 
and the set $D\cap\delta$ are clubs in $\delta$,
and hence $\acc(C_\delta)\cap D$ is trivially cofinal in $\delta$.

Consider now another example due to Shelah concerning the case $\kappa=\lambda^+$ (see \cite{cgdummies}, for a short proof):

\begin{fact}[{\cite[Claim 3.5]{Sh:413}}]\label{shelahs21ns}
There is a sequence $\vec C=\langle C_\delta \mid \delta \in E^{\aleph_2}_{\aleph_1}\rangle$ such that 
\begin{enumerate}
\item for every $\delta \in E^{\aleph_2}_{\aleph_1}$, $C_\delta$ is a club in $\delta$ of ordertype $\omega_1$;
\item for every club $D \s \aleph_2$, there is a $\delta \in S$ such that the following set is stationary in $\delta$:
$$\{\beta <\delta \mid \min(C_\delta \setminus (\beta+1)) \in D\}.$$
\end{enumerate}
\end{fact}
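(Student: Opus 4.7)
My plan is to construct the desired sequence by transfinite iteration, starting from an arbitrary candidate and shrinking it whenever a club witnesses the failure of clause~(ii). A cardinality-based stabilization argument, combined with a Fodor-style press-down, will force the iteration to terminate with a valid guessing sequence.

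Concretely, suppose toward contradiction that no such $\vec C$ exists. Fix any $\vec C^0=\langle C^0_\delta\mid\delta\in E^{\aleph_2}_{\aleph_1}\rangle$ with each $C^0_\delta$ a club in $\delta$ of order type $\omega_1$. I would recursively build $\vec C^\xi$ together with a $\s$-decreasing sequence of club witnesses $D^\xi\s\aleph_2$ of the failure of $\vec C^\xi$ (using the regularity of $\aleph_2$ to replace each fresh witness by its intersection with the previous ones, still a club for $\xi<\aleph_2$). At a successor stage, set
$$C^{\xi+1}_\delta:=C^\xi_\delta\cap\acc(D^\xi)\cap\delta$$
whenever $\delta\in\acc(D^\xi)\cap E^{\aleph_2}_{\aleph_1}$, and $C^{\xi+1}_\delta:=C^\xi_\delta$ otherwise; since $\cf(\delta)=\aleph_1$, this preserves the property of being a club of order type $\omega_1$. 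At countable limit stages, take pointwise intersection.

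The key observation is that, for each $\delta$, the sequence $\langle C^\xi_\delta\rangle_{\xi<\aleph_2}$ is weakly $\s$-decreasing within $C^0_\delta$ of cardinality $\aleph_1$; hence it admits at most $\aleph_1$ strict decreases and stabilizes at some $\xi_\delta<\aleph_2$. Each strict decrease at stage $\xi$ exhibits a regressive witness $w\in C^0_\delta\setminus\acc(D^\xi)$ with $w<\delta$. A Fodor-type press-down argument on these witnesses should produce a uniform $\xi^*<\aleph_2$ and a stationary $S\s E^{\aleph_2}_{\aleph_1}$ with $\xi_\delta\leq\xi^*$ on $S$. Then for $\delta\in S$ the stability $C^{\xi^*+1}_\delta=C^{\xi^*}_\delta$ forces $C^{\xi^*}_\delta\s\acc(D^{\xi^*})\s D^{\xi^*}$, whence $\nacc(C^{\xi^*}_\delta)\s D^{\xi^*}$; consequently, the set $\{\beta<\delta\mid\min(C^{\xi^*}_\delta\setminus(\beta+1))\in D^{\xi^*}\}$ is co-bounded—hence stationary—in $\delta$ for all $\delta\in S$, contradicting the choice of $D^{\xi^*}$ as a failure witness.

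The hard part will be the press-down step producing the uniform $\xi^*$: the bound $|C^0_\delta|=\aleph_1$ immediately gives $\xi_\delta<\aleph_2$ per $\delta$, but collapsing these per-$\delta$ bounds to a single $\xi^*$ on a stationary set requires a genuine regressive-function argument that tracks, for each $\delta$, a specific removal witness $w\in\delta$ and presses down on $w$ before (or simultaneously with) pressing down on $\xi_\delta$ itself. The $\s$-decreasing structure of $\langle D^\xi\rangle$ ensures that the successive shrinkings remain coherent, which is essential for the press-down to yield a consistent stabilization stage.
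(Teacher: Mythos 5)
The recursion and its terminal contradiction are set up correctly: if some $\delta\in\acc(D^{\xi^*})\cap E^{\aleph_2}_{\aleph_1}$ satisfies $C^{\xi^*+1}_\delta=C^{\xi^*}_\delta$, then $C^{\xi^*}_\delta\subseteq\acc(D^{\xi^*})\subseteq D^{\xi^*}$, so the guessing set is all of $\delta$, contradicting the choice of $D^{\xi^*}$. The gap is exactly the press-down you flag, and it is a genuine one. The stabilization stage $\xi_\delta$ is in no way regressive --- nothing ties the stage at which the chain $\langle C^\xi_\delta\rangle$ becomes constant to the ordinal $\delta$ itself --- so Fodor's lemma does not apply to $\delta\mapsto\xi_\delta$. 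All that ``$\xi_\delta<\aleph_2$ for every $\delta$'' gives you is a decomposition of $E^{\aleph_2}_{\aleph_1}$ into $\aleph_2$-many level sets $\{\delta\mid\xi_\delta=\xi\}$; since $\ns_{\aleph_2}$ is only $\aleph_2$-complete, there is no reason for one of them, or an initial union of them, to be stationary. Pressing down on the removal witnesses $w$ does not rescue this: for a given $\delta$ there are up to $\aleph_1$-many of them (one per strict decrease), while Fodor applies only to a single regressive ordinal value, and a single witness neither determines nor bounds $\xi_\delta$. (There is also an unaddressed issue at limit stages of cofinality $\omega_1$, where a pointwise intersection of $\omega_1$-many clubs in a $\delta$ of cofinality $\omega_1$ need not be club in $\delta$; this one is fixable because $\acc(D^\xi)\subseteq\bigcap_{\xi'<\xi}\acc(D^{\xi'})$, but it should be addressed.)

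This is not a patch-it-later detail; the set-stabilization route structurally loses control because waiting for the whole club $C^\xi_\delta$ to stabilize forces the iteration out to length $\aleph_2$. The argument this paper develops (Lemma~\ref{claim521} together with Theorem~\ref{thm42}, yielding Theorem~\ref{THMA} and hence Fact~\ref{shelahs21ns}) avoids this entirely by running only $\omega$ steps and using a \emph{local} well-foundedness argument. One first invokes the already-available unbounded club-guessing $\cg_{\omega_1}(E^{\aleph_2}_{\aleph_1},\aleph_2)$ (Fact~\ref{shelahs21relativeb}) to select a good $\delta$ and a good $\gamma\in\nacc(C_\delta)$; then one tracks the \emph{ordinals} $\alpha_i:=\sup(e^i_\delta\cap\gamma)$ for $i<\omega$, which form a weakly decreasing sequence of ordinals and hence stabilize after \emph{finitely} many steps, independently of $|C_\delta|$. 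Upgrading the resulting auxiliary guessing to genuine stationary guessing is then a separate pigeonhole argument that exploits $\omega_1$ being a successor cardinal (the surjections $\varphi_{\delta,\alpha}:\omega\to C_\delta\cap[\alpha,\min(e_\delta\setminus(\alpha+1)))$ in the proof of Theorem~\ref{thm42}). If you want a self-contained proof, that is the mechanism you need to reproduce; the global stabilization of $\langle C^\xi_\delta\rangle_{\xi<\aleph_2}$ does not.
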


Comparing the two, we see that Fact~\ref{shelahs21relative} features a sequence where the guessing is measured
against the ideal $J^{\bd}[\delta]$ of bounded subsets of $\delta$,
whereas here we measure against the nonstationary ideal $\ns_\delta$. 
However, Fact~\ref{shelahs21relative} features a sequence which guesses clubs \emph{relative to the set $E^{\aleph_2}_{\aleph_1}$}, 
and for this reason the two results are incomparable.
In this paper, a join of the two results is obtained.

Note that so far we have always required that the clubs $C_\delta$ have the minimal possible ordertype of $\cf(\delta)$.
The small ordertype requirement is trivially gotten for a sequence that guesses as in Clause~(ii) of Fact~\ref{shelahcg}. 
In other cases, however, obtaining that the local clubs have a small ordertype requires extra care (see for example \cite[Theorem~9]{kojmanabc}, where the weaker form of Fact~\ref{shelahs21relative} is proved where one requires each of the $C_\delta$ to merely have \emph{size} $\aleph_1$). 
As a sample application,  we mention that
in \cite{MR2533525}, a strong form of club-guessing at $\aleph_1$ with minimal ordertype is used to construct a small Dowker space.

However, attention should not be restricted to guessing with minimal ordertypes.
At the level of $\aleph_1$, the ordertypes of guessing sequences play a crucial role in separating forcing axioms at $\aleph_1$ in \cite[Chapter XVII]{Sh:f}, and later in \cite{MR3093385}.
At higher cardinals, guessing sequences $\vec C$ with very large ordertypes are useful
for getting a pathological graph $G(\vec C)$ with maximal chromatic number \cite{paper12}.
An open question concerning guessing sequences of maximal ordertype is stated in \cite[Question~2]{paper11}.
For an extended discussion, see the introduction to \cite{paper19}.

At a cardinal $\kappa$ that is a limit or a successor of a limit, 
another type of relative club-guessing has shown to be useful,
where the guessing feature stipulates additional conditions on the sequence $\langle \cf(\gamma) \mid \gamma \in \nacc(C_\delta) \rangle$. 
In \cite{Sh:365, MR3321938}, the additional condition is that this sequence is strictly increasing and converging to $|\delta|$. 
This is used to construct colourings satisfying strong negative square bracket partition relations \cite{EiSh:535,EiSh:819}. 
An earlier construction (see \cite{Sh:365} or \cite[Theorem~5.19]{MR2768694}) requires that the sequence $\langle \cf(\gamma) \mid \gamma \in \nacc(C_\delta) \rangle$ have cofinally many cardinals carrying a J\'onsson algebra. This is used to construct J\'onsson algebras at $\kappa$.
Note that the existence of a club-guessing sequence of large ordertypes in $\zfc$ would have given rise to such sequences,
in particular, solving \cite[Question~2.4]{EiSh:819} in the affirmative.

We move to the next example, this time a question of Shelah.
\begin{question}[{\cite[Question 5.4]{Sh:666}}]\label{shelahquestion}
Let $\lambda < \lambda^+ = \kappa$ be regular uncountable cardinals. Is there a sequence $\vec C=\langle C_\delta \mid \delta \in E^\kappa_\lambda\rangle$ such that
\begin{enumerate}
\item for every $\delta \in E^\kappa_\lambda$, $C_\delta$ is a club in $\delta$ of ordertype $\lambda$;
\item for every club $D \s \kappa$, there is a $\delta \in E^\kappa_\lambda$ such that the following set is stationary in $\delta$:
$$\{\beta <\delta \mid \beta_1, \beta_2 \in D\text{ where }\beta_1:=\min(C_\delta \setminus(\beta+1))\ \&\ \beta_2:=\min(C_\delta \setminus (\beta_1+1))\}.$$
\end{enumerate}
\end{question}

Compared to Fact~\ref{shelahs21ns}, 
here we require that for stationarily many $\beta < \delta$, \emph{two consecutive non-accumulation points} following $\beta$ are in the club $D$.
Shelah mentions (without proof) that this slight strenghening of Fact~\ref{shelahs21ns}
combined with $\gch$ allows for the construction of a $\kappa$-Souslin tree. This is related to the open problem of whether $\gch$ implies the existence of an $\aleph_2$-Souslin tree (see \cite{paper37}),
and the earlier work of Kojman and Shelah on that matter \cite{KoSh:449}.

Here we shall prove that at the level of $\aleph_2$, an affirmative answer to Shelah's question follows from the existence of a sequence as in Fact~\ref{shelahs21relative} 
in which $D \cap E^{\aleph_2}_{\aleph_1}$ in Clause~(ii) is replaced by $D \cap E^{\aleph_2}_{\aleph_0}$.
However, Asper{\'o} has answered Shelah's question negatively \cite{MR3307877}. 
Getting this failure together with the $\gch$ remains open.

The feature of guessing consecutive points has other applications in the construction of Souslin trees: 
in \cite[\S5]{paper23}, the feature of guessing with two consecutive points allows to reduce a $\diamondsuit(\kappa)$ hypothesis from \cite{paper22}
to just $\kappa^{<\kappa}=\kappa$. 
In \cite{paper20}, a feature of guessing with $\omega$-many consecutive points is used to construct Souslin trees with precise control over their reduced powers.

Returning to the discussion after Fact~\ref{shelahs21ns}, there is another way to impose that the set of good guesses be `large'.
Here is an example, again due to Shelah.
\begin{fact}[{\cite[Claim~3.10]{Sh:572}}]\label{shelahpartitioning}
Suppose that $\kappa=\lambda^+$ for a regular uncountable cardinal $\lambda$ that is not strongly inaccessible. 
Then, there is a sequence $\langle h_\delta :C_\delta \rightarrow \omega \mid \delta \in E^\kappa_\lambda \rangle$ such that
\begin{enumerate}[(1)]
    \item for every $\delta \in E^\kappa_\lambda$, $C_\delta$ is a club in $\delta$ of ordertype $\lambda$;
    \item for every club $D \s \kappa$, there is a $\delta \in E^\kappa_\lambda$ such that  
$$\bigwedge_{n<\omega}\sup\{\beta< \delta \mid \min(C_\delta \setminus(\beta+1))\in D \cap h_\delta^{-1}\{n\}\} = \delta.$$
\end{enumerate}
\end{fact}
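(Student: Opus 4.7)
I would begin with a base club-guessing sequence $\vec C^\circ = \langle C^\circ_\delta \mid \delta \in E^\kappa_\lambda \rangle$ with $\otp(C^\circ_\delta) = \lambda$ for every $\delta$. At $\kappa = \lambda^+$ with $\lambda$ regular uncountable, the existence of such a sequence enjoying the nonstationary-style guessing of Clause~(ii) of Fact~\ref{shelahs21ns} is a $\zfc$ theorem, obtained by the generalization of Fact~\ref{shelahs21ns} from $\aleph_1$ to arbitrary regular uncountable $\lambda$. The task is then to manufacture colorings $h_\delta$ on the $C^\circ_\delta$'s such that, for any club $D\s\kappa$, the $\nacc$-guessed elements of some $C_\delta$ meeting $D$ realize \emph{every} color cofinally in $\delta$.

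Next, I would invoke the hypothesis that $\lambda$ is not strongly inaccessible to fix a cardinal $\mu < \lambda$ with $2^\mu \geq \lambda$: take $\mu$ to be the immediate predecessor of $\lambda$ when $\lambda$ is a successor, and otherwise exploit the failure of $\lambda$ being a strong limit. Fix also a one-to-one enumeration $\langle A_\epsilon \mid \epsilon < \lambda \rangle$ of pairwise distinct subsets of $\mu$. Now parametrize candidate colorings by functions $\phi : \mu \to \omega$ each of whose preimages $\phi^{-1}\{n\}$ is unbounded in $\mu$: enumerate $\nacc(C^\circ_\delta) = \{c^\delta_\epsilon \mid \epsilon < \lambda\}$ in increasing order and declare $h^\phi_\delta(c^\delta_\epsilon) := \phi(\min A_\epsilon)$ (or a closely related rule guaranteeing that every $n < \omega$ appears in the range of $h^\phi_\delta$ cofinally often in $\delta$).

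The heart of the argument is then to show that for \emph{some} admissible $\phi$, the sequence $\vec h^\phi := \langle h^\phi_\delta \mid \delta \in E^\kappa_\lambda \rangle$ satisfies Clause~(2). Suppose toward contradiction that for every such $\phi$ there is a witness club $D_\phi \s \kappa$ and a map $\delta \mapsto n(\delta, \phi) < \omega$ with $\sup\{\beta < \delta \mid \min(C^\circ_\delta \setminus (\beta+1)) \in D_\phi \cap (h^\phi_\delta)^{-1}\{n(\delta,\phi)\}\} < \delta$ for every $\delta$. I would pick an elementary submodel $M \prec H(\theta)$ of cardinality $\lambda$ with $\mu + 1 \s M$, containing $\vec C^\circ$, $\langle A_\epsilon \mid \epsilon < \lambda\rangle$, and the assignment $\phi \mapsto (D_\phi, n(\cdot,\phi))$. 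Applying the base club-guessing property of $\vec C^\circ$ to a suitable club derived from intersecting the $D_\phi$'s for $\phi \in M$ should produce a $\delta^* \in E^\kappa_\lambda$ where a generic $\phi^* \in M$ is realized in the tail pattern of $\nacc(C^\circ_{\delta^*})$, yielding every color cofinally in $\delta^*$ and thereby contradicting the failure at $\phi^*$.

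The main obstacle is precisely this last diagonalization: there are $2^\mu$ many candidate $\phi$'s, so one cannot take their intersection directly. The remedy is to pass to an elementary submodel of size $\lambda$ (possible since $2^\mu \geq \lambda$) and exploit that inside $M$ only $\lambda$ many $\phi$'s and their companion clubs $D_\phi$ need be tracked, while the injectivity of $\epsilon \mapsto A_\epsilon$ still lets us decode any desired color distribution from the values of a single $\phi^* \in M$. Balancing these two requirements — enough $\phi$'s to force every color cofinally, few enough to survive diagonalization — is what the hypothesis $2^\mu \geq \lambda$ is doing in the background, and is precisely where strong inaccessibility of $\lambda$ would break the argument.
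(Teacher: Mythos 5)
The paper does not reprove this Fact of Shelah; rather, Section~\ref{sectionpartitioning} develops a framework that recovers it: since $\lambda$ is not strongly inaccessible it is not weakly compact, so $\ubd(J^{\bd}[\lambda],\omega)$ holds (Fact~\ref{fact517}(5)); applying Theorem~\ref{partitioningjkk}(3) to the $\cg_\lambda(E^\kappa_\lambda,E^{\lambda^+}_\lambda)$-sequence of Fact~\ref{shelahs21relativeb} yields $\omega\in\Theta_2(\vec C,E^\kappa_\lambda)$, which is Clause~(ii) after restricting $h_\delta$ to $C_\delta$. The engine there is a two-dimensional colouring $c:[\lambda]^2\to\omega$ with a pivot property (Fact~\ref{jbduparrow}(2)), and a diagonal-intersection stabilization (as in Claims~\ref{claim892} and~\ref{claim893}) freezes a single $\eta^*<\lambda$ so that $h_\delta(\beta):=c(\eta^*,\otp(C_\delta\cap\beta))$ works.

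Your direct diagonalization has two genuine gaps. First, the coloring rule $h^\phi_\delta(c^\delta_\epsilon):=\phi(\min A_\epsilon)$ is degenerate: $\min A_\epsilon$ ranges over $\mu<\lambda$, and nothing prevents $\epsilon\mapsto\min A_\epsilon$ from being constant on a cofinal subset of $\lambda$, or even on all of it (take $0\in A_\epsilon$ for every $\epsilon$), in which case $h^\phi_\delta$ realizes only one colour and Clause~(ii) fails for every $D$ and every $\phi$. The hedge ``or a closely related rule'' acknowledges the problem but does not fix it; extracting a single bit of $A_\epsilon$ discards exactly the information that makes a $\lambda$-sequence of \emph{distinct} subsets of $\mu$ useful, so a correct rule must interact with the full sets, not their minima. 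Second, the elementary-submodel step is asserted rather than argued: the colourings $h^\phi_\delta$ are fixed in advance of $D$, so at the witnessing $\delta^*$ you must exhibit a concrete $\phi^*\in M$ for which, for every $n<\omega$, the set $\{\epsilon<\lambda\mid c^{\delta^*}_\epsilon\in D_{\phi^*}\ \&\ h^{\phi^*}_{\delta^*}(c^{\delta^*}_\epsilon)=n\}$ is cofinal in $\lambda$; saying that ``a generic $\phi^*$ is realized in the tail pattern'' does not produce this interleaving, and in fact the injectivity of $\epsilon\mapsto A_\epsilon$, which you flag as the key resource bought by $2^\mu\geq\lambda$, is never actually used. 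The paper's route sidesteps both issues by isolating the combinatorics into a two-dimensional colouring of $\lambda$ whose pivot $\eta^*$ is chosen once and for all by a club-stabilization argument, after which the partition is read off directly.
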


This result is used in \cite[\S2]{paper15} in producing a strong oscillation with $\omega$ many colours,
sufficient to derive strong colorings \cite[\S3]{paper15} and transformations of the transfinite plane \cite{paper44}.
Any improvement of the above result that partitions the club-guessing into $\theta$ many pieces
immediately translates to getting a strong oscillation with $\theta$ colours. 
This also connects to our previous discussion on guessing sequences $\langle C_\delta\mid\delta\in S\rangle$ with very large ordertypes,
since the number of pieces into which $C_\delta$ may be partitioned is bounded by $|C_\delta|$.

We shall later show that for the purpose of obtaining such partitioned club-guessings, the move from the unbounded ideal to the non-stationary ideal as in Fact~\ref{shelahs21relative} to Fact~\ref{shelahs21ns} is beneficial. Sufficient conditions and applications to an even stronger form of partitioned club-guessing 
in which there is global function $h:\kappa\rightarrow\theta$
such that $h_\delta=h\restriction C_\delta$ for all $\delta$ may be found in \cite[\S3]{Sh:365} and \cite[Theorem~4.20]{paper34}.

A very useful feature of club-guessing sequences we have so far ignored is \emph{coherence}. 
Coherent club-guessing sequences have been applied to set theory of the real line \cite{she1056}, and to cardinal invariants of the continuum  \cite{MR1906062}.
Coherent club-guessing sequences were also used to show the non-existence of a natural forcing axiom \cite{Sh:784}
and to construct strong colourings \cite{paper18,paper52}. 
Weakly coherent club-guessing at the level of $\aleph_1$ have been used to define a pathological topology on the real line \cite{MR1814230},
and weakly coherent club-guessing at the level of a successor $\lambda^+$ of a singular cardinal $\lambda$ was used in \cite[\S2.1]{paper29} to prevent $\lambda$-distributive $\lambda^+$-trees from having a cofinal branch,
thus, yielding nonspecial $\lambda^+$-Aronszajn trees.

The above is hardly an exhaustive list of applications of club guessing but merely a selection biased by the themes of this paper. 
Additional key results, including those from \cite{GiSh:577,MR2194236,MR2151585,MR2652193}  will be discussed in Part~II of this series.
Another caveat is that in this paper we shall only be concerned with getting club-guessing results at $\kappa\ge\aleph_2$.
The behavior of club-guessing at the level of $\aleph_1$ is entirely independent of $\zfc$,
and we refer the reader to \cite{MR2342443,MR2444284,MR2529910,MR2963020,MR2963019,MR3312316,MR4078213} for more on that matter.

\subsection{The results}\label{mainconventions}
Throughout the paper,
$\kappa$ stands for an arbitrary regular uncountable cardinal;
$\theta,\mu,\chi$ are (possibly finite) cardinals $\le\kappa$,
$\lambda$ and $\nu$ are infinite cardinals $<\kappa$,
$\xi,\sigma$ are ordinals $\le\kappa$,
and $S$ and $T$ are stationary subsets of $\kappa$ consisting of limit ordinals.

\begin{defn} A \emph{$C$-sequence over $S$} is a sequence $\vec C=\langle C_\delta\mid\delta\in S\rangle$ such that, for every $\delta\in S$, $C_\delta$ is a closed subset of $\delta$ with $\sup(C_\delta)=\sup(\delta)$. It is said to be \emph{$\xi$-bounded} if $\otp(C_\delta)\le\xi$ for all $\delta\in S$.
\end{defn}

Our first main result fulfills the promise of finding a join of Facts \ref{shelahs21relative} and \ref{shelahs21ns}.

\begin{mainthm}\label{THMA} For every successor cardinal $\lambda$, there exists a $\lambda$-bounded $C$-sequence $\vec C=\langle C_\delta\mid \delta\in E^{\lambda^+}_\lambda\rangle$
satisfying the following.
For every club $D \s \lambda^+$, there is a $\delta \in E^{\lambda^+}_\lambda$ such that the following set is stationary in $\delta$:
$$\{\beta <\delta \mid \min(C_\delta \setminus(\beta+1))\in D\cap E^{\lambda^+}_\lambda\}.$$
\end{mainthm}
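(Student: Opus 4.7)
The plan is to construct a $\lambda$-bounded $C$-sequence $\vec C=\langle C_\delta\mid\delta\in E^{\lambda^+}_\lambda\rangle$ simultaneously enjoying two properties: (a) $\nacc(C_\delta)\s E^{\lambda^+}_\lambda$ for every $\delta$ in the club $G:=\acc(E^{\lambda^+}_\lambda)$; and (b) the $\textup{NS}_\delta$-style club-guessing of Fact~\ref{shelahs21ns}, generalized to $\lambda^+$, so that for every club $D\s\lambda^+$ there is $\delta\in E^{\lambda^+}_\lambda$ with $\{\beta<\delta\mid\min(C_\delta\setminus(\beta+1))\in D\}$ stationary in $\delta$ (in fact, stationarily many such $\delta$, per the equivalence alluded to in the footnote following Fact~\ref{shelahs21relative}). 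These two conditions together imply Theorem~A: given a club $D\s\lambda^+$, apply (b) to obtain a stationary collection of witnessing $\delta$'s and pick one in the club $G$; then $\min(C_\delta\setminus(\beta+1))$, being necessarily a non-accumulation point of $C_\delta$, lies in $E^{\lambda^+}_\lambda$ by (a), and hence the stationary set produced by (b) already witnesses the conclusion of Theorem~A for $D$.

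Condition (a) is easy to secure. The set $G=\acc(E^{\lambda^+}_\lambda)$ is a club in $\lambda^+$ since $E^{\lambda^+}_\lambda$ is stationary, and for each $\delta\in G\cap E^{\lambda^+}_\lambda$ the set $E^{\lambda^+}_\lambda\cap\delta$ is cofinal in $\delta$. As $\cf(\delta)=\lambda$ is regular (because $\lambda$ is a successor cardinal), we may enumerate a cofinal increasing $\lambda$-sequence $\langle\gamma_i\mid i<\lambda\rangle$ from $E^{\lambda^+}_\lambda\cap\delta$ and set $C_\delta:=\cl_\delta(\{\gamma_i\mid i<\lambda\})$. A direct check shows $\otp(C_\delta)=\lambda$ and $\nacc(C_\delta)$ is contained in the range of the enumeration, hence in $E^{\lambda^+}_\lambda$. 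The remaining $\delta\in E^{\lambda^+}_\lambda\setminus G$ form a non-stationary set and may be given any $\lambda$-bounded cofinal closed $C_\delta$.

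Condition (b) subject to (a) is the crux. The plan is to adapt Shelah's proof of Fact~\ref{shelahs21ns} to the constrained setting. Shelah iteratively post-processes a starting sequence against putative counter-witness clubs, but the standard post-processing $C_\delta\mapsto C_\delta\cap e_\delta$ (with $e_\delta\s\delta$ a club witnessing failure at $\delta$) may violate (a): accumulation points of the old sequence lying outside $E^{\lambda^+}_\lambda$ can become non-accumulation points of the shrinking. We replace this by the alternative post-processing $C_\delta\mapsto\cl_\delta(\{s_\delta(\gamma)\mid\gamma\in e_\delta\})$, where $s_\delta(\gamma):=\min(\nacc(C_\delta)\setminus\gamma)$, so that $\nacc$ of the new sequence lies inside $\nacc$ of the old, preserving (a) and remaining $\lambda$-bounded. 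The main obstacle is to verify that this modified iteration still terminates in a sequence satisfying (b); this requires following through Shelah's rank-based termination proof with the alternative post-processing in place and checking that the rank continues to strictly decrease. The pressing-down step that bounds the rank relies on the regularity of $\lambda$, which is precisely where the hypothesis that $\lambda$ is a successor cardinal is used.
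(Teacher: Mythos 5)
Your reduction to the two properties (a) and (b) is valid, and your treatment of (a) is fine: every $\delta\in\acc(E^{\lambda^+}_\lambda)\cap E^{\lambda^+}_\lambda$ admits a club $C_\delta$ of ordertype $\lambda$ with $\nacc(C_\delta)\s E^{\lambda^+}_\lambda$, and since $\min(C_\delta\setminus(\beta+1))\in\nacc(C_\delta)$ always, (a) $+$ (b) does yield Theorem~A. The gap is entirely in (b), which you yourself flag as ``the crux'' and then only sketch. You propose to re-run Shelah's proof of the $\aleph_2$-result with a modified postprocessing operator that keeps $\nacc$ of the output inside $\nacc$ of the input, and you then write that ``the main obstacle is to verify that this modified iteration still terminates.'' That verification is never performed, and there is good reason to doubt the form you assert for it: you claim the successor hypothesis on $\lambda$ enters only through ``the regularity of $\lambda$'' in a pressing-down step. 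But every inaccessible $\lambda$ is regular as well, so if regularity were all that mattered, your argument would prove Theorem~A for inaccessible $\lambda$ too, which the paper does not claim. The likeliest explanation is that your sketch has not located the real use of $\lambda=\mu^+$.

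For comparison, the paper's proof does not iterate on the $C_\delta$'s at all and does not maintain a $\nacc$-constraint. It keeps $T=E^{\lambda^+}_\lambda$ inside the club-guessing parameter from the start: Fact~\ref{shelahs21relativeb} gives a $\lambda$-bounded $\vec C$ witnessing $\cg_\lambda(E^{\lambda^+}_\lambda,E^{\lambda^+}_\lambda)$, and then Theorem~\ref{thm42}(1) (via Lemma~\ref{claim521}) upgrades the local ideal from $J^{\bd}[\delta]$ to $\ns_\delta$ while preserving $S$, $T$, and the ordertype bound. Lemma~\ref{claim521} is an $\omega$-length stabilization argument on auxiliary clubs $e_\delta$, and the successor hypothesis enters in Theorem~\ref{thm42} through the pigeonhole: writing $\lambda=\mu^+$, each interval $C_\delta\cap[\alpha,\min(e_\delta\setminus(\alpha+1)))$ is a proper initial segment of $C_\delta\setminus\alpha$ and hence has size $\le\mu$, so one can fix surjections $\varphi_{\delta,\alpha}:\mu\to C_\delta\cap[\alpha,\min(e_\delta\setminus(\alpha+1)))$ and press down the index $i_\alpha<\mu$ on a stationary subset of $e_\delta$ using $\mu<\lambda=\cf(\delta)$. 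That counting argument has no analogue for inaccessible $\lambda$, where no uniform bound $\mu<\lambda$ on the interval sizes exists. So the successor hypothesis is doing combinatorial work quite different from, and more essential than, the ``regularity'' you cite, and without locating it your outline cannot be completed as written.
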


Our next result deals with coherent guessing sequences.
\begin{mainthm}\label{THMB} For every cardinal $\lambda\ge\beth_\omega$ such that $\square(\lambda^+)$ holds,
 for all stationary subsets $S,T$ of $\lambda^+$,
there exists an $\sq^*$-coherent $C$-sequence $\vec C=\langle C_\delta\mid\delta<\lambda^+\rangle$ such that,
for every club $D\s\lambda^+$, there exists $\delta\in S$ such that $\sup(\nacc(C_\delta)\cap D\cap T)=\delta$.
\end{mainthm}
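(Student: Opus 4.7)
The plan is to combine $\square(\lambda^+)$-coherence with a PCF-theoretic club-guessing (made accessible by the hypothesis $\lambda\ge\beth_\omega$), so as to postprocess a $\sq$-coherent skeleton into a $\sq^*$-coherent $C$-sequence whose $\nacc$-points lie in $T$ and capture any given club at some $\delta\in S$.

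First, I would fix a $\sq$-coherent $C$-sequence $\vec K=\langle K_\delta\mid\delta<\lambda^+\rangle$ coming from $\square(\lambda^+)$; this is the accumulation skeleton that will drive the $\sq^*$-coherence of $\vec C$ at accumulation points. Second, I would fix an auxiliary coherent guessing system $\vec B=\langle B_\delta\mid\delta<\lambda^+\rangle$ of cofinal subsets $B_\delta\s T\cap\delta$ with $\otp(B_\delta)=\cf(\delta)$, enjoying the guessing property that for every club $D\s\lambda^+$ there is some $\delta\in S$ with $\sup(B_\delta\cap D)=\delta$, and the coherence property that $B_\delta\cap\gamma$ differs from $B_\gamma$ by a bounded set for $\gamma$ in a club of $\delta$. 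The existence of such $\vec B$ is precisely where $\lambda\ge\beth_\omega$ enters: Shelah's PCF theorems at this threshold supply good scales on products of regulars cofinal in $\lambda$, and these in turn furnish coherent cofinal systems enabling localised club-guessing targeted at any prescribed stationary set $T$.

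Third, I would define $\vec C$ by grafting $\vec B$ into the nacc-gaps of $\vec K$: for each $\delta<\lambda^+$, let $C_\delta$ be the closure in $\delta$ of $\acc(K_\delta)$ together with a chosen representative from $B_\delta$ in each maximal interval disjoint from $\acc(K_\delta)$ (whenever such a representative exists). Each $C_\delta$ is then a club in $\delta$ with $\acc(C_\delta)=\acc(K_\delta)$ and $\nacc(C_\delta)\s T$ modulo a bounded set. The $\sq^*$-coherence of $\vec C$ follows because for $\gamma\in\acc(C_\delta)=\acc(K_\delta)$ we have $K_\gamma=K_\delta\cap\gamma$ by $\sq$-coherence of $\vec K$, so $\acc(C_\delta)\cap\gamma=\acc(C_\gamma)$ exactly, while the nacc interpolations agree up to a bounded set by the coherence of $\vec B$. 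The club-guessing property of $\vec C$ transfers directly from $\vec B$, since $\nacc(C_\delta)$ contains a cofinal tail of $B_\delta\s T$.

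The main obstacle will be Step~2: constructing the coherent guessing system $\vec B$ that simultaneously localises its guesses inside the prescribed stationary $T$ and carries enough PCF coherence to align with $\vec K$ along accumulation points. This is a genuinely nontrivial combinatorial task, and it is exactly the point at which the hypothesis $\lambda\ge\beth_\omega$ is indispensable, since one needs Shelah's structural theorems on good scales at products of regulars cofinal in $\lambda$ to obtain the requisite coherent cofinal system. Once $\vec B$ is in hand, the grafting of Step~3 and the verification of both $\sq^*$-coherence and the guessing clause should reduce to routine bookkeeping.
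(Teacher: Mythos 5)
Your proposal does not match the paper's argument, and it has concrete gaps. The paper derives Theorem~\ref{THMB} from Corollary~\ref{cor318}: the role of $\lambda\ge\beth_\omega$ is only to guarantee (via~\cite{paper51}) that the ideal $J_\omega[\lambda^+]$ of Definition~\ref{paper51} contains a stationary set $S_0$. From $\square(\lambda^+)$ one first obtains (Fact~\ref{widecg}) a $\square(\lambda^+)$-sequence whose restriction to $S_0$ guesses clubs with $T=\lambda^+$; then a postprocessing function $\Phi_f$ adapted to $S_0\in J_\omega[\lambda^+]$ refines this to a $\square(\lambda^+)$-sequence guessing the prescribed $T$; finally Theorem~\ref{sqstartthm} moves the witnessing set from $S_0$ to the prescribed $S$ by an $\omega_1$-length recursion that explicitly preserves $\sq^*$-coherence. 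At no point are PCF good scales or coherent cofinal systems invoked; all coherence is maintained mechanically because the maps used are postprocessing${}^*$ functions.

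Your Step~2, by contrast, posits an auxiliary $\vec B$ that is simultaneously a club-guessing sequence targeting $T$ and coherent modulo bounded sets; that is essentially the object one is trying to construct, and an appeal to ``Shelah's structural theorems on good scales'' is a placeholder, not a proof. Even granting $\vec B$, the grafting of Step~3 fails: for $\delta\in E^{\lambda^+}_\omega$ with $\otp(K_\delta)=\omega$ (a case that necessarily arises in any $\square(\lambda^+)$-sequence, since the least accumulation point $\delta'$ of any $K_\delta$ with $\acc(K_\delta)\neq\emptyset$ has $K_{\delta'}$ of ordertype $\omega$ by coherence), the set $\acc(K_\delta)$ is empty, there is a single ``gap,'' and choosing one representative per gap produces a singleton rather than a club in $\delta$. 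Finally, the $\sq^*$-coherence check is not routine: you need the chosen representatives in the gaps of $K_\delta\cap\gamma$ to agree modulo a bounded set with those chosen for $\gamma$, for \emph{every} $\gamma\in\acc(K_\delta)$, yet the coherence you stipulated for $\vec B$ gives agreement only on a club of $\gamma<\delta$, and the non-canonical ``one representative per gap'' choice supplies no mechanism to synchronize the two selections. This misalignment is exactly what the paper's postprocessing-function framework is engineered to avoid.
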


The next two results address the problem of partitioning a given club-guessing sequence into $\theta$ many pieces as in Fact~\ref{shelahpartitioning}.\footnote{\label{footnote2}The related problem of using a given club-guessing sequence to produce another club-guessing sequence that admits a partition is also addressed in this paper. See Theorem~\ref{812} and the introduction to Section~\ref{movingsection}.} 

\begin{mainthm}\label{THMC} Suppose that for each $\delta\in E^\kappa_\lambda$, $J_\delta$ is some $\lambda$-complete ideal over $\delta$,
and suppose that $\vec C=\langle C_\delta\mid\delta\in E^\kappa_\lambda\rangle$ is a given $\lambda$-bounded $C$-sequence satisfying that
for every club $D\s\lambda$, there exists $\delta\in E^\kappa_\lambda$ such that
$$\{\beta<\delta\mid \min(C_\delta\setminus(\beta+1))\in D\cap T\}\in J_\delta^+.$$

Any of the following hypotheses  imply that there exists a map $h:\lambda\rightarrow\theta$ 
such that for every club $D\s\lambda$, there exists $\delta\in E^\kappa_\lambda$ such that, for every $\tau<\theta$,
$$\{\beta<\delta\mid h(\otp(C_\delta\cap\beta))=\tau\ \&\ \min(C_\delta\setminus(\beta+1))\in D\cap T\}\in J_\delta^+.$$
\begin{enumerate}
\item $\theta=\lambda=\lambda^{<\lambda}$ and $\lambda$ is a successor cardinal;
\item $\theta=\lambda$, $\diamondsuit(\lambda)$ holds, and $\lambda$ is not Mahlo;
\item $\theta=\lambda$, $\diamondsuit^*(\lambda)$ holds, and each $J_\delta$ is normal;
\item $\theta<\lambda$ is regular, and $\lambda$ is not greatly Mahlo.
\end{enumerate}
\end{mainthm}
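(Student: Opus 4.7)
The plan is to prove each case separately by contradiction: if no such $h$ works, then for every candidate $h: \lambda \to \theta$, there is a club $D_h \s \kappa$ and a ``missed color'' function $\delta \mapsto \tau_{h,\delta}$ witnessing failure, i.e., $\{\beta<\delta \mid h(\otp(C_\delta \cap \beta))=\tau_{h,\delta}\ \&\ \min(C_\delta\setminus(\beta+1))\in D_h\cap T\} \in J_\delta$ for every $\delta \in E^\kappa_\lambda$. In each case the structural hypothesis on $\lambda$ will let us refine $D_h$, for a carefully chosen $h$, into a single club $D^* \s \kappa$ defeating the unpartitioned guessing of $\vec C$ with respect to $T$, contradicting the hypothesis on $\vec C$.

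For case~(i), writing $\lambda=\mu^+$ and using $\lambda^{<\lambda}=\lambda$, I would enumerate in order type $\lambda$ all $<\!\lambda$-sized partial color patterns $p:[\eta,\eta')\to\theta$ and build $h$ by bookkeeping so that every such pattern appears cofinally often among intervals of $\lambda$; any bad $D_h$ is then thinned, via the bookkeeping schedule, to produce $D^*$. For case~(ii), fix a $\diamondsuit(\lambda)$-sequence $\langle A_\alpha \mid \alpha<\lambda\rangle$ and define $h(\alpha)$ by a canonical decoding of $A_\alpha$ into $\theta=\lambda$, using a club of non-inaccessibles from non-Mahloness to normalize the decoding. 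If this $h$ fails, $\diamondsuit$ catches the would-be missed-color function $\delta \mapsto \tau_{h,\delta}$ inside an elementary submodel containing $\vec C, h, D_h$, producing the contradicting $D^*$. Case~(iii) is the cleanest: $\diamondsuit^*(\lambda)$ predicts on a club (not merely stationarily), and the normality of each $J_\delta$ permits a diagonal-intersection argument across the predicted colorings.

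The main obstacle is case~(iv). The hypothesis that $\lambda$ is not greatly Mahlo yields a Mahlo-rank function on the regular cardinals below $\lambda$, equivalently a coherent sequence of nonreflecting stationary witnesses; combined with $\theta<\lambda$ regular (which supplies the pigeonholing room that $\theta=\lambda$ does not need), this furnishes a suitable $h$. The delicate point is that $J_\delta$ is only assumed $\lambda$-complete, not normal, so the refinement from $D_h$ to $D^*$ cannot invoke diagonal intersections as in case~(iii) and must be routed entirely through the non-great-Mahlo structure. Across all four cases, the unifying idea is that the hypothesis on $\lambda$ provides a mechanism to ``code'' arbitrary colorings into a single $h$ in such a way that any bad $D_h$ can be decoded back into a unique $D^*$ that defeats the original, unpartitioned club-guessing; this closes the contradiction and establishes the theorem.
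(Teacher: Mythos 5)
Your proposal has the right general shape (assume failure, collect counterexample clubs $D_h$, diagonalize to a single $D^*$ defeating $\vec C$), but it misses the structural device that makes the paper's proof go through, and several of your case-specific sketches have genuine gaps.

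The paper does not attack Theorem~C case by case at all. It instead factors the entire problem through the Sierpi\'nski-type colouring principles $\onto(J,\theta)$ and $\ubd(J,\theta)$ of Definition~\ref{def1}, proving once (Theorem~\ref{partitioningjkk}, via the ``pump-up'' Fact~\ref{jbduparrow}) that $\onto(J^{\bd}[\lambda],\theta)$ plus $\lambda$-boundedness of $\vec C$ yields $\theta\in\Theta_1(\vec C,T,\sigma,\vec J)$, and that $\ubd(J^{\bd}[\lambda],\theta)$ yields $\theta\in\Theta_2(\vec C,T,\sigma,\vec J)$. The combinatorial content of each of your four hypotheses is then spent entirely on \emph{producing the colouring}, not on the club-guessing argument: (i) $\lambda^{<\lambda}=\lambda$ plus successorhood gives $\stick(\lambda)$, hence $\onto(J^{\bd}[\lambda],\lambda)$; (ii) $\diamondsuit(\lambda)$ on a stationary set avoiding the (nonstationary) regulars gives $\onto(J^{\bd}[\lambda],\lambda)$; (iii) $\diamondsuit^*(\lambda)$ gives $\onto(\ns_\lambda,\lambda)$, and normality of the $J_\delta$ lets one invoke the normal-ideal version, Theorem~\ref{partitioningnormal}; (iv) ``not greatly Mahlo'' gives $\ubd(J^{\bd}[\lambda],\lambda)$, and then regularity of $\theta<\lambda$ upgrades $\ubd(J^{\bd}[\lambda],\lambda)$ to $\onto(J^{\bd}[\lambda],\theta)$ by Fact~\ref{fact517} and Fact~4.15. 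In every case the final step is Theorem~\ref{partitioningjkk}(1) (or its normal variant) applied with $\xi=\lambda$.

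Two concrete problems with the sketch. First, in case~(ii) the ``missed-colour function'' $\delta\mapsto\tau_{h,\delta}$ is defined on $E^\kappa_\lambda$, a subset of $\kappa>\lambda$, so $\diamondsuit(\lambda)$ cannot catch it; there is also a circularity in defining $h$ from $\diamondsuit$ and then asking $\diamondsuit$ to predict an object built from that very $h$. The paper sidesteps both issues by working with a two-dimensional colouring $c:[\lambda]^2\rightarrow\theta$ and stabilizing the first coordinate: one fixes a single $\eta<\lambda$ such that $h:=c(\eta,\cdot)$ works against every club, the counterexample clubs $D_\eta$ being diagonalized over $\eta<\lambda$ (Claim~\ref{4101}), and the passage from $\delta$ to $\lambda$ is via the collapsing map $\psi_\delta(\beta)=\otp(C_\delta\cap\beta)$ of $C_\delta$. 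Second, in case~(iv) the ``Mahlo-rank/nonreflecting stationary witnesses'' picture does not by itself yield a colouring of $[\lambda]^2$ with the required diagonal property against $\lambda$-complete ideals. Your worry about non-normality is legitimate, but it is handled not by any delicate routing through the Mahlo structure but by the ``pump-up'' Fact~\ref{jbduparrow}, which converts a colouring good against $J^{\bd}[\lambda]$ into one that is good against every $\lambda$-complete ideal on an ordinal of cofinality $\lambda$. Finally, note that $\lambda$-boundedness of $\vec C$ is not incidental: it is exactly what lets the collapsing maps $\psi_\delta$ land coherently in $\lambda$, which is why it gives $\Theta_1$ (a global $h$) rather than merely $\Theta_2$ (a sequence $\langle h_\delta\rangle$, which is the content of Theorem~D).
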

\begin{mainthm}\label{THMD}
Under the same setup of the previous theorem,
any of the following hypotheses  imply that there exists a sequence of maps $\langle h_\delta:\lambda\rightarrow\theta\mid \delta\in E^\kappa_\lambda\rangle$ 
such that for every club $D\s\lambda$, there exists $\delta\in E^\kappa_\lambda$ such that, for every $\tau<\theta$,
$$\{\beta<\delta\mid h_\delta(\otp(C_\delta\cap\beta))=\tau\ \&\ \min(C_\delta\setminus(\beta+1))\in D\cap T\}\in J_\delta^+.$$
\begin{enumerate}
\item $\theta=\lambda$ and $\diamondsuit^*(\lambda)$ holds;
\item $\lambda$ is not strongly inaccessible, and $\theta$ is the least to satisfy $2^\theta\ge\lambda$;
\item $\theta^+=\lambda$;
\item $\theta=\omega$ and $\lambda$ is not ineffable.
\end{enumerate}
\end{mainthm}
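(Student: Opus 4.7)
The plan is to handle the four hypotheses of Theorem~\ref{THMD} separately but within a common framework. The key advantage over Theorem~\ref{THMC} is that the map $h_\delta$ may depend on $\delta$, so we are free to exploit the combinatorial hypothesis \emph{locally} at each $\delta \in E^\kappa_\lambda$. First I would reformulate the conclusion via the pushforward: for each $\delta$, let $\pi_\delta \colon \lambda \to C_\delta$ be the order-isomorphism, let $J_\delta^*$ denote the pushforward of $J_\delta \restriction C_\delta$ under $\pi_\delta^{-1}$, and for any club $D \s \kappa$ set
\[
A^D_\delta := \{\alpha < \lambda \mid \min(C_\delta \setminus (\pi_\delta(\alpha) + 1)) \in D \cap T\}.
\]
The base hypothesis says that for every club $D$ some $A^D_\delta$ is $J_\delta^*$-positive, and we must choose $h_\delta \colon \lambda \to \theta$ so that at such a $\delta$, for every $\tau < \theta$, the set $A^D_\delta \cap h_\delta^{-1}\{\tau\}$ is still $J_\delta^*$-positive. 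The overall scheme is by contradiction: if every candidate sequence $\langle h_\delta \rangle$ fails, then for each candidate we extract a club $D$ witnessing the failure, and we use the combinatorial hypothesis to diagonalize and reach a contradiction with the base guessing property.

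For Case~(i), with $\diamondsuit^*(\lambda)$ and $\theta=\lambda$, I would let $\langle \mathcal{A}_\alpha \mid \alpha < \lambda \rangle$ be a $\diamondsuit^*$-sequence and define $h_\delta$ by transfinite recursion, arranging that at each $\alpha$ the value $h_\delta(\alpha)$ serves as a diagonal correction against all sets currently enumerated in $\mathcal{A}_\alpha$. Because $\diamondsuit^*$ guarantees that every $A \s \lambda$ is approximated on a club, in particular the set $A^D_\delta$ will be guessed on a club, so all $\lambda$ colors will be realized on $J_\delta^*$-positive subsets of $A^D_\delta$ provided the $\lambda$-completeness of $J_\delta$ is used to kill the nonstationary error.

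Case~(iii) $\theta^+ = \lambda$ should follow by a direct partition along levels of $\lambda$: since $\lambda$ is the successor of $\theta$, a natural stratification of $\lambda$ into $\theta$ unbounded pieces can be exploited against the $\lambda$-complete ideal. Case~(ii), where $\theta$ is least with $2^\theta \ge \lambda$ and $\lambda$ is not strongly inaccessible, uses the bound $2^{<\theta} < \lambda$ to enumerate (via the $\diamondsuit(\lambda)$ implicit in that setup, cf.\ Theorem~\ref{THMC}) a universal family of partial colorings, each of which must be achieved by some $h_\delta$; the non-inaccessibility guarantees enough diamond-like machinery at $\lambda$. For Case~(iv), with $\theta = \omega$ and $\lambda$ not ineffable, I would fix a sequence $\langle A_\alpha \mid \alpha < \lambda \rangle$ with $A_\alpha \s \alpha$ whose coherence set is nonstationary for every $A \s \lambda$, and define $h_\delta(\alpha)$ from some $\omega$-valued invariant of $A_\alpha$ (e.g., the residue modulo $\omega$ of the least disagreement level); the anti-ineffability ensures that on any $J_\delta^*$-positive $A^D_\delta$ the invariant takes infinitely many values with positive mass.

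The main obstacle, I expect, is Case~(iv): the anti-ineffability hypothesis is by far the weakest of the four, and making the $\omega$-valued invariant interact correctly with a general $\lambda$-complete ideal $J_\delta^*$ -- rather than just the nonstationary ideal -- will require careful bookkeeping, possibly via a detour that first reduces to the normal hull of $J_\delta^*$ and then transfers back. A secondary difficulty is ensuring the constructions in Cases~(i) and~(ii) are uniform enough in $\delta$ that a single club $D$ cannot simultaneously defeat the choice at every $\delta \in E^\kappa_\lambda$, which is what ultimately feeds the contradiction.
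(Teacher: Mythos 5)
Your high-level framing (diagonalize against clubs, by contradiction) is in the right spirit, but there is a genuine gap: the proposal never identifies the uniform mechanism the paper uses, namely the colouring principles $\onto(J^{\bd}[\lambda],\theta)$ and $\ubd(J^{\bd}[\lambda],\theta)$ (and their $\ns_\lambda$ analogues) from \cite{paper47,paper53}, together with the pump-up results (Fact~\ref{jbduparrow} and Theorem~\ref{normaluparrow}) that upgrade a witnessing colouring $c:[\lambda]^2\to\theta$ so that it works simultaneously against \emph{all} $\lambda$-complete ideals $J$ on all ordinals $\delta$ of cofinality $\lambda$, via the collapsing maps $\psi_\delta$. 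All four clauses are then discharged uniformly: Facts~\ref{fact517} and~\ref{normalsampleresults} reduce the combinatorial hypotheses (ii)--(iv) and (i), respectively, to an instance of $\ubd$ or $\onto$; Theorems~\ref{partitioningjkk} and~\ref{partitioningnormal} then fix a single colouring $c$ and a single $\eta<\lambda$ (found by a $\lambda$-fold club diagonalization) and set $h_\delta(\otp(C_\delta\cap\beta)):=c(\eta,\psi_\delta(\beta))$, with a further $\delta$-dependent correction via a set $D^*(\delta)$ in the $\ubd$ case. Crucially, the $\delta$-dependence of $h_\delta$ arises only through $\psi_\delta$ and $D^*(\delta)$; it is not a license to run the combinatorics ``locally at $\delta$'' as you suggest, since $\diamondsuit^*$, anti-ineffability, etc.\ are properties of $\lambda$, not of $\delta$.

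Concretely, your per-case sketches do not go through. In Case~(iii), a ``natural stratification of $\lambda$ into $\theta$ unbounded pieces'' $\{P_\tau\}_{\tau<\theta}$ fails against a general $\lambda$-complete $J_\delta$: one may take $J_\delta$ to concentrate on $\{\beta<\delta\mid\otp(C_\delta\cap\beta)\in P_0\}$ (e.g., the ideal of sets meeting this set in a bounded set), and then every colour other than $\tau=0$ is killed while the base guessing hypothesis can still hold. What actually handles Case~(iii) is that $\lambda=\theta^+$ is not greatly Mahlo, so $\ubd(J^{\bd}[\lambda],\theta)$ holds (Fact~\ref{fact517}(3)), and then Theorem~\ref{partitioningjkk}(3) applies. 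In Case~(ii), $\theta=\log_2(\lambda)$ and $\lambda$ not strongly inaccessible give $\ubd(J^{\bd}[\lambda],\theta)$ by Fact~\ref{fact517}(6); your ``universal family of partial colourings via an implicit diamond'' is not the mechanism and is not developed far enough to evaluate. In Case~(iv), your ``residue mod $\omega$ of the least disagreement level'' invariant would need to be $J_\delta$-positive in every fibre on every $J_\delta^+$ set, which is exactly what the Ulam-type property of the $\ubd(\ns_\lambda,\omega)$ colouring, pumped up through Theorem~\ref{normaluparrow}, provides; you correctly flag this as the hard part but do not close it. In Case~(i), the direct $\diamondsuit^*$ recursion is a workable heuristic but again misses the reduction to $\onto(\ns_\lambda,\lambda)$ (Fact~\ref{normalsampleresults}(1)) that makes the argument ideal-agnostic. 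In short, the missing idea is the factorization through a single ``Ulam-style'' colouring $c$ with the pump-up property, which is what makes the final diagonalization against $\lambda$-many clubs converge on one $\eta$.
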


Our last result fulfills the promise to show that at the level of $\aleph_2$ an affirmative answer to Question~\ref{shelahquestion} follows from the existence of a sequence as in Fact~\ref{shelahs21relative} 
in which $D \cap E^{\aleph_2}_{\aleph_1}$ in Clause~(ii) is replaced by $D \cap E^{\aleph_2}_{\aleph_0}$. 
\begin{mainthm}\label{THME} For every successor cardinal $\lambda$, 
if there exists a $\lambda$-bounded $C$-sequence $\vec C=\langle C_\delta\mid \delta\in E^{\lambda^+}_{\lambda}\rangle$ such that for every club $D\s\lambda^+$,
there is a $\delta\in E^{\lambda^+}_\lambda$ such that $\sup(\nacc(C_\delta)\cap D\cap E^{\lambda^+}_{<\lambda})=\delta$,
then there exists a $\lambda$-bounded $C$-sequence $\vec C=\langle C_\delta\mid \delta\in E^{\lambda^+}_{\lambda}\rangle$ such that for every club $D\s\lambda^+$,
there is a $\delta \in E^\kappa_\lambda$ such that the following set is stationary in $\delta$:
$$\{\beta <\delta \mid \beta_1, \beta_2 \in D\text{ where }\beta_1:=\min(C_\delta \setminus(\beta+1))\ \&\ \beta_2:=\min(C_\delta \setminus (\beta_1+1))\}.$$
\end{mainthm}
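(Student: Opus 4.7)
The plan is to define $\vec C'$ by inserting, just below each nacc-point of cofinality $<\lambda$ in $C_\delta$, one auxiliary point drawn from a fixed auxiliary $C$-sequence $\vec e$ on $E^{\lambda^+}_{<\lambda}$. Two consecutive elements of $C'_\delta$ in $D$ would then be witnessed by an inserted point together with the nacc-point immediately above it, and the $<\lambda$-cofinality hypothesis is precisely what allows these insertions to land in any given club sufficiently often. Concretely, first I fix $\vec{e}=\langle e_\alpha:\alpha\in E^{\lambda^+}_{<\lambda}\rangle$ with each $e_\alpha$ a club in $\alpha$ of ordertype $\cf(\alpha)$; by applying Fact~\ref{shelahcg} separately at each cofinality $\nu<\lambda$ and gluing, I may additionally arrange $\vec e$ to be club-guessing, so that for every club $D\s\lambda^+$ the set $S_D:=\{\alpha\in E^{\lambda^+}_{<\lambda}:e_\alpha\s D\}$ is stationary in $\lambda^+$. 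For each $\delta\in E^{\lambda^+}_\lambda$, enumerate $C_\delta=\langle c^\delta_\xi:\xi<\lambda\rangle$ in increasing order and set
\[
C'_\delta := C_\delta\cup\{d^\delta_\xi:\xi<\lambda\text{ a successor},\ c^\delta_\xi\in E^{\lambda^+}_{<\lambda}\},
\]
where $d^\delta_\xi:=\min(e_{c^\delta_\xi}\setminus(c^\delta_{\xi-1}+1))$. Since at most one new point is inserted between each consecutive pair of $C_\delta$, a routine ordinal-arithmetic check (each limit index $\xi<\lambda$ of $C_\delta$ retains its rank $\xi$ in $C'_\delta$) yields $\otp(C'_\delta)\le\lambda$, so $\vec C'$ is a $\lambda$-bounded $C$-sequence.

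For the verification, the key observation is that whenever $\alpha=c^\delta_\xi\in S_D$ one has $e_{c^\delta_\xi}\s D$ and thus $d^\delta_\xi\in D$; if in addition $\alpha\in D$, then $(d^\delta_\xi,c^\delta_\xi)$ is a pair of two consecutive elements of $C'_\delta$ both in $D$, so the entire interval $[c^\delta_{\xi-1},d^\delta_\xi)$ is absorbed into $\{\beta<\delta:\beta_1,\beta_2\in D\}$. Given a club $D\s\lambda^+$, I would apply the hypothesis to a carefully constructed club $D^*\s\acc(D)$ (built, for instance, as $\acc(D)\cap\acc(S_D\cap\acc(D))$) designed so that for the $\delta$ produced by the hypothesis, stationarily many $\xi<\lambda$ yield $c^\delta_\xi\in S_D\cap D$. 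Via the continuous cofinal enumeration $\lambda\to C'_\delta$, this stationary set of good indices $\xi$ in $\lambda$ translates to a stationary subset of $\{\beta<\delta:\beta_1,\beta_2\in D\}$ in $\delta$, as required.

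The main obstacle is the design of $D^*$ and the argument that $c^\delta_\xi\in S_D$ for stationarily many of the good $\xi$'s. Since $S_D$ is only stationary (and typically not a club), the hypothesis cannot be applied directly to membership in $S_D$, and mere accumulation in $\acc(S_D)$ does not force $\alpha$ itself to be in $S_D$. The resolution most likely proceeds by first invoking a toolbox calibration lemma from this paper to upgrade the ``cofinal'' guessing hypothesis to a ``stationary'' variant, then combining with the $\vec e$-club-guessing through a Fodor-style pressing-down on $\lambda$ to isolate a stationary set of $\xi$'s with $c^\delta_\xi\in S_D$; an alternative route would be to replace the single-point insertion $d^\delta_\xi$ by a more elaborate scheme that locally encodes nearby $S_D$-membership while preserving the ordertype constraint $\otp(C'_\delta)\le\lambda$.
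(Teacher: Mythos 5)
The gap you flag in your final paragraph is genuine and is the crux of the matter; as written, the construction does not go through. Club-guessing for $\vec e$ tells you that $S_D:=\{\alpha\in E^{\lambda^+}_{<\lambda}\mid e_\alpha\s D\}$ is stationary, but the hypothesis on $\vec C$ guesses only \emph{clubs}, and there is no way to feed $S_D$ into it: applying the hypothesis to a club such as $\acc^+(S_D\cap D)$ forces $\gamma:=\min(C_\delta\setminus(\alpha+1))$ into $\acc^+(S_D)$ but not into $S_D$ itself, so there is no reason for $d^\delta_\xi=\min(e_{c^\delta_\xi}\setminus(c^\delta_{\xi-1}+1))$ to land in $D$. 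Neither of the remedies you float closes this. Upgrading ``cofinal'' to ``stationary'' guessing (Lemma~\ref{claim521}, Theorem~\ref{thm42}) is an orthogonal improvement on the local ideal $J_\delta$ and does nothing about the mismatch between the club filter on $\lambda^+$ driving the hypothesis and the merely stationary $S_D$; and there is no regressive function on $\lambda$ here to which Fodor would sensibly apply.

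The paper never asks the inserted points to land in $D$ via club-guessing of $\vec e$: there, $\vec e$ is an \emph{arbitrary} $C$-sequence with $\otp(e_\gamma)=\cf(\gamma)$, and instead of hoping $e_\gamma\s D$ one realigns by applying the drop-down operator $\Phi_D$ from Definition~\ref{dropdefn}. A stabilization argument (iterate $D\mapsto D\cap F^D$ over $\omega$ steps and invoke well-foundedness) produces a single club $D$ so that for every club $E$ there are $J_\delta$-positively many $\alpha$ whose $\gamma=\min(C_\delta\setminus(\alpha+1))$ has $|\Phi_D(e_\gamma)\cap(E\setminus\alpha)|>\sigma$; then a pigeonhole over $z\in[\nu]^{\sigma+1}$, using $\cf(\delta)$-additivity of $J_\delta$, selects which $\sigma+1$ indices of $\Phi_D(e_\gamma)$ to insert below each $\gamma\in\nacc(C_\delta)\cap E^{\lambda^+}_{\le\nu}$. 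This is Theorem~\ref{thm42a}, yielding Corollary~\ref{cor56}, i.e.\ $\cg_\lambda(E^{\lambda^+}_\lambda,\lambda^+,2)$. The passage from $J^{\bd}[\delta]$ to $\ns_\delta$ is then a separate stage, Theorem~\ref{thm42}(1), which exploits that $\lambda$ is a successor. Your proposal conflates these two stages and hinges on a club-guessing mechanism for $\vec e$ that is neither necessary nor, for the reason above, sufficient.
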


\subsection{Organization of this paper}
The aim of Section~\ref{sectionwarmingup} is to give the reader a tour of the basic methods for
proving club-guessing theorems.
The purpose is introductory, and with one exception, all the results we prove are not new,
though, some of them are not widely known. In particular, we give a proof of Fact~\ref{shelahcg}. In Subsection~\ref{sectionpreliminaries} our main definition, Definition~\ref{maindefn}, can be found.

In Section~\ref{sectioncoherent}, our theme is to obtain club-guessing sequences with additional \emph{coherence} properties. This is done by starting with an arbitrary $C$-sequence with some degree of coherence, and then improving it to make it guess clubs as well, all the while preserving the coherence. This allows us to obtain `coherent forms' of known results such as Fact~\ref{shelahs21relative}. 
At the end of the section, we record the results which can be obtained by the same proofs, but without any assumptions of coherence on the initial $C$-sequence. A proof of Theorem~\ref{THMB} can be found in this section. 

In Section~\ref{sectionpartitioning}, we consider partitioned club-guessing. We show how the colouring principles of \cite{paper47,paper53} allow us to not just obtain \emph{partitioned} club guessing, but in fact \emph{partition} club guessing (recall Footnote~\ref{footnote2}). 
Furthermore, using these colouring principles allows us to separate the combinatorial content from the club guessing content in previous results about partitioned club guessing (see \cite[Lemma 3.10]{Sh:572}). 
A proof of Theorems \ref{THMC} and \ref{THMD} can be found here.

In Section~\ref{sectionsigma}, we turn to the problem of guessing many consecutive non-accumulation points as in the discussion surrounding Question~\ref{shelahquestion}. We show how a sequence guessing clubs relative to points of small cofinality can be modified for this purpose. A proof of Theorem~\ref{THME} can be found here.

In the last section, Section~\ref{movingsection}, our focus is on improving the quality of the guessing calibrated against the ideal. Mainly, our focus is moving from the unbounded ideal to the non-stationary as in the move from Fact~\ref{shelahs21relative} to Fact~\ref{shelahs21ns}. A proof of Theorem~\ref{THMA} can be found here. Similar ideas also allow us to improve some results from Section~\ref{sectionpartitioning}.

	\subsection{Notation and conventions}\label{notation} We have already listed some conventions in the beginning of Subsection~\ref{mainconventions}. Here, we list some more. 
	$\log_\chi(\lambda)$ stands for the least cardinal $\theta\le\lambda$ such that $\chi^\theta\ge\lambda$.
	For sets of ordinals $A,B$, we denote $A\circledast B:=\{(\alpha,\beta)\in A\times B\mid \alpha<\beta\}$
	and we identify $[B]^2$ with $B\circledast B$.
	For $\theta>2$, $[\kappa]^\theta$ simply stands for the collection of all subsets of $\kappa$ of size $\theta$.

	Let $E^\kappa_\theta:=\{\alpha < \kappa \mid \cf(\alpha) = \theta\}$,
	and define $E^\kappa_{\le \theta}$, $E^\kappa_{<\theta}$, $E^\kappa_{\ge \theta}$, $E^\kappa_{>\theta}$,  $E^\kappa_{\neq\theta}$ analogously.
For a stationary $S\s \kappa$, we write
$\Tr(S):= \{\alpha \in E^\kappa_{>\omega}\mid  S\cap \alpha\text{ is stationary in }\alpha\}$.

For a set of ordinals $A$, we write $\ssup(A) := \sup\{\alpha + 1 \mid \alpha \in A\}$, $\acc^+(A) := \{\alpha < \ssup(A) \mid \sup(A \cap \alpha) = \alpha > 0\}$,
$\acc(A) := A \cap \acc^+(A)$, $\nacc(A) := A \setminus \acc(A)$, and $\cl(A):= A \cup \acc^+(A)$.
A function $f:A\rightarrow\ord$ is \emph{regressive}  iff $f(\alpha)<\alpha$ for all nonzero $\alpha\in A$.
A function $f:[A]^2\rightarrow\ord$ is \emph{upper-regressive} iff $f(\alpha,\beta)<\beta$ for every $(\alpha,\beta)\in[A]^2$.

\section{Warming up}\label{sectionwarmingup}
In this introductory section we have two tasks. The first is to introduce our main definition, Definition~\ref{maindefn}, and the second is to familiarise the reader with the basic idea of all club-guessing proofs, the method of `collecting counterexamples'.\footnote{As Shelah puts it in \cite{Sh:365}: ``The moral is quite old-fashioned: if you work hard and continue to try enough times, correcting and recorrecting yourself you will eventually succeed."} 
The former is achieved in Subsection~\ref{sectionpreliminaries}.

For the latter purpose, we provide in Subsection~\ref{sectiontour} a proof of some known club-guessing results including the most famous, Fact~\ref{shelahcg}. In the process we hope to make the reader comfortable with Definition~\ref{maindefn}. However, we stress that the full generality of Definition~\ref{maindefn} is not needed in Subsection~\ref{sectiontour}.
\subsection{Preliminaries}\label{sectionpreliminaries}
The aim of our main definition, Definition~\ref{maindefn}, is to provide a language that is able to differentiate between
all of the club-guessing principles we have met in Subsection~\ref{subsectionsurvey}.
While there are numerous parameters in the definition, we hope to have convinced the reader with the examples from Section~\ref{subsectionsurvey} that all of them have been found fruitful from the point of view of applications.
\begin{defn}[\cite{paper22}]
For a set of ordinals $C$, write $$\suc_\sigma(C) := \{ C(j+1)\mid j<\sigma\ \&\allowbreak\ j+1<\otp(C)\}.$$
\end{defn}
In particular, for all $\gamma\in C$ such that $\sup(\otp(C\setminus\gamma))\ge\sigma$,
$\suc_\sigma(C\setminus\gamma)$ consists of the next $\sigma$-many successor elements of $C$ above $\gamma$.

\medskip

Throughout the paper, we shall be working with some sequence
$\vec J=\langle J_\delta\mid \delta\in S\rangle$ such that,
for each $\delta\in S$, $J_\delta$ is a $\cf(\delta)$-additive proper ideal over $\delta$ extending $J^{\bd}[\delta]:=\{ B\s\delta\mid \sup(B)<\delta\}$.

\begin{defn}[Main definition]\label{maindefn}
$\cg_\xi(S, T, \sigma,\vec J)$ asserts the existence of
a $\xi$-bounded $C$-sequence, $\vec C=\langle C_\delta\mid\delta \in S\rangle$ such that,
for every club $D\s\kappa$ there is $\delta \in S$ such that
    \[\{\beta < \delta \mid \suc_\sigma(C_\delta \setminus \beta) \subseteq D\cap T\}\in J_\delta^+.\]
\end{defn}

\begin{conv}\label{convention-omissions}  We will often simplify the notation by omitting some parameters,
in which case, these parameters take their weakest possible values. Specifically,
if we omit $\xi$, then we mean that $\xi=\kappa$;
if we omit $\vec J$, then we mean that $\vec J=\langle J^{\bd}[\delta]\mid \delta\in S\rangle$,
if we omit $\vec J$ and $\sigma$, then we mean that $\sigma=1$ and  $\vec J=\langle J^{\bd}[\delta]\mid \delta\in S\rangle$.
\end{conv}

The following collects some trivial monotonicity results on $\cg$.
\begin{prop} \label{monotonicty}
Suppose that $\cg_\xi(S, T, \sigma,\allowbreak \langle J_\delta \mid \delta \in S\rangle)$ holds. Then $\cg_{\xi'}(S', T', \sigma',\allowbreak \langle J'_\delta \mid \delta \in S'\rangle)$ also holds assuming all of the following conditions are satisfied:
\begin{enumerate}
    \item $\xi \leq \xi'$;
    \item $S \subseteq S'$;
    \item $T \subseteq T'$;
    \item $\sigma \geq \sigma'$;
    \item for each $\delta \in S'$, $J'_\delta \supseteq J_\delta$.
\end{enumerate}
In fact, in this case, if $\vec{C}$ witnesses that $\cg_\xi(S, T, \sigma,\langle J_\delta \mid \delta \in S\rangle)$, then it is also a witness for $\cg_{\xi'}(S', T', \sigma',\langle J'_\delta \mid \delta \in S'\rangle)$. 
\end{prop}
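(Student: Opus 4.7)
The proof is a direct verification: essentially, $\vec C$ itself (extended trivially if needed) witnesses the weaker principle, because every one of the five weakenings either enlarges the target set appearing in the guessing clause or weakens the positivity demand placed on it.

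Concretely, I would define $\vec C' = \langle C'_\delta \mid \delta \in S' \rangle$ by setting $C'_\delta := C_\delta$ for $\delta \in S$ (using $S \subseteq S'$ from condition~(2)), and for each $\delta \in S' \setminus S$ choosing any club in $\delta$ of order type at most $\xi'$; condition~(1) then guarantees $\vec C'$ is $\xi'$-bounded. Given an arbitrary club $D \subseteq \kappa$, the guessing property of $\vec C$ yields some $\delta \in S \subseteq S'$ with
\[A := \{\beta < \delta \mid \suc_\sigma(C_\delta \setminus \beta) \subseteq D \cap T\} \in J_\delta^+.\]

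The heart of the verification is the pointwise inclusion
\[A \subseteq A' := \{\beta < \delta \mid \suc_{\sigma'}(C'_\delta \setminus \beta) \subseteq D \cap T'\}.\]
For any $\beta \in A$ one uses three trivial facts: that $C'_\delta = C_\delta$; that $\suc_{\sigma'}(C_\delta \setminus \beta) \subseteq \suc_\sigma(C_\delta \setminus \beta)$ by the obvious monotonicity of $\suc$ under $\sigma' \le \sigma$ (condition~(4)); and that $D \cap T \subseteq D \cap T'$ by condition~(3). Hence $A \subseteq A'$, so since $A \in J_\delta^+$ and $J_\delta$ is an ideal we also have $A' \in J_\delta^+$, and then condition~(5) promotes this to $A' \in (J'_\delta)^+$, which is exactly what is needed.

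There is no genuine obstacle here — every step is an immediate monotonicity observation, and the closing sentence of the proposition (that the same $\vec C$ suffices as witness) drops out of the construction. The only point worth a moment's care is keeping the five monotonicities straight, in particular tracking the direction in condition~(5) so that $J_\delta$-positivity of $A'$ transfers to $J'_\delta$-positivity.
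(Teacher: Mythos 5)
Your overall strategy---extend $\vec C$ to a $C$-sequence $\vec{C'}$ over $S'$ by filling in arbitrary clubs of appropriate ordertype, then argue a pointwise inclusion $A \subseteq A'$---is the natural one, and conditions (i)--(iv) are handled correctly. However, the final step is wrong as you have stated it: if $J_\delta \subseteq J'_\delta$, then the positive sets satisfy $(J'_\delta)^+ \subseteq J_\delta^+$, and so $A' \in J_\delta^+$ does \emph{not} yield $A' \in (J'_\delta)^+$. A set can escape the smaller ideal $J_\delta$ while nonetheless landing in the larger ideal $J'_\delta$. The implication you need, namely $A' \in J_\delta^+ \Rightarrow A' \in (J'_\delta)^+$, is equivalent to $J'_\delta \subseteq J_\delta$ on the $\delta$'s in play.

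The likely culprit is a typo in the paper's condition (v), which should read $J'_\delta \subseteq J_\delta$. This reading is forced by the surrounding discussion: the text immediately after Proposition~\ref{monotonicty} lists ``enlarging the ideals'' alongside ``reducing $\xi$,'' ``shrinking $S$ or $T$,'' and ``increasing $\sigma$'' as ways of \emph{strengthening} a club-guessing instance, and the trivial monotonicity proposition must go in the opposite, weakening direction for each parameter; for $\vec J$ that means passing to smaller ideals. You yourself flagged condition (v) as the one ``worth a moment's care'' and promised to track the direction, but then asserted the needed implication without checking it. A blind proof should have caught that, with the inclusion as literally written, the step fails and the proposition cannot be proved; the honest move is to correct the hypothesis to $J'_\delta \subseteq J_\delta$ and then finish exactly as you did.
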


\subsection{A tour of club-guessing}\label{sectiontour}

Most of the results in this article will have the following format: we shall assume the existence of a $C$-sequence $\vec C$ witnessing a certain form of club guessing, 
and then we shall improve or modify this $\vec C$ so that it satisfies another form of club guessing, or such that it has some other properties. 
For example, Proposition~\ref{monotonicty} suggests to us that starting from $\vec C$ witnessing $\cg_\xi(S, T, \sigma,\langle J_\delta \mid \delta \in S\rangle)$, 
we may look into the possibility of reducing $\xi$, or shrinking $S$ or $T$, or increasing $\sigma$, 
or enlarging the ideals in $\langle J_\delta \mid \delta \in S\rangle$, all the while preserving the guessing properties of $\vec C$. 
We shall be considering these problems and other similar ones in this article.

What is important in all this is our ability to modify a given $C$-sequence to satisfy other, or additional, properties. In this section we shall present some of the standard techniques that one uses to make such modifications, 
and we do this by giving a proof of Fact~\ref{shelahcg} (Corollary~\ref{corfullcg} below). As our purpose is introductory, we avoid giving the most direct proofs and focus instead upon the gradual process of improving the guessing.

We then move on to proving  a less-known theorem of Shelah that $\cg(S,\kappa)$ holds for every stationary subset $S$ of every regular cardinal $\kappa\ge\aleph_2$ (Theorem~\ref{thm212} below).

We finish by giving in Proposition~\ref{prop217} an example of how a prediction principle weaker than $\diamondsuit$ consisting of a matrix of sets can be modified to obtain a club-guessing principle.

\medskip

We begin by considering a very weak variation of $\cg(S, T, \sigma,\vec J)$.
\begin{defn} $\cg_\xi(S,T,-)$ asserts
the existence of a $\xi$-bounded $C$-sequence, $\langle C_\delta\mid\delta \in S\rangle$ such that,
for every club $D\s\kappa$ there is a $\delta \in S$ with
$\sup(C_\delta \cap D\cap T)=\delta$.
\end{defn}

The following might be obvious, but since we have just begun, we give a detailed proof.
\begin{prop}\label{prop22} For every triple of regular cardinals $\mu<\lambda<\kappa$, for every stationary  $S\s E^\kappa_\lambda$,
any $\lambda$-bounded $C$-sequence over $S$ witnesses $\cg_\lambda(S,E^\kappa_\mu,-)$.
\end{prop}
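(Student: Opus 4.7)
The plan is to verify directly that any such $\vec C=\langle C_\delta\mid\delta\in S\rangle$ does the job. Given an arbitrary club $D\s\kappa$, I need to produce $\delta\in S$ satisfying $\sup(C_\delta\cap D\cap E^\kappa_\mu)=\delta$. Since $\acc(D)=D\cap\acc^+(D)$ is a club in $\kappa$ and $S$ is stationary, I may pick $\delta\in S\cap\acc(D)$, and I claim any such $\delta$ works.

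The reason is a cofinality mismatch between $\lambda$ and $\mu$. Because $\delta\in\acc(D)$ and $D$ is closed, $D\cap\delta$ is club in $\delta$. Since $\delta\in S\s E^\kappa_\lambda$ and $\mu<\lambda$, the set $E^\kappa_\mu\cap\delta$ is stationary in $\delta$, whence $D\cap E^\kappa_\mu\cap\delta$ is stationary in $\delta$ as the intersection of a stationary subset of $\delta$ with a club in $\delta$. Finally, $C_\delta$ is itself club in $\delta$ (it is closed by definition of $C$-sequence, and unbounded in $\delta$ because $\sup(C_\delta)=\delta$; the $\lambda$-boundedness hypothesis merely pins the ordertype at $\lambda$, which is what $\cg_\lambda$ records). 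Intersecting the club $C_\delta$ with the stationary set $D\cap E^\kappa_\mu\cap\delta$ yields a stationary, hence cofinal, subset of $\delta$, as required.

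There is no real obstacle here; the only point worth flagging is the step where I pass from ``$D\cap\delta$ is club in $\delta$'' to ``$D\cap E^\kappa_\mu\cap\delta$ is stationary in $\delta$'', since this is where the hypothesis $\mu<\lambda$ is used in the guise of $\cf(\delta)=\lambda>\mu$.
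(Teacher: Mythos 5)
Your proof is correct and follows essentially the same route as the paper's: both pick $\delta\in S\cap\acc(D)$ and then exploit the cofinality gap $\mu<\lambda=\cf(\delta)$ to find cofinally many points of $C_\delta\cap D$ of cofinality $\mu$. The only difference is cosmetic --- you invoke stationarity of $E^\kappa_\mu\cap\delta$ in $\delta$ and intersect, whereas the paper directly enumerates a club of ordertype $\lambda$ inside $C_\delta\cap D$ and reads off the points indexed by $E^\lambda_\mu$; the underlying computation is the same.
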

\begin{proof}
Let $S \subseteq E^\kappa_\lambda$ be stationary, and let $\langle C_\delta \mid \delta \in S \rangle$ be a $\lambda$-bounded $C$-sequence. 
Given a club $D$ in $\kappa$, pick $\delta \in S \cap \acc(D)$. Since $\cf(\delta) = \lambda$ which is a regular uncountable cardinal, $D\cap\delta$ is club in $\delta$.
Pick a closed and unbounded subset $B_\delta$ of $\delta$ of ordertype $\lambda$ such that $B_\delta \subseteq D$. 
Since $C_\delta$ is also club in $\delta$, and $\delta$ has uncountable cofinality, $B_\delta \cap C_\delta$ is as well club in $\delta$.
Let $\langle a_\delta(i) \mid i < \lambda\rangle$ be the increasing enumeration of $C_\delta \cap D_\delta$. As this is an increasing and continuous sequence, it is clear then that for every $j \in \acc(\lambda)$, $\cf(a_\delta(j)) = \cf(j)$. Since $\mu < \lambda$, the set $E^\lambda_\mu$ is cofinal in $\lambda$, and so for every $j \in E^\lambda_\mu$, $a_\delta(j) \in E^\kappa_\mu$. 
It follows that $\{a_\delta(j) \mid j\in E^\lambda_\mu\}$ is a subset of $C_\delta \cap D \cap E^\lambda_\mu$ which is unbounded in $\delta$.
\end{proof}

Our goal now is to show that if $\lambda^+<\kappa$, then $\cg_\lambda(S,T,-)$ implies
that there is a $C$-sequence $\langle C_\delta\mid\delta\in S\rangle$ with the property that for every club $D\s\kappa$, the set $\{\delta\in S\mid C_\delta\s D\}$ is stationary. 
In doing so, the challenge lies in improving instances of ``$\sup(C_\delta\cap D)=\delta$'' into instances of ``$C_\delta\s D$''.
A natural approach is to shrink each club $C_\delta$ into a smaller club in $\delta$, say $\Phi(C_\delta)$. In an ideal scenario, a single such act of shrinking will be enough and we will have our result. If the scenario is not so ideal, we would hope that $\Phi(C_\delta)$ is at least `better' than $C_\delta$, or `takes care of the requirements imposed by more clubs' than $C_\delta$ (we will be more precise momentarily). A common strategy in club guessing is to assume that there are no such ideal scenarios, and then in this case perform this shrinking process (equivalently, improvement process) iteratively for long enough that a contradiction results.

We return to precision. We shall need the following operator in what follows for purposes we have already hinted at.
\begin{defn}\label{firstoperatordefn}
For a subset $B\s\kappa$, we define the operator $\Phi^B:\mathcal P(\kappa)\rightarrow\mathcal P(\kappa)$ by letting for all $x \subseteq \kappa$, 
    \[\Phi^B(x):=\begin{cases}
\cl(x\cap B),&\text{if } \sup(x\cap B)=\sup(x);\\
x\setminus\sup(x\cap B),&\text{otherwise}.
\end{cases}\]
\end{defn}
We list a few useful properties of $\Phi^B$:
\begin{enumerate}
    \item $\sup(\Phi^B(x))=\sup(x)$;
    \item If $\sup(x\cap B)=\sup(x)$, then $\nacc(\Phi^B(x))$ is a cofinal subset of $x \cap B$;
    \item If $x$ is a closed subset of $\sup(x)$, then $\Phi^B(x) \subseteq x$ and $\otp(\Phi^B(x)) \leq \otp(x)$.
\end{enumerate}

\begin{lemma} \label{weaktotail}
Suppose that $\vec C=\langle C_\delta\mid \delta\in S\rangle$ witnesses $\cg_\lambda(S,T,-)$,
where $S$ and $T$ are stationary subsets of $\kappa$.
If $ \lambda^+<\kappa$,
then there exists a club $D\s\kappa$ such that $\langle \Phi^{D\cap T}(C_\delta)\mid \delta\in S\rangle$  witnesses $\cg_\lambda(S,T,\kappa)$.
\end{lemma}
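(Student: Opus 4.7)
The plan is to prove Lemma~\ref{weaktotail} by contradiction via the standard ``collect counterexamples'' scheme, ultimately producing an injection from $\lambda^+$ into $C_\delta$ for a single carefully chosen $\delta\in S$, which contradicts the $\lambda$-boundedness of $\vec C$.

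First I would unpack the target condition. Since $\nacc(\Phi^{D\cap T}(C_\delta))=\nacc(C_\delta\cap D\cap T)$, and this set is automatically a subset of $T$, unfolding the definition of $\suc_\kappa$ (and using $|C_\delta|\le\lambda<\kappa$) shows that ``$\{\beta<\delta\mid \suc_\kappa(\Phi^{D\cap T}(C_\delta)\setminus\beta)\s D'\cap T\}$ is unbounded in $\delta$'' is equivalent to ``$\nacc(C_\delta\cap D\cap T)\setminus D'$ is bounded in $\delta$'' (the $T$-factor being automatic, since the relevant elements already lie in $T$). Consequently, the assumption that \emph{no} club $D$ witnesses the conclusion translates into: for every club $D\s\kappa$ there is a club $E(D)\s\kappa$ such that, for every $\delta\in S$, the set $\nacc(C_\delta\cap D\cap T)\setminus E(D)$ is unbounded in $\delta$.

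Using this operator $D\mapsto E(D)$, I would recursively construct a decreasing sequence of clubs $\langle D_i\mid i\le\lambda^+\rangle$ by $D_0:=\kappa$, $D_{i+1}:=D_i\cap E(D_i)$, and $D_i:=\bigcap_{j<i}D_j$ at limit $i$. Each $D_i$ remains a club in $\kappa$ because $\lambda^+<\kappa$. Setting $D:=D_{\lambda^+}$ and applying the hypothesis $\cg_\lambda(S,T,-)$ to this $D$, I can fix a single $\delta\in S$ with $\sup(C_\delta\cap D\cap T)=\delta$; since $D\s D_i$, also $\sup(C_\delta\cap D_i\cap T)=\delta$ for every $i<\lambda^+$.

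Finally, at this fixed $\delta$, the failure associated with each $D_i$ guarantees that $\nacc(C_\delta\cap D_i\cap T)\setminus E(D_i)$ is unbounded in $\delta$; let $\gamma_i$ be any element of this set. Then $\gamma_i\in D_i$ while $\gamma_i\notin E(D_i)$, so $\gamma_i\notin D_{i+1}$, whence $\gamma_i\notin D_j$ for every $j>i$. Since $\gamma_j\in D_j$, this forces $\gamma_i\ne\gamma_j$, so $i\mapsto\gamma_i$ is an injection from $\lambda^+$ into $C_\delta$, contradicting $|C_\delta|\le\lambda$.

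The step I expect to require the most care is the initial reformulation: the target $\cg_\lambda(S,T,\kappa)$ is phrased in terms of $\suc_\kappa$-tails of the closure $\Phi^{D\cap T}(C_\delta)$, which \emph{prima facie} seems to demand that a whole tail of that closure (including its accumulation points) lie inside~$T$; verifying that the condition actually reduces to the cleaner statement about $\nacc(C_\delta\cap D\cap T)$ alone, with no residual $T$-constraint, is exactly what makes the injection argument go through.
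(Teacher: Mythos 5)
Your proof is correct and follows the paper's argument essentially step-for-step: the same decreasing chain of clubs $\langle D_i \mid i \le \lambda^+\rangle$ is built, the same $\delta$ with $\sup(C_\delta\cap D_{\lambda^+}\cap T)=\delta$ is chosen, and the closing pigeonhole is identical in substance (your injection $i\mapsto\gamma_i$ into $C_\delta$ is just a repackaging of the paper's stabilization of the $\s$-decreasing chain $\langle C_\delta\cap D_i\cap T\mid i<\lambda^+\rangle$). One small caution: the identity $\nacc(\Phi^{D\cap T}(C_\delta))=\nacc(C_\delta\cap D\cap T)$ on which your ``unpacking'' rests only holds in the case $\sup(C_\delta\cap D\cap T)=\delta$ (in the other case $\Phi^{D\cap T}(C_\delta)=C_\delta\setminus\sup(C_\delta\cap D\cap T)$ and the two $\nacc$-sets are genuinely different), so your global restatement of the negated conclusion is not literally an equivalence for all $\delta\in S$; this is harmless because you only ever invoke it at the chosen $\delta$, where the supremum condition is inherited for every $D_i$, but it would be cleaner to keep the negation in terms of $\nacc(\Phi^{D\cap T}(C_\delta))$ as the paper does and pass to $\nacc(C_\delta\cap D_i\cap T)$ only after $\delta$ is fixed.
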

\begin{proof}  Without loss of generality, $S\s\acc(\kappa)$.
Suppose that the conclusion does not hold. In this case, for every club $D\subseteq \kappa$, there is a club $F^D \subseteq \kappa$ such that, for every $\delta \in S$,
$$\sup(\nacc(\Phi^{D\cap T}(C_\delta))\setminus (F^D\cap T))=\delta.$$
Here, we use that 
$$\otp(\nacc(\Phi^{D\cap T}(C_\delta))) \leq \otp(\Phi^{D\cap T}(C_\delta)) \leq \otp(C_\delta) \leq \lambda<\kappa.$$
We construct now a $\subseteq$-decreasing sequence $\langle D_i \mid i\leq \lambda^+ \rangle$ of clubs in $\kappa$ as follows:
\begin{enumerate}
    \item $D_0 := \kappa$;
    \item $D_{i+1} := D_i \cap F^{D_i}$;
    \item for $i\in\acc(\lambda^++1)$, $D_i := \bigcap_{i'< i} D_{i'}$.
\end{enumerate}
Since $\lambda^+ < \kappa$, all these are club subsets of $\kappa$.
As $\vec C$ witnesses $\cg_\lambda(S,T,-)$, let us now pick $\delta\in S$ with $\sup(C_\delta\cap D_{\lambda^+}\cap T)=\delta$.
In particular, for all $i<\lambda^+$, $\sup(C_\delta\cap D_i\cap T)=\sup(C_\delta)$,
so that $\Phi^{D_i\cap T}(C_\delta)=\cl(C_\delta\cap D_i\cap T)$. Now, as
$\langle D_i \mid i<\lambda^+ \rangle$ is $\subseteq$-decreasing,
so is  $\langle C_\delta\cap D_i\cap T \mid i<\lambda^+ \rangle$.
But $\otp(C_\delta)\le\lambda$, so we may find some $i<\lambda^+$ such that $C_\delta\cap D_i\cap T=C_\delta\cap D_{i+1}\cap T$.

By the choice of $F^{D_i}$, and since $D_{i+1} \s F^{D_i}$, we have that 
$$\sup(\nacc(\Phi^{D_i\cap T}(C_\delta))\setminus (D_{i+1}\cap T))=\delta.$$
However, $\nacc(\Phi^{D_i\cap T}(C_\delta))\s C_\delta \cap D_i\cap T= C_\delta \cap D_{i+1}\cap T$, which is a contradiction.
\end{proof}

So $\cg_\lambda(S,T,-)$ implies $\cg_\lambda(S,\allowbreak T,\kappa)$,
provided that $ \lambda^+<\kappa$. 
Likewise, $\cg(S,\allowbreak T,\kappa)$ holds whenever there is a witness $\vec C=\langle C_\delta\mid\delta\in S\rangle$ to $\cg(S,T)$
such that $|C_\delta|<\delta$ for club many $\delta\in S$.

The instance $\cg(S,T,\sigma)$ with $\sigma=\kappa$ is sometimes dubbed \emph{tail club guessing}.
The next lemma derives a stronger form of guessing from tail club guessing.
\begin{lemma} \label{taillemma} $\cg(S,T,\kappa)$ holds iff there is a $C$-sequence $\langle C_\delta\mid\delta\in S\rangle$ such that:
\begin{enumerate}
\item for every $\delta\in S$, $\otp(C_\delta)=\cf(\delta)$;
\item for every club $D\s\kappa$, the set $\{ \delta\in S\mid \nacc(C_\delta)\s D\cap T\}$ is stationary.
\end{enumerate}
\end{lemma}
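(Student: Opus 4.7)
For the backward direction, suppose $\vec C$ satisfies~(i) and~(ii). Given any club $D\s\kappa$, pick any $\delta$ in the nonempty stationary set of~(ii). For every $\beta<\delta$ and every $j+1<\otp(C_\delta\setminus\beta)$, set $y:=(C_\delta\setminus\beta)(j)$ and $x:=(C_\delta\setminus\beta)(j+1)$. Since $\sup(C_\delta\cap\beta)\le\beta\le y$ and $\sup((C_\delta\setminus\beta)\cap x)=y$, we obtain $\sup(C_\delta\cap x)=y<x$, so $x\in\nacc(C_\delta)\s D\cap T$. Hence $\suc_\kappa(C_\delta\setminus\beta)\s D\cap T$ for every $\beta<\delta$, making the set in the definition of $\cg(S,T,\kappa)$ all of $\delta$, which lies in $J^{\bd}[\delta]^+$.

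For the forward direction, starting from $\vec C=\langle C_\delta\mid\delta\in S\rangle$ witnessing $\cg(S,T,\kappa)$, I first thin each $C_\delta$ to $C_\delta^1\s C_\delta$ of ordertype $\cf(\delta)$ with $\nacc(C_\delta^1)\s\nacc(C_\delta)$: recursively build a continuous cofinal $\langle c_i\mid i<\cf(\delta)\rangle$ in $\delta$, drawing $c_0$ and each $c_{i+1}$ from the cofinal set $\nacc(C_\delta)$ and setting $c_i=\sup_{i'<i}c_{i'}$ at limits. Tail club-guessing is preserved, since $\suc_\kappa(C_\delta^1\setminus\beta)\s\nacc(C_\delta^1)\cap(\beta,\delta)\s\nacc(C_\delta)\cap(\beta,\delta)$. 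Next, I upgrade ``exists $\delta$'' to ``stationarily many $\delta$'' via the closure trick of Section~\ref{sectiontour}: for any club $F$, apply tail-guessing to $D\cap F$; the witness $\delta$ is a limit of the cofinal set $D\cap F\cap T$, hence $\delta\in F$. Fodor's lemma applied to the regressive truncation function then furnishes, for each club $D$, a stationary $Y_D\s S$ and a constant $\eta_D<\kappa$ with $\nacc(C_\delta^1)\setminus\eta_D\s D\cap T$ for every $\delta\in Y_D$.

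For the main step I must upgrade the tail inclusion to full $\nacc(C_\delta^1)\s D\cap T$ on a stationary set. If this fails for some $D$, some club $F$ yields $\nacc(C_\delta^1)\not\s D\cap T$ for every $\delta\in F\cap Y_D$; pick $\gamma_\delta\in\nacc(C_\delta^1)\cap\eta_D\setminus(D\cap T)$, and by Fodor extract a constant $\gamma^*<\eta_D$ attained on a stationary $Y'\s F\cap Y_D$. Since $\gamma^*\in\nacc(C_\delta^1)$ for $\delta\in Y'$, the modification $C_\delta^2:=C_\delta^1\setminus\{\gamma^*\}$ preserves closedness (removing a non-accumulation point creates no new limits), preserves ordertype $\cf(\delta)$, and satisfies $\nacc(C_\delta^2)\setminus\eta_D=\nacc(C_\delta^1)\setminus\eta_D\s D\cap T$, while strictly reducing $|\nacc(C_\delta^2)\cap\eta_D\setminus(D\cap T)|$ on $Y'$. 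Iterating this $\Phi$-type correction in the spirit of Definition~\ref{firstoperatordefn}, at most $|\eta_D|^+$ times per counterexample club $D$, with intersected clubs at limit stages as in the proof of Lemma~\ref{weaktotail}, the process terminates and produces the required $\vec C$ satisfying~(i) and~(ii). The principal obstacle is coordinating the corrections uniformly across all counterexample clubs simultaneously and across $\delta$'s of varying cofinality; the standard remedy is to partition $S$ by cofinality into pieces $S\cap E^\kappa_\lambda$ for each regular $\lambda<\kappa$ and run the $\lambda^+$-step iteration piece by piece, using $\lambda^+<\kappa$ to keep the intersected counterexample clubs club.
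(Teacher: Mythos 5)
Your backward direction is correct, and so are the preliminary steps of the forward direction: thinning each $C_\delta$ to a club $C_\delta^1$ of ordertype $\cf(\delta)$ with $\nacc(C_\delta^1)\s\nacc(C_\delta)$, obtaining stationarily many witnesses by intersecting with clubs, and applying Fodor to extract a constant threshold $\eta_D$ per club $D$. The main step, however, has a genuine gap. The transfinite point-removal iteration has no termination argument: the stationary sets $Y'_\alpha$ delivered by Fodor at successive stages need not intersect at all, so for no fixed $\delta$ is the quantity $|\nacc(C_\delta^\alpha)\cap\eta_D\setminus(D\cap T)|$ guaranteed to decrease cumulatively, and the claimed bound of $|\eta_D|^+$ stages per club is unsupported. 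Nor is there any mechanism to stabilize across the $2^\kappa$-many potential counterexample clubs. Finally, the proposed cofinality-partitioning remedy needs $\lambda^+<\kappa$ for each regular $\lambda$ with $S\cap E^\kappa_\lambda$ stationary, which breaks down precisely when $\kappa=\lambda^+$ and $S\s E^\kappa_\lambda$ --- a case the lemma must cover and the paper's proof does.

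The missing idea is that a \emph{single} truncation threshold works for every club $D$ simultaneously, and this is shown by a short self-referential diagonal argument rather than by transfinite removal. After the stationary-witnesses step, argue: there is an $\eta<\kappa$ such that for every club $D$, the set $\{\delta\in S\mid\nacc(C_\delta^1)\setminus\eta\s D\cap T\}$ is stationary. Otherwise, for each $\eta<\kappa$ fix a club $D_\eta$ disjoint from $\{\delta\in S\mid\nacc(C_\delta^1)\setminus\eta\s D_\eta\cap T\}$; letting $D':=\diagonal_{\eta<\kappa}D_\eta$, pick $\delta\in S\cap D'$ and $\epsilon<\delta$ with $\nacc(C_\delta^1)\setminus\epsilon\s D'\cap T$. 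Since every element of $\nacc(C_\delta^1)\setminus\epsilon$ lies above $\epsilon$ and $D'\cap(\epsilon,\kappa)\s D_\epsilon$, we get $\nacc(C_\delta^1)\setminus\epsilon\s D_\epsilon\cap T$; but $\delta\in D'$ and $\epsilon<\delta$ force $\delta\in D_\epsilon$, a contradiction. With such an $\eta$ in hand, re-thin to a club $C_\delta^\bullet$ of ordertype $\cf(\delta)$ with $\nacc(C_\delta^\bullet)\s\nacc(C_\delta^1)\setminus(\eta+1)$ and you are done. This is essentially the Claim inside the paper's proof, where the truncation operator $\Phi_i$ is used for exactly this purpose.
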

\begin{proof} Only the forward implication requires an argument. Let $\vec C=\langle C_\delta\mid\delta\in S\rangle$ be a $\cg(S,T,\kappa)$-sequence.
For every $i<\kappa$, we define the operator $\Phi_i:\mathcal P(\kappa)\rightarrow\mathcal P(\kappa)$ by letting for all $x \subseteq \kappa$, 
    \[\Phi_i(x):=\begin{cases}
x,&\text{if } x\s i;\\
x\setminus i,&\text{otherwise}.
\end{cases}\]

It is clear that $\Phi_i(x)$ is a cofinal subset of $x$,
and $\nacc(\Phi_i(x))\s\nacc(x)$.
Furthermore, if $x$ is club in its supremum, then so is $\Phi_i(x)$.

\begin{claim} \label{warmfirstrefine}There exists $i<\kappa$ such that, for every club $D\s\kappa$, the set $\{ \delta\in S \mid \nacc(\Phi_i(C_\delta))\s D\cap T\}$ is stationary.
\end{claim}
\begin{why} Suppose not. For each $i<\kappa$, fix a sparse enough club $D_i\s\kappa$ for which
$\{ \delta\in S \mid \nacc(\Phi_i(C_\delta))\s D_i\cap T\}$ is disjoint from $D_i$.
Let $D:=\diagonal_{i<\kappa}D_i$. By the choice of $\vec C$, there are $\delta\in S\cap D$ and $\beta<\delta$ such that $\suc_\kappa(C_\delta\setminus\beta)\s D\cap T$.
As $\otp(C_\delta) < \kappa$, we can find an $i<\delta$ such that $\nacc(C_\delta)\setminus i\s D\cap T$.
Then $\nacc(\Phi_i(C_\delta))\s D\cap T$. But $i<\delta$ and $\delta\in D$, so that $\delta\in D_i$. 
This is a contradiction.
\end{why}

Let $i$ be given by the preceding claim. 
The sequence $\langle \Phi_i(C_\delta)\mid \delta\in S\rangle$ satisfies Clause~(ii) of the Lemma.
In order to incorporate Clause~(i),
for each $\delta\in S$, 
we simply pick a club $C_\delta^\bullet$ in $\delta$ of ordertype $\cf(\delta)$ such that $\nacc(C_\delta^\bullet)\s\nacc(\Phi_i(C_\delta))$.
Evidently, $\langle C_\delta^\bullet\mid \delta\in S\rangle$ is as sought.
\end{proof}

Putting everything together, we arrive at the following striking conclusion.

\begin{cor}[\cite{Sh:365}]\label{thm23} For every regular uncountable cardinal $\lambda$ such that $\lambda^+<\kappa$,
for every stationary $S\s E^\kappa_\lambda$, there is a $C$-sequence $\vec C=\langle C_\delta\mid\delta\in S\rangle$ such that the following two hold:
\begin{enumerate}
    \item for every $\delta\in S$, $\otp(C_\delta)=\lambda$;
    \item for every club $D\s\kappa$, the set $\{ \delta\in S\mid C_\delta\s D\}$ is stationary.
\end{enumerate}
\end{cor}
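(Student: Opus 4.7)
The plan is simply to chain together the three preceding results and finish with a short closing-under-limits argument. To start, for each $\delta\in S\s E^\kappa_\lambda$ I would choose any club $C^0_\delta\s\delta$ of ordertype $\lambda$ (possible since $\cf(\delta)=\lambda$), producing a trivial $\lambda$-bounded $C$-sequence $\vec C^0=\langle C^0_\delta\mid\delta\in S\rangle$. By Proposition~\ref{prop22} applied with $\mu:=\omega$, this sequence already witnesses $\cg_\lambda(S, E^\kappa_\omega, -)$: the point is that a club in $\delta$ of ordertype $\lambda$ intersected with any club of $\kappa$ has ordertype $\lambda$, hence meets $E^\kappa_\omega$ cofinally in $\delta$.

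Next, since $\lambda^+<\kappa$, I would invoke Lemma~\ref{weaktotail} with $T:=E^\kappa_\omega$ to upgrade $\vec C^0$ to a $\lambda$-bounded $C$-sequence witnessing the \emph{tail} guessing $\cg_\lambda(S, E^\kappa_\omega, \kappa)$. This new sequence is the input to Lemma~\ref{taillemma}, whose output is a $C$-sequence $\vec C^\bullet=\langle C^\bullet_\delta\mid\delta\in S\rangle$ satisfying $\otp(C^\bullet_\delta)=\cf(\delta)=\lambda$ for every $\delta\in S$, and with the feature that for every club $D\s\kappa$, the set
\[ \{\delta\in S\mid \nacc(C^\bullet_\delta)\s D\cap E^\kappa_\omega\} \]
is stationary in $\kappa$. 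Clause~(i) of the statement is now automatic, and this is where the hypothesis $\lambda^+<\kappa$ has been consumed.

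It remains only to convert ``$\nacc(C^\bullet_\delta)\s D$'' into ``$C^\bullet_\delta\s D$''. Given such a $\delta$, enumerate $C^\bullet_\delta$ increasingly as $\langle a_i\mid i<\lambda\rangle$; since $C^\bullet_\delta$ is closed in $\delta$ and $\lambda$ is uncountable, $\acc(C^\bullet_\delta)=\{a_i\mid i\in\acc(\lambda)\}$, while the successor-indexed points $a_{i+1}$ are all in $\nacc(C^\bullet_\delta)$ and hence in $D$. For any $\alpha=a_i\in\acc(C^\bullet_\delta)$, the points $\{a_{i'+1}\mid i'<i\}\s\nacc(C^\bullet_\delta)\s D$ converge to $\alpha$, and closure of $D$ forces $\alpha\in D$. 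Thus $C^\bullet_\delta\s D$, which gives Clause~(ii). There is no real obstacle here, as all the combinatorial work is done in Lemmas~\ref{weaktotail} and~\ref{taillemma}; the only delicate feature of this proof is that the uncountable cofinality of $\delta$ is what allows the passage from control of $\nacc(C^\bullet_\delta)$ to control of the whole of $C^\bullet_\delta$ in the final step.
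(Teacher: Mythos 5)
Your proposal is correct and follows essentially the same route as the paper's proof, which simply cites Proposition~\ref{prop22}, then Lemma~\ref{weaktotail}, then Lemma~\ref{taillemma} in that order. The only difference is that you make explicit the final conversion from ``$\nacc(C^\bullet_\delta)\subseteq D$'' to ``$C^\bullet_\delta\subseteq D$'' (valid because every accumulation point of $C^\bullet_\delta$ is a limit of its nonaccumulation points, hence lies in the closed set $D$), whereas the paper leaves this step implicit.
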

\begin{proof} By Proposition~\ref{prop22}, in particular, $\cg_\lambda(S,E^\kappa_{\aleph_0},-)$ holds.
Then, Lemma~\ref{weaktotail} implies that so does $\cg_\lambda(S,E^\kappa_{\aleph_0},\kappa)$.
Now, appeal to Lemma~\ref{taillemma}.
\end{proof}

At this point, it is natural to ask whether it is possible to waive the uncountability hypothesis on $\lambda$ in the preceding theorem.
We shall show that this is indeed the case, by invoking an operation different than that of $\Phi^B$.

\begin{defn}\label{dropdefn}
For a subset $D\s\kappa$, we define the operator $\Phi_D:\mathcal P(\kappa)\rightarrow\mathcal P(\kappa)$ by letting for all $x \subseteq \kappa$, 
    \[\Phi_D(x):=\begin{cases}
\{\sup(D \cap \eta )\mid \eta \in x, \eta > \min(D)\},&\text{if } \sup(D\cap\sup(x))=\sup(x);\\
x\setminus\sup(D\cap\sup(x)),&\text{otherwise}.
\end{cases}\]
\end{defn}

We list a few useful properties of $\Phi_D$:
\begin{enumerate}
    \item $\sup(\Phi_D(x))=\sup(x)$;
    \item $\otp(\Phi_D(x)) \leq \otp(x)$;
    \item If $\sup(D\cap\sup(x))=\sup(x)$, then $\acc^+(\Phi_D(x))\s \acc^+(D)\cap\acc^+(x)$.
    If in addition, $D$ is closed below $\sup(x)$, then $\Phi_D(x)\s D$.
\end{enumerate}

\begin{lemma} \label{tailomega}Suppose that $\kappa\ge\aleph_2$, and that $\langle C_\delta\mid \delta\in S\rangle$
is an $\omega$-bounded $C$-sequence over a stationary $S\s E^{\kappa}_{\aleph_0}$.

Then there is a club $D\s\kappa$ such that $\langle \Phi_D(C_\delta)\mid \delta\in S\rangle$ witnesses $\cg_\omega(S,\kappa,\kappa)$.
\end{lemma}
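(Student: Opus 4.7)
The plan is to mimic the counterexample-collecting strategy of Lemma~\ref{weaktotail}, now calibrated to the operator $\Phi_D$ in place of $\Phi^B$. Suppose, for contradiction, that no such $D$ works. Since $\Phi_D(C_\delta)$ is always $\omega$-bounded, the failure of $\cg_\omega(S,\kappa,\kappa)$ for $\langle \Phi_D(C_\delta) \mid \delta\in S \rangle$ amounts to the statement that for every club $D\s\kappa$ there is a club $F(D)\s\kappa$ such that $\Phi_D(C_\delta)\setminus F(D)$ is infinite for every $\delta\in S$.

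I would then build a $\s$-decreasing sequence $\langle D_i \mid i\le\omega_1\rangle$ of clubs by $D_0:=\kappa$, $D_{i+1}:=D_i\cap F(D_i)$, and $D_i:=\bigcap_{j<i} D_j$ at limits. The hypothesis $\kappa\ge\aleph_2$, namely $\omega_1<\kappa$, guarantees that each $D_i$ remains a club in $\kappa$. Pick any $\delta\in S\cap\acc(D_{\omega_1})$ (a stationary set), enumerate $C_\delta$ as $\langle c_n\mid n<\omega\rangle$, and fix $N<\omega$ least with $c_N>\min(D_{\omega_1})$. Since $\min(D_i)$ is non-decreasing in $i$ and bounded by $\min(D_{\omega_1})$, for every $n\ge N$ and every $i\le\omega_1$ we have $c_n>\min(D_i)$. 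Moreover, $\delta\in\acc(D_i)$ for every $i\le\omega_1$, placing us in the first clause of the definition of $\Phi_{D_i}$, so $\Phi_{D_i}(C_\delta)$ contains the point $\alpha_n^i:=\sup(D_i\cap c_n)$ for every $n\ge N$.

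The heart of the argument is a stabilization step: for each $n\ge N$, the map $i\mapsto \alpha_n^i$ is a non-increasing sequence of ordinals, so it stabilizes at some $i_n<\omega_1$; since there are only countably many $n$, the ordinal $i^*:=\sup_{n\ge N} i_n$ is still $<\omega_1$. Now for every $n\ge N$,
\[\alpha_n^{i^*} \;=\; \alpha_n^{i^*+1} \;=\; \sup(D_{i^*}\cap F(D_{i^*})\cap c_n),\]
and a quick case split---if $\alpha_n^{i^*}$ is a successor then this supremum is attained and lies in $F(D_{i^*})$ directly; if $\alpha_n^{i^*}$ is a limit then closedness of $F(D_{i^*})$ delivers $\alpha_n^{i^*}\in F(D_{i^*})$---shows in either case $\alpha_n^{i^*}\in F(D_{i^*})$. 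Consequently $\Phi_{D_{i^*}}(C_\delta)\setminus F(D_{i^*})$ is contained in the finite set $\{\sup(D_{i^*}\cap c_n)\mid n<N,\ c_n>\min(D_{i^*})\}$, contradicting the choice of $F(D_{i^*})$.

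The step I expect to be the main obstacle is precisely this stabilization: one needs the pointwise stabilization indices $i_n$ to be countable \emph{and} to remain countable in supremum, which only works because $\cf(\delta)=\omega$ provides merely countably many $c_n$ to track, and because $\omega_1<\kappa$ leaves enough room in the iteration to accommodate $i^*$. These two ingredients are precisely where the hypotheses $S\s E^\kappa_{\aleph_0}$ and $\kappa\ge\aleph_2$ are used.
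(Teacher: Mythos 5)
Your proposal is correct and follows the same counterexample-collecting strategy as the paper: build a decreasing $\omega_1$-chain of clubs, locate $\delta\in S\cap\acc(D_{\omega_1})$, and use $\cf(\delta)=\omega$ together with $\omega_1<\kappa$ to stabilize $\sup(D_i\cap\eta)$ across all $\eta\in C_\delta$ at a countable stage $i^*$. The only cosmetic difference is that the paper stabilizes over all $\eta\in C_\delta$ at once and concludes $\Phi_{D_{i^*}}(C_\delta)=\Phi_{D_{i^*+1}}(C_\delta)\s D_{i^*+1}\s F^{D_{i^*}}$ directly, whereas you track only indices $n\ge N$ and argue $\Phi_{D_{i^*}}(C_\delta)\setminus F(D_{i^*})$ is finite---either way contradicting the unboundedness guaranteed by the choice of $F$.
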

\begin{proof} Suppose not. In this case, for every club $D \subseteq \kappa$, there is a club $F^D \subseteq \kappa$ such that, for every $\delta \in S$,
$$\sup(\Phi_{D}(C_\delta)\setminus F^D)=\delta.$$
Here we have used that since $C_\delta$ has ordertype $\omega$, $\Phi_D(C_\delta)$ has ordertype $\omega$ as well, and hence all of its points are nonaccumulation points.

As $\kappa>\aleph_1$, we may construct a $\subseteq$-decreasing sequence $\langle D_i \mid i\leq \omega_1 \rangle$ of clubs in $\kappa$ as follows:
\begin{enumerate}
    \item $D_0 := \kappa$;
    \item $D_{i+1} := D_i \cap F^{D_i}$;
    \item for $i\in\acc(\omega_1+1)$, $D_i := \bigcap_{i'< i} D_{i'}$.
\end{enumerate}

Pick $\delta \in S \cap \acc(D_{\omega_1})$. For each $i< \omega_1$, since $D_i\cap \delta$ is a closed unbounded subset of $\delta$,
it is the case that $\Phi_{D_i}(C_\delta) = \{\sup(D_i \cap \eta )\mid \eta \in C_\delta, \eta > \min(D_i)\}$,
and $\Phi_{D_i}(C_\delta)\s D_i$.

As  $\langle D_i \mid i\leq \omega_1 \rangle$ is $\s$-decreasing, for each $\eta \in C_\delta$, $\langle \sup(D_i \cap \eta) \mid i< \omega_1\rangle$ is a weakly decreasing sequence of ordinals. By well-foundedness of the ordinals, for each $\eta\in C_\delta$,
there must be some $i_{\eta} < \omega_1$ such that $\sup(D_i\cap \eta) = \sup(D_j \cap \eta)$
whenever $i_\eta\le i<j<\omega_1$.
Let $i^* := \sup_{\eta \in C_\delta}i_\eta$, which is a countable ordinal as $C_\delta$ is a countable set. It follows that for any $i\in[i^*,\omega_1)$,
$\Phi_{D_i}(C_\delta) = \Phi_{D_{i+1}}(C_\delta)$.
However, $\Phi_{D_{i+1}}(C_\delta)\s  D_{i+1} \subseteq F^{D_i}$, contradicting the choice of $F^{D_i}$.
\end{proof}

Putting everything together:

\begin{cor}[\cite{Sh:365}]  \label{corfullcg}For every pair of infinite regular cardinals $\lambda<\kappa$ 
and every stationary $S\s E^\kappa_\lambda$,
if $\lambda^+<\kappa$, then
there is a $C$-sequence $\vec C=\langle C_\delta\mid\delta\in S\rangle$ with the property that for every club $D\s\kappa$,
the set $\{\delta\in S\mid C_\delta\s D\}$ is stationary.\qed
\end{cor}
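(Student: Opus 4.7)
The plan is to split according to whether $\lambda$ is uncountable or $\lambda=\omega$, handling each by invoking the machinery already set up in this section.

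\textbf{Case 1: $\lambda$ uncountable.} Here the conclusion is literally Corollary~\ref{thm23}, which was obtained by combining Proposition~\ref{prop22} (to get $\cg_\lambda(S,E^\kappa_{\aleph_0},-)$), Lemma~\ref{weaktotail} (to get tail guessing, using $\lambda^+<\kappa$), and Lemma~\ref{taillemma} (to get full guessing). So in this case there is nothing new to do.

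\textbf{Case 2: $\lambda=\omega$.} Then $\lambda^+=\aleph_1<\kappa$ forces $\kappa\ge\aleph_2$, which is precisely the hypothesis of Lemma~\ref{tailomega}. To apply it, I first fix an arbitrary $\omega$-bounded $C$-sequence $\langle C_\delta\mid\delta\in S\rangle$: for each $\delta\in S$ we have $\cf(\delta)=\omega$, so we may choose any cofinal subset of $\delta$ of ordertype $\omega$. Lemma~\ref{tailomega} then yields a club $D\s\kappa$ such that $\langle \Phi_D(C_\delta)\mid\delta\in S\rangle$ witnesses $\cg_\omega(S,\kappa,\kappa)$. In particular, $\cg(S,\kappa,\kappa)$ holds, so by the forward direction of Lemma~\ref{taillemma} there is a $C$-sequence $\langle C^\bullet_\delta\mid\delta\in S\rangle$ with $\otp(C^\bullet_\delta)=\cf(\delta)=\omega$ such that, for every club $D\s\kappa$, the set $\{\delta\in S\mid \nacc(C^\bullet_\delta)\s D\}$ is stationary.

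To finish, I observe that when $\otp(C^\bullet_\delta)=\omega$ every element of $C^\bullet_\delta$ is preceded in $C^\bullet_\delta$ by only finitely many elements, so $\sup(C^\bullet_\delta\cap\alpha)<\alpha$ for each $\alpha\in C^\bullet_\delta$; that is, $\nacc(C^\bullet_\delta)=C^\bullet_\delta$. Thus the stationary set furnished by Lemma~\ref{taillemma} is exactly $\{\delta\in S\mid C^\bullet_\delta\s D\}$, as required.

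There is no real obstacle beyond correctly assembling the pieces; the only point worth flagging is the last observation, which is what lets the passage from $\nacc(C^\bullet_\delta)\s D$ to $C^\bullet_\delta\s D$ go through in the countable-cofinality case and explains why the uncountability hypothesis of Corollary~\ref{thm23} can be dropped once Lemma~\ref{tailomega} is available.
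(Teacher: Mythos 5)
Your proof is correct and is precisely the assembly of pieces that the paper leaves implicit behind its $\qed$: Corollary~\ref{thm23} handles uncountable $\lambda$, and Lemma~\ref{tailomega} followed by Lemma~\ref{taillemma} handles $\lambda=\omega$, where $\kappa\ge\aleph_2$ is exactly what the hypothesis $\lambda^+<\kappa$ forces. Your closing observation that $\otp(C^\bullet_\delta)=\omega$ entails $\nacc(C^\bullet_\delta)=C^\bullet_\delta$ is the right way to pass from the conclusion of Lemma~\ref{taillemma} to $C^\bullet_\delta\s D$ in the countable case; note that the analogous passage for uncountable $\lambda$ (used implicitly in the proof of Corollary~\ref{thm23}) instead uses that $\acc(C_\delta)\s\acc^+(\nacc(C_\delta))\s\acc^+(D)\s D$ since $D$ is closed.
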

 
We now move on to prove a lesser-known theorem of Shelah concerning club-guessing.
Unlike the previous result, in the following, $S$ is not assumed to be a subset of $E^\kappa_\lambda$ for some fixed cardinal $\lambda<\kappa$. 
So, for instance, $S$ could be the set of regular cardinals below a Mahlo cardinal $\kappa$.
\begin{theorem}[Shelah]\label{thm212} Suppose $\kappa\ge\aleph_2$.

For every stationary $S\s\kappa$, $\cg(S,\kappa,1)$ holds.
\end{theorem}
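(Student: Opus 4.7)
The plan is to proceed by case analysis on whether $S\cap E^\kappa_\omega$ is stationary.

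If $S\cap E^\kappa_\omega$ is stationary, then Lemma~\ref{tailomega} applied to any $\omega$-bounded $C$-sequence over $S_0:=S\cap E^\kappa_\omega$ yields a witness to $\cg_\omega(S_0,\kappa,\kappa)$; extending this sequence arbitrarily to $S$ and invoking Proposition~\ref{monotonicty} delivers $\cg(S,\kappa,1)$. Henceforth I would intersect $S$ with a club to reduce to the case $S\s E^\kappa_{>\omega}$. In this setting, the trace $S^+:=\acc^+(S)\cap E^\kappa_\omega$ is stationary in $\kappa$ (being the intersection of a club with a stationary set), and every $\alpha\in S^+$ has $S\cap\alpha$ cofinal in $\alpha$. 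Picking a countable cofinal $C^*_\alpha\s S\cap\alpha$ for each $\alpha\in S^+$ and invoking Lemma~\ref{tailomega}, I obtain a club $D^*\s\kappa$ and a sequence $\vec C^{**}=\langle \Phi_{D^*}(C^*_\alpha)\mid\alpha\in S^+\rangle$ witnessing $\cg_\omega(S^+,\kappa,\kappa)$.

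To transfer this guessing from $S^+$ to $S$, I would define $\vec C=\langle C_\delta\mid\delta\in S\rangle$ as follows. For $\delta\in S\cap\acc^+(S^+)$ (which contains a club subset of $S$), pick an unbounded sparse $A_\delta\s S^+\cap\delta$ and let $C_\delta$ be the closure in $\delta$ of $\bigcup_{\alpha\in A_\delta}\Phi_{D^*}(C^*_\alpha)$. Each $\Phi_{D^*}(C^*_\alpha)$ is an $\omega$-sequence cofinal in $\alpha$ whose elements are nacc within that $\omega$-sequence, and sparsity of $A_\delta$ is arranged so that these elements remain nacc in $C_\delta$ (since below each $\alpha\in A_\delta$, the only contribution to $C_\delta$ from other $\alpha'\in A_\delta$ is bounded away from $\alpha$).

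The main obstacle is verifying that for every club $D\s\kappa$ there exists $\delta\in S$ with $\nacc(C_\delta)\cap D$ unbounded in $\delta$. By the ``some/stationarily many'' equivalence alluded to in the discussion at the end of this section, $\vec C^{**}$ in fact yields a stationary set $S^+_D\s S^+$ of $\alpha$'s with a tail of $\Phi_{D^*}(C^*_\alpha)$ in $D$; so any $\delta\in S\cap\acc^+(S^+_D)$ with $A_\delta\cap S^+_D$ cofinal in $\delta$ would deliver the conclusion. The delicate point is that $A_\delta$ must be chosen a priori, while $S^+_D$ depends on $D$; overcoming this will likely require either arranging $A_\delta$ to be \emph{stationary} in $\delta$ so that it meets every relevant $S^+_D$ cofinally, or performing a further chain-of-clubs refinement in the style of Lemmas~\ref{weaktotail} and~\ref{tailomega} to absorb the $D$-dependence.
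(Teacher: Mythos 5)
Your Case 1 (when $S\cap E^\kappa_\omega$ is stationary) is correct: applying Lemma~\ref{tailomega} to an arbitrary $\omega$-bounded $C$-sequence over $S\cap E^\kappa_\omega$ and invoking Proposition~\ref{monotonicty} does yield $\cg(S,\kappa,1)$. The rest of the proposal, however, has a genuine gap that you partially acknowledge but do not resolve, and the tools you propose for closing it do not work.

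First, the route itself is quite different from the paper's: the paper proves Theorem~\ref{thm212} by passing through the notion of an \emph{amenable} $C$-sequence (Definition~\ref{defame}), using Fact~\ref{amenfact} to shrink $S$ to a stationary $S'\s S$ carrying one, and then running the $\omega_1$-chain argument of Lemma~\ref{lemma215}, where amenability is precisely what guarantees a $\delta\in S'$ with $\sup(D_{\omega_1}\cap\delta\setminus C_\delta)=\delta$ and powers the well-foundedness contradiction. Nothing in your proposal supplies a substitute for that structural property.

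Second, the ``transfer from $S^+$ to $S$'' step fails as stated. Your parenthetical justification that the contributions to $C_\delta$ from other $\alpha'\in A_\delta$ stay bounded away from $\alpha$ is false: for $\alpha'\in A_\delta$ with $\alpha'>\alpha$, the $\omega$-sequence $\Phi_{D^*}(C^*_{\alpha'})$ can have initial elements arbitrarily close to (and below) $\alpha$, and there are cofinally many such $\alpha'$ below $\delta$, so elements of $\Phi_{D^*}(C^*_\alpha)$ near $\alpha$ need not land in $\nacc(C_\delta)$. This can be repaired by ``chopping from below,'' i.e.\ replacing each block by $\Phi_{D^*}(C^*_\alpha)\setminus\sup(A_\delta\cap\alpha)$ as is done in Theorems~\ref{sqstartthm} and \ref{weaksquarethm}. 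But then only $\alpha\in\nacc(A_\delta)$ contribute a nonempty block, which kills your suggested fix of taking $A_\delta$ stationary (or $A_\delta=S^+\cap\delta$): for $\alpha\in\acc^+(A_\delta)$ the chopped block is empty, and there is no reason for $\nacc(A_\delta)\cap S^+_D$ to be cofinal in $\delta$ (indeed $S^+_D$ could easily be contained in $\acc^+(S^+)$). Conversely, if $A_\delta$ is chosen sparse (discrete) in advance, then for an adversarially chosen $D$, the stationary set $S^+_D$ may be entirely disjoint from $A_\delta$, since $\cg_\omega(S^+,\kappa,\kappa)$ only gives you $S^+_D$ unbounded below stationarily many $\delta$, not stationary below $\delta$, and even stationarity below $\delta$ would not help against a thin $A_\delta$. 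Finally, your suggested ``further chain-of-clubs refinement'' is not a fix either: those arguments (Lemmas~\ref{weaktotail} and \ref{tailomega}) all rely on a bound on $\otp(C_\delta)$ to force stabilisation, whereas your $C_\delta$'s have ordertypes on the order of $\cf(\delta)$, which is unbounded below $\kappa$; the whole point of Lemma~\ref{lemma215} is that amenability replaces the ordertype bound. So the gap is real, and the proposal as written does not constitute a proof.
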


Our proof of Theorem~\ref{thm212} goes through the notion of an amenable $C$-sequence,
which is a strengthening of $\otimes_{\vec C}$ from \cite[p.~134]{Sh:365}.
\begin{defn}[{\cite[Definition~1.3]{paper29}}]\label{defame} For a stationary $S\s\kappa$, a $C$-sequence $\langle C_\delta\mid\delta\in S\rangle$ is \emph{amenable}
iff for every club $D\s\kappa$, the set $\{ \delta\in S\mid \sup(D\cap\delta\setminus C_\delta)<\delta\}$ is nonstationary in $\kappa$.
\end{defn}

\begin{fact}[{\cite[Corollary~3.11]{paper47}}]\label{amenfact} For every stationary $S\s\kappa$, there exists a stationary $S'\s S$ such that $S'$ carries an amenable $C$-sequence.
\end{fact}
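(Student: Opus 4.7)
I would proceed by the ``collecting counterexamples'' paradigm that drives the other club-guessing proofs in this section, starting from an arbitrary $C$-sequence and iteratively refining it until amenability is achieved on a stationary subset. Begin with $\vec C^{(0)} = \langle C^{(0)}_\delta \mid \delta \in S\rangle$, where each $C^{(0)}_\delta$ is a closed cofinal subset of $\delta$ of minimal order-type $\cf(\delta)$. If $\vec C^{(0)}$ is already amenable on $S$, set $S' := S$ and we are done. Otherwise, the failure of amenability supplies a club $D_0 \s \kappa$ such that the set $B_0 := \{\delta \in S \mid \sup(D_0 \cap \delta \setminus C^{(0)}_\delta) < \delta\}$ is stationary. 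Applying Fodor's lemma to the regressive map $\delta \mapsto \sup(D_0 \cap \delta \setminus C^{(0)}_\delta)$ yields a stationary $B'_0 \s B_0$ and an ordinal $\epsilon_0 < \kappa$ with $D_0 \cap \delta \setminus \epsilon_0 \s C^{(0)}_\delta$ for every $\delta \in B'_0$. This is the structural information I would exploit to thin out $\vec C^{(0)}$ over $B'_0$, using an operator akin to $\Phi^B$ or $\Phi_D$ (Definitions~\ref{firstoperatordefn} and~\ref{dropdefn}) that removes a cofinal portion of the trap $D_0 \cap \delta$ from $C^{(0)}_\delta$, producing $\vec C^{(1)}$ whose members remain closed cofinal subsets of $\delta$.

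Iterating this construction yields a chain $\langle (\vec C^{(i)}, S_i) \mid i \le \iota \rangle$ with $S_{i+1} \s S_i$ stationary and $\vec C^{(i+1)}$ a refinement of $\vec C^{(i)}$ handling the next counterexample club $D_i$. At limit stages I would take intersections along a carefully chosen cofinal sequence to keep the intersection stationary. The main driver of termination is that for each fixed $\delta$, the ordertype $\otp(C^{(i)}_\delta)$ forms a weakly decreasing sequence of ordinals, so $\delta$ can only be ``touched'' by boundedly many stages. A Fodor-style argument together with $\kappa$-completeness of the nonstationary ideal then forces the iteration to stabilize before stage $\kappa^+$, on some stationary $S' \s S$; the stabilized $\vec C \restriction S'$ is the desired amenable $C$-sequence.

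\textbf{Main obstacle.} The crux of the proof is the design of the modification step, which must simultaneously (i) produce a valid $C$-sequence (closed cofinal elements), (ii) genuinely escape the trap of $D_i$ on a stationary set so that the well-foundedness argument applies, and (iii) avoid undoing progress from previous stages. When $\cf(\delta) < \delta$, an order-type or cardinality based thinning is adequate. The truly delicate case is when $S$ concentrates on regular uncountable cardinals --- which forces $\kappa$ to be Mahlo --- because then $C^{(i)}_\delta$ is a club of $\delta$ of order-type $\delta$ itself, no cardinality reduction is available, and one cannot hope to arrange $\delta \setminus C^{(i)}_\delta$ stationary in $\delta$ (since $C^{(i)}_\delta$ is itself a club meeting every club of $\delta$). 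In this regime, the modification must structurally reshape $C^{(i)}_\delta$ by intersecting with a club of $\delta$ derived from internal stationary subsets, or by coordinating the construction against a fixed continuous elementary chain of submodels of $H(\chi)$ so that the modifications across different $\delta$'s remain coherent enough for the limit-stage bookkeeping to succeed.
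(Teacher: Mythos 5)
Your ``collecting counterexamples'' scheme mirrors the iterations in Lemmas~\ref{weaktotail}, \ref{tailomega}, and~\ref{lemma215}, and you correctly sense where the strain is, but what you have is not yet a proof. First, the modification step does not do what you describe: neither $\Phi^{D_0}$ nor $\Phi_{D_0}$ ``removes a cofinal portion of the trap''. For $B := D_0$, $\Phi^{D_0}(C^{(0)}_\delta) = \cl(C^{(0)}_\delta \cap D_0)$, which still contains $D_0 \cap (\epsilon_0, \delta)$; and $\Phi_{D_0}(C^{(0)}_\delta)$ does too, since every $\eta \in D_0 \cap (\epsilon_0, \delta)$ equals $\sup(D_0 \cap \eta')$ for $\eta' := \min(D_0 \setminus (\eta+1))$, which lies in $C^{(0)}_\delta$ by the trap hypothesis. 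So the trap persists through both operators, verbatim. In Lemmas~\ref{weaktotail}, \ref{tailomega}, and~\ref{lemma215} the drop operator is made effective by a well-foundedness argument that is fuelled either by an order-type bound strictly below the iteration length or by amenability itself --- and in the present proof you have neither. Second, your ``main driver of termination'' (weakly decreasing $\otp(C^{(i)}_\delta)$) is vacuous precisely in the case you flag as delicate: when $\cf(\delta) = \delta$ every club in $\delta$ has order type $\delta$, so the sequence of order types is constant. And the claim that ``a Fodor-style argument together with $\kappa$-completeness \ldots forces the iteration to stabilize before stage $\kappa^+$, on some stationary $S'$'' is a promissory note: the $S_i$'s are a $\subseteq$-decreasing chain of stationary sets, and nothing in the plan controls them (or the clubs $D_i$) across $\kappa$-many limit stages.

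The genuine content, and exactly what your plan does not locate, is the case where $S$ concentrates on a stationary set of regular cardinals that reflects at stationarily many of its own members. The other cases are elementary: if $\cf(\delta) < |\delta|$ then any $C_\delta$ of order type $\cf(\delta)$ works, since for club-many $\delta$ one has $|D \cap [\alpha, \delta)| = |\delta| > \cf(\delta) = |C_\delta|$ for every $\alpha < \delta$; and if $S \subseteq E^\kappa_\lambda$ for a fixed regular $\lambda < \kappa$, then $S \cap \delta$ is nonstationary in every $\delta \in S$, so one may take $C_\delta \cap S = \emptyset$, which makes $C_\delta \cap C_{\delta'}$ bounded in $\delta$ for all $\delta < \delta'$ in $S$, and then a two-line Fodor argument gives amenability. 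When every stationary refinement of $S$ reflects at its own regular members, none of this is available, and ``intersect with a club derived from internal stationary subsets'' or ``coordinate against a fixed elementary chain'' are hopes rather than arguments. Note finally that the paper cites Fact~\ref{amenfact} to \cite[Corollary~3.11]{paper47} rather than proving it, so there is no internal proof against which to match your plan; the gaps I have flagged are genuine nonetheless.
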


\begin{lemma}\label{lemma215} Suppose that $S\s\kappa$ is stationary and $\vec C=\langle C_\delta\mid \delta\in S\rangle$ is an amenable $C$-sequence.
If $\kappa\ge\aleph_2$, then there exists a club $D\s\kappa$ for which 
$\langle \Phi_D(C_\delta)\mid \delta\in S\rangle$ witnesses $\cg(S,\kappa,1)$.
\end{lemma}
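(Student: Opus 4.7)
My plan is to mimic the proof of Lemma~\ref{tailomega}, with amenability of $\vec C$ playing the role that $\omega$-boundedness of $\vec C$ played there. Assume for contradiction that no club $D$ witnesses the conclusion. For each club $D\s\kappa$, fix a club $F^D\s\kappa$ such that for every $\delta\in S$ the set $\{\beta<\delta\mid\suc_1(\Phi_D(C_\delta)\setminus\beta)\s F^D\}$ is bounded below $\delta$; equivalently, $\nacc(\Phi_D(C_\delta))\cap F^D$ is bounded in $\delta$. Construct a $\s$-decreasing chain $\langle D_i\mid i\le\omega_1\rangle$ of clubs in $\kappa$ by $D_0:=\kappa$, $D_{i+1}:=D_i\cap F^{D_i}$, and $D_i:=\bigcap_{i'<i}D_{i'}$ at limits; since $\kappa\ge\aleph_2$ the club filter on $\kappa$ is $\aleph_2$-complete, so each $D_i$ is a club.

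Using amenability of $\vec C$ combined with $\aleph_2$-completeness of the nonstationary ideal, the union $\bigcup_{i\le\omega_1}\{\delta\in S\mid\sup((D_i\cap\delta)\setminus C_\delta)<\delta\}$ of $\aleph_1$-many nonstationary sets is nonstationary, so I can fix $\delta\in S\cap\acc(D_{\omega_1})$ for which $D_i\setminus C_\delta$ is cofinal in $\delta$ simultaneously for every $i\le\omega_1$. For such $\delta$ and each $\eta\in C_\delta$, the sequence $\phi_i(\eta):=\sup(D_i\cap\eta)$ is weakly decreasing in $i<\omega_1$ and hence stabilizes at some $i_\eta<\omega_1$ by well-foundedness. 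A direct computation shows that whenever $\eta\in\nacc(C_\delta)$ and the $C_\delta$-gap $(\sup(C_\delta\cap\eta),\eta)$ contains a $D_i$-point, the value $\phi_i(\eta)$ is a non-accumulation point of $\Phi_{D_i}(C_\delta)$; amenability provides a cofinal subset $G:=\{\eta_d\mid d\in (D_{\omega_1}\cap\delta)\setminus C_\delta\}$ of $\nacc(C_\delta)$ (where $\eta_d:=\min(C_\delta\setminus d)$) witnessing this property at every $i\le\omega_1$. At the stabilization stage $i_\eta$ we moreover have $\phi_i(\eta)=\phi_{i+1}(\eta)\in D_{i+1}\s F^{D_i}$, which places $\phi_i(\eta)$ into $\nacc(\Phi_{D_i}(C_\delta))\cap F^{D_i}$.

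The proof is finished by producing a single $i^*<\omega_1$ for which $\{\eta\in G\mid i_\eta\le i^*\}$ is cofinal in $\delta$, as this would give cofinally many non-accumulation points of $\Phi_{D_{i^*}}(C_\delta)$ in $F^{D_{i^*}}$, contradicting the choice of $F^{D_{i^*}}$. When $\cf(\delta)\ne\omega_1$ this is a pigeonhole: if $\cf(\delta)=\omega$, then any cofinal $\omega$-subsequence of $G$ has a countable supremum of stabilization indices; if $\cf(\delta)>\omega_1$, a union of $\omega_1$-many bounded subsets of $\delta$ cannot be cofinal in $\delta$, so one of the sets $\{\eta\in G\mid i_\eta\le i\}$ already is. The genuine obstacle is $\cf(\delta)=\omega_1$, where this cardinality count fails; I plan to handle it by enumerating the closure of $G$ inside $\delta$ in ordertype $\omega_1$ as $\langle\gamma_\xi\mid\xi<\omega_1\rangle$, observing that at limit $\xi$ the identity $\phi_i(\gamma_\xi)=\sup_{\xi'<\xi}\phi_i(\gamma_{\xi'})$ yields the continuity inequality $i_{\gamma_\xi}\le\sup_{\xi'<\xi}i_{\gamma_{\xi'}}$, and then feeding the induced normal function $j(\xi):=\sup_{\xi'\le\xi}i_{\gamma_{\xi'}}$ from $\omega_1$ to $\omega_1$ into Fodor's lemma, with a further cofinality argument ensuring that the stationarily many $\xi$'s produced correspond to points of $G$ cofinally in $\delta$.
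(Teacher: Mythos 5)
Your setup is sound and parallels the paper's: the decreasing chain $\langle D_i\mid i\le\omega_1\rangle$, the use of amenability to fix $\delta$ with $D_{\omega_1}\cap\delta\setminus C_\delta$ cofinal, and the observation that for $\eta=\eta_d\in G$ the value $\phi_i(\eta)=\sup(D_i\cap\eta)$ lands in $\nacc(\Phi_{D_i}(C_\delta))$ because the $C_\delta$-gap below $\eta$ contains the $D_{\omega_1}$-point $d$. Your reduction to finding $i^*<\omega_1$ with $\{\eta\in G\mid i_\eta\le i^*\}$ cofinal in $\delta$ is also correct, as is your treatment of the cases $\cf(\delta)=\omega$ and $\cf(\delta)>\omega_1$.

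The gap is the case $\cf(\delta)=\omega_1$, and your Fodor sketch does not close it. The function $j(\xi):=\sup_{\xi'\le\xi}i_{\gamma_{\xi'}}$ is continuous and non-decreasing but it is not regressive, so Fodor does not apply to it directly. Passing to the club of its closure points only yields $j(\xi)\le\xi$ there, which is compatible with $j(\xi)=\xi$ on a club; in that scenario the $i_{\gamma_\xi}$'s are cofinal in $\omega_1$ and no $i^*$ bounds them along any end-segment of the enumeration. Further, even where Fodor would bite, the stationary set of constant values consists of limit $\xi$'s, whose $\gamma_\xi$'s are accumulation points of $G$ and generically \emph{not} in $G$; the ``further cofinality argument'' ensuring coverage of $G$ itself is exactly the missing content, not a routine step. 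This is the same obstacle that separates the present lemma from Lemma~\ref{tailomega}: there $C_\delta$ is countable so the stabilization indices over all of $C_\delta$ have a countable supremum, whereas here $C_\delta$ (and $G$) can have cardinality $\ge\aleph_1$ while $\cf(\delta)=\omega_1$, and nothing forces a uniform bound on the $i_\eta$'s along a cofinal subset of $G$.

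The paper's closing step is structurally different and avoids bounding stabilization indices altogether. It sets $\epsilon_i:=\sup(\nacc(\Phi_{D_i}(C_\delta))\cap D_{i+1})$, each $<\delta$ since $D_{i+1}\s F^{D_i}$; picks a set $I\s\omega_1$ of ordertype $\omega$ with $\sup\{\epsilon_i\mid i\in I\}<\delta$ (trivial once $\cf(\delta)>\omega$, a pigeonhole when $\cf(\delta)=\omega$ --- note this covers $\cf(\delta)=\omega_1$ as well); fixes a single $\gamma\in D_{\omega_1}\cap\delta\setminus C_\delta$ above that supremum; and applies pigeonhole to the $\omega$-indexed weakly decreasing sequence $\langle\sup(D_i\cap\eta)\mid i\in I\rangle$ for $\eta:=\min(C_\delta\setminus\gamma)$, obtaining $i<j$ in $I$ with $\beta_i:=\sup(D_i\cap\eta)=\sup(D_j\cap\eta)$. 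The contradiction is then local: $\beta_i\in\Phi_{D_i}(C_\delta)\cap D_{i+1}$ lies above $\epsilon_i$, so it must be an accumulation point of $\Phi_{D_i}(C_\delta)$, hence of $C_\delta$; but $\beta_i\in[\gamma,\eta]$ where the only $C_\delta$-point is the non-accumulation point $\eta$. A repeat within a countable index family is free, whereas a uniform bound on stabilization indices is not --- that is the idea your proposal is missing, and you should replace the final step by something of this shape.
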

\begin{proof} Suppose not. In this case, for every club $D \subseteq \kappa$, there is a club $F^D \subseteq \kappa$ such that, for every $\delta \in S$,
$$\sup(\nacc(\Phi_{D}(C_\delta))\cap F^D)<\delta.$$
As $\kappa\ge\aleph_2$, we may construct a $\subseteq$-decreasing sequence $\langle D_i \mid i\leq \omega_1 \rangle$ of clubs in $\kappa$ as follows:
\begin{enumerate}
    \item $D_0 := \kappa$;
    \item $D_{i+1} := D_i \cap F^{D_i}$;
    \item for $i\in\acc(\omega_1+1)$, $D_i := \bigcap_{i'< i} D_{i'}$.
\end{enumerate}

As $\vec C$ is amenable and $D_{\omega_1}$ is club in $\kappa$, we may pick some $\delta\in S$ such that $\sup(D_{\omega_1}\cap\delta\setminus C_\delta)=\delta$.
For each $i< \omega_1$, since $D_i\cap \delta$ is a closed unbounded subset of $\delta$,
it is the case that $$\Phi_{D_i}(C_\delta) = \{\sup(D_i \cap \eta )\mid \eta \in C_\delta, \eta > \min(D_i)\}.$$
So $\Phi_{D_i}(C_\delta)\s D_i$ and $\acc(\Phi_{D_i}(C_\delta))\s \acc(D_i)\cap\acc(C_\delta)$.

In addition, for each $i<\omega_1$, since $D_{i+1}\s F^{D_i}$, the following ordinal is smaller than $\delta$:
$$\epsilon_i:=\sup(\nacc(\Phi_{D_i}(C_\delta))\cap D_{i+1}).$$
\begin{claim}\label{2131} There exists $I\s\omega_1$ of ordertype $\omega$ such that $\sup\{\epsilon_i\mid i\in I\}<\delta$.
\end{claim}
\begin{why} If $\cf(\delta)>\omega_1$, then just let $I:=\omega$. 
If $\cf(\delta)=\omega$, then pick a countable cofinal subset $E$ of $\delta$ and for each $i\in\omega_1$,
find the least $\epsilon\in E$ such $\epsilon_i\le\epsilon$. By the pigeonhole principle, 
there is an $\epsilon\in E$ for which $\{ i\in I\mid \epsilon_i\le\epsilon\}$ is uncountable. In particular, this set contains a subset of ordertype $\omega$.
\end{why}

Fix $I$ as in the claim, and then pick $\gamma\in D_{\omega_1}\cap\delta\setminus C_\delta$ above $\sup\{ \epsilon_i\mid i\in I\}$.
As $\gamma\notin C_\delta$, $\eta:=\min(C_\delta\setminus\gamma)$ is in $\nacc(C_\delta)$.
As $\langle \sup(D_i \cap \eta) \mid i\in I\rangle$ is a weakly decreasing sequence of ordinals, by well-foundedness 
there must be a pair of ordinals $i<j$ in $I$ such that $\beta_i:=\sup(D_i\cap \eta)$ is equal to  $\beta_j:=\sup(D_j \cap \eta)$.

As $\gamma\in D_{\omega_1}\s D_i$, $\epsilon_i<\gamma\le\beta_i\le\eta$, so $\beta_i\in\Phi_{D_i}(C_\delta)\cap(\epsilon_i,\eta]$.
Likewise, $\beta_j\in\Phi_{D_j}(C_\delta)\cap(\epsilon_j,\eta]$.
Recalling that $\beta_i=\beta_j\in D_j\s D_{i+1}$, it follows that $\beta_i$ is an element of $\Phi_{D_i}(C_\delta)\cap D_{i+1}$ above $\epsilon_i$
and hence $\beta_i\in\acc(\Phi_{D_i}(C_\delta))$.
However, $\acc(\Phi_{D_i}(C_\delta))\s \acc(D_i)\cap\acc(C_\delta)$, and hence $\beta_i\in\acc(C_\delta)$.
But $\gamma\le\beta_i\le\eta$ and $C_\delta\cap[\gamma,\eta]=\{\eta\}$, and hence $\beta_i=\eta$,
contradicting the fact that $\eta\in\nacc(C_\delta)$.
\end{proof}

\begin{proof}[Proof of Theorem~\ref{thm212}] Given a stationary $S\s\kappa$, 
appeal to Fact~\ref{amenfact} to find an amenable $C$-sequence $\langle C_\delta\mid\delta\in S'\rangle$ for some stationary $S'\s S$.
Then, by Lemma~\ref{lemma215}, $\cg(S',\kappa,1)$ holds.
So $\cg(S,\kappa,1)$ holds as well.
\end{proof}

As pointed out in the introduction, if $\diamondsuit(S)$ holds for a given stationary subset $S$ of $\kappa$,
then, for every stationary $T\s\kappa$, $\cg(S,T,\kappa)$ holds. The next result
shows how to get $\cg(S,T,\kappa)$ from a principle weaker than $\diamondsuit(S)$ and even weaker than $\clubsuit(S)$
and of which many instances hold true in $\zfc$.

\begin{defn}[\cite{paper07}]\label{clubminus} For a stationary subset $S$ of a regular uncountable cardinal $\kappa$,
$\clubsuit^-(S)$ asserts the existence of a sequence $\langle \mathcal A_\delta\mid \delta\in S\rangle$ such that:
\begin{enumerate}
\item for all $\delta\in S$, $\mathcal A_\delta\s [\delta]^{<|\delta|}$ and $|\mathcal A_\delta|\le|\delta|$;
\item for every cofinal $Z\s\kappa$, there are $\delta\in S$ and $A\in\mathcal A_\delta$ with $\sup(A\cap Z)=\delta$.
\end{enumerate}
\end{defn}
\begin{remark}
Note that if $\clubsuit^-(S)$ holds, then $\{\delta\in S\mid \cf(\delta)<|\delta|\}$ must be stationary.
\end{remark}
\begin{fact}[\cite{paper07}]  For an infinite cardinal $\lambda$ and a stationary $S\s\lambda^+$:
\begin{itemize}
\item If $S\cap E^{\lambda^+}_{\neq\cf(\lambda)}$ is stationary, then  $\clubsuit^-(S)$ holds;
\item If $\square_\lambda^*$ holds and $S$ reflects stationarily often, then  $\clubsuit^-(S)$ holds.
\end{itemize}
\end{fact}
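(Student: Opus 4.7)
The plan is to prove the two bullets separately, both following the \emph{collecting counterexamples} pattern seen throughout Subsection~\ref{sectiontour}. For the first bullet, I would begin by reducing to the case $S \s E^{\lambda^+}_\theta$ for a fixed regular cardinal $\theta < \lambda$: this reduction is possible because $\cf(\delta) \ne \cf(\lambda)$ for every $\delta \in S$, combined with the regularity of $\cf(\delta)$, forces $\theta < \lambda$ whether $\lambda$ is regular or singular (the case $\theta = \lambda$ would demand $\lambda$ regular and then be ruled out by $\theta \ne \cf(\lambda) = \lambda$). Further shrinking to $\delta \ge \theta^+$ guarantees $\cf(\delta) = \theta < \lambda = |\delta|$, so the necessary condition for $\clubsuit^-(S)$ from the remark is automatically met. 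I would then define each $\mathcal A_\delta$ as a family of $\le \lambda$ cofinal $\theta$-sequences in $\delta$, built from an initial natural family (say, $\theta$-sequences derived from a fixed bijection $\delta \leftrightarrow |\delta|$) and iteratively enlarged to block counterexamples. For the capture argument, I would assume toward contradiction that no refinement yields $\clubsuit^-(S)$ and construct a $\s$-decreasing chain of cofinal counterexamples $\langle Z_i \mid i < \cf(\lambda)^+\rangle$, exploiting the mismatch $\theta \ne \cf(\lambda)$ via a Fodor-style stabilization argument to force a contradiction within fewer than $\lambda$ stages.

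For the second bullet, I would fix a $\square_\lambda^*$-sequence $\langle \mathcal C_\alpha \mid \alpha < \lambda^+\rangle$ with the usual coherence and with $|\mathcal C_\alpha| \le \lambda$ and $\otp(C) \le \lambda$ for every $C \in \mathcal C_\alpha$. Since $S$ reflects stationarily often, $\Tr(S)$ is stationary, and for $\delta \in \Tr(S)$ I would set
$$\mathcal A_\delta := \{ C \cap \alpha \mid C \in \mathcal C_\delta,\ \alpha \in \acc(C) \cup \{\delta\},\ |C \cap \alpha| < |\delta|\},$$
and $\mathcal A_\delta := \emptyset$ elsewhere, so that the sizing requirements are met. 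For capture, given a cofinal $Z \s \lambda^+$, let $D := \{ \alpha < \lambda^+ \mid \sup(Z \cap \alpha) = \alpha\}$ be the canonical club; choose $\delta \in \Tr(S) \cap D$ with $S \cap \delta$ stationary in $\delta$; at each $\alpha \in S \cap \delta \cap D$ with $\cf(\alpha) < |\alpha|$, use $\mathcal C_\alpha$ together with the cofinality of $Z \cap \alpha$ in $\alpha$ to select some $C_\alpha \in \mathcal C_\alpha$ with $\sup(C_\alpha \cap Z) = \alpha$; then a pressing-down argument using the uniform $\le \lambda$ bound on $|\mathcal C_\beta|$ and the $\s$-coherence of $\square_\lambda^*$ should produce a common $C \in \mathcal C_\delta$ and some $\alpha \in \acc(C)$ for which $C \cap \alpha \in \mathcal A_\delta$ witnesses capture of $Z$.

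The main obstacle in both parts is making the capture argument rigorous. For the first bullet, stabilization of the counterexample iteration within fewer than $\lambda$ stages is the technical heart, and it is precisely where the cofinality mismatch $\theta \ne \cf(\lambda)$ becomes essential — without it, the iteration can be constructed to run indefinitely, reflecting the well-known independence of $\clubsuit$-style principles at $E^{\lambda^+}_{\cf(\lambda)}$. For the second bullet, extracting a single $C \in \mathcal C_\delta$ from the local witnesses $\{C_\alpha\}$ via pressing-down is the delicate step, and one must separately verify that reflection provides access to $\alpha \in S \cap \delta$ with $\cf(\alpha) < |\alpha|$ — the only places where local capture can succeed, in view of the aforementioned necessary condition.
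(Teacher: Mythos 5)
This statement is only cited in the paper (as a \textsf{Fact} taken from \cite{paper07}), so there is no proof in this text to compare against; I can only assess whether your blind argument is sound, and I see genuine gaps in both bullets.

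For the first bullet, the reduction to a fixed regular $\theta<\lambda$ with $\theta\neq\cf(\lambda)$ is fine, but the ``collecting counterexamples'' step as you set it up does not go through. In the warm-up section that strategy always builds a $\s$-decreasing chain of \emph{clubs}, where the limit stages survive because $\lambda^+$ is regular. You propose a chain of \emph{cofinal} counterexamples $\langle Z_i\mid i<\cf(\lambda)^+\rangle$, but intersections of cofinal sets need not be cofinal, so the chain cannot be carried past limit stages; and it is not clear what ``iteratively enlarging $\mathcal A_\delta$ to block'' the counterexample $Z_i$ should mean if the blocking set would itself have size $\lambda$. The route that does work first converts the cofinal target into a club: from $Z$ pass to $D:=\acc^+(Z)$, use Fact~\ref{shelahcg} on $E^{\lambda^+}_\theta$ (applicable since $\theta^+<\lambda^+$) to find $\delta$ with $C_\delta\s D$, note that each gap $(\sup(C_\delta\cap\gamma),\gamma)$ for $\gamma\in\nacc(C_\delta)$ then meets $Z$, and finally use a bijection $\pi_\delta:\lambda\to\delta$ to produce the nested family $\{\pi_\delta[\lambda_\xi]\mid\xi<\cf(\lambda)\}$ (or $\{\pi_\delta[i]\mid i<\lambda\}$ when $\lambda$ is regular). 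The cofinality mismatch is used exactly to find a single member of this family that picks up a $Z$-point from cofinally many gaps: if $\theta<\cf(\lambda)$ take a sup; if $\theta>\cf(\lambda)$ use pigeonhole on the map $j\mapsto$ index of the gap-point. Your restriction of $\mathcal A_\delta$ to $\theta$-sequences is also an unnecessary handicap — $\clubsuit^-$ allows $A\in[\delta]^{<|\delta|}$, and the natural witnesses here have size anywhere up to $<\lambda$.

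For the second bullet the problems are more serious. First, an indexing issue: you define $\mathcal A_\delta$ only for $\delta\in\Tr(S)$, but $\clubsuit^-(S)$ requires a sequence indexed by $S$, and $\Tr(S)$ is in general not a subset of $S$; your capture then lands at some $\delta\in\Tr(S)$ rather than at an element of $S$. Second, the local selection step fails: for a cofinal $Z\s\lambda^+$ you want $C_\alpha\in\mathcal C_\alpha$ with $\sup(C_\alpha\cap Z)=\alpha$, but $Z\cap\alpha$ is merely cofinal (not club) in $\alpha$ and can easily be chosen disjoint from all $\le\lambda$ clubs in $\mathcal C_\alpha$; exactly this club-versus-cofinal distinction is what makes $\clubsuit^-$ nontrivial. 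Third, the sets $C\cap\alpha$ for $\alpha\in\acc(C)$ with $\alpha<\delta$ are bounded in $\delta$, so they can never witness $\sup(A\cap Z)=\delta$; the only potentially useful member of your $\mathcal A_\delta$ is $C$ itself, which brings you back to the second problem. What is needed is an encoding through the non-accumulation-point gaps of the local clubs together with auxiliary data (e.g.\ the bijection $\pi_\delta$), not the clubs $C\in\mathcal C_\delta$ themselves. As it stands, the capture argument in this bullet does not work.
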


In reading the statement of the next two propositions, keep in mind Lemma~\ref{taillemma}.
\begin{prop}\label{prop217} Suppose that $\clubsuit^-(S)$ holds for a given stationary $S\s \kappa$. 

	Then, for every stationary $T\s\kappa$, $\cg(S,T,\kappa)$ holds. 
\end{prop}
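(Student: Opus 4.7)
Plan: First I would extract from $\clubsuit^-(S)$ a $C$-sequence witnessing the weak form $\cg(S,T,-)$, and then upgrade to the tail-guessing $\cg(S,T,\kappa)$ via the iteration of Lemma~\ref{weaktotail}. For each $\delta\in S$, enumerate $\mathcal{A}_\delta=\{A^\delta_\alpha\mid\alpha<\theta_\delta\}$ (with $\theta_\delta\le|\delta|$), and set
$$C_\delta:=\cl\Bigl(\bigcup_{\alpha<\theta_\delta}(A^\delta_\alpha\cap T)\Bigr)$$
whenever this union is cofinal in $\delta$; otherwise let $C_\delta$ be any club in $\delta$. Given a club $D\s\kappa$, the set $Z:=D\cap T$ is cofinal in $\kappa$ since $T$ is stationary, so $\clubsuit^-(S)$ furnishes some $\delta\in S$ and $A\in\mathcal{A}_\delta$ with $\sup(A\cap D\cap T)=\delta$; in particular $A\cap T$ is cofinal in $\delta$, placing $\delta$ in the non-trivial branch of the construction, which yields $\sup(C_\delta\cap D\cap T)=\delta$. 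Hence $\vec C$ witnesses $\cg(S,T,-)$.

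For the upgrade, the Remark following Definition~\ref{clubminus} gives that $\{\delta\in S\mid\cf(\delta)<|\delta|\}$ is stationary, and Fodor's lemma then pins $|\delta|$ to a fixed $\lambda<\kappa$ on a stationary $S'\s S$. On $S'$, since each $A\in\mathcal{A}_\delta$ has $|A|<\lambda$ and $|\mathcal{A}_\delta|\le\lambda$, we have $|C_\delta|\le\lambda$; after truncating each $C_\delta$ to a closed cofinal subset of ordertype at most $\lambda$ that still meets $A\cap T$ cofinally for each cofinal $A\in\mathcal{A}_\delta$, the restricted sequence is $\lambda$-bounded and witnesses $\cg_\lambda(S',T,-)$. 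Lemma~\ref{weaktotail} then yields $\cg_\lambda(S',T,\kappa)$, and Proposition~\ref{monotonicty} upgrades this to $\cg(S,T,\kappa)$.

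The main obstacle I foresee is the edge case $\kappa=\lambda^+$, where the hypothesis $\lambda^+<\kappa$ of Lemma~\ref{weaktotail} fails. To handle it, I would revisit the proof of Lemma~\ref{weaktotail}: the pigeonhole on the $\s$-decreasing chain $\langle C_\delta\cap D_i\cap T\rangle$ relies only on $|C_\delta|\le\lambda$, so the iteration can be truncated to length $\lambda+2$; each intermediate limit-stage intersection involves fewer than $\lambda^+=\kappa$ clubs and hence remains club in $\kappa$, and the truncated iteration recovers the upgrade in this case as well. A secondary subtlety is arranging the truncation so that the cofinal-intersection property is not destroyed, but since $|\mathcal{A}_\delta|\le\lambda$ and $\cf(\delta)\le\lambda$, one can interleave a cofinal selection from $A\cap T$ for each cofinal $A\in\mathcal{A}_\delta$ while keeping the total ordertype below $\lambda^+$.
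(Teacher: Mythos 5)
Your reduction to Lemma~\ref{weaktotail} has a fatal gap at the step where you claim Fodor's lemma ``pins $|\delta|$ to a fixed $\lambda<\kappa$ on a stationary $S'\s S$.'' The map $\delta\mapsto|\delta|$ is regressive only on non-cardinals, and the condition $\cf(\delta)<|\delta|$ in the Remark does \emph{not} imply $|\delta|<\delta$ (consider $\delta=\aleph_\omega$). When $\kappa$ is inaccessible, the cardinals below $\kappa$ form a club, so stationarily many $\delta\in S$ satisfy $|\delta|=\delta$, and no fixed $\lambda<\kappa$ can bound $|\delta|$ on a stationary set. Without such a uniform bound you never obtain a $\lambda$-bounded sequence, so Lemma~\ref{weaktotail} (or your truncated variant of it) simply does not apply. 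The case $\kappa=\lambda^+$, which you flag as the delicate edge case, is in fact the only case in which your Fodor step is even vacuously available.

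There is a second, related problem. You set $C_\delta$ to be the closure of $\bigcup_{\alpha<\theta_\delta}(A^\delta_\alpha\cap T)$; this union has cardinality up to $|\delta|$ (a union of $\le|\delta|$ sets each of size $<|\delta|$), whereas Definition~\ref{clubminus} only bounds each \emph{single} $A\in\mathcal A_\delta$ by $|A|<|\delta|$. The paper's proof avoids both issues by working locally: it first pigeonholes over the enumeration $\langle A_{\delta,i}\mid i<\delta\rangle$ against a $\kappa$-length diagonal intersection $\diagonal_{i<\kappa}E_i$ to isolate a \emph{single} index $i^*$ so that $A_\delta:=A_{\delta,i^*}$ already has the guessing property, preserving $|A_\delta|<|\delta|\le\delta$. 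It then runs a $\kappa$-length $\subseteq$-decreasing iteration of clubs, and the contradiction is extracted by a pigeonhole \emph{local to each $\delta$}: picking $\beta_i\in(A_\delta\cap D_i\cap T)\setminus D_{i+1}$ for $i<\delta$ and colliding two of them because there are $\delta$ indices but only $|A_\delta|<\delta$ possible values. No uniform cardinal $\lambda$ is ever needed. Passing to the union destroys the strict inequality $|A_\delta|<\delta$ and hence this local pigeonhole as well, so your construction cannot be repaired simply by re-running the paper's second claim on your $C_\delta$'s.
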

\begin{proof}
Let $\langle \mathcal A_\delta\mid \delta\in S\rangle$ be a $\clubsuit^-(S)$-sequence. For each $\delta\in S$,
fix an enumeration $\{ A_{\delta,i}\mid i<\delta\}$ of $\mathcal A_\delta$.
\begin{claim} There exists $i<\kappa$ such that, for every club $E\s\kappa$, there is $\delta\in S$ with $\sup(A_{\delta,i}\cap E\cap T)=\delta$.
\end{claim}
\begin{why} Otherwise, for each $i<\kappa$, we may pick a counterexample $E_i$. Let $Z:=T\cap \diagonal_{i<\kappa}E_i$.
Pick $\delta\in S$ and $i<\delta$ such that $\sup(A_{\delta, i}\cap Z)=\delta$. Since $Z\cap(i,\delta)\s T\cap E_i$, we have that $\sup(A_{\delta,i}\cap E_i\cap T)=\delta$. This contradicts the choice of $E_i$.
\end{why}

Fix $i$ as given by the preceding claim, and denote $A_\delta:=A_{\delta,i}$. We shall now make use of the operator $\Phi^B$ from Definition~\ref{firstoperatordefn}.
\begin{claim} There exists a club $D\s\kappa$ such that, for every club $E\s\kappa$, there exists $\delta\in S$ with $\sup(A_{\delta})=\delta$ and $\nacc(\Phi^{D\cap T}(A_{\delta}))\s E$.
\end{claim}
\begin{why} 
    Suppose not. In this case, for every club $D\subseteq \kappa$, there is a club $F^D \subseteq \kappa$ such that for every $\delta \in S$,
either $\sup(A_\delta)<\delta$ or $\nacc(\Phi^{D\cap T}(A_{\delta}))\nsubseteq F^D$.
We construct a $\subseteq$-decreasing sequence $\langle D_i \mid i<\kappa\rangle$ of clubs in $\kappa$ as follows:
\begin{enumerate}
    \item $D_0 := \kappa$;
    \item $D_{i+1} := D_i \cap F^{D_i}$;
    \item for $i\in\acc(\kappa)$, $D_i := \bigcap_{i'< i} D_{i'}$.
\end{enumerate}

Let $E:=\diagonal_{i<\kappa}D_i$. Pick $\delta\in S$ with $\sup(A_\delta\cap E\cap T)=\delta$.
For every $i<\delta$, $\delta\in\acc^+(D_i\cap T)$ and $\nacc(\Phi^{D_i\cap T}(A_\delta))\nsubseteq D_{i+1}$,
so that we may pick $\beta_i\in (A_\delta\cap D_i\cap T)\setminus D_{i+1}$.
As $|A_\delta|<\delta$, let us fix $i<j<\delta$ such that $\beta_i=\beta_j$.
So $\beta_i\notin D_{i+1}$ while $\beta_j\in D_j\s D_{i+1}$. This is a contradiction.
\end{why}

Let $D$ be given by the preceding claim. Write $C_\delta:=\Phi^{D\cap T}(A_\delta)$.
Then $\langle C_\delta\mid\delta\in S\rangle$ witness $\cg(S,T,\kappa)$.
\end{proof}
\begin{cor}\label{cor36} For every uncountable cardinal $\lambda$, every stationary $S\s E^{\lambda^+}_{\neq\cf(\lambda)}$,
and every stationary $T\s\lambda^+$, $\cg(S,T,\kappa)$ holds.\qed
\end{cor}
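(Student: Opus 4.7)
The plan is to derive this as an immediate consequence of the two results immediately preceding it, namely the Fact giving sufficient conditions for $\clubsuit^-(S)$ and Proposition~\ref{prop217} converting $\clubsuit^-(S)$ into club-guessing against an arbitrary target stationary set $T$. There is essentially no new work to do beyond checking that the hypothesis of the Fact is met.

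First I would observe that since $S$ is given to be a stationary subset of $E^{\lambda^+}_{\neq\cf(\lambda)}$, we trivially have that $S\cap E^{\lambda^+}_{\neq\cf(\lambda)}=S$ is stationary in $\lambda^+$. The first bullet of the Fact preceding Proposition~\ref{prop217} therefore applies and yields that $\clubsuit^-(S)$ holds. Note that this requires $\lambda$ to be an infinite cardinal, which is guaranteed by the hypothesis that $\lambda$ is uncountable; the requirement that a witness sequence consists of subsets of size $<|\delta|$ for $\delta\in S$ is consistent with our set-up since every $\delta\in E^{\lambda^+}_{\neq\cf(\lambda)}$ satisfies $\cf(\delta)<\lambda=|\delta|$.

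Having secured $\clubsuit^-(S)$, I would then invoke Proposition~\ref{prop217} with the given stationary $T\s\lambda^+$ to conclude that $\cg(S,T,\kappa)$ holds, where of course $\kappa=\lambda^+$ in this context. This completes the argument. The only potential subtlety, which is not really an obstacle, is to make sure that the conventions around Definition~\ref{maindefn} (with $\sigma=\kappa$ and $\vec J$ taken to be the sequence of bounded ideals) align with the statement of Proposition~\ref{prop217}, but this is exactly the default convention spelled out in Convention~\ref{convention-omissions}.
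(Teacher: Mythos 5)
Your proof is correct and is precisely the argument the paper intends, as signalled by the bare \qed on the corollary: since $S\s E^{\lambda^+}_{\neq\cf(\lambda)}$, the set $S\cap E^{\lambda^+}_{\neq\cf(\lambda)}=S$ is stationary, so the first bullet of the Fact yields $\clubsuit^-(S)$, and Proposition~\ref{prop217} then gives $\cg(S,T,\kappa)$ for every stationary $T\s\lambda^+$. Your side remarks about the size constraint in Definition~\ref{clubminus} and about the conventions behind $\cg(S,T,\kappa)$ are accurate but not strictly needed to invoke the two cited results.
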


We end this section by saying a few words about the following natural generalisation of Definition~\ref{maindefn}.
\begin{defn} \label{defnjkappa}For an ideal $J_\kappa$ over $\kappa$,
$\cg_\xi(J_\kappa, T, \sigma,\vec J)$ asserts the existence of
a $C$-sequence, $\vec C=\langle C_\delta\mid\delta <\kappa\rangle$ such that
$\{\delta<\kappa\mid \otp(C_\delta)>\xi\}\in J_\kappa$, and such that
for every club $D\s\kappa$, 
$$\{\delta<\kappa\mid \{\beta < \delta \mid \suc_\sigma(C_\delta \setminus \beta) \subseteq D\cap T\}\in J_\delta^+\}\in J_\kappa^+.$$
\end{defn}

First, the proof of Lemma~\ref{taillemma} makes it clear that 
the usual principle $\cg_\xi(S, T, \sigma,\vec J)$ coincides with $\cg_\xi(J_\kappa, T, \sigma,\vec J)$ for $J_\kappa:=\ns_\kappa\restriction S$. That is, obtaining a single witness $\delta \in S$ to an instance of guessing a club is equivalent to obtaining stationarily-many witnesses $\delta \in S$.

Second, in any of the upcoming results that involve pumping an instance $\cg_\xi(S,\ldots)$ into a better instance $\cg_{\bar\xi}(S,\ldots)$,
no new ideas would be needed in order to get the analogous result where $S$ is replaced by an abstract $\kappa$-complete ideal $J_\kappa$ over $\kappa$.
In fact, for many of the results, letting $J_\kappa$ be an $\aleph_1$-indecomposable ideal (see for instance \cite[\S2]{MR2652193}) over $\kappa$ would be sufficient. For this reason we eschew the added generality 
of Definition~\ref{defnjkappa} and focus on Definition~\ref{maindefn}.

\section{Coherent sequences}\label{sectioncoherent}
Let us point out some commonalities in the proofs of Lemmas \ref{weaktotail}, \ref{taillemma}, \ref{tailomega} and \ref{lemma215}. 
In all of these, we started with a $C$-sequence, and then we improved it using some operation $\Phi:\mathcal P(\kappa)\rightarrow\mathcal P(\kappa)$.
These similarities leads one to describe abstractly the class of such operations to which the examples we've met  belong, in the hope that known members or properties of this class might be of assistance in future endeavours. 
This class has in fact already been delineated in work of Brodsky and Rinot in \cite{paper29}, where they occurred in the work on constructing trees with prescribed properties by studying how the properties of a $C$-sequence affect the properties of the trees derived from walks on ordinals.

\begin{defn}[\cite{paper29}]\label{defpp}
Let $\mathcal K(\kappa):=\{ x\in\mathcal P(\kappa)\mid x\neq\emptyset\ \&\ \acc^+(x)\s x\ \&\allowbreak\ \sup(x) \notin x\}$
be the set of all closed subsets of some nonzero limit ordinal $\le\kappa$.

An operator $\Phi:\mathcal K(\kappa)\rightarrow\mathcal K(\kappa)$ is a \emph{postprocessing function}  if for every $x\in\mathcal K(\kappa)$:
\begin{enumerate}
\item  $\Phi(x)$ is a club in $\sup(x)$;
\item $\acc(\Phi(x)) \s \acc(x)$;
\item $\Phi(x)\cap\bar\alpha=\Phi(x\cap\bar\alpha)$ for every $\bar\alpha\in\acc(\Phi(x))$.
\end{enumerate}
\end{defn}
\begin{remark} By the first clause, $\otp(\Phi(x))\ge\cf(\sup(x))$, and by the second clause, $\otp(\Phi(x))\le\otp(x)$.
\end{remark}
It is easy to verify that the three operations we met in Section~\ref{sectionwarmingup} --- when their domains are restricted to $\mathcal K(\kappa)$ --- are postprocessing functions.
What's nice about postprocessing functions is that requirement~(iii) implies that they maintain \emph{coherence} features of $C$-sequences. 
Indeed, the theme of this section is to obtain club-guessing sequences which have additional coherence  features. The particular coherence features we consider can be found in the following definition and in Definition~\ref{defnstarcoherence} below.

\begin{defn} \label{defncoherence}
Let $\vec C=\langle C_\delta\mid\delta<\kappa\rangle$ be a $C$-sequence.
\begin{enumerate}
\item For an infinite cardinal $\chi\le\kappa$, we say that $\vec C$ is \emph{$\sqx$-coherent} iff for every $\delta<\kappa$ and $\bar\delta\in\acc(C_\delta)\cap E^\kappa_{\ge\chi}$,
it is the case that $C_{\bar\delta}=C_\delta\cap\bar\delta$;
\item We say that $\vec C$ is \emph{coherent} iff it is $\sqw$-coherent;
\item We say that $\vec C$ is \emph{weakly coherent}
iff for every $\alpha<\kappa$, $|\{ C_\delta\cap\alpha\mid \delta<\kappa\}|<\kappa$.
\end{enumerate}
\end{defn}
It is routine to verify that if $\Phi: \mathcal K(\kappa) \rightarrow \mathcal K(\kappa)$ is a postprocessing function, and $\langle C_\delta \mid \delta < \kappa \rangle$ satisfies any of the above coherence properties, then $\langle \Phi(C_\delta) \mid \delta< \kappa\rangle$ satisfies the same coherence property as well, Clause~(iii) being key in the verification.\footnote{Strictly speaking, $\Phi(C_{\gamma})$ is undefined for $\gamma\in\nacc(\kappa)$, so we make the ad hoc choice that 
$\Phi(C_0):=\emptyset$ and $\Phi(C_{\gamma+1}) := \{\gamma\}$ for all $\gamma<\kappa$ in order to ensure formal correctness of this statement.}

In particular, as a consequence of the use of postprocessing functions, in each of Lemmas \ref{weaktotail}, \ref{taillemma}, \ref{tailomega}, and \ref{lemma215}, if we start with a $C$-sequence with one of the coherence properties above, the exact same proof ensures that the guessing $C$-sequence obtained satisfies the same coherence property. 

As a concrete example, by \cite[Lemma~1.23]{paper29}, every transversal to a $\square_\xi(\kappa,{<}\mu)$-sequence with $\xi<\kappa$ or $\mu<\kappa$ gives an amenable $C$-sequence $\langle C_\delta \mid \delta \in S\rangle$ on a stationary $S \s \kappa$, which can then be supplied to the machinery in Lemma~\ref{lemma215}.
Indeed, in \cite{paper29}, solving Question~16 from \cite{paper_s01} in the affirmative,
a \emph{wide club guessing} theorem was proven using Lemma~\ref{lemma215}.\footnote{The statement of \cite[Lemma~2.5]{paper29} does not mention the parameter $\xi$,
however, its proof is an application of a postprocessing function and hence does not increase ordertypes.}
\begin{fact}[{\cite[Lemma~2.5]{paper29}}]\label{widecg} If $\square_\xi(\kappa,{<}\mu)$ holds for a regular cardinal $\kappa\ge\aleph_2$ and a cardinal $\mu<\kappa$,
then for every stationary $S\s\kappa$, $\square_\xi(\kappa,{<}\mu)$ may be witnessed by a sequence $\langle\mathcal C_\delta\mid\delta<\kappa\rangle$ 
with the added feature that for every club $D\s\kappa$, there exists $\delta\in S$ such that, for every $C\in\mathcal C_\delta$, $\sup(\nacc(C)\cap D)=\delta$.
\end{fact}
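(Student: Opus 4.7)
The plan is to start from the given $\square_\xi(\kappa,{<}\mu)$-sequence $\langle \mathcal{C}^{\mathrm{old}}_\delta \mid \delta < \kappa\rangle$, extract from it an amenable $C$-sequence via a transversal, find a single club $D\s\kappa$ such that the postprocessing function $\Phi_D$ of Definition~\ref{dropdefn} turns this transversal into a club-guesser, and then \emph{uniformly} apply the same $\Phi_D$ to every member of every $\mathcal{C}^{\mathrm{old}}_\delta$. The guiding principle is that $\Phi_D$, being a postprocessing function, preserves every flavour of coherence, so the resulting family will still be a $\square_\xi(\kappa,{<}\mu)$-sequence while the guessing property of $D$ propagates to all of its members.

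Concretely, I would first invoke \cite[Lemma~1.23]{paper29} to obtain an amenable transversal $\langle C_\delta\mid\delta\in S\rangle$ of the given $\square$-sequence. Next, I would apply Lemma~\ref{lemma215} to this transversal to produce a club $D\s\kappa$ such that $\langle\Phi_D(C_\delta)\mid\delta\in S\rangle$ witnesses $\cg(S,\kappa,1)$. Setting $\mathcal{C}_\delta := \{\Phi_D(C) \mid C\in\mathcal{C}^{\mathrm{old}}_\delta\}$ for every $\delta<\kappa$, the postprocessing axioms verify that $\langle \mathcal{C}_\delta\mid\delta<\kappa\rangle$ is still a $\square_\xi(\kappa,{<}\mu)$-sequence: clause~(i) of Definition~\ref{defpp} keeps each $\Phi_D(C)$ a club in $\delta$, the remark following that definition gives $\otp(\Phi_D(C))\le\otp(C)\le\xi$, and trivially $|\mathcal{C}_\delta|\le|\mathcal{C}^{\mathrm{old}}_\delta|<\mu$. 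For coherence, given any $\bar\delta\in\acc(\Phi_D(C))$, clause~(ii) yields $\bar\delta\in\acc(C)$, so $C\cap\bar\delta\in\mathcal{C}^{\mathrm{old}}_{\bar\delta}$, and then clause~(iii) yields $\Phi_D(C)\cap\bar\delta=\Phi_D(C\cap\bar\delta)\in\mathcal{C}_{\bar\delta}$.

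The main obstacle is ensuring that the single $D$ chosen at the second step makes \emph{every} $\Phi_D(C)\in\mathcal{C}_\delta$ a guesser, and not merely the image of the fixed transversal. I expect this is handled by rerunning the diagonalisation of Lemma~\ref{lemma215} against the whole family rather than against one transversal: a would-be counterexample club $F^D$ is required to simultaneously defeat every $C\in\bigcup_\delta\mathcal{C}^{\mathrm{old}}_\delta$, but since $|\mathcal{C}^{\mathrm{old}}_\delta|<\mu<\kappa$, the $\omega_1$-step decreasing sequence $\langle D_i\mid i\le\omega_1\rangle$ can be arranged to absorb the obstructions posed by every club in the family at once, and the well-foundedness step of Lemma~\ref{lemma215} then produces the required contradiction pointwise for each such $C$. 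The hypothesis $\kappa\ge\aleph_2$ remains crucial to accommodate the $\omega_1$-many intersections of clubs.
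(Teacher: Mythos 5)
Your outline faithfully follows the route the paper itself sketches just before the statement (transversal $\to$ amenable $C$-sequence via \cite[Lemma~1.23]{paper29} $\to$ Lemma~\ref{lemma215} $\to$ apply $\Phi_D$ uniformly), and your verification that $\Phi_D$ preserves the $\square_\xi(\kappa,{<}\mu)$ structure is correct. You also correctly isolate the real difficulty: a single run of Lemma~\ref{lemma215} makes the transversal $\langle\Phi_D(C_\delta)\mid\delta\in S\rangle$ a club-guesser but says nothing about the other clubs in $\mathcal{C}_\delta^{\mathrm{old}}$.

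The patch you gesture at, however, does not close the gap. The contradiction in Lemma~\ref{lemma215} is anchored in the amenability of the \emph{fixed} transversal $\vec C$: one picks $\gamma\in D_{\omega_1}\cap\delta\setminus C_\delta$ and then analyses $\eta:=\min(C_\delta\setminus\gamma)$. In the family version, the bad witness at stage $i$ is some $C^{(i)}\in\mathcal{C}^{\mathrm{old}}_\delta$ that need not equal $C_\delta$. Even after pigeonholing to a constant bad witness $C^*$ --- which, incidentally, requires the chain of clubs to have length greater than $|\mathcal{C}^{\mathrm{old}}_\delta|$, so $\omega_1$-many steps do not suffice once $\mu>\aleph_1$; one needs $\mu$-many --- you would need $\gamma\in D\setminus C^*$, whereas amenability of $\vec C$ only supplies $\gamma\notin C_\delta$. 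Reconciling the changing witness with the fixed amenable transversal requires an additional idea (for instance, extracting a family form of amenability from the nontriviality clause of $\square_\xi(\kappa,{<}\mu)$, or choosing the transversal adaptively), and your proposal does not supply it. As written, the ``absorb the obstructions of every club in the family'' step is an expectation rather than an argument.
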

\begin{recall} $\square_\xi(\kappa,{<}\mu)$ asserts the existence of a sequence $\langle \mathcal C_\alpha\mid\alpha<\kappa\rangle$ satisfying all of the following:
\begin{itemize}
\item for every limit ordinal $\alpha<\kappa$, $1<|\mathcal C_\alpha|<\mu$, and each $C\in\mathcal C_\alpha$ is club in $\alpha$ with $\otp(C)\le\xi$;
\item for every $\alpha<\kappa$, $C\in\mathcal C_\alpha$, and $\bar\alpha\in\acc(C)$, $C\cap\bar\alpha\in\mathcal C_{\bar\alpha}$;
\item for every club $D$ in $\kappa$, there exists some $\alpha\in\acc(D)$ such that $D\cap\alpha\notin\mathcal C_\alpha$.
\end{itemize}
\end{recall}
\begin{remark} 
The instance $\square_\kappa(\kappa,{<}2)$ is better known as $\square(\kappa)$,
the instance $\square_\lambda(\lambda^+,{<}2)$ is better known as $\square_\lambda$,
and the instance $\square_\lambda(\lambda^+,{<}\lambda^+)$ is better known as $\square_\lambda^*$.
\end{remark}

Note that $\square_\lambda$ holds iff there exists a coherent $\lambda$-bounded $C$-sequence over $\lambda^+$,
and that $\square^*_\lambda$ holds iff there exists a weakly coherent $\lambda$-bounded $C$-sequence over $\lambda^+$.
The following terminology is also quite useful.
\begin{defn} A $C$-sequence $\vec C=\langle C_\delta\mid\delta<\lambda^+\rangle$ is a \emph{transversal for $\square^*_\lambda$}
iff it is $\lambda$-bounded and weakly coherent.
\end{defn}

A special case of Fact~\ref{widecg} states that if $\square(\kappa)$ holds and $\kappa\ge\aleph_2$,
then for every stationary $S\s\kappa$,
there exists a $\square(\kappa)$-sequence $\vec C$ such that 
$\vec C\restriction S$  witnesses $\cg(S,\kappa)$.
Replacing $\square(\kappa)$ by $\square_\lambda$, better forms of guessing are available:

\begin{fact}[{\cite[Corollary~2.4]{paper11}}] Suppose that $\lambda$ is an uncountable cardinal.

Then $\square_\lambda$ holds iff there exists a coherent $\lambda$-bounded $C$-sequence $\langle C_\delta\mid\delta<\lambda^+\rangle$
with the feature that
for every club $D\s\lambda^+$ and every $\theta\in\acc(\lambda)$,
there exists some $\delta<\lambda^+$ with $\otp(C_\delta)=\theta$ such that $C_\delta\s D$.
\end{fact}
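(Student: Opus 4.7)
The reverse implication is immediate, since any such $C$-sequence is by definition coherent and $\lambda$-bounded over $\lambda^+$, hence witnesses $\square_\lambda$. For the forward direction, fix any $\square_\lambda$-sequence $\vec C^0=\langle C^0_\delta\mid\delta<\lambda^+\rangle$. My plan rests on the following \emph{coherence amplification}: if $\delta<\lambda^+$ satisfies $C^0_\delta\s D$ and $\theta<\otp(C^0_\delta)$ is a nonzero limit ordinal, then the $\theta$-th element $\bar\delta$ of $C^0_\delta$ lies in $\acc(C^0_\delta)$, so coherence yields $C^0_{\bar\delta}=C^0_\delta\cap\bar\delta$; in particular $\otp(C^0_{\bar\delta})=\theta$ and $C^0_{\bar\delta}\s D$, precisely what the target statement demands for the pair $(\theta,D)$. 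Consequently, the task reduces to producing a coherent $\lambda$-bounded $C$-sequence $\vec C$ such that, for every club $D\s\lambda^+$ and every $\theta\in\acc(\lambda)$, there is some $\delta<\lambda^+$ with $C_\delta\s D$ and $\otp(C_\delta)>\theta$.

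To achieve the reduced statement, I would apply postprocessing to $\vec C^0$. First, invoke Fact~\ref{widecg} to arrange that $\vec C^0$ itself satisfies the weak guessing property ``$\sup(\nacc(C^0_\delta)\cap D)=\delta$'' for some $\delta$ in any preassigned stationary subset of $\lambda^+$. Then, following the template of Lemma~\ref{weaktotail} and Lemma~\ref{taillemma}, isolate a single club $B\s\lambda^+$ such that the sequence $\vec C$ defined by $C_\delta:=\Phi^{B}(C^0_\delta)$ satisfies the strong guessing ``$C_\delta\s D$'' for stationarily many $\delta\in E^{\lambda^+}_\mu$ at each regular uncountable cofinality $\mu<\lambda$, simultaneously across all clubs $D$. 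Since $\Phi^{B}$ is a postprocessing function in the sense of Definition~\ref{defpp}, coherence and $\lambda$-boundedness are preserved. Using $\otp(C_\delta)\ge\cf(\delta)=\mu$, guessing at cofinality $\mu$ yields $\otp(C_\delta)>\theta$ for every $\theta<\mu$.

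The principal obstacle is handling $\theta\in\acc(\lambda)$ that are not dominated by any regular uncountable $\mu<\lambda$ to which Corollary~\ref{corfullcg} applies---namely, the \emph{top-cofinality} regime. For regular $\lambda$, $\lambda$-boundedness forces $\otp(C^0_\delta)=\lambda$ on all of $E^{\lambda^+}_\lambda$, so one needs guessing on this set; but the standard postprocessing machinery requires $\lambda^+<\kappa$, a hypothesis that collapses here. To bridge this gap, I would exploit the amenability of $\vec C^0$ (Fact~\ref{amenfact}) via Lemma~\ref{lemma215} to extract weak guessing on $E^{\lambda^+}_\lambda$, and then use coherence to upgrade it to strong guessing. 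For singular $\lambda$, one instead works with $E^{\lambda^+}_{\cf(\lambda)}$, first choosing the $\square_\lambda$-sequence so that $\otp(C^0_\delta)=\lambda$ for club-many $\delta\in E^{\lambda^+}_{\cf(\lambda)}$, and then running the analogous argument. I expect this top-cofinality step to be the technically most delicate portion of the proof, since it is exactly where the additional input of $\square_\lambda$---beyond what Corollary~\ref{corfullcg} provides---genuinely enters.
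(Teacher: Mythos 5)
Your coherence amplification is the right first move: if $\vec C$ is coherent, $C_\delta\s D$, and $\theta\le\otp(C_\delta)$ is a nonzero limit, then the $\theta$-th element $\bar\delta$ of $C_\delta$ lies in $\acc(C_\delta)$, so $C_{\bar\delta}=C_\delta\cap\bar\delta\s D$ and $\otp(C_{\bar\delta})=\theta$. This correctly reduces the problem to showing that, for every club $D$, the set $\{\otp(C_\delta)\mid C_\delta\s D\}$ is cofinal in $\lambda$. The gap lies entirely in your treatment of that reduced task, and it is fatal in exactly the cases where the statement is not trivial.

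First, your ``lower cofinality'' step gives nothing when $\lambda$ is a successor: if $\lambda=\nu^+$ then every regular uncountable $\mu<\lambda$ is at most $\nu$, so guessing at cofinality $\mu$ with $\otp(C_\delta)=\mu$ only covers $\theta<\nu$, missing all $\theta\in[\nu,\lambda)$; and if $\lambda=\omega_1$ there are no regular uncountable $\mu<\lambda$ at all, so this step is vacuous. Second, your fallback for the ``top-cofinality regime'' is not a proof. Passing from the weak guessing $\cg(E^{\lambda^+}_\lambda,\kappa,1)$ furnished by Lemma~\ref{lemma215} to tail/strong guessing is precisely the content of Lemma~\ref{weaktotail}, whose recursion runs for $\lambda^+$ steps and hence needs $\lambda^+<\kappa$ to converge; coherence does not shorten it. Moreover, the route is provably blocked: strong club guessing on $E^{\lambda^+}_\lambda$ with $\otp(C_\delta)=\lambda$ would in particular yield $\cg_\lambda(E^{\lambda^+}_\lambda,\lambda^+,2)$, yet Remark~\ref{asperoremark} exhibits a model of $\square_{\omega_1}$ in which $\cg_{\omega_1}(E^{\aleph_2}_{\aleph_1},\omega_2,2)$ fails. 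So any argument from $\square_\lambda$ that passes through strong guessing on $E^{\lambda^+}_\lambda$ is proving a non-theorem. (Note also that the statement itself never forces you onto $E^{\lambda^+}_\lambda$: the required $\delta$ satisfies $\cf(\delta)=\cf(\theta)<\lambda$, and so does any witness $\delta^*$ with $\theta\le\otp(C_{\delta^*})<\lambda$.) What is actually missing is a mechanism that produces, at \emph{small} cofinalities, guessed $\delta$ with $C_\delta\s D$ and $\otp(C_\delta)$ cofinal in $\lambda$, whereas the $\Phi^B$/$\Phi_D$ machinery you invoke systematically drives $\otp(C_\delta)$ down toward $\cf(\delta)$. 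For the test case $\lambda=\omega_1$, your proposal as written offers no such mechanism.
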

\begin{remark}
Note that for a  $\vec C$ as above,
the map $\delta\mapsto\otp(C_\delta)$
yields a canonical partition of $\acc(\lambda^+)$
into $\lambda$-many pairwise disjoint stationary sets.
\end{remark}

An inspection of the proofs of \cite[Lemma~2.8]{paper11} and Proposition~\ref{prop217} makes it clear that the following holds true.

\begin{thm}\label{thm310} Suppose that $\lambda$ is an uncountable cardinal, and $S,T$ are stationary subsets of $\lambda^+$.
Suppose also that either $S\cap E^{\lambda^+}_{\neq\cf(\lambda)}$ is stationary or that $\Tr(S)$ is stationary.
Then:
\begin{enumerate}[(1)]
\item $\square_\lambda$ holds iff there exists a coherent $\lambda$-bounded $C$-sequence $\langle C_\delta\mid\delta<\lambda^+\rangle$
with the feature that
for every club $D\s\lambda^+$, there exists a $\delta\in S$ such that $\nacc(C_\delta)\s D\cap T$;
\item $\square^*_\lambda$ holds iff there exists a weakly coherent $\lambda$-bounded $C$-sequence $\langle C_\delta\mid\delta<\lambda^+\rangle$
with the feature that
for every club $D\s\lambda^+$, there exists a $\delta\in S$ such that $\nacc(C_\delta)\s D\cap T$.\qed
\end{enumerate}
\end{thm}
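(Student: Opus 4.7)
The backward implications are immediate: any coherent (resp.\ weakly coherent) $\lambda$-bounded $C$-sequence on $\lambda^+$ is, by definition, a $\square_\lambda$-sequence (resp.\ a transversal for $\square^*_\lambda$). So only the forward directions need work, and we treat them in parallel.

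The first step is to observe that either alternative on $S$ yields $\clubsuit^-(S)$. Under the first alternative this is the first bullet of the Fact preceding Proposition~\ref{prop217}, applied with $\lambda$. Under the second, one has $\square^*_\lambda$ (from $\square_\lambda$ in case~(1), and as a hypothesis in case~(2)) together with the statement that $S$ reflects stationarily often (since $\Tr(S)$ is stationary), so the second bullet of the same Fact applies. Fix accordingly a $\clubsuit^-(S)$-sequence $\langle \mathcal A_\delta\mid\delta\in S\rangle$ with enumerations $\{A_{\delta,i}\mid i<\delta\}$. A verbatim repetition of the first claim in the proof of Proposition~\ref{prop217} then yields an index $i^*<\lambda^+$ such that, for every club $E\s\lambda^+$, there is $\delta\in S$ with $\sup(A_{\delta,i^*}\cap E\cap T)=\delta$; write $A_\delta := A_{\delta,i^*}$.

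Next I would fix a coherent (resp.\ weakly coherent) $\lambda$-bounded $C$-sequence $\vec C^0=\langle C^0_\delta\mid\delta<\lambda^+\rangle$ from $\square_\lambda$ (resp.\ serving as a transversal for $\square^*_\lambda$), and aim to produce a postprocessing function $\Phi$ for which $\vec C:=\langle\Phi(C^0_\delta)\mid\delta<\lambda^+\rangle$ has the desired nacc-inclusion guessing. Since $\Phi$ is a postprocessing function, $\vec C$ automatically inherits from $\vec C^0$ both the coherence (resp.\ weak coherence) and the bound $\otp\le\lambda$, so the resulting sequence will meet all the structural requirements. To find $\Phi$, I would mirror the counterexample-collection iteration from the second claim in the proof of Proposition~\ref{prop217}, but applied to $\vec C^0$ in place of $\vec A$: supposing no candidate of the form $\Phi^{B\cap T}$ works, build a $\s$-decreasing chain $\langle B_j \mid j<\lambda^+\rangle$ of clubs using counterexample clubs at successors, closing under ${<}\lambda^+$-intersections at limits, and diagonalizing to $B^*:=\diagonal_{j<\lambda^+}B_j$ at the conclusion. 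Applying the sup-guessing property of $\vec A$ with $E:=B^*$ extracts $\delta\in S$ with $A_\delta\cap B_j\cap T$ cofinal in $\delta$ for every $j<\delta$. The pigeonhole $|A_\delta|<\delta\le\lambda^+$ then stabilizes $A_\delta\cap B_j\cap T$ at some $j_\delta<\delta$, and in combination with the stabilization of $\Phi^{B_j\cap T}(C^0_\delta)$ forced by $\otp(C^0_\delta)\le\lambda$, this extracts a contradiction against the choice of the counterexample club $F^{B_{j_\delta}}$, exactly as in Proposition~\ref{prop217}.

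The hard part will be coordinating the sup-guessing on the $\vec A$-side with the postprocessing on the $\vec C^0$-side --- in particular, verifying that the two stabilization stages can be aligned to force the contradiction, while ensuring that the passage from $\Phi^{B_j\cap T}(C^0_\delta)$ to its nacc transmits the guessing. This alignment is precisely the content absorbed into the proof of \cite[Lemma~2.8]{paper11}, which handles the $\square_\lambda$-setting; once the sup-guessing index $i^*$ and the coherent $\vec C^0$ are in hand, that argument transplants without change, and the same reasoning covers the case of transversals for $\square^*_\lambda$.
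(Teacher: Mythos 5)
The high-level decomposition you propose --- derive $\clubsuit^-(S)$ from either stationarity hypothesis, fix a coherent (resp.\ weakly coherent) $\lambda$-bounded $C$-sequence $\vec C^0$ from the square hypothesis, then postprocess $\vec C^0$ to gain the guessing while the postprocessing preserves coherence --- matches the combination the paper gestures at when it says ``an inspection of the proofs of [Lemma~2.8]{paper11} and Proposition~\ref{prop217} makes it clear.'' Your derivation of $\clubsuit^-(S)$ under both alternatives, and the extraction of the index $i^*$ via the first claim of Proposition~\ref{prop217}, are correct.

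However, the concrete mechanism you describe for the counterexample-collection does not close, and the issue is not a mere technicality. The iteration builds clubs $B_j$ and chooses counterexample clubs $F^{B_j}$ calibrated against $\Phi^{B_j\cap T}(C^0_\delta)$, i.e.\ against the square sequence. But the only guessing at your disposal comes from the $\clubsuit^-$-sequence: you get $\delta\in S$ with $\sup(A_\delta\cap B_j\cap T)=\delta$ for $j<\delta$. This says nothing about $\sup(C^0_\delta\cap B_j\cap T)$, and without $\sup(C^0_\delta\cap B_j\cap T)=\delta$ the operator $\Phi^{B_j\cap T}(C^0_\delta)$ is merely a tail of $C^0_\delta$ rather than $\cl(C^0_\delta\cap B_j\cap T)$, so the claimed stabilization of $\Phi^{B_j\cap T}(C^0_\delta)$ ``forced by $\otp(C^0_\delta)\le\lambda$'' is not in fact forced. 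Even for $\delta$ of uncountable cofinality, where $C^0_\delta\cap B_j$ is automatically club in $\delta$, nothing puts $T$ in that intersection cofinally; the relativisation to $T$ is the whole difficulty. Moreover, the pigeonhole in Proposition~\ref{prop217} lives on the small set $A_\delta$ --- the requirement $|A_\delta|<|\delta|$ is precisely what $\clubsuit^-$ buys you --- whereas your counterexample points $\beta_j\in\nacc(\Phi^{B_j\cap T}(C^0_\delta))$ live inside $C^0_\delta$, which has $|C^0_\delta|\le\lambda=|\delta|$ with equality possible, so the ``fix $i<j$ with $\beta_i=\beta_j$'' step is unavailable on the $\vec C^0$-side. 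Your two stabilizations run in parallel on unrelated objects and never interact to produce a contradiction.

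You recognize that this coordination is the crux and then defer it entirely to [Lemma~2.8]{paper11}. Since bridging the $\vec A$-side guessing to the $\vec C^0$-side postprocessing is precisely the content of the theorem, this deferral does not constitute a proof --- it restates the problem. To complete the argument one must describe how the cited lemma actually threads the $\clubsuit^-$-predictor through a postprocessing of the square sequence (or prove that threading directly), and what is written here does not do that.
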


We now turn to present another postprocessing function.

\begin{lemma}[see {\cite[Lemma~4.9]{paper23}}] For every function $f:\kappa\rightarrow[\kappa]^{<\omega}$, 
the operator $\Phi_f:\mathcal K(\kappa)\rightarrow\mathcal K(\kappa)$ defined via:
$$\Phi_f(x):= x\cup \bigcup\{f(\gamma)\cap (\sup(x\cap \gamma), \gamma) \mid \gamma \in \nacc(x)\}$$
is a postprocessing function.\qed
\end{lemma}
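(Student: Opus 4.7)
The plan is to verify the three clauses of Definition~\ref{defpp} in turn. The informal picture to keep in mind is that $\Phi_f$ inserts only the finite set $f(\gamma)\cap(\sup(x\cap\gamma),\gamma)$ into each \emph{gap} of $x$ immediately below a point $\gamma\in\nacc(x)$, while leaving $x$ itself untouched. I expect the key working observation to be that the inserted points sit strictly between consecutive elements of $x$, and that each batch of inserted points is finite, so any accumulation of $\Phi_f(x)$ can only be fed by accumulation of elements of $x$.

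For clause~(i), I would note that $\Phi_f(x)\supseteq x$ is cofinal in $\sup(x)$ and everything added lies below $\sup(x)$, so only closedness in $\sup(x)$ requires work. Given a limit $\bar\alpha<\sup(x)$ with $\sup(\Phi_f(x)\cap\bar\alpha)=\bar\alpha$, the plan is to show $\sup(x\cap\bar\alpha)=\bar\alpha$ and hence $\bar\alpha\in\acc^+(x)\s x\s\Phi_f(x)$. For this, note that an inserted point coming from a $\gamma<\bar\alpha$ lies below $\gamma\in x\cap\bar\alpha$, and that inserted points coming from any single $\gamma\ge\bar\alpha$ form a finite set, so cofinality of $\Phi_f(x)$ in $\bar\alpha$ must be witnessed by cofinality of $x$-points in $\bar\alpha$, as required.

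The same bookkeeping will yield clause~(ii). Given $\bar\alpha\in\acc(\Phi_f(x))$, the argument just sketched shows $\bar\alpha\in\acc^+(x)$, so I only need to rule out $\bar\alpha\notin x$. Were that the case, $\bar\alpha$ would itself be an inserted point lying in some gap $(\sup(x\cap\gamma_0),\gamma_0)$ with $\gamma_0\in\nacc(x)$, forcing $\sup(x\cap\bar\alpha)=\sup(x\cap\gamma_0)<\bar\alpha$ and contradicting $\bar\alpha\in\acc^+(x)$. Hence $\bar\alpha\in x\cap\acc^+(x)=\acc(x)$.

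For clause~(iii), I would fix $\bar\alpha\in\acc(\Phi_f(x))$, invoke~(ii) to get $\bar\alpha\in\acc(x)$, and then verify routinely that $\nacc(x\cap\bar\alpha)=\nacc(x)\cap\bar\alpha$ and that $\sup((x\cap\bar\alpha)\cap\gamma)=\sup(x\cap\gamma)$ for each such $\gamma$. These identifications will match the contributions coming from $\gamma\in\nacc(x)\cap\bar\alpha$ in $\Phi_f(x)$ and in $\Phi_f(x\cap\bar\alpha)$. For $\gamma\in\nacc(x)$ with $\gamma>\bar\alpha$, the fact that $\bar\alpha\in\acc(x)$ yields $\sup(x\cap\gamma)\ge\bar\alpha$, so the gap $(\sup(x\cap\gamma),\gamma)$ is disjoint from $\bar\alpha$ and such $\gamma$ contribute nothing below $\bar\alpha$. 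The only mildly subtle step throughout is the finiteness bookkeeping used in~(i) and~(ii): infinitely many gaps of $x$ may accumulate at $\bar\alpha$, yet each contributes only finitely many inserted points, so one must carefully conclude that only the right endpoints $\gamma$ of these gaps, which already belong to $x$, can produce the required accumulation.
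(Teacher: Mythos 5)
Your verification is correct, and since the paper gives no proof here (the lemma is stated with a $\qed$ and a citation to \cite[Lemma~4.9]{paper23}), there is no in-text argument to compare against; the three clauses of Definition~\ref{defpp} are checked exactly as one should. The one place where I would urge a bit more precision is the closedness/accumulation step in clauses (i) and (ii): when $\sup(x\cap\bar\alpha)=\beta<\bar\alpha$, it is not merely that each $\gamma\ge\bar\alpha$ contributes finitely many points, but that \emph{only one} such $\gamma$ can contribute anything to $(\beta,\bar\alpha)$ at all, namely $\gamma_0:=\min(x\setminus\bar\alpha)$ (for any $\gamma\in\nacc(x)$ with $\gamma>\gamma_0$ one has $\sup(x\cap\gamma)\ge\gamma_0\ge\bar\alpha$, so that gap is disjoint from $\bar\alpha$). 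This makes $\Phi_f(x)\cap(\beta,\bar\alpha)$ a finite set outright, which is what actually forces the contradiction; you state this observation for clause (iii) but it is equally needed for (i) and (ii), and the phrase ``infinitely many gaps of $x$ may accumulate at $\bar\alpha$'' is the wrong picture in the subcase $\sup(x\cap\bar\alpha)<\bar\alpha$, where exactly one gap is relevant.
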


The definition of $\Phi_f$ is motivated by the \emph{regressive functions ideal} $J[\kappa]$ from \cite{paper24},
and the following extension of it from \cite{paper51}.

\begin{defn}[\cite{paper51}]\label{paper51} $J_\omega[\kappa]$
stands for the collection of all subsets $S\s\kappa$ for which there exist a club $C\s\kappa$ and a sequence of functions  $\langle f_i:\kappa\rightarrow[\kappa]^{<\omega}\mid i<\kappa\rangle$ 
with the property that for every $\delta\in S\cap C$, every regressive function $f:\delta\rightarrow\delta$,
and every cofinal subset $\Gamma\s\delta$, there exists an $i<\delta$ such that $$\sup\{ \gamma\in \Gamma\mid f(\gamma)\in f_i(\gamma)\}=\delta.$$
\end{defn}

The next lemma gives a sufficient condition for moving from 
$\cg_\xi(S,\kappa)$ to $\cg_\xi(S,T)$.
\begin{lemma} 
Suppose that $\vec C=\langle C_\delta\mid\delta\in S\rangle$ is a $C$-sequence witnessing $\cg_\xi(S,\kappa)$.
If $S\in J_\omega[\kappa]$,
then for every stationary $T\s\kappa$, there exists a function $f:\kappa\rightarrow[\kappa]^{<\omega}$
such that $\langle \Phi_f(C_\delta)\mid \delta\in S\rangle$ witnesses $\cg_\xi(S,T)$
\end{lemma}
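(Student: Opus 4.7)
The plan is to exploit the $J_\omega[\kappa]$-machinery to enrich each $C_\delta$ by inserting, just below those nonaccumulation points of $C_\delta$ that guess a given club $D$, an element of $D\cap T$ coming from $f$. Fix a sequence $\langle f_i : \kappa \to [\kappa]^{<\omega} \mid i < \kappa\rangle$ and a club $E_\omega \s \kappa$ witnessing $S \in J_\omega[\kappa]$. Fix also, by Solovay's decomposition theorem, a partition $\langle A_i \mid i < \kappa\rangle$ of $\kappa$ into pairwise disjoint stationary sets, and let $\pi : \kappa \to \kappa$ satisfy $\pi(\gamma) = i \Leftrightarrow \gamma \in A_i$. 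Define $f : \kappa \to [\kappa]^{<\omega}$ by $f(\gamma) := f_{\pi(\gamma)}(\gamma)$, which is finite-valued by construction.

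To verify that $\langle \Phi_f(C_\delta) \mid \delta \in S\rangle$ witnesses $\cg_\xi(S,T)$, fix a club $D \s \kappa$ and form the auxiliary club $D^\star := E_\omega \cap D \cap \{\gamma < \kappa \mid D \cap T \cap \gamma \text{ is cofinal in } \gamma\}$. By $\cg_\xi(S,\kappa)$ together with the standard single-witness/stationarily-many-witnesses equivalence recorded at the end of Section~\ref{sectionwarmingup}, obtain a stationary $S^\star \s S$ such that for each $\delta \in S^\star$, the set $\Gamma_\delta := \nacc(C_\delta) \cap D^\star$ is cofinal in $\delta$. For each such $\delta$, define a regressive $g_\delta:\delta\rightarrow\delta$ by assigning to each $\gamma \in \Gamma_\delta$ some element $g_\delta(\gamma) \in D\cap T\cap (\sup(C_\delta\cap\gamma),\gamma)$, which exists because $\gamma\in D^\star$ makes $D\cap T$ cofinal in $\gamma$ and $\gamma\in\nacc(C_\delta)$ makes $\sup(C_\delta\cap\gamma)<\gamma$. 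By $J_\omega[\kappa]$-absorption applied to $g_\delta$ and $\Gamma_\delta$, obtain $i_\delta < \delta$ such that $\Gamma_\delta^{\star\star} := \{\gamma \in \Gamma_\delta \mid g_\delta(\gamma) \in f_{i_\delta}(\gamma)\}$ is cofinal in $\delta$.

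Apply Fodor's Lemma to $\delta\mapsto i_\delta$ on $S^\star$ to pin down some $i^\ast < \kappa$ with $i_\delta = i^\ast$ on a stationary $S^{\star\star}\s S^\star$. For any $\delta\in S^{\star\star}$ additionally lying in $\Tr(A_{i^\ast})$, the set $A_{i^\ast}\cap\delta$ is stationary in $\delta$, and one aims to arrange that $A_{i^\ast}\cap\Gamma_\delta^{\star\star}$ is cofinal in $\delta$. Then for each $\gamma$ in this intersection, $\pi(\gamma)=i^\ast$ forces $f(\gamma)=f_{i^\ast}(\gamma)\ni g_\delta(\gamma)$, so $g_\delta(\gamma)$ is a nonaccumulation point of $\Phi_f(C_\delta)$ belonging to $D\cap T$ and lying strictly between $\sup(C_\delta\cap\gamma)$ and $\gamma$. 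These $g_\delta(\gamma)$'s, occurring cofinally in $\delta$, ensure that for cofinally many $\beta<\delta$, $\min(\Phi_f(C_\delta)\setminus(\beta+1))\in D\cap T$, as required.

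The principal obstacle lies in that final cofinality claim for $A_{i^\ast}\cap\Gamma_\delta^{\star\star}$: the set $\Gamma_\delta^{\star\star}$ is only guaranteed cofinal, not stationary, in $\delta$, whereas $A_{i^\ast}\cap\delta$ is stationary but not club, so their intersection being cofinal is not automatic. Overcoming this likely requires strengthening the partition or the witness sequence to ensure a diagonal compatibility—for instance, replacing the simple partition $\pi$ by a more elaborate packing scheme in which $f(\gamma)$ is defined as $\bigcup_{i\in N_\gamma}f_i(\gamma)$ for a carefully chosen finite $N_\gamma\s\gamma$ whose fibers $\{\gamma\mid i\in N_\gamma\}$ intersect every cofinal subset of every relevant $\delta$ cofinally, or else by incorporating a second Fodor step that, within $\delta$, pins down a cofinal subset of $\Gamma_\delta^{\star\star}$ along which $\pi$ takes the constant value $i^\ast$.
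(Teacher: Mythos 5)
Your final paragraph correctly diagnoses the obstacle, and it is fatal to the approach as written: $\Gamma_\delta^{\star\star}$ is merely cofinal in $\delta$, not stationary, so there is no reason for $A_{i^\ast}\cap\Gamma_\delta^{\star\star}$ to be cofinal in $\delta$ even if $\delta\in\Tr(A_{i^\ast})$. A stationary set in $\delta$ and a cofinal set in $\delta$ can perfectly well intersect boundedly (e.g.\ take $\Gamma_\delta^{\star\star}$ to meet $A_j$ for $j\neq i^\ast$ cofinally and $A_{i^\ast}$ boundedly). Neither of your two suggested repairs fixes this: a finite union $\bigcup_{i\in N_\gamma}f_i(\gamma)$ still leaves you needing to know which indices $i$ to pack into $N_\gamma$ before you have seen $D$; and a second Fodor step inside $\delta$ requires a stationary set on which to apply it, which $\Gamma_\delta^{\star\star}$ is not. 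The deeper issue is that you are trying to produce a single function $f$ that works for all clubs $D$ before the proof sees $D$, whereas the statement only asks for a suitable member of the given family to exist (the conclusion quantifies $f$ existentially, and $f$ is allowed to depend on the whole setup, but the construction should be designed around the counterexample clubs rather than an a priori stationary partition).

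The paper's proof sidesteps all of this by not committing to an $f$ in advance. It shows that some $f_i$ from the witnessing family already works, via a collecting-counterexamples argument: if no $f_i$ works, choose a counterexample club $D_i$ for each $i<\kappa$, form the diagonal intersection $D:=C\cap\diagonal_{i<\kappa}D_i$ and $D':=\acc^+(D\cap T)$, pick $\delta\in S$ with $\Gamma:=\nacc(C_\delta)\cap D'$ cofinal, define a regressive map $\gamma\mapsto\min\{\beta\in D\cap T\mid\sup(C_\delta\cap\gamma)<\beta<\gamma\}$, and apply the $J_\omega[\kappa]$ property to get $i<\delta$ with $\Gamma':=\{\gamma\in\Gamma\mid f(\gamma)\in f_i(\gamma)\}$ cofinal. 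Now the crucial payoff of diagonalizing is that after discarding an initial segment so that $\sup(C_\delta\cap\min(\Gamma'))>i$, every value $f(\gamma)$ for $\gamma\in\Gamma'$ lies above $i$ and in $D$, hence automatically in $D_i$; so $\Phi_{f_i}(C_\delta)$ has cofinally many nonaccumulation points in $D_i\cap T$, contradicting the choice of $D_i$. No stationary partition and no Fodor step are needed; the cofinality you were struggling to secure falls out of the diagonal intersection for free. If you want to salvage your write-up, replace the partition-and-Fodor step by this contradiction scheme: assume for each $i$ that $\Phi_{f_i}$ fails and derive the contradiction from $D:=C\cap\diagonal_{i<\kappa}D_i$.
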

\begin{proof} Without loss of generality, $S\s\acc(\kappa)$.
Suppose that $S\in J_\omega[\kappa]$,
and fix a club $C\s\kappa$ and a sequence of functions  $\langle f_i:\kappa\rightarrow[\kappa]^{<\omega}\mid i<\kappa\rangle$ 
as in Definition~\ref{paper51}.
For all $i<\kappa$ and $\delta\in S$, denote $C_\delta^i:=\Phi_{f_i}(C_\delta)$, so that $\otp(C_\delta^i)\le\otp(C_\delta)$.

Let $T$ be an arbitrary stationary subset of $\kappa$.

    \begin{claim}\label{claim961}
    	There is an $i< \kappa$ such that $\langle C^i_\delta \mid \delta \in S\rangle$ witnesses $\cg(S,T)$.
    \end{claim}
\begin{why}
	Suppose not. For each $i< \kappa$, pick a club $D_i \s \kappa$ such that for every $\delta \in S$, 
	$$\sup(\nacc(C^i_\delta) \cap D_{i} \cap T)< \delta.$$
Consider the two clubs $D:= C\cap\diagonal_{i< \kappa} D_i$ and $D':=\acc^+(D\cap T)$.
	By the choice of $\vec C$, pick $\delta\in S$ such that 	$\Gamma:=\nacc(C_\delta)\cap D'$ is cofinal in $\delta$.
	As $\Gamma$ is a subset of $\nacc(C_\delta)\cap D'$, 
we may define a regressive function $f:\Gamma\rightarrow\delta$ via:
	$$f(\gamma):=\min\{ \beta\in D\cap T\mid \sup(C_\delta\cap\gamma)<\beta<\gamma\}.$$

As $\gamma\in S\cap C$, 
find $i<\delta$ such that $\Gamma':=\{ \gamma\in \Gamma\mid f(\gamma)\in f_i(\gamma)\}$ is cofinal in $\delta$.
By possibly omitting an initial segment of $\Gamma'$, 
we may assume that $\sup(C_\delta\cap\min(\Gamma'))>i$.
Recalling the definition of $D$, it follows that for every $\gamma\in\Gamma'$, $f(\gamma)\in D_i\cap T$.
So, for every $\gamma\in\Gamma'$, if we let $\beta:=\sup(C_\delta\cap\gamma)$,
then $C_\delta^i\cap(\beta,\gamma)$ is equal to the finite set $f_i(\gamma)\cap(\beta,\gamma)$ that contains $f(\gamma)$ which is an element of $D_i\cap T$.
So $\sup(\nacc(C^i_\delta) \cap D_{i} \cap T)= \delta$, contradicting the choice of $D_i$.
\end{why}
Let $i$ be given by the preceding claim. Then $f:=f_i$ is as sought.
\end{proof}
By \cite[Proposition~3.3]{paper51}, $J_\omega[\lambda^+]$ contains no stationary subsets of $E^{\lambda^+}_{\cf(\lambda)}$. In particular, $J_\omega[\omega_1]$ is empty. So, 
unlike Fact~\ref{widecg},
in the following we don't need to explicitly require $\kappa$ to be $\ge\aleph_2$.

\begin{cor} If $\square(\kappa)$ holds,
then for every stationary $S\in J_\omega[\kappa]$ and every stationary $T\s\kappa$,
there exists a $\square(\kappa)$-sequence $\vec C$ such that 
$\vec C\restriction S$  witnesses $\cg(S,T)$.\qed
\end{cor}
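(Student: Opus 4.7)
The plan is to combine Fact~\ref{widecg} with the lemma immediately preceding this corollary. First, since the remark just before the corollary implies that $J_\omega[\omega_1]$ contains no stationary sets, the existence of a stationary $S\in J_\omega[\kappa]$ forces $\kappa\ge\aleph_2$, so Fact~\ref{widecg} applies to the instance $\square(\kappa)=\square_\kappa(\kappa,{<}2)$ and our given stationary $S$. This yields a $\square(\kappa)$-sequence $\vec C=\langle C_\delta\mid\delta<\kappa\rangle$ with the property that for every club $D\s\kappa$, there exists $\delta\in S$ with $\sup(\nacc(C_\delta)\cap D)=\delta$. This is equivalent to $\vec C\restriction S$ witnessing $\cg(S,\kappa)$: given any $\gamma\in\nacc(C_\delta)\cap D$, setting $\beta:=\sup(C_\delta\cap\gamma)<\gamma$ gives $\suc_1(C_\delta\setminus\beta)=\{\gamma\}\s D$.

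Next, since $S\in J_\omega[\kappa]$, an application of the preceding lemma with our stationary $T$ furnishes a function $f:\kappa\rightarrow[\kappa]^{<\omega}$ such that $\langle \Phi_f(C_\delta)\mid\delta\in S\rangle$ witnesses $\cg(S,T)$. Define the full sequence $\vec C^*:=\langle\Phi_f(C_\delta)\mid\delta<\kappa\rangle$, interpreting $\Phi_f$ on nonlimit indices via the footnoted ad hoc convention. Coherence of $\vec C^*$ is immediate from the coherence of $\vec C$ together with clauses~(ii) and (iii) of Definition~\ref{defpp}: whenever $\bar\delta\in\acc(\Phi_f(C_\delta))$, then $\bar\delta\in\acc(C_\delta)$, so $C_{\bar\delta}=C_\delta\cap\bar\delta$, whence $\Phi_f(C_\delta)\cap\bar\delta=\Phi_f(C_\delta\cap\bar\delta)=\Phi_f(C_{\bar\delta})$.

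The remaining (and main) obstacle is verifying that $\vec C^*$ admits no thread. Suppose toward a contradiction that $D\s\kappa$ is a club with $D\cap\delta=\Phi_f(C_\delta)$ for every $\delta\in\acc(D)$. Apply the guessing property of $\vec C^*\restriction S$ to the club $\acc(D)$ to obtain $\delta\in S$ with
\[\sup\{\beta<\delta\mid\suc_1(\Phi_f(C_\delta)\setminus\beta)\s\acc(D)\cap T\}=\delta.\]
The associated ``next successors'' lie cofinally in $\nacc(\Phi_f(C_\delta))\cap\acc(D)$. Since $\acc(D)$ is closed and $\delta$ is a nonzero limit ordinal, it follows that $\delta\in\acc(D)$, whence the thread hypothesis yields $D\cap\delta=\Phi_f(C_\delta)$. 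A routine unpacking then gives $\acc(D)\cap\delta=\acc(\Phi_f(C_\delta))$. But then the aforementioned cofinal set lies in $\nacc(\Phi_f(C_\delta))\cap\acc(\Phi_f(C_\delta))=\emptyset$, a contradiction.
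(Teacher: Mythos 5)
Your proof is correct and follows the paper's intended route: apply Fact~\ref{widecg} in the instance $\square(\kappa)=\square_\kappa(\kappa,{<}2)$ (noting, as you do, that a stationary $S\in J_\omega[\kappa]$ forces $\kappa\ge\aleph_2$), obtain a $\square(\kappa)$-sequence $\vec C$ with $\vec C\restriction S$ witnessing $\cg(S,\kappa)$, and then push it through the preceding lemma via the postprocessing function $\Phi_f$, which preserves coherence by Clause~(iii) of Definition~\ref{defpp}. The one step you spell out that the paper's $\qed$ leaves implicit is that $\langle\Phi_f(C_\delta)\mid\delta<\kappa\rangle$ remains unthreaded; your derivation of this from the guessing property is valid, though one can also argue directly from the postprocessing axioms: if $D$ were a thread of $\langle\Phi_f(C_\delta)\rangle$, then for $\alpha<\alpha'$ in $\acc(D)$ one has $\alpha\in\acc(D\cap\alpha')=\acc(\Phi_f(C_{\alpha'}))\s\acc(C_{\alpha'})$, so by coherence $C_\alpha=C_{\alpha'}\cap\alpha$, and then $\bigcup_{\alpha\in\acc(D)}C_\alpha$ is a thread of $\vec C$ itself.
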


We have described a way for moving from $\cg(S,\kappa)$ to $\cg(S,T)$.
Our next goal is to describe a way for moving from $\cg(\kappa,T)$ to $\cg(S,T)$.
First, a definition.

\begin{defn} \label{defnstarcoherence}We say that a $C$-sequence $\vec C=\langle C_\delta\mid\delta<\kappa\rangle$ 
is \emph{$\sq^*$-coherent} iff for all $\delta<\kappa$ and $\bar\delta\in\acc(C_\delta)$,
it is the case that $\sup(C_{\bar\delta}\symdiff (C_\delta\cap \bar\delta)) < \bar \delta$.
\end{defn}
\begin{remark} Every $\sq^*$-coherent $C$-sequence is weakly coherent.
\end{remark}

To preserve $\sq^*$-coherence, one needs to consider a strengthening of Definition~\ref{defpp}.

\begin{defn}
An operator $\Phi:\mathcal K(\kappa)\rightarrow\mathcal K(\kappa)$ is a \emph{postprocessing$^*$ function}  if for every $x\in\mathcal K(\kappa)$,
Clause (i)--(iii) of Definition~\ref{defpp} hold true, and, in addition:
\begin{enumerate}
\item[(iv)] for every $\bar x\in\mathcal K(\kappa)$ such that $\bar x\sq^* x$ and $\sup(\bar x)\in\acc(\Phi(x))$, $\Phi(\bar x)\sq^*\Phi(x)$.
\end{enumerate}
\end{defn}

It is readily checked that all the postprocessing functions we have met so far are moreover postprocessing$^*$ functions.
An example of a postprocessing function that is not postprocessing$^*$ function may be found in \cite[Lemma~3.8]{paper28}.  
In fact, it is unknown at present how to obtain the same effect of that map using a postprocessing$^*$ function.

\begin{thm}\label{sqstartthm} Suppose that $\kappa\ge\aleph_2$,
and that there exists an $\sq^*$-coherent $C$-sequence witnessing $\cg(\kappa,T)$.
	
For every stationary $S\s \kappa$, there exists an $\sq^*$-coherent $C$-sequence over $\kappa$
whose restriction to $S$ witnesses  $\cg(S,T)$.
\end{thm}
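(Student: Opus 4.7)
The strategy is to argue by contradiction in the spirit of the proof of Lemma~\ref{lemma215}, by iteratively applying the postprocessing$^*$ function $\Phi^{E\cap T}$ of Definition~\ref{firstoperatordefn} to the given sequence. The key feature we exploit is that a postprocessing$^*$ function applied uniformly to an $\sq^*$-coherent $C$-sequence again yields an $\sq^*$-coherent $C$-sequence, as observed after Definition~\ref{defpp}.

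Let $\vec D=\langle D_\delta\mid\delta<\kappa\rangle$ be the given $\sq^*$-coherent witness of $\cg(\kappa,T)$, and suppose toward a contradiction that for no $\sq^*$-coherent $C$-sequence $\vec C$ over $\kappa$ does $\vec C\restriction S$ witness $\cg(S,T)$. Then, in particular, for every club $E\s\kappa$, the $\sq^*$-coherent sequence $\langle \Phi^{E\cap T}(D_\delta)\mid\delta<\kappa\rangle$ restricted to $S$ fails to witness $\cg(S,T)$, so we may pick a club $F^E\s\kappa$ such that
$$\sup(\nacc(\Phi^{E\cap T}(D_\delta))\cap F^E\cap T)<\delta\text{ for every }\delta\in S.$$
Using $\kappa\ge\aleph_2$, build a $\s$-decreasing chain $\langle E_i\mid i\le\omega_1\rangle$ of clubs by $E_0:=\kappa$, $E_{i+1}:=E_i\cap F^{E_i}$, and $E_i:=\bigcap_{i'<i} E_{i'}$ at limits. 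By the equivalence discussed at the end of Section~\ref{sectionwarmingup}, $\cg(\kappa,T)$ upgrades to: the set $W:=\{\delta<\kappa\mid \sup(D_\delta\cap E_{\omega_1}\cap T)=\delta\}$ is stationary in $\kappa$. Moreover, by $\sq^*$-coherence of $\vec D$, $W$ propagates downward: whenever $\delta\in W$ and $\bar\delta\in\acc(D_\delta)\cap\acc^+(E_{\omega_1}\cap T)$, the relation $D_{\bar\delta}\sq^* D_\delta\cap\bar\delta$ forces $\bar\delta\in W$.

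The heart of the argument, and the main obstacle, is to produce some $\delta^*\in W\cap S$. I would handle this by selecting $\delta\in W$ of sufficiently large cofinality that $\acc(D_\delta)\cap \acc^+(E_{\omega_1}\cap T)$ is a club in $\delta$ reflecting $S$; any such reflection point lies in $W\cap S$ by the downward propagation above. For cofinalities where this direct reflection fails---for example, when $W$ and $S$ live in disjoint cofinality strata---the remedy is to first compose the iteration with an additional postprocessing$^*$ function targeting $S$ (such as a variant of $\Phi_{\cl(\acc^+(S))}$ from Definition~\ref{dropdefn}), so that the nonaccumulation points of the output are forced into $\acc^+(S)$; this is where the argument requires the most care.

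Once $\delta^*\in W\cap S$ is secured, the contradiction follows along the lines of Lemma~\ref{lemma215}. For every $i<\omega_1$, since $E_i\supseteq E_{\omega_1}$ and $\delta^*\in W$, we have $\Phi^{E_i\cap T}(D_{\delta^*})=\cl(D_{\delta^*}\cap E_i\cap T)$. The weakly $\s$-decreasing $\omega_1$-chain $\langle D_{\delta^*}\cap E_i\cap T\mid i<\omega_1\rangle$ admits the same descent argument as in the proof of Lemma~\ref{lemma215}, casing on $\cf(\delta^*)$: one locates $i<\omega_1$ and a point $\eta\in\nacc(\Phi^{E_i\cap T}(D_{\delta^*}))\cap E_{i+1}\cap T$ with $\eta$ arbitrarily close to $\delta^*$. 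Since $E_{i+1}\s F^{E_i}$, this contradicts the choice of $F^{E_i}$ and completes the proof.
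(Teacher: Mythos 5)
The proposal has a genuine gap, and you have (commendably) flagged it yourself: the step of producing some $\delta^*\in W\cap S$ is precisely where the argument breaks, and the suggested remedy --- composing with some further postprocessing$^*$ function ``targeting $S$'' such as a variant of $\Phi_{\cl(\acc^+(S))}$ --- does not resolve it. The issue is structural. After iterating a \emph{shrinking} operator $\Phi^{E\cap T}$, you obtain a stationary set $W$ of successful guessers for the original sequence, but nothing forces $W$ to meet $S$; and your ``downward propagation'' claim is weaker than it looks, since $\bar\delta\in\acc(D_\delta)\cap\acc^+(E_{\omega_1}\cap T)$ does not in general give $\sup(D_\delta\cap\bar\delta\cap E_{\omega_1}\cap T)=\bar\delta$, so the claimed transfer of membership in $W$ to a reflection point need not hold. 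Even granting propagation, reflection of $S$ at an arbitrary point of $W$ is not available --- $W$ and $S$ can be disjoint and concentrated on incompatible cofinalities, and a postprocessing$^*$ function cannot move a $C$-sequence's club-guessing feature from one stationary index set into another.

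The paper's proof sidesteps this exactly at the point you identify as ``the main obstacle,'' but by a device that is not a shrinking at all. Instead of applying $\Phi^{E\cap T}$ to $\vec D$, it \emph{grows} the $C$-sequences over $\omega_1$ steps: at stage $i+1$ a club $F^{D_i}$ is extracted using the assumed failure over $S$, and then $C^{i+1}_\alpha$ is obtained from $C^i_\alpha$ by inserting copies of $C^{i+1}_\gamma$ below the nonaccumulation points $\gamma\in\nacc(C^i_\alpha)$ that are \emph{outside} $F^{D_i}\cap T$. This enlargement (i) preserves $\sq^*$-coherence (checked via a claim that $\acc^+(C^i_\alpha)=\bigcup_{j<i}\acc(C^j_\alpha)$ at limit $i$), and (ii) preserves the key invariant $\nacc(C^0_\beta)\cap D\cap T\subseteq\nacc(C^i_\beta)\cap D\cap T$ for all $i$, where $D=\bigcap_{i<\omega_1}D_i$. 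Consequently, for $\beta$ in the stationary set $B:=\{\beta<\kappa\mid\sup(\nacc(C^0_\beta)\cap D\cap T)=\beta\}$ given by $\cg(\kappa,T)$, the guessing feature of $\beta$ persists at every stage $i$. One then picks $\delta\in S\cap\acc^+(B)$; crucially $\delta$ itself need \emph{not} lie in $B$. Since $\delta\in S$, one has bounds $\epsilon_i:=\sup(\nacc(C^i_\delta)\cap F^{D_i}\cap T)<\delta$, and the contradiction is obtained through a case analysis on $\gamma_i:=\min(C^i_\delta\setminus\beta)$ for a well-chosen $\beta\in B$ above $\sup\epsilon_i$: if $\gamma_i\in\nacc(C^i_\delta)\setminus(F^{D_i}\cap T)$ the insertion at stage $i+1$ drives $\gamma_{i+1}$ strictly below $\gamma_i$ (impossible cofinally), and if $\beta\in\acc(C^i_\delta)$, the $\sq^*$-coherence of $\vec{C^i}$ transfers the guessing feature of $\beta$ (retained by (ii)) up into $C^i_\delta$ below $\beta$, contradicting $\epsilon_i<\beta$. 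This is the only place $\sq^*$-coherence is used in an essential way, and it is a different use than the one your proposal envisions. Your overall contradiction-by-iteration framework is the right starting shape, but the growing construction and the $S\cap\acc^+(B)$ trick are the two ideas your proposal is missing, and they are the substance of the theorem.
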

	\begin{proof}The proof will be an adaptation of the proof of \cite[Theorem 4.13]{paper24}. Suppose towards a contradiction that $S$ is a counterexample.
Fix 	an $\sq^*$-coherent $C$-sequence $\vec e=\langle e_\delta\mid\delta<\kappa\rangle$  witnessing $\cg(\kappa,T)$.
By recursion on $i< \omega_1$, we construct a club $D_i\s\kappa$ and an $\sq^*$-coherent $C$-sequence $\vec{C^i}=\langle C^i_\alpha \mid \alpha< \kappa \rangle$. 
Our construction will have the property that for all $\alpha< \kappa$
and $j< i<\omega_1$, $C^j_\alpha \s C^i_\alpha$.

$\br$ For $i=0$, let $D_0 := \kappa$ and $\vec{C^0}:=\vec e$.

$\br$ For every $i<\omega_1$ such that a club $D_i\s\kappa$ and an $\sq^*$-coherent $C$-sequence $\vec{C^i}$ have been constructed, by the assumption we can find a club $F^{D_i}$ such that, for every $\delta \in S$, 
        $$\sup(\nacc(C^i_\delta) \cap F^{D_i} \cap T) < \delta.$$
        So let $D_{i+1}:= D_i \cap F^{D_i}$. As for constructing $\vec{C^{i+1}}$, we do this by recursion on $\alpha< \kappa$. To start, let $C^{i+1}_0:= \emptyset$, and for every $\alpha< \kappa$ let $C^{i+1}_{\alpha+1}:=\{\alpha\}$. 
        Next, for $\alpha\in\acc(\kappa)$, 
			$$C^{i+1}_\alpha:= C^i_\alpha \cup \bigcup \{C^{i+1}_\gamma \setminus\sup(C^i_\alpha\cap \gamma) \mid \gamma\in\nacc(C^i_\alpha)\setminus (F^{D_i} \cap T) \}.$$

			It is clear that $\vec{C^{i+1}}=\langle C^{i+1}_\alpha \mid \alpha< \kappa\rangle$ is $\sq^*$-coherent as well.

$\br$ For every $i\in\acc(\omega_1)$ such that $\langle (D_j,\vec{C^j})\mid j<i\rangle$ has been constructed as required, let $D_i:= \bigcap_{j< i} D_j$ 
and for every $\alpha< \kappa$, let $C^i_\alpha:= \bigcup_{j< i}C^j_\alpha$.

			\begin{claim} Let $\alpha<\kappa$. Then 
			$\acc^+(C^i_\alpha)= \bigcup_{j< i}\acc(C^j_\alpha)$.
			\end{claim}
			\begin{why} Let $\beta \in \acc^+(C^i_\alpha)$.			
				The sequence $\langle \min(C^j_\alpha\setminus\beta)\mid j< i\rangle$ is weakly decreasing, and as $i$ is a nonzero limit ordinal, it stabilizes at some $j^*< i$. 
				Let $\beta^+$ be this stable value (which may be equal to $\beta$).
				If there exists $j \in [j^*, i)$ such that $\beta^+\in \acc(C^{j}_\alpha)$, then in fact $\beta^+ = \beta$ and we can finish. So, suppose that this is not so. 
				If there exists $j \in [j^*, i)$ such that $\beta^+\notin F^{D_j}\cap T$, then $\beta^+ \in \acc(C^{j+1}_\alpha)$ and again we can finish. So suppose this is not so.
				Let $\beta^- : =\sup(C^{j^*}_\alpha \cap \beta^+)$ so that $\beta^-<\beta \leq \beta^+$. 
				Now examining the construction of $C^j_\alpha$ for $j \in (j^*, i)$, it is clear that for every $j\in[j^*, i)$,
				$$(\beta^-,\beta^+] \cap C^{j^*}_\alpha = (\beta^-,\beta^+] \cap C^{j}_\alpha.$$
				However, this implies that $\beta\notin\acc(C^i_\alpha)$, which is a contradiction.
			\end{why}
			
So 				$\vec{C^i}=\langle C^i_\alpha\mid \alpha< \kappa\rangle$ is a $C$-sequence.
Furthermore, since,  				$\vec{C^j}$ is $\sq^*$-coherent for every $j<i$, 
the preceding claim altogether implies that	$\vec{C^i}$ is $\sq^*$-coherent.

Consider the club $D:=\bigcap_{i< \omega_1}D_i$. As $\vec C$ witnesses $\cg(\kappa, T)$, the following set is stationary:
    $$B:= \{\beta< \kappa \mid \sup(\nacc(C_\beta)\cap D \cap T)=\beta\}.$$

Notice that by the nature of our recursive construction, for all $i< \omega_1$,
$$\nacc(C_\beta)\cap D \cap T) \s \nacc(C^i_\beta)\cap D \cap T,$$
and hence, for all $\beta\in B$ and $i<\omega_1$,
$$\sup(\nacc(C^i_\beta)\cap D \cap T)=\beta.$$
    
Pick $\delta\in S\cap\acc^+(B)$.
    For each $i< \omega_1$ the following ordinal is smaller than $\delta$
    $$\epsilon_i := \sup(\nacc(C^i_\delta)\cap F^{D_i} \cap T).$$
    
We now perform a case analysis to reach a contradiction.

\medskip
    
		\underline{\textbf{CASE 1}. $\cf(\delta)> \aleph_0$.} Let $\epsilon^*:= \sup_{i< \omega} \epsilon_i$, so that $\epsilon^*< \delta$. 
		Pick $\beta \in (\epsilon^*, \delta)\cap B$.
		For every $i< \omega$, let $\gamma_i:= \min(C^i_\delta\setminus \beta)$ so that $\langle \gamma_i \mid i< \omega\rangle$ is weakly decreasing. 
Then pick $i< \omega$ such that $\gamma_i= \gamma_{i+1}$. 

\underline{\textbf{SUBCASE 1.1}. $\gamma_i > \beta$.} It follows that $\gamma_i \in \nacc(C^i_\delta)$. As $\epsilon^*< \beta< \gamma_i$, we have that $\gamma_i \notin F^{D_i}\cap T$. 
It follows from our recursive construction that $C^{i+1}_\delta\cap [\beta, \gamma_i) = C^{i+1}_{\gamma_i} \cap [\beta, \gamma_i)$ and the latter set is nonempty so that $\gamma_{i+1} < \gamma_i$ which is a contradiction. 

		\underline{\textbf{SUBCASE 1.2}. $\gamma_i = \beta$ and $\beta \in \nacc(C^i_\delta)$.} So $\gamma_i$ is an element of $\nacc(C^i_\delta)$ above $\epsilon^*$,
		and we are back to Subcase~1.1.

\underline{\textbf{SUBCASE 1.3}. $\gamma_i = \beta$ and $\beta \in \acc(C^i_\delta)$.} In this case, by $\sq^*$-coherence, 
we have that $\sup((C^i_{\beta}\symdiff C^i_\delta)\cap \beta) < \beta$. This implies that 
$$\sup(\nacc(C^i_\delta)\cap \beta \cap D\cap T) = \beta.$$
But $D\s F^{D_i}$, contradicting the fact that $\epsilon_i< \beta$.

\medskip

		\underline{\textbf{SUBCASE 2}. $\cf(\delta)= \aleph_0$.} Find an uncountable $I \s \omega_1$ such that $$\epsilon^*:=\sup\{\max\{\epsilon_i, \epsilon_{i+1}\}\mid i\in I\}$$ is smaller than $\delta$. 
Pick $\beta \in (\epsilon^*, \delta)\cap B$. For every $i< \omega_1$, let $\gamma_i:= \min(C^i_\delta\setminus \beta)$ so that $\langle \gamma_i \mid i< \omega_1\rangle$ is weakly decreasing. 
Pick a large enough $i\in I$ such that $\gamma_i= \gamma_{i+1}$. 
		
		\underline{\textbf{SUBCASE 2.1}. $\gamma_i > \beta$.} 
Same as in Subcase~1.1.

		\underline{\textbf{SUBCASE 2.2}. $\gamma_i = \beta$ and $\beta \in \nacc(C^i_\delta)$.}  
Same as in Subcase~1.2.
				
		\underline{\textbf{SUBCASE 2.3}. $\gamma_i = \beta$ and $\beta \in \acc(C^i_\delta)$.} 
Same as in Subcase~1.3.		
	\end{proof}

\begin{cor}\label{cor318} If $\square(\kappa)$ holds and $J_\omega[\kappa]$ contains a stationary set,
then for all stationary subsets $S,T$ of $\kappa$,
there exists an $\sq^*$-coherent $C$-sequence $\vec C=\langle C_\delta\mid\delta<\kappa\rangle$ such that 
$\vec C\restriction S$  witnesses $\cg(S,T)$.\qed
\end{cor}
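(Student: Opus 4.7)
The plan is to chain together three tools developed earlier in this section, each of which preserves $\sq^*$-coherence. Note first that since $J_\omega[\omega_1]$ is empty (as remarked in the excerpt), the hypothesis that $J_\omega[\kappa]$ contains a stationary set forces $\kappa\ge\aleph_2$, so that both Fact~\ref{widecg} and Theorem~\ref{sqstartthm} are available. Fix a stationary $S^*\s\kappa$ with $S^*\in J_\omega[\kappa]$, and apply Fact~\ref{widecg} to $\square(\kappa)=\square_\kappa(\kappa,{<}2)$ with the stationary set $S^*$ to obtain a $\square(\kappa)$-sequence $\vec C=\langle C_\delta\mid\delta<\kappa\rangle$ such that for every club $D\s\kappa$ there exists $\delta\in S^*$ with $\sup(\nacc(C_\delta)\cap D)=\delta$. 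Since each $\mathcal C_\delta$ is a singleton, $\vec C$ satisfies $C_{\bar\delta}=C_\delta\cap\bar\delta$ for every $\bar\delta\in\acc(C_\delta)$, hence is $\sq^*$-coherent; and the guessing feature rephrases as $\cg(S^*,\kappa)$.

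The next step is to convert $\cg(S^*,\kappa)$ into $\cg(S^*,T)$ via the lemma immediately preceding this corollary (which crucially uses $S^*\in J_\omega[\kappa]$). It produces a function $f\colon\kappa\rightarrow[\kappa]^{<\omega}$ such that $\langle\Phi_f(C_\delta)\mid\delta\in S^*\rangle$ witnesses $\cg(S^*,T)$. I then consider the fully extended sequence $\vec{C'}:=\langle\Phi_f(C_\delta)\mid\delta<\kappa\rangle$. Because $\Phi_f$ is a postprocessing$^*$ function (as observed in the excerpt right after the definition), the $\sq^*$-coherence of $\vec C$ is inherited by $\vec{C'}$, and by Proposition~\ref{monotonicty}, $\vec{C'}$ still witnesses $\cg(\kappa,T)$.

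Finally, I apply Theorem~\ref{sqstartthm} to the $\sq^*$-coherent $C$-sequence $\vec{C'}$ witnessing $\cg(\kappa,T)$, taking the target stationary set to be the given $S$. This yields an $\sq^*$-coherent $C$-sequence over $\kappa$ whose restriction to $S$ witnesses $\cg(S,T)$, as required. The argument is essentially just the composition of three preservation results, so I do not anticipate any substantive obstacle: the only thing to verify at each step is that $\sq^*$-coherence survives, which is precisely the purpose of the postprocessing$^*$ formalism and of Theorem~\ref{sqstartthm} itself.
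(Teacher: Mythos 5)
Your proposal is correct and follows exactly the chain of results the paper intends for this \qed corollary: Fact~\ref{widecg} (with $\mu=2$) gives a $\square(\kappa)$-sequence, hence coherent and a fortiori $\sq^*$-coherent, witnessing $\cg(S^*,\kappa)$ for a chosen $S^*\in J_\omega[\kappa]$; the $J_\omega[\kappa]$-lemma upgrades this to $\cg(S^*,T)$ via $\Phi_f$, and since $\Phi_f$ is a postprocessing$^*$ function the extended sequence $\langle\Phi_f(C_\delta)\mid\delta<\kappa\rangle$ remains $\sq^*$-coherent and witnesses $\cg(\kappa,T)$; finally Theorem~\ref{sqstartthm} relocates the guessing to the prescribed $S$. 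Your preliminary observation that $J_\omega[\kappa]$ being nonempty forces $\kappa\ge\aleph_2$, which is needed for both Fact~\ref{widecg} and Theorem~\ref{sqstartthm}, is also the right thing to note.
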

	
	Recalling Theorem~\ref{thm310},
we now turn to deal with stationary subsets of $E^{\lambda^+}_{\cf(\lambda)}$,
dividing the results into two, depending on whether $\lambda$ is regular or singular.

\begin{thm}\label{weaksquarethm} Suppose that $\lambda$ is a regular uncountable cardinal, and $\square^*_\lambda$ holds.

For every stationary $S\s E^{\lambda^+}_\lambda$, there exists a transversal $\vec C=\langle C_\delta\mid \delta<\lambda^+ \rangle$ for $\square^*_\lambda$ such that
$\vec C\restriction S$ witnesses $\cg_\lambda(S,E^{\lambda^+}_\lambda)$.
\end{thm}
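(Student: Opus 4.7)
The proof will proceed by contradiction, combining the iterative postprocessing pattern of Lemma~\ref{weaktotail} with a descending-chain argument. Set $T:=E^{\lambda^+}_\lambda$ and replace $S$ by $S\cap\acc^+(T)$, which remains stationary; the point is that for every $\delta\in S$ the set $T\cap\delta$ is now unbounded in $\delta$. A useful preliminary identity: for any $Y\s T$ which is cofinal in some $\delta\in S$ and has ordertype $\le\lambda$, every $\gamma\in\acc^+(Y)$ with $\gamma<\delta$ satisfies $\cf(\gamma)\le\otp(Y\cap\gamma)<\lambda$, so $\acc^+(Y)\setminus\{\delta\}$ does not meet $T$; in particular, $\nacc(\cl(Y))\cap T=Y$.

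Fix any transversal $\vec e=\langle e_\delta\mid\delta<\lambda^+\rangle$ for $\square^*_\lambda$, which exists by hypothesis. Suppose toward a contradiction that no transversal for $\square^*_\lambda$ restricted to $S$ witnesses $\cg_\lambda(S,T)$. Then for every such transversal $\vec C$ there is a club $F(\vec C)\s\lambda^+$ with $\sup(\nacc(C_\delta)\cap F(\vec C)\cap T)<\delta$ for every $\delta\in S$. Define $\s$-decreasing clubs $\langle D_i\mid i\le\lambda^+\rangle$ by $D_0:=\lambda^+$; $D_i:=\bigcap_{j<i}D_j$ at a limit $i<\lambda^+$; $D_{\lambda^+}:=\diagonal_{i<\lambda^+}D_i$; and at successor steps form the transversal $\vec C^i:=\langle\Phi^{D_i\cap T}(e_\delta)\mid\delta<\lambda^+\rangle$, where $\Phi^B$ is the operator from Definition~\ref{firstoperatordefn}. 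Since $\Phi^{D_i\cap T}$ is a postprocessing function preserving weak coherence and $\lambda$-boundedness, each $\vec C^i$ is indeed a transversal for $\square^*_\lambda$; set $D_{i+1}:=D_i\cap F(\vec C^i)$.

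The decisive step is to pick $\delta\in S\cap\acc(D_{\lambda^+})$ with $\sup(e_\delta\cap D_{\lambda^+}\cap T)=\delta$. For such a $\delta$, for each $i<\lambda^+$ we have $\Phi^{D_i\cap T}(e_\delta)=\cl(e_\delta\cap D_i\cap T)$, and by the identity above, $\nacc(\Phi^{D_i\cap T}(e_\delta))\cap T=e_\delta\cap D_i\cap T$. The $\s$-decreasing chain $\langle e_\delta\cap D_i\cap T\mid i<\lambda^+\rangle$ of subsets of the set $e_\delta$, which has size $\le\lambda$, must stabilize at some $i<\lambda^+$, giving $e_\delta\cap D_i\cap T=e_\delta\cap D_{i+1}\cap T$. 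By choice of $\delta$, the common value has supremum $\delta$. Hence cofinally many points of $\nacc(\vec C^i_\delta)\cap T$ lie in $D_{i+1}\s F(\vec C^i)$, contradicting the defining property of $F(\vec C^i)$.

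The main obstacle is guaranteeing the existence of the required $\delta$. This calls for establishing, for every club $D\s\lambda^+$ produced by the iteration, that some $\delta\in S$ satisfies $\sup(e_\delta\cap D\cap T)=\delta$; that is, that $\vec e$ (perhaps after a preliminary postprocessing step, such as replacing $\vec e$ by $\Phi^T(\vec e)$, and/or further restricting $S$) already witnesses the weaker $\cg_\lambda(S,T,-)$. I expect this preliminary step to absorb most of the technical content and to require exploiting the density of $T$ in each $\delta\in\acc^+(T)$ together with the weak coherence of $\vec e$; once it is in place, the iteration above yields the desired transversal.
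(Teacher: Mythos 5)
Your "preliminary step" is not a small technical lemma to be absorbed — it is the entire theorem in disguise, and it is false as stated for an arbitrary transversal $\vec e$. For $\delta\in E^{\lambda^+}_\lambda$, every accumulation point $\gamma<\delta$ of $e_\delta$ satisfies $\cf(\gamma)\le\otp(e_\delta\cap\gamma)<\lambda$, so $e_\delta\cap E^{\lambda^+}_\lambda\subseteq\nacc(e_\delta)$; nothing forces a transversal to have any $\nacc$-points of cofinality $\lambda$, let alone cofinally many inside a prescribed club. (Contrast this with Proposition~\ref{prop22}, which works precisely because $T=E^\kappa_\mu$ with $\mu<\lambda$ is caught by the \emph{accumulation} points of $e_\delta\cap D$.) So $\cg_\lambda(S,E^{\lambda^+}_\lambda,-)$ is not a property of $\vec e$ you can postprocess into existence with $\Phi^{T}$ or a shrinking of $S$; proving it requires exactly the construction the theorem is asking for. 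Consequently the iteration in your second and third paragraphs has nothing to feed it. (There is also a secondary issue in the stabilization step: the chain $\langle e_\delta\cap D_i\cap T\mid i<\lambda^+\rangle$ stabilizes at some $i^*<\lambda^+$, but your argument that the stable value is cofinal in $\delta$ silently assumes $i^*<\delta$, which needs a separate closure argument.)

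The paper's proof is genuinely different and is built to avoid ever needing $\cg_\lambda(S,E^{\lambda^+}_\lambda,-)$. It uses the "drop" operator $\Phi_{D}$ from Definition~\ref{dropdefn} rather than the restriction operator $\Phi^{D\cap T}$, and iterates only $\omega$ times. Crucially, at each step after forming $e^{n+1}_\delta:=\Phi_{D_{n+1}}(C^n_\delta)$, it performs a \emph{filler} step
$C^{n+1}_\delta:=e^{n+1}_\delta\cup\{\,e_\gamma\setminus\sup(e^{n+1}_\delta\cap\gamma)\mid\gamma\in\nacc(e^{n+1}_\delta)\cap E^{\lambda^+}_{<\lambda}\,\}$,
which inserts a cofinal club below each new non-accumulation point of small cofinality. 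This guarantees that if $\gamma_n:=\min(C^n_\delta\setminus\beta)$ ever stabilizes (for a cleverly chosen $\beta\in D\setminus\bigcup_n C^n_\delta$), then the stable value must lie in $E^{\lambda^+}_\lambda$, which is exactly what the contradiction needs. That filler step — converting "I guessed a club at some point" into "I guessed a club at a point of cofinality $\lambda$" — is the idea your approach is missing, and it is where the relativization to $E^{\lambda^+}_\lambda$ is actually earned.
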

\begin{proof} 
Suppose not, and fix a stationary $S\s E^{\lambda^+}_\lambda$ that constitutes a counterexample.
As $\square^*_\lambda$ holds, we may fix a  transversal 
$\vec e=\langle e_\delta\mid \delta<\lambda^+ \rangle$ for $\square^*_\lambda$.
We shall recursively construct a sequence $\langle (D_n,\vec{e^n},\vec{C^n})\mid n<\omega\rangle$ 
such that $D_n$ is a club in $\lambda^+$,
and $\vec{e^n}$ and $\vec{C^n}$ are transversals for $\square^*_\lambda$.

Set $D_0:=\acc(\lambda^+)$, $\vec{e^0}:=\vec e$ and $\vec{C^0}:=\vec e$.
Next, suppose that $n<\omega$ and that $\langle (D_j,\vec{e^j},\vec{C^j})\mid j\le n\rangle$ has already been successfully defined.
As $\vec{C^n}=\langle C_\delta^n\mid \delta<\lambda^+\rangle$ is a transversal for $\square^*_\lambda$,
by the choice of the stationary set $S$, it follows that we may pick a subclub $D_{n+1}\s D_n$ such that, for every $\delta\in S$,
$$\sup(\nacc(C_\delta^n)\cap D_{n+1}\cap E^{\lambda^+}_\lambda)<\delta.$$

Consider the postprocessing function $\Phi_{D_{n+1}}$ from Definition~\ref{dropdefn}.
For every $\delta\in\lambda^+\setminus S$, let $e_\delta^{n+1}:=e_\delta$ and $C_\delta^{n+1}:=e_\delta$.
For every $\delta\in S$, let 
$e_\delta^{n+1}:=\Phi_{D_{n+1}}(C^n_\delta)$, and then let
$$C_\delta^{n+1}:=e_\delta^{n+1}\cup\{ e_\gamma\setminus \sup(e_\delta^{n+1}\cap\gamma)\mid \gamma\in\nacc(e^{n+1}_\delta)\cap E^{\lambda^+}_{<\lambda}\}.$$

By \cite[Lemma~2.8]{paper32}, $\vec{e^{n+1}}:=\langle e_\delta^{n+1}\mid \delta<\lambda^+\rangle$ is again a transversal for $\square^*_\lambda$.
\begin{claim}\label{claim3111} $\vec{C^{n+1}}$ is a transversal for $\square^*_\lambda$.
\end{claim}
\begin{why} Since $\vec{e^{n+1}}$ is a $\lambda$-bounded $C$-sequence,
the definition of $\vec{C^{n+1}}$ makes it clear that it is as well.
Suppose that $\vec{C^{n+1}}$ is not weakly coherent,
and pick the least $\alpha<\lambda^+$ such that
$$|\{ C^{n+1}_\delta\cap\alpha\mid \delta<\lambda^+\}|=\lambda^+.$$
As $\vec{C^{n+1}}\restriction(\lambda^+\setminus S)=\vec{e^{n+1}}\restriction(\lambda^+\setminus S)$,
and $\vec{e^{n+1}}$ is a transversal for $\square^*_\lambda$, it follows that 
$$|\{ e^{n+1}_\delta\cap\alpha\mid \delta\in S\}|<|\{ C^{n+1}_\delta\cap\alpha\mid \delta\in S\}|=\lambda^+,$$
so we may fix $\Delta\in[S]^{\lambda^+}$ such that:
\begin{itemize}
\item $\delta\mapsto C^{n+1}_\delta\cap\alpha$ is injective over $\Delta$, but
\item $\delta\mapsto e^{n+1}_\delta\cap\alpha$ is constant over $\Delta$.
\end{itemize}
Fix $\epsilon<\alpha$ such that $\sup(e^{n+1}_\delta\cap\alpha)=\epsilon$ for all $\delta\in \Delta$. By minimality of $\alpha$, and by possibly shrinking $\Delta$ further, we may also assume that
\begin{itemize}
\item $\delta\mapsto C^{n+1}_\delta\cap\epsilon$ is constant over $\Delta$.
\end{itemize}
It thus follows from the definition of $\vec{C^{n+1}}$ that the map $\delta\mapsto C^{n+1}_\delta\cap[\epsilon,\alpha)$ is injective over $\Delta$, and
that, for every $\delta\in \Delta$, $C^{n+1}_\delta\cap[\epsilon,\alpha)=e_\gamma\cap[\epsilon,\alpha)$ for $\gamma:=\min(e_\delta^{n+1}\setminus(\epsilon+1))$.
In particular, $$|\{ e_\gamma\cap[\epsilon,\alpha)\mid \gamma<\lambda^+\}|=\lambda^+,$$
contradicting the fact that $\vec e$ is a transversal for a $\square^*_\lambda$-sequence.
\end{why}

This completes the construction of the sequence $\langle (D_n,\vec{e^n},\vec{C^n})\mid n<\omega\rangle$.
Now, let $D:=\bigcap_{n<\omega}D_n$. Pick $\delta\in S$ such that $\otp(D\cap\delta)=\omega^\delta>\lambda$.
Recall that, for every $n<\omega$, the following ordinal is smaller than $\delta$:
$$\epsilon_n:=\sup(\nacc(C_\delta^n)\cap D_{n+1}\cap E^{\lambda^+}_\lambda).$$

Since $\cf(\lambda)>\omega$,
for every $\alpha<\delta$, $\otp(\bigcup_{n<\omega}C_\delta^n\cap\alpha)<\lambda$.
So, $\otp(\bigcup_{n<\omega}C_\delta^n)=\lambda<\omega^\delta=\otp(D\cap \delta)$,
and we may fix $\beta\in D\setminus \bigcup_{n<\omega}C_\delta^n$ above $\sup_{n<\omega}\epsilon_n$.
Clearly, for each $n<\omega$, 
$\gamma_n:=\min(C_\delta^n\setminus\beta)$ is an element of $\nacc(C_\delta^n)$ above $\beta$.

Let $n<\omega$. Since $e^{n+1}_\delta=\Phi_{D_{n+1}}(C^n_\delta)$ and $\sup(D_{n+1}\cap\delta)=\delta$, 
$$e^{n+1}_\delta=\{\sup(D_{n+1}\cap \eta )\mid \eta \in C^n_\delta, \eta > \min(D_{n+1})\}.$$
In particular, $\sup(D_{n+1}\cap \gamma_n)\in e_\delta^{n+1}$. 
As $\gamma_n>\beta$ and $\beta\in D\s D_{n+1}$,
it is the case that $\beta\le \sup(D_{n+1}\cap \gamma_n)$. 
As $e^{n+1}_\delta\s C^{n+1}_\delta$, altogether, 
$$\beta <\gamma_{n+1}=\min(C^{n+1}_\delta\setminus\beta)\le\min(e^{n+1}_\delta\setminus\beta)=\sup(D_{n+1}\cap \gamma_n)\le\gamma_n.$$

Now, pick $n<\omega$ such that $\gamma_{n+1}=\gamma_n$. There are two options, each leads to a contradiction.

$\br$ If $\gamma_{n+1}\in e^{n+1}_\delta$, then since $e^{n+1}_\delta\s C^{n+1}_\delta$,
and $\gamma_{n+1}\in\nacc(C^{n+1}_\delta)$, $\gamma_{n+1}\in\nacc(e^{n+1}_\delta)$.
As, $\gamma_{n+1}\in e^{n+1}_\delta\s D_{n+1}\s D_0$, $\gamma$ is a limit ordinal.
Since $C_\delta^{n+1}\cap[\beta,\gamma_{n+1})$ is empty,
the definition of $C^{n+1}_\delta$ implies that $\cf(\gamma_{n+1})=\lambda$.
Altogether, $\gamma_{n+1}\in\nacc(C^{n+1}_\delta)\cap D_{n+1}\cap E^{\lambda^+}_\lambda$,
contradicting the fact that $\gamma_{n+1}>\beta>\epsilon_{n+1}$.

$\br$ If $\gamma_{n+1}\notin e^{n+1}_\delta$, then since $\gamma_{n+1}=\gamma_n\in C^n_\delta$,
the definition of $e^{n+1}_\delta$ implies that $\gamma_{n+1}<\sup(D_{n+1}\cap\gamma_n)$.
So, this time,
$$\gamma_{n+1}=\min(C^{n+1}_\delta\setminus\beta)<\min(e^{n+1}_\delta\setminus\beta)=\sup(D_{n+1}\cap \gamma_n)\leq\gamma_n,$$
contradicting the choice of $n$.
\end{proof}

In order to obtain a correct analogue of the preceding result,
we introduce the following natural strengthening of Definition~\ref{maindefn},
in which we replace the stationary set $T\s\kappa$ by a sequence 
$\vec T = \langle T_i \mid i< \theta\rangle$ of stationary subsets of $\kappa$.
\begin{defn}
$\cg_\xi(S, \vec T, \sigma,\vec J)$ asserts the existence
of a $\xi$-bounded $C$-sequence, $\vec C=\langle C_\delta\mid\delta \in S\rangle$ such that,
for every club $D\s\kappa$ there is $\delta \in S$ such that for every $i< \min\{\delta, \theta\}$,
    $$\{\beta < \delta \mid \suc_\sigma(C_\delta \setminus \beta) \subseteq D\cap T_i\}\in J_\delta^+.$$
\end{defn}

\begin{conv}
Convention~\ref{convention-omissions} applies to the above definition, as well.
\end{conv}

\begin{thm}\label{weaksquarethm2} 
Suppose that $\lambda$ is  a singular cardinal of uncountable cofinality,
and $\square^*_\lambda$ holds.
Let $\langle \lambda_i\mid i<\cf(\lambda)\rangle$ be the increasing enumeration of a club in $\lambda$.

For every stationary $S \s E^{\lambda^+}_{>\omega}$,
there exists a transversal $\vec C=\langle C_\delta\mid \delta<\lambda^+ \rangle$ for $\square^*_\lambda$ such that $\vec C\restriction S$ witnesses
$\cg_\lambda(S, \langle E^{\lambda^+}_{\geq \lambda_i}\mid i<\cf(\lambda)\rangle)$.
\end{thm}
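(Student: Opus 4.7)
The plan is to imitate the proof of Theorem~\ref{weaksquarethm} (the regular case), with adjustments to accommodate the singularity of $\lambda$ and the simultaneous guessing against all sets $E^{\lambda^+}_{\ge \lambda_i}$. Suppose towards a contradiction that $S$ is a counterexample, and fix a transversal $\vec e$ for $\square^*_\lambda$. Recursively build $\langle(D_n, \vec{e^n}, \vec{C^n}) \mid n<\omega\rangle$ in parallel to the earlier construction: at stage $n+1$, the failure of guessing for $\vec{C^n}$ supplies a subclub $D_{n+1} \s D_n$ together with, for each $\delta \in S$, an index $i_n(\delta) < \cf(\lambda)$ witnessing
\[
\epsilon_n(\delta) := \sup(\nacc(C^n_\delta) \cap D_{n+1} \cap E^{\lambda^+}_{\ge \lambda_{i_n(\delta)}}) < \delta.
\]
Let $\vec{e^{n+1}}$ be the result of applying $\Phi_{D_{n+1}}$ to $\vec{C^n}$ on $S$, and set
\[
C^{n+1}_\delta := e^{n+1}_\delta \cup \bigcup\{e_\gamma \setminus \sup(e^{n+1}_\delta \cap \gamma) \mid \gamma \in \nacc(e^{n+1}_\delta) \cap E^{\lambda^+}_{<\lambda}\}
\]
for $\delta \in S$, verifying (as in Claim~\ref{claim3111}) that $\vec{C^{n+1}}$ is again a transversal for $\square^*_\lambda$.

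With $D := \bigcap_n D_n$ in hand, choose $\delta \in S$ with $\otp(D \cap \delta) > \lambda$; such $\delta$ exists since $S$ is stationary and $\{\alpha < \lambda^+ \mid \otp(D \cap \alpha) > \lambda\}$ is a club in $\lambda^+$. Two applications of pigeonhole, each leveraging an uncountable cofinality, then complete the argument. Because $\cf(\delta) > \omega$, $\epsilon^* := \sup_n \epsilon_n(\delta) < \delta$, and because $\cf(\lambda) > \omega$, $i^* := \sup_n i_n(\delta) < \cf(\lambda)$; consequently $E^{\lambda^+}_{\ge \lambda_{i^*}} \s E^{\lambda^+}_{\ge \lambda_{i_n(\delta)}}$ for every $n$, and hence $\nacc(C^n_\delta) \cap D_{n+1} \cap E^{\lambda^+}_{\ge \lambda_{i^*}}$ is bounded by $\epsilon_n(\delta)$. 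Since $\otp(\bigcup_n C^n_\delta) \le \omega \cdot \lambda = \lambda$, we may pick $\beta \in (D \cap \delta) \setminus \bigcup_n C^n_\delta$ above $\epsilon^*$. Define $\gamma_n := \min(C^n_\delta \setminus \beta)$; following Theorem~\ref{weaksquarethm}, this sequence is weakly decreasing, stabilizes at some $\gamma^* \in D$ from stage $N+1$ onwards, and satisfies $\gamma^* \in \nacc(e^{N+1}_\delta) \cap D_{N+1}$ with $C^{N+1}_\delta \cap [\beta, \gamma^*) = \emptyset$. The definition of $C^{N+1}_\delta$ now forces $\cf(\gamma^*) \ge \lambda$: otherwise $\gamma^* \in \nacc(e^{N+1}_\delta) \cap E^{\lambda^+}_{<\lambda}$ would have triggered the addition of the cofinal tail $e_{\gamma^*} \setminus \sup(e^{N+1}_\delta \cap \gamma^*)$ of $e_{\gamma^*}$, producing a point of $C^{N+1}_\delta$ in $[\beta, \gamma^*)$. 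Since $\lambda_{i^*} < \lambda \le \cf(\gamma^*)$, we get $\gamma^* \in \nacc(C^{N+1}_\delta) \cap D_{N+2} \cap E^{\lambda^+}_{\ge \lambda_{i^*}}$, so that $\gamma^* \le \epsilon_{N+1}(\delta) < \beta < \gamma^*$, a contradiction.

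The main obstacle is verifying that $\vec{C^{n+1}}$ stays a transversal for $\square^*_\lambda$ — in particular, $\lambda$-bounded — when $\lambda$ is singular, since an ordinal sum of $\lambda$-many ordinals below $\lambda$ need not remain below $\lambda$. The resolution should either exploit the weak coherence of $\vec e$ in a careful ordinal accounting (as in Claim~\ref{claim3111}), or modify the densification step to add only a short cofinal tail of each $e_\gamma$, which suffices to block the small-cofinality case in the contradiction while keeping ordertypes controlled. Preserving weak coherence of $\vec{C^{n+1}}$ follows the pattern of Claim~\ref{claim3111}, invoking the postprocessing properties of $\Phi_{D_{n+1}}$ together with the weak coherence of $\vec e$.
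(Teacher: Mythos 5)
You correctly identify where the difficulty lies, but your proposed resolution does not address it. The densification
$C^{n+1}_\delta := e^{n+1}_\delta \cup \bigcup\{e_\gamma \setminus \sup(e^{n+1}_\delta \cap \gamma) \mid \gamma \in \nacc(e^{n+1}_\delta) \cap E^{\lambda^+}_{<\lambda}\}$
is a direct transplant from Theorem~\ref{weaksquarethm}, and, as you observe, nothing controls the ordertypes of the inserted tails $e_\gamma$, so there is no reason the strict bound $\otp(C^{n+1}_\delta) < \lambda$ should hold. That strict bound is exactly what the $\cf(\lambda)>\omega$ pigeonhole requires to conclude $\otp(\bigcup_n C^n_\delta) < \lambda$; your alternative derivation $\otp(\bigcup_n C^n_\delta) \le \omega\cdot\lambda = \lambda$ is also unsound, since the ordertype of a union of $\omega$-many sets is not bounded by $\omega$ times the supremum of the individual ordertypes. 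Neither of your two floated repairs actually supplies the missing uniform bound. The paper's fix is specific: it densifies only at $\gamma \in \nacc(e^{n+1}_\delta) \cap E^{\lambda^+}_{<\lambda_{i^{n+1}_\delta}}$, coupling the cutoff to the very index $i^{n+1}_\delta$ produced by the failure of guessing at stage $n$, and it first replaces $\vec e$ (via \cite[Lemma~3.1]{paper29} applied with $\Sigma := \{\lambda_i \mid i<\cf(\lambda)\}$) by a transversal with $\otp(e_\gamma) < \lambda$ for every $\gamma$. With the tails thus uniformly small, $\otp(C^{n+1}_\delta) < \lambda$ follows. You omit both the preliminary shrinking step and the stage-wise cutoff.

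A symptom of missing the cutoff is the detour through $i^* := \sup_n i_n(\delta)$ in your contradiction: densifying at all of $E^{\lambda^+}_{<\lambda}$ forces the over-strong conclusion $\cf(\gamma^*) \ge \lambda$, which you then throw away down to $\lambda_{i^*}$. With the paper's cutoff one reads off exactly $\cf(\gamma_{n+1}) \ge \lambda_{i^{n+1}_\delta}$, which is precisely what is needed against $\epsilon_{n+1}$, and no post-hoc uniformization is required. Separately, choosing $\delta$ merely so that $\otp(D\cap\delta) > \lambda$ does not yet guarantee that $(D\cap\delta)\setminus\bigcup_n C^n_\delta$ is unbounded in $\delta$; the paper takes $\otp(D\cap\delta) = \omega^\delta > \lambda$, an additively indecomposable ordinal, which is what the argument actually uses.
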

\begin{proof} 
Suppose not, and fix a stationary $S\s E^{\lambda^+}_{>\omega}$ that constitutes a counterexample.
Without loss of generality, $\min(S)\ge\lambda$.
As $\square^*_\lambda$ holds, we may fix a  transversal 
$\vec e=\langle e_\delta\mid \delta<\lambda^+ \rangle$ for $\square^*_\lambda$.
As $\lambda$ is singular, we may assume that $\otp(e_\delta)<\lambda$ for every $\delta<\lambda^+$ (e.g., by appealing to \cite[Lemma~3.1]{paper29} with $\Sigma:=\{\lambda_i\mid i<\cf(\lambda)\}$).

Following the proof approach of Theorem~\ref{weaksquarethm},
we shall recursively construct a sequence $\langle (D_n,\vec{e^n},\vec{C^n})\mid n<\omega\rangle$ 
such that $D_n$ is a club in $\lambda^+$,
and $\vec{e^n}$ and $\vec{C^n}$ are transversals for $\square^*_\lambda$.

Set $D_0:=\acc(\lambda^+)$, $\vec{e^0}:=\vec e$ and $\vec{C^0}:=\vec e$.
Next, suppose that $n<\omega$ and that $\langle (D_j,\vec{e^j},\vec{C^j})\mid j\le n\rangle$ has already been successfully defined.
As $\vec{C^n}=\langle C_\delta^n\mid \delta<\lambda^+\rangle$ is a transversal for $\square^*_\lambda$,
by the choice of the stationary set $S$, it follows that we may pick a subclub $D_{n+1}\s D_n$ such that, for every $\delta\in S$,
for some $i_\delta^{n+1}<\cf(\lambda)$,
$$\sup(\nacc(C_\delta^n)\cap D_{n+1}\cap E^{\lambda^+}_{\ge\lambda_{i_\delta^{n+1}}})<\delta.$$

Consider the postprocessing function $\Phi_{D_{n+1}}$ from Definition~\ref{dropdefn}.
For every $\delta\in\lambda^+\setminus S$, let $e_\delta^{n+1}:=e_\delta$ and $C_\delta^{n+1}:=e_\delta$.
For every $\delta\in S$, let 
$e_\delta^{n+1}:=\Phi_{D_{n+1}}(C^n_\delta)$, and then let
$$C_\delta^{n+1}:=e_\delta^{n+1}\cup\{ e_\gamma\setminus \sup(e_\delta^{n+1}\cap\gamma)\mid \gamma\in\nacc(e^{n+1}_\delta)\cap E^{\lambda^+}_{<\lambda_{i_\delta^{n+1}}}\}.$$

By \cite[Lemma~2.8]{paper32}, $\vec{e^{n+1}}:=\langle e_\delta^{n+1}\mid \delta<\lambda^+\rangle$ is again a transversal for $\square^*_\lambda$.
By the exactly same proof of Claim~\ref{claim3111},
also $\vec{C^{n+1}}:=\langle C_\delta^{n+1}\mid \delta<\lambda^+\rangle$ is a transversal for $\square^*_\lambda$.
Furthermore, $\otp(e_\delta^{n+1})\le\otp(C_\delta^{n+1})<\lambda$ for all $\delta<\lambda^+$.

This completes the construction of the sequence $\langle (D_n,\vec{e^n},\vec{C^n})\mid n<\omega\rangle$.
Now, let $D:=\bigcap_{n<\omega}D_n$. Pick $\delta\in S$ such that $\otp(D\cap\delta)=\omega^\delta>\lambda$.
Recall that, for every $n<\omega$, the following ordinal is smaller than $\delta$:
$$\epsilon_n:=\sup(\nacc(C_\delta^n)\cap D_{n+1}\cap E^{\lambda^+}_{\ge\lambda_{i^{n+1}_\delta}}).$$

As $\cf(\lambda)>\omega$, $\otp(\bigcup_{n<\omega}C_\delta^n)<\lambda$,
so, we may fix $\beta\in D\setminus \bigcup_{n<\omega}C_\delta^n$ above $\sup_{n<\omega}\epsilon_n$.
Clearly, for each $n<\omega$, 
$\gamma_n:=\min(C_\delta^n\setminus\beta)$ is an element of $\nacc(C_\delta^n)$ above $\beta$.

Let $n<\omega$. Since $e^{n+1}_\delta=\Phi_{D_{n+1}}(C^n_\delta)$ and $\sup(D_{n+1}\cap\delta)=\delta$, 
$$e^{n+1}_\delta=\{\sup(D_{n+1}\cap \eta )\mid \eta \in C^n_\delta, \eta > \min(D_{n+1})\}.$$
In particular, $\sup(D_{n+1}\cap \gamma_n)\in e_\delta^{n+1}$. 
As $\gamma_n>\beta$ and $\beta\in D\s D_{n+1}$,
it is the case that $\beta\le \sup(D_{n+1}\cap \gamma_n)$. 
As $e^{n+1}_\delta\s C^{n+1}_\delta$, altogether, 
$$\beta<\gamma_{n+1}=\min(C^{n+1}_\delta\setminus\beta)\le\min(e^{n+1}_\delta\setminus\beta)=\sup(D_{n+1}\cap \gamma_n)\le\gamma_n.$$

Now, pick $n<\omega$ such that $\gamma_{n+1}=\gamma_n$. There are two options, each leads to a contradiction.

$\br$ If $\gamma_{n+1}\in e^{n+1}_\delta$, then since $e^{n+1}_\delta\s C^{n+1}_\delta$,
and $\gamma_{n+1}\in\nacc(C^{n+1}_\delta)$, $\gamma_{n+1}\in\nacc(e^{n+1}_\delta)$.
As, $\gamma_{n+1}\in e^{n+1}_\delta\s D_{n+1}\s D_0$, $\gamma$ is a limit ordinal.
So, since $C_\delta^{n+1}\cap[\beta,\gamma_{n+1})$ is empty,
the definition of $C^{n+1}_\delta$ implies that $\cf(\gamma_{n+1})\ge\lambda_{i^{n+1}_\delta}$.
Altogether, $\gamma_{n+1}\in\nacc(C^{n+1}_\delta)\cap D_{n+1}\cap E^{\lambda^+}_{\lambda_{i^{n+1}_\delta}}$,
contradicting the fact that $\gamma_{n+1}>\beta>\epsilon_{n+1}$.

$\br$ If $\gamma_{n+1}\notin e^{n+1}_\delta$, then since $\gamma_{n+1}=\gamma_n\in C^n_\delta$,
the definition of $e^{n+1}_\delta$ implies that $\gamma_{n+1}<\sup(D_{n+1}\cap\gamma_n)$.
So, this time,
$$\gamma_{n+1}=\min(C^{n+1}_\delta\setminus\beta)<\min(e^{n+1}_\delta\setminus\beta)=\sup(D_{n+1}\cap \gamma_n)\leq\gamma_n,$$
contradicting the choice of $n$.
\end{proof}

By applying the proof of Proposition~\ref{prop217} on the $C$-sequence produced by the preceding, 
we get a somewhat cleaner form of guessing, as follows.

\begin{cor}
Suppose that $\lambda$ is  a singular cardinal of uncountable cofinality,
and $\square^*_\lambda$ holds.
For every stationary $S \s E^{\lambda^+}_{>\omega}$,
there exists a transversal $\vec C=\langle C_\delta\mid \delta<\lambda^+ \rangle$ for $\square^*_\lambda$ satisfying the following:
\begin{itemize}
\item  for every $\delta<\lambda^+$, $\otp(C_\delta)<\lambda$;
\item for every club $D\s\lambda^+$,
there exists $\delta\in S$ such that $C_\delta\s D$ and
$\sup(\nacc(C_\delta)\cap D\cap E^{\lambda^+}_{>\mu})=\delta$ for every $\mu<\lambda$.\qed
\end{itemize}
\end{cor}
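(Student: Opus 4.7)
The plan is to apply the strategy of the proof of Proposition~\ref{prop217} to the $C$-sequence produced by Theorem~\ref{weaksquarethm2}. First, I would invoke Theorem~\ref{weaksquarethm2} on the given stationary $S$ to obtain a transversal $\vec{C'}=\langle C'_\delta\mid\delta<\lambda^+\rangle$ for $\square^*_\lambda$ with $\otp(C'_\delta)<\lambda$ for every $\delta$ and such that $\vec{C'}\restriction S$ witnesses $\cg_\lambda(S, \langle E^{\lambda^+}_{\ge\lambda_i}\mid i<\cf(\lambda)\rangle)$. Since postprocessing functions preserve weak coherence and do not increase ordertypes, fixing any club $D^*\s\lambda^+$ and setting $C_\delta:=\Phi^{D^*}(C'_\delta)$ (with $\Phi^{D^*}$ as in Definition~\ref{firstoperatordefn}) will automatically yield a transversal for $\square^*_\lambda$ satisfying the ordertype requirement. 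The whole task is to choose $D^*$ so that the guessing requirement also holds.

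Specifically, I would prove the following: there is a club $D^*\s\lambda^+$ such that for every club $E\s\lambda^+$ there exists $\delta\in S$ with $\nacc(\Phi^{D^*}(C'_\delta))\s E$ and $\sup(\nacc(\Phi^{D^*}(C'_\delta))\cap E^{\lambda^+}_{>\mu})=\delta$ for every $\mu<\lambda$. From this the corollary follows: the first condition together with $E$ being closed and $\acc(C_\delta)\s\acc^+(\nacc(C_\delta))$ upgrades to $C_\delta\s E$, and the second rewrites as $\sup(\nacc(C_\delta)\cap E\cap E^{\lambda^+}_{>\mu})=\delta$.

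To establish the displayed claim, I would argue by contradiction, following the second claim in the proof of Proposition~\ref{prop217}. Assuming no such $D^*$ exists, for every club $D^*$ pick a counterexample club $F^{D^*}$, and recursively construct a $\s$-decreasing chain $\langle D_j\mid j\le\lambda^+\rangle$ of clubs by $D_{j+1}:=D_j\cap F^{D_j}$ with intersections at limits. Let $E:=\diagonal_{j<\lambda^+}D_j$, and invoke the guessing of $\vec{C'}$ to fix $\delta\in S$ for which $\nacc(C'_\delta)\cap E\cap E^{\lambda^+}_{\ge\lambda_i}$ is cofinal in $\delta$ for every $i<\cf(\lambda)$; in particular $|\delta|=\lambda$. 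For each $j<\delta$, the counterexample at $D^*=D_j$ forces either (a) some $\gamma_j\in C'_\delta\cap D_j\setminus D_{j+1}$, or (b) some $\mu_j<\lambda$ with $\sup(\nacc(\Phi^{D_j}(C'_\delta))\cap D_{j+1}\cap E^{\lambda^+}_{>\mu_j})<\delta$.

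By pigeonhole, one of (a), (b) applies on a set of $j<\delta$ of size $\lambda$. If (a) does, then the $\gamma_j$'s are pairwise distinct because the $D_j$'s decrease, and all lie in $C'_\delta$, contradicting $|C'_\delta|=\otp(C'_\delta)<\lambda$. The main obstacle is case (b): using $\cf(\lambda)>\omega$ together with the uncountable cofinality of $\delta$, one locates $i^*<\cf(\lambda)$ such that $\mu_j<\lambda_{i^*}$ for $\lambda$-many relevant $j$. Then the vector guessing at level $i^*$, combined with the inclusion $\nacc(C'_\delta)\cap D_j\s\nacc(\Phi^{D_j}(C'_\delta))$, forces each such $j$ to contribute a point in $\nacc(C'_\delta)\cap E^{\lambda^+}_{\ge\lambda_{i^*}}\cap D_j\setminus D_{j+1}$; collecting these over the $\lambda$-many $j$ again contradicts $\otp(C'_\delta)<\lambda$. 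The delicate step will be this uniformization of $i^*$ from the $\mu_j$'s consistent with the vector guessing, so that the bounded clause (b) actually yields a defect outside $D_{j+1}$ rather than merely outside the postprocessed image.
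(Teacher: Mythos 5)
Your plan is essentially the paper's intended one: the paper's proof of this corollary is a single line (``apply the proof of Proposition~\ref{prop217} to the $C$-sequence from Theorem~\ref{weaksquarethm2}''). Your reduction to the displayed claim, the choice of the operator $\Phi^{D^*}$ from Definition~\ref{firstoperatordefn}, the iteration $\langle D_j\mid j\le\lambda^+\rangle$, the diagonal intersection $E$, and the handling of case~(a) via pairwise-distinct $\gamma_j\in C'_\delta\cap D_j\setminus D_{j+1}$ colliding with $|C'_\delta|<\lambda\le|\delta|$ are all correct.

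The handling of case~(b) is where the argument goes wrong, and your own closing sentence signals this. You correctly record that $\nacc(C'_\delta)\cap D_j\s\nacc(\Phi^{D_j}(C'_\delta))$ and that the vector guessing at $\delta$ (against the club $\acc(E)$, say, with $\delta>\lambda$) gives $\sup(\nacc(C'_\delta)\cap D_j\cap E^{\lambda^+}_{\ge\lambda_i})=\delta$ for every $i<\cf(\lambda)$ and every $j<\delta$. But chaining these two facts does not ``force each such $j$ to contribute a point in $D_j\setminus D_{j+1}$''; it shows that for every $j<\delta$ and every $\mu<\lambda$ one has $\sup(\nacc(\Phi^{D_j}(C'_\delta))\cap E^{\lambda^+}_{>\mu})=\delta$, which is precisely the \emph{negation} of clause~(b). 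In other words, case~(b) simply cannot occur at any $j<\delta$ for a $\delta$ chosen via the guessing against $\acc(\diagonal_{j<\lambda^+}D_j)$. There is consequently no $i^*$ to uniformize and no pigeonhole split between (a) and (b) needed: (a) holds at \emph{every} $j<\delta$, and your case~(a) argument finishes the proof. The ``delicate step'' you flag is not delicate at all once you observe that the vector guessing together with your displayed inclusion already rules out (b) outright.
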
 

\subsection{When coherence is not available}\label{subsectionnocoherence}
By waiving any coherence considerations, 
the proofs of Theorems \ref{weaksquarethm}   and \ref{weaksquarethm2} (together with Proposition~\ref{prop217}) yield, respectively, the general case of the introduction's Fact~\ref{shelahs21relative}
and a result from \cite{EiSh:819}.
\begin{fact}[{\cite[Claim~2.4]{Sh:365}}]\label{shelahs21relativeb} For every regular uncountable cardinal $\lambda$,
for every stationary $S\s E^{\lambda^+}_\lambda$, $\cg_\lambda(S,E^{\lambda^+}_\lambda)$ holds.
\end{fact}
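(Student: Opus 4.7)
The plan is to run the proof of Theorem~\ref{weaksquarethm} almost verbatim while discarding the bookkeeping that preserved weak coherence; without any coherence requirement, we may also dispense with the $\square^*_\lambda$ hypothesis. Suppose for contradiction that a stationary $S\s E^{\lambda^+}_\lambda$ is a counterexample. Fix any $\lambda$-bounded $C$-sequence $\vec e=\langle e_\delta\mid\delta<\lambda^+\rangle$ with the extra stipulation that $\otp(e_\gamma)=\omega$ whenever $\cf(\gamma)<\lambda$; this replaces the role of a transversal for $\square^*_\lambda$ in Theorem~\ref{weaksquarethm}. Set $\vec C^0:=\vec e\restriction S$ and $D_0:=\acc(\lambda^+)$.

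Now recursively construct a $\s$-decreasing sequence of clubs $\langle D_n\mid n<\omega\rangle$ and $\lambda$-bounded $C$-sequences $\vec C^n=\langle C^n_\delta\mid\delta\in S\rangle$ as follows. Given $\vec C^n$, the failure of $\cg_\lambda(S,E^{\lambda^+}_\lambda)$ for $\vec C^n$ yields a club $D_{n+1}\s D_n$ such that for every $\delta\in S$,
\[\epsilon_n^\delta:=\sup(\nacc(C^n_\delta)\cap D_{n+1}\cap E^{\lambda^+}_\lambda)<\delta.\]
For each $\delta\in S$, put $e^{n+1}_\delta:=\Phi_{D_{n+1}}(C^n_\delta)$ via Definition~\ref{dropdefn}, and
\[C^{n+1}_\delta:=e^{n+1}_\delta\cup\bigcup\{e_\gamma\setminus\sup(e^{n+1}_\delta\cap\gamma)\mid \gamma\in\nacc(e^{n+1}_\delta)\cap E^{\lambda^+}_{<\lambda}\}.\]
Each inserted piece has ordertype at most $\omega$ and $\otp(e^{n+1}_\delta)\le\otp(C^n_\delta)\le\lambda$, so by the regularity of $\lambda$, $\otp(C^{n+1}_\delta)\le\lambda$.

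Let $D:=\bigcap_{n<\omega}D_n$, and pick an epsilon number $\delta\in S\cap\acc(D)$ with $\delta>\lambda$, so that $\otp(D\cap\delta)=\delta>\lambda$. Since $\cf(\lambda)>\omega$ and each $C^n_\delta$ is $\lambda$-bounded, $\otp(\bigcup_{n<\omega}C^n_\delta\cap\alpha)<\lambda$ for every $\alpha<\delta$, so $\otp(\bigcup_{n<\omega}C^n_\delta)\le\lambda<\otp(D\cap\delta)$; combined with $\sup_{n<\omega}\epsilon_n^\delta<\delta$ (again by $\cf(\delta)>\omega$), this allows us to fix $\beta\in D$ above $\sup_{n<\omega}\epsilon_n^\delta$ with $\beta\notin\bigcup_{n<\omega}C^n_\delta$. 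Writing $\gamma_n:=\min(C^n_\delta\setminus\beta)\in\nacc(C^n_\delta)$, the sequence $\langle\gamma_n\mid n<\omega\rangle$ is weakly decreasing, so we fix $n$ with $\gamma_{n+1}=\gamma_n$. The case analysis then mirrors Theorem~\ref{weaksquarethm}: if $\gamma_{n+1}\in e^{n+1}_\delta$, the emptiness of $C^{n+1}_\delta\cap[\beta,\gamma_{n+1})$ precludes any $e_\gamma$-insertion at $\gamma_{n+1}$, forcing $\cf(\gamma_{n+1})=\lambda$; thus $\gamma_n=\gamma_{n+1}\in\nacc(C^n_\delta)\cap D_{n+1}\cap E^{\lambda^+}_\lambda$ lies above $\epsilon_n^\delta$, contradicting the choice of $D_{n+1}$. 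Otherwise $\gamma_{n+1}\notin e^{n+1}_\delta$ forces $\gamma_{n+1}<\sup(D_{n+1}\cap\gamma_n)\le\gamma_n$, contradicting $\gamma_{n+1}=\gamma_n$.

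The main technical obstacle is maintaining $\lambda$-boundedness of each $C^{n+1}_\delta$; in Theorem~\ref{weaksquarethm} this was handled through \cite[Lemma~2.8]{paper32} via the transversal property of $\vec e$, but here the absence of any coherence requirement lets us shorten each $e_\gamma$ with $\cf(\gamma)<\lambda$ to a cofinal $\omega$-sequence, so that all insertions are of uniform ordertype $\omega$ and, by regularity of $\lambda$, the total ordertype of $C^{n+1}_\delta$ stays bounded.
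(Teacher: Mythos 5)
Your strategy --- rerun the proof of Theorem~\ref{weaksquarethm} with the coherence bookkeeping discarded --- is precisely the route the paper describes in Subsection~\ref{subsectionnocoherence}, and the recursion and the case analysis you carry out are faithful to that proof. Two points need repair.

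The first concerns the stipulation that $\otp(e_\gamma)=\omega$ whenever $\cf(\gamma)<\lambda$: this is impossible once $\lambda>\aleph_1$, since for $\omega<\cf(\gamma)<\lambda$ every cofinal subset of $\gamma$ has ordertype at least $\cf(\gamma)>\omega$. As your $\lambda$-boundedness check leans on all insertions having uniform ordertype $\omega$, this is a real gap. The repair is to take $\otp(e_\gamma)=\cf(\gamma)$; then each inserted block $e_\gamma\setminus\sup(e^{n+1}_\delta\cap\gamma)$ with $\gamma\in E^{\lambda^+}_{<\lambda}$ has ordertype $<\lambda$, and by regularity of $\lambda$ a sum of at most $\lambda$-many ordinals each $<\lambda$ is still $\le\lambda$, giving $\otp(C^{n+1}_\delta)\le\lambda$. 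Nothing downstream depends on the particular value $\omega$.

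The second is the inference that choosing an epsilon number $\delta\in S\cap\acc(D)$ with $\delta>\lambda$ yields $\otp(D\cap\delta)=\delta$. This does not follow: one can build a club $D\subseteq\lambda^+$ and an epsilon number $\delta\in\acc(D)$ of cofinality $\lambda$ with $\otp(D\cap\delta)=\lambda<\delta$ (take $A$ a club in $\delta$ of ordertype $\lambda$ and set $D:=A\cup\{\delta\}\cup(\lambda^+\setminus(\delta+1))$). What is really needed is that $\delta$ be a closure point of the increasing enumeration of $D$, i.e., $\delta\in\{\alpha<\lambda^+\mid\otp(D\cap\alpha)=\alpha\}$; this set is a club, so intersecting it with $(\lambda,\lambda^+)$ and $S$ produces the required $\delta$. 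This is what the paper's condition $\otp(D\cap\delta)=\omega^\delta$ encodes. With these two corrections the argument is complete and matches the paper's.
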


\begin{fact}[{\cite[Theorem~2]{EiSh:819}}] For every singular cardinal $\lambda$ of uncountable cofinality
and every stationary $S \s E^{\lambda^+}_{\cf(\lambda)}$,
there exists a $\cf(\lambda)$-bounded $C$-sequence $\langle C_\delta\mid\delta\in S\rangle$ satisfying the following.

For every club $D\s\lambda^+$,
there exists $\delta\in S$ such that $C_\delta\s D$ and $\langle \cf(\gamma)\mid \gamma\in\nacc(C_\delta)\rangle$ is strictly increasing and converging to $\lambda$.
\end{fact}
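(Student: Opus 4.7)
The plan is to follow the hint preceding the statement: reproduce the arguments of Theorem~\ref{weaksquarethm2} and Proposition~\ref{prop217} with every $\square^*_\lambda$-coherence verification deleted, and then add a thinning step to extract strictly increasing cofinalities. First I would fix an increasing sequence $\langle \lambda_i\mid i<\cf(\lambda)\rangle$ of regular cardinals cofinal in $\lambda$ with $\lambda_0>\cf(\lambda)$. By intersecting $S$ with the club of nonzero multiples of $\lambda$ below $\lambda^+$, I may assume that for every $\delta\in S$ and every regular $\mu<\lambda$, the set $\{\gamma<\delta\mid\cf(\gamma)=\mu\}$ is unbounded in $\delta$; consequently each such $\delta$ admits a club $e_\delta$ of ordertype $\cf(\lambda)$ whose nonaccumulation points have strictly increasing cofinalities converging to $\lambda$, and $\vec e=\langle e_\delta\mid\delta\in S\rangle$ will serve as the starting $C$-sequence.

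Next, I would run the $\omega$-stage iteration of Theorem~\ref{weaksquarethm2} verbatim, simply omitting every verification that the successive $C$-sequences are transversals for $\square^*_\lambda$. The contradiction is powered only by $\cf(\lambda)>\omega$ keeping $\otp(\bigcup_{n<\omega}C_\delta^n)<\lambda$, so no coherence is needed. This produces a $C$-sequence $\vec C$ with $\otp(C_\delta)<\lambda$ such that, for every club $D\s\lambda^+$, some $\delta\in S$ satisfies $\sup(\nacc(C_\delta)\cap D\cap E^{\lambda^+}_{\ge\lambda_i})=\delta$ for every $i<\cf(\lambda)$. An application of the $\Phi^{D\cap T}$ postprocessing from the proof of Proposition~\ref{prop217} then upgrades this to: for every club $D\s\lambda^+$ there is $\delta\in S$ with $C_\delta\s D$ and $\sup(\nacc(C_\delta)\cap E^{\lambda^+}_{>\mu})=\delta$ for every $\mu<\lambda$.

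At the end I would thin. For each $\delta\in S$, attempt to recursively pick $\gamma_i^\delta\in\nacc(C_\delta)$ with $\gamma_i^\delta>\sup_{j<i}\gamma_j^\delta$ and $\cf(\gamma_i^\delta)>\max\{\lambda_i,\sup_{j<i}\cf(\gamma_j^\delta)\}$. If the selection extends cofinally in $\delta$, set $C_\delta^\star:=\cl(\{\gamma_i^\delta\mid i<\cf(\lambda)\})$, which is contained in $C_\delta$ since $C_\delta$ is closed in $\delta$; otherwise fall back to $C_\delta^\star:=e_\delta$. Either branch yields a club of $\delta$ of ordertype $\cf(\lambda)$ whose nonaccumulation points have strictly increasing cofinalities converging to $\lambda$, and whenever $\delta$ witnesses the strengthened guessing, the first branch succeeds---powered precisely by the unboundedness of $\nacc(C_\delta)\cap E^{\lambda^+}_{>\mu}$ in $\delta$ for every $\mu<\lambda$---so $C_\delta^\star\s C_\delta\s D$ as required. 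The main subtlety will be this juggling between the globally-defined $\vec C^\star$ and the branch-dependent inclusion $C_\delta^\star\s D$; it is resolved by ensuring that the first branch is always available on guessing witnesses, the fallback on non-witnesses being harmless, while the preliminary restriction of $S$ exists solely to make $e_\delta$ well-defined with the required cofinality pattern.
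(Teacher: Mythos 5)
Your plan tracks the paper's own recipe for this fact: run the $\omega$-stage descent of Theorem~\ref{weaksquarethm2} (which, as you note, only uses $\cf(\lambda)>\omega$ to keep $\otp(\bigcup_{n<\omega}C^n_\delta)<\lambda$ and never invokes coherence), feed the output through the Proposition~\ref{prop217}-style stabilization, and then thin. The terminal thinning is the one step the paper leaves implicit, but it really is needed, since the intermediate $C^n_\delta$ interleave the short auxiliary $e_\gamma$'s and therefore only achieve ordertype ${<}\lambda$ rather than $\cf(\lambda)$; you identified this correctly.

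Two details merit attention. Minor: the auxiliary $C$-sequence that feeds the definition of $C^{n+1}_\delta$ must be given on \emph{all} of $\lambda^+$, not just on $S$, so you should carry a global $\langle e_\gamma\mid\gamma<\lambda^+\rangle$ with $\otp(e_\gamma)=\cf(\gamma)$, keeping the cofinality-patterned clubs at points of $S$ only as fallbacks for the final step. More substantively: in the thinning recursion you must additionally require $\gamma_i^\delta$ to exceed the $i$-th term of a fixed cofinal sequence $\langle\delta_i\mid i<\cf(\lambda)\rangle$ in $\delta$. As written, a strictly increasing sequence of $\cf(\lambda)$-many elements of $\nacc(C_\delta)$ with the prescribed cofinalities can perfectly well accumulate strictly below $\delta$ (indeed $\otp(C_\delta)$ may far exceed $\cf(\lambda)$), in which case your ``extends cofinally'' test fails even on a guessing witness and the fallback $e_\delta$ need not lie inside $D$, breaking the claim that the first branch is always available on witnesses. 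Since the post-$\Phi$ hypothesis gives that $\nacc(C_\delta)\cap E^{\lambda^+}_{>\mu}$ meets every final segment of $\delta$ for all $\mu<\lambda$, the strengthened recursion is always satisfiable on witnesses; with that one correction your argument is complete.
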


Likewise, by changing the choice of the initial $C$-sequence $\vec e$ in the proof of Theorem~\ref{sqstartthm},
one obtains a proof of the following.

\begin{thm}[Shelah]\label{nonreflectingthm} Suppose that $R,S,T$ are stationary subsets of a regular cardinal $\kappa\ge\aleph_2$.
\begin{enumerate}[(1)]
\item If $T$ is a nonreflecting stationary set, then $\cg(S,T)$ holds;
\item If $R$ is a nonreflecting stationary set, then $\cg(R,T,\sigma)$ implies $\cg(S,T,\sigma)$.\qed
\end{enumerate}
\end{thm}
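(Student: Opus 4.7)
The plan is to adapt the proof of Theorem~\ref{sqstartthm} by changing the initial $C$-sequence $\vec e$. In that proof, the $\sq^*$-coherence of $\vec e$ was used only in the critical Subcases~1.3 and~2.3, where the guessing witness $\beta$ happens to lie in $\acc(C^i_\delta)$, to transfer guessing from $\beta$ to $\delta$. Our strategy for part~(2) is to choose $\vec e$ together with a constraint on the iterates $\vec{C^i}$ that makes these critical subcases vacuous by virtue of the nonreflection of $R$.

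For part~(2), let $\vec e=\langle e_\alpha\mid\alpha\in R\rangle$ be the given $\cg(R,T,\sigma)$-sequence, extended arbitrarily to a $C$-sequence on $\kappa$. Since $R$ is nonreflecting, for each $\delta\in S\cap E^\kappa_{>\omega}$ the set $R\cap\delta$ is nonstationary in $\delta$, so we fix a closed unbounded $c_\delta\s\delta$ disjoint from $R$, which forces $\acc(c_\delta)\cap R=\emptyset$ as well. We run the iterative construction of $\langle(D_i,\vec{C^i})\mid i<\omega_1\rangle$ from Theorem~\ref{sqstartthm}, but at successor stages we fill in at a bad $\gamma\in\nacc(C^i_\delta)\setminus(F^{D_i}\cap T)$ using points from $c_\delta\cap\gamma\setminus\sup(C^i_\delta\cap\gamma)$ instead of from $C^{i+1}_\gamma\setminus\sup(C^i_\delta\cap\gamma)$. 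Arranging $C^i_\delta\s\acc(c_\delta)$ throughout guarantees these fill-in sets are cofinal in the relevant $\gamma$, and the inclusion $C^i_\delta\s c_\delta$ forces $\acc(C^i_\delta)\cap R=\emptyset$ along the iteration. The case $\delta\in S\cap E^\kappa_\omega$ is handled by classical $\zfc$ club-guessing.

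The contradiction then runs as in Theorem~\ref{sqstartthm}: set $D:=\bigcap_{i<\omega_1}D_i$; the guessing of $\vec e$ yields a stationary $B\s R$ of witness points; pick $\delta\in S\cap\acc^+(B)$ and case-analyze on $\cf(\delta)$ and on the eventually constant value of $\gamma_i:=\min(C^i_\delta\setminus\beta)$. Subcases~1.3 and~2.3 require $\beta\in\acc(C^i_\delta)\s c_\delta$, and Subcases~1.2 and~2.2 require $\beta\in\nacc(C^i_\delta)\s c_\delta$; all four are vacuous since $\beta\in B\s R$ while $c_\delta\cap R=\emptyset$. Only Subcases~1.1 and~2.1 remain, and there the modified fill-in at $\gamma_i\in\nacc(C^i_\delta)$ produces a point of $c_\delta\cap(\beta,\gamma_i)$ lying in $C^{i+1}_\delta$, forcing $\gamma_{i+1}<\gamma_i$ and contradicting the choice of $i$. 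Part~(1) is obtained by a similar but more delicate construction: an initial sequence $\vec e$ supplied by Theorem~\ref{thm212} is used together with a postprocessing that draws $\nacc$-points of $C^i_\delta$ from $T$ while keeping $\acc(C^i_\delta)$ disjoint from $T$ by means of $T$-nonreflection.

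The main obstacle will be maintaining the auxiliary inclusion $C^i_\delta\s\acc(c_\delta)$ through successor and limit stages of the iteration, which controls the cofinality of the fill-in sets $c_\delta\cap\gamma$ at each bad $\gamma\in\nacc(C^i_\delta)$; it can be arranged by shrinking $c_\delta$ beforehand to a closed unbounded set with sufficient internal closure properties, such as the intersection of a short iterated sequence of accumulation sets of an initial club.
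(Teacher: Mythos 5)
The paper does not actually spell out a proof of Theorem~\ref{nonreflectingthm}; it merely asserts that one ``obtains a proof'' of it ``by changing the choice of the initial $C$-sequence $\vec e$ in the proof of Theorem~\ref{sqstartthm}.'' So your task was essentially to flesh out that claim, and your overall strategy is indeed the intended one: replace $\vec e$, and use the nonreflection of $R$ to ensure that the guessing witness $\beta\in B\subseteq R$ never lands in $\acc(C^i_\delta)$, so that Subcases~1.3/2.3 --- the only places where $\sq^*$-coherence was invoked --- are vacuous. That part you got right.

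However, the mechanism you propose for achieving this contains a genuine gap. You want to maintain the invariant $C^i_\delta\subseteq\acc(c_\delta)$ through the iteration, and you use it for two purposes: (a) to make the fill-in $c_\delta\cap\gamma\setminus\sup(C^i_\delta\cap\gamma)$ cofinal in $\gamma$, and (b) to keep $\acc(C^i_\delta)$ disjoint from $R$. But the fill-in set itself is contained in $c_\delta$, not in $\acc(c_\delta)$, so $C^{i+1}_\delta$ immediately escapes $\acc(c_\delta)$ and the invariant is lost. You acknowledge this as ``the main obstacle,'' but the suggested fix --- replacing $c_\delta$ by ``the intersection of a short iterated sequence of accumulation sets'' --- cannot work: for any nonempty club $A$, $\min(A)\notin\acc(A)$, so there is no nontrivial closed cofinal set equal to its own accumulation set, and a finite (or ``short'') iteration of $\acc$ only postpones the escape. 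In fact the iteration runs for $\omega_1$ stages, and for $\delta$ with $\cf(\delta)=\omega_1$ one cannot even take $\bigcap_{i<\omega_1}\acc^{(i)}(c_\delta)$ and remain cofinal.

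A cleaner route, closer in spirit to the paper's one-line hint, is to not touch the fill-in at all: keep the recursion $C^{i+1}_\alpha:=C^i_\alpha\cup\bigcup\{C^{i+1}_\gamma\setminus\sup(C^i_\alpha\cap\gamma)\mid\gamma\in\nacc(C^i_\alpha)\setminus(F^{D_i}\cap T)\}$ exactly as in Theorem~\ref{sqstartthm}, and only change $\vec e$, setting $e_\alpha$ to be the given $\cg(R,T,\sigma)$-witness for $\alpha\in R$ and setting $e_\alpha$ to be a club disjoint from $R$ (which exists for $\alpha$ of uncountable cofinality by nonreflection, and for $\alpha$ of countable cofinality by taking successor ordinals) for $\alpha\notin R$. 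A double recursion on $i$ and on $\alpha$ then shows $C^i_\alpha\cap R=\emptyset$ whenever $\alpha\notin R$, so for $\delta\in S\setminus R$ one always has $\gamma_i:=\min(C^i_\delta\setminus\beta)>\beta$, landing in Subcase~1.1 and yielding the contradiction with no coherence needed. This also removes the $\delta$-dependence of your fill-in, which is a second soft spot in the proposal. What this cleaner route still does not automatically cover is the case where $S\setminus R$ is nonstationary (i.e.\ essentially $S\subseteq R$), which your write-up also silently passes over; there one must either argue separately or shrink $S$ to a stationary subset on which the construction can be controlled, and this deserves explicit handling.

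Two more concrete issues. First, your remark that ``the case $\delta\in S\cap E^\kappa_\omega$ is handled by classical $\zfc$ club-guessing'' does not suffice: the classical result (Corollary~\ref{corfullcg}) yields $\cg(S\cap E^\kappa_\omega,\kappa,\kappa)$, which gives guessing into $D$, not into $D\cap T$, and says nothing about the parameter $\sigma$. In fact in the proof of Theorem~\ref{sqstartthm} both the $\cf(\delta)=\omega$ and $\cf(\delta)>\omega$ cases are handled by the same iteration (that is the whole point of Case~2 there), so separating off $E^\kappa_\omega$ is unnecessary once the fill-in is set up correctly. Second, your sketch of part~(1) --- a ``postprocessing that draws $\nacc$-points of $C^i_\delta$ from $T$ while keeping $\acc(C^i_\delta)$ disjoint from $T$'' --- is at best a statement of intent; it is exactly the content that needs proving, and as written it does not indicate how the two demands (nonaccumulation points inside $T$, accumulation points outside $T$) are to be reconciled through the $\omega_1$-long iteration.
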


Forgetting about coherence, Corollary~\ref{cor318} has the following strong consequence.

\begin{cor} Suppose that $\square(\lambda^+)$ holds, and one of the following:
\begin{itemize}
\item $\lambda\ge\beth_\omega$;
\item $\lambda^{\aleph_0}=\lambda$;
\item $\lambda=\mathfrak b=\aleph_1$;
\item $\lambda\ge 2^{\aleph_1}$ and Shelah's Strong Hypothesis ($\ssh$) holds;
\item There exists an infinite regular cardinal $\theta$ such that $2^\theta\le\lambda<\theta^{+\theta}$.
\end{itemize}

Then $\cg(S,T)$ holds for all stationary subsets $S,T$ of $\lambda^+$.
\end{cor}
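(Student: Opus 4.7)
The plan is to invoke Corollary~\ref{cor318} directly. The hypothesis $\square(\lambda^+)$ is built into every clause of the corollary, so the whole task reduces to checking that, under any one of the five listed clauses, $J_\omega[\lambda^+]$ contains a stationary subset of $\lambda^+$. Once that is in hand, Corollary~\ref{cor318} applied with $\kappa:=\lambda^+$ delivers, for any prescribed stationary $S,T\s\lambda^+$, an $\sq^*$-coherent $C$-sequence $\vec C$ such that $\vec C\restriction S$ witnesses $\cg(S,T)$, which is even stronger than the claimed conclusion.

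By \cite[Proposition~3.3]{paper51}, no stationary subset of $E^{\lambda^+}_{\cf(\lambda)}$ can lie in $J_\omega[\lambda^+]$, so in each of the five scenarios the target is a stationary subset of $E^{\lambda^+}_{\neq\cf(\lambda)}$ — typically a stationary subset of $E^{\lambda^+}_{\omega}$, or of $E^{\lambda^+}_{\theta}$ in the last clause. That each of the five hypotheses suffices for producing such a stationary set in $J_\omega[\lambda^+]$ is precisely what is isolated and proved in \cite{paper51}: the clauses of the corollary are a verbatim copy of the standard cardinal-arithmetic / PCF-theoretic conditions identified there as sufficient for constructing a covering sequence $\langle f_i:\lambda^+\rightarrow[\lambda^+]^{<\omega}\mid i<\lambda^+\rangle$ as in Definition~\ref{paper51}. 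In each case, such an $\vec f$ is extracted from a Shelah-style scale of length $\lambda^+$: in the last clause the scale lives on a product of $\theta$-many regular cardinals cofinal in $\lambda$, whereas in the clauses $\lambda^{\aleph_0}=\lambda$, $\lambda\ge\beth_\omega$, $\lambda\ge 2^{\aleph_1}+\ssh$, and $\lambda=\mathfrak b=\aleph_1$ the scale lives at countable cofinality.

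The main, and really the only, obstacle in the proof is the five-case verification of the non-triviality of $J_\omega[\lambda^+]$. Rather than reprove these routine but delicate scale-construction arguments here, I would simply quote the corresponding statements from \cite{paper51}, matching each clause of the corollary to its counterpart there. Once non-triviality of $J_\omega[\lambda^+]$ is secured, the remaining ingredients — the $\sq^*$-coherent machinery, the relevant postprocessing argument, and the actual club-guessing — are already packaged inside Corollary~\ref{cor318} and its proof (Theorem~\ref{sqstartthm}), so no further work is needed.
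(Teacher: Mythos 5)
Your proposal is correct and follows exactly the paper's route: reduce to Corollary~\ref{cor318} and then verify, case by case via the cited results of \cite{paper51}, that $J_\omega[\lambda^+]$ contains a stationary set. The paper's proof is precisely this one-liner (quoting Corollaries~5.1, 5.3, and~5.7 of \cite{paper51}), so the only difference is that you spell out slightly more of the surrounding context.
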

\begin{proof} By Corollary~5.1, Corollary~5.3 and Corollary~5.7
of \cite{paper51}, any of the above hypotheses imply that $J_\omega[\lambda^+]$ contains a stationary set.
\end{proof}

\section{Partitioned the club-guessing} \label{sectionpartitioning}
The theme of this section is partitioned club guessing as in Fact~\ref{shelahpartitioning}. The main definition is Definition~\ref{defnpartitioning}, where various types of partitions are considered. The bulk of our results, with the exception of Subsection~\ref{subsectionpartitioned}, however are about the stronger partitioning club-guessing. The difference being that in the former we obtain a $C$-sequence and a partition, whereas in the latter we are given the $C$-sequence in advance and then are given the task of partitioning it. 

In Subsection~\ref{subsectioncolourings}, we partition club-guessing sequences using colouring principles from \cite{paper47,paper53} which were in fact discovered while working on partitioning club-guessing. We show how these colouring principles allow for an abstract approach to partitioning club-guessing, separating the club-guessing technology from the combinatorial technology given to us by the relevant hypothesis. 

In Subsection~\ref{subsectionpartitioned}, we construct partitioned club-guessing using these colouring principles. Furthermore, we can obtain partitioned club-guessing sequences satisfying coherence features as well. 

In Subsection~\ref{partitioningapplications}, we list the results from \cite{paper47,paper53} under which the colouring principles can be obtained, and draw conclusions.

We conclude with a higher analogue of a combinatorial construction on $\aleph_1$ due to Moore from \cite{MR2444284}.
\begin{defn} \label{defnpartitioning}
For a $C$-sequence $\vec C=\langle C_\delta\mid \delta\in S\rangle$, we define three sets of cardinals:
\begin{itemize}
\item $\Theta_0(\vec C,T,\sigma,\vec J)$ denotes the set of all cardinals $\theta$ for which there exists a function $h:\kappa\rightarrow\theta$ satisfying the following.

For every club $D\s\kappa$, there exists $\delta\in S$ such that, for every $\tau<\theta$,
$$\{\beta<\delta\mid \suc_\sigma(C_\delta\setminus\beta)\s D\cap T\cap h^{-1}\{\tau\}\}\in J_\delta^+.$$

\item $\Theta_1(\vec C,T,\sigma,\vec J)$ denotes the set of all cardinals $\theta$ for which there exists a function $h:\kappa\rightarrow\theta$ satisfying the following.

For every club $D\s\kappa$, there exists $\delta\in S$ such that, for every $\tau<\theta$,
$$\{\beta<\delta\mid h(\otp(C_\delta\cap\beta))=\tau \ \&\  \suc_\sigma(C_\delta\setminus\beta)\s D\cap T\}\in J_\delta^+.$$
\item $\Theta_2(\vec C,T,\sigma,\vec J)$ denotes the set of all cardinals $\theta$ 
for which there exists a sequence of functions $\langle h_\delta:\delta\rightarrow\theta\mid \delta\in S\rangle$ satisfying the following.

For every club $D\s\kappa$, there exists $\delta\in S$ such that, for every $\tau<\theta$,
$$\{\beta<\delta\mid h_\delta(\beta)=\tau \ \&\  \suc_\sigma(C_\delta\setminus\beta)\s D\cap T\}\in J_\delta^+.$$
\end{itemize}
\end{defn}
\begin{conv}
Convention~\ref{convention-omissions} applies to the above definition, as well.
In addition, if we omit $T$, then $T:=\kappa$.
\end{conv}
\begin{remark} $\Theta_0(\vec C,T,\sigma,\vec J)\s \Theta_2(\vec C,T,\sigma,\vec J)$ and
$\Theta_1(\vec C,T,\sigma,\vec J)\s \Theta_2(\vec C,T,\sigma,\vec J)$.
\end{remark}
\begin{prop}
 For a stationary $S\s E^\kappa_\theta$
and a sequence $\vec C$ witnessing $\cg(S,T,\kappa)$, $\theta\in\Theta_2(\vec C,T,\sigma)$ for any choice of $\sigma<\theta$.\qed
\end{prop}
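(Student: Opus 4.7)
The plan is to take $h_\delta$ to be any partition of $\delta$ into $\theta$-many unbounded pieces, completely ignoring the given $\vec C$. The strong value $\sigma=\kappa$ hiding inside $\cg(S,T,\kappa)$ makes the coloring and the club-guessing essentially independent, so no coordination is required.

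Concretely, for each $\delta\in S$, the assumption $S\s E^\kappa_\theta$ gives $\cf(\delta)=\theta$, so I fix a partition $\langle A^\delta_\tau\mid\tau<\theta\rangle$ of $\delta$ into $\theta$ pairwise disjoint sets each unbounded in $\delta$, and define $h_\delta(\beta):=\tau$ whenever $\beta\in A^\delta_\tau$. Given a club $D\s\kappa$, invoke the hypothesis that $\vec C$ witnesses $\cg(S,T,\kappa)$ to produce some $\delta\in S$ such that
$$B:=\{\beta<\delta\mid\suc_\kappa(C_\delta\setminus\beta)\s D\cap T\}\in (J^{\bd}[\delta])^+.$$

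The pivotal observation (and the only non-trivial point in the proof) is that $B$ is upward closed in $\delta$: whenever $\beta<\beta'<\delta$, the set $\suc_\kappa(C_\delta\setminus\beta')$ is a subset of $\suc_\kappa(C_\delta\setminus\beta)$, since enlarging $\beta$ only advances the initial index from which successor-indexed members of $C_\delta$ are enumerated. Combined with the unboundedness of $B$, this forces $B$ to contain a final segment $(\eta,\delta)$ of $\delta$.

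To finish, fix $\tau<\theta$. Since $\sigma<\kappa$, we have $\suc_\sigma(C_\delta\setminus\beta)\s\suc_\kappa(C_\delta\setminus\beta)$ for every $\beta<\delta$, so every $\beta\in(\eta,\delta)$ automatically satisfies $\suc_\sigma(C_\delta\setminus\beta)\s D\cap T$. By construction, $A^\delta_\tau$ is unbounded in $\delta$, hence so is $A^\delta_\tau\cap(\eta,\delta)$; each member of the latter lies in the set $\{\beta<\delta\mid h_\delta(\beta)=\tau\ \&\ \suc_\sigma(C_\delta\setminus\beta)\s D\cap T\}$, witnessing that it is unbounded in $\delta$, i.e., in $(J^{\bd}[\delta])^+$. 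This exhibits $\theta\in\Theta_2(\vec C,T,\sigma)$.
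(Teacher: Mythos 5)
The paper states this proposition with an unqualified $\qed$ and supplies no argument, so there is no proof to compare against; the authors evidently regarded it as immediate. Your argument is correct and identifies the right reason it is immediate. The crucial step is the upward closedness of $B$, which follows because for a $\kappa$-bounded $C$-sequence one has $\suc_\kappa(C_\delta\setminus\beta)=\nacc(C_\delta)\cap(\min(C_\delta\setminus\beta),\delta)$, and $\min(C_\delta\setminus\beta)$ is nondecreasing in $\beta$; thus $B$ contains a final segment $[\min B,\delta)$ of $\delta$, and on that final segment the guessing constraint is already satisfied for every $\beta$, leaving the coloring completely unconstrained. Your choice of $h_\delta$ from an arbitrary partition of $\delta$ into $\theta$ unbounded pieces (legitimate because $\cf(\delta)=\theta$) then finishes it, and the passage $\suc_\sigma\subseteq\suc_\kappa$ only needs $\sigma\le\kappa$, which $\sigma<\theta\le\kappa$ ensures. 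This is precisely the "tail club guessing" phenomenon made explicit in Lemma~\ref{taillemma}; the one point worth noting is that your route works directly with the given $\vec C$ rather than the reprocessed sequence that Lemma~\ref{taillemma} produces, which matters since $\Theta_2$ is pinned to $\vec C$ itself.
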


\begin{lemma}\label{amhocg} 
Suppose that $\vec C=\langle C_\delta\mid \delta\in S\rangle$ is a $C$-sequence witnessing $\cg_\xi(S,T)$.
For every $\theta\in\Theta_2(\vec C,T)$ such that $\alpha+\beta<\xi$ for all $(\alpha,\beta)\in\theta\times\xi$, there exists 
a $C$-sequence $\vec C^\bullet=\langle C^\bullet_\delta\mid \delta\in S\rangle$ witnessing $\cg_\xi(S,T)$
for which $\theta\in\Theta_1(\vec{C^\bullet},T)$.
\end{lemma}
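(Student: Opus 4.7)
Let $\vec h=\langle h_\delta\mid\delta\in S\rangle$ witness that $\theta\in\Theta_2(\vec C,T)$. The strategy is to construct $\vec{C^\bullet}$ together with a global coloring $h:\kappa\rightarrow\theta$ so that for each $\delta\in S$ and each element $\beta=c^\delta_\alpha\in C_\delta$, the ordertype $\otp(C^\bullet_\delta\cap\beta)$ carries $h$-color $h_\delta(\beta)$. The hypothesis $\alpha+\beta<\xi$ for $(\alpha,\beta)\in\theta\times\xi$ furnishes precisely the room within $\xi$ that is needed for the encoding.

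First, define $h:\kappa\rightarrow\theta$ by a $\theta$-block decomposition of $\xi$: for each $\alpha<\xi$, write uniquely $\alpha=\mu+\tau$ with $\mu$ a multiple of $\theta$ (or $\mu=0$) and $\tau<\theta$, and set $h(\alpha):=\tau$ (extending $h$ arbitrarily on $\kappa\setminus\xi$). The hypothesis guarantees that $h$ is \emph{$\theta$-dense} below $\xi$: for every $\mu<\xi$ and every $\tau<\theta$, there is some $\mu'\in\xi$ with $\mu\le\mu'<\mu+\theta$ and $h(\mu')=\tau$.

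For each $\delta\in S$, enumerate $C_\delta=\{c^\delta_\alpha\mid\alpha<\sigma_\delta\}$ where $\sigma_\delta:=\otp(C_\delta)\le\xi$, and recursively assign positions $\pi^\delta_\alpha<\xi$ as follows: at successor stages, let $\pi^\delta_{\alpha+1}$ be the least ordinal exceeding $\pi^\delta_\alpha$ with $h(\pi^\delta_{\alpha+1})=h_\delta(c^\delta_{\alpha+1})$; at limit stages, set $\pi^\delta_\alpha:=\sup_{\alpha'<\alpha}\pi^\delta_{\alpha'}$. Then choose $C^\bullet_\delta\s\delta$ to be a closed unbounded set of ordertype $\sup_\alpha\pi^\delta_\alpha$ whose enumeration $\langle c^{\bullet,\delta}_\pi\mid \pi<\otp(C^\bullet_\delta)\rangle$ satisfies $c^{\bullet,\delta}_{\pi^\delta_\alpha}=c^\delta_\alpha$ for every $\alpha<\sigma_\delta$. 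The filler elements at positions not of the form $\pi^\delta_\alpha$ are placed in the gaps of $C_\delta$, inserted \emph{strictly below} the corresponding $c^\delta_\alpha$ (and above the previous $c^\delta_{\alpha-1}$); an initial application of a postprocessing function $\Phi$ to $\vec C$ ensures the gaps are wide enough to accommodate the fillers without disturbing the guessing.

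Three verifications complete the argument. (i) $\vec{C^\bullet}$ is $\xi$-bounded: the positions $\pi^\delta_\alpha$ advance by less than $\theta$ at each successor step, and the hypothesis $\theta+\beta<\xi$ for $\beta<\xi$ controls the cumulative growth at limit stages so that $\pi^\delta_\alpha<\xi$ throughout. (ii) $\vec{C^\bullet}$ witnesses $\cg_\xi(S,T)$: since the fillers lie strictly below each $c^\delta_\alpha$, the identity $\min(C^\bullet_\delta\setminus(c^\delta_\alpha+1))=c^\delta_{\alpha+1}$ holds, so every good anchor $\beta=c^\delta_\alpha$ for $\vec C$ (with $c^\delta_{\alpha+1}\in D\cap T$) is simultaneously a good anchor for $\vec{C^\bullet}$. (iii) $\theta\in\Theta_1(\vec{C^\bullet},T)$: by construction, $h(\otp(C^\bullet_\delta\cap c^\delta_\alpha))=h(\pi^\delta_\alpha)=h_\delta(c^\delta_\alpha)$, so for every club $D\s\kappa$ and every $\tau<\theta$, the $J_\delta^+$-set of anchors supplied by $\Theta_2$ (namely $\{\beta<\delta\mid h_\delta(\beta)=\tau\ \&\ \min(C_\delta\setminus(\beta+1))\in D\cap T\}$) pushes forward verbatim to a $J_\delta^+$-set of anchors witnessing the $\Theta_1$-clause.

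The main obstacle is item (i): bounding $\pi^\delta_\alpha$ below $\xi$ at limit stages, particularly when $\sigma_\delta$ is close to $\xi$. The $\theta$-density of $h$ at successors is immediate, but at a limit $\alpha$ of uncountable cofinality the supremum of the positions may threaten to escape $\xi$. This is handled by working with a preliminary thinning of $\vec C$ chosen so that the base ordertypes leave enough slack for the hypothesis $\theta+\beta<\xi$ to iteratively absorb the accumulated shifts.
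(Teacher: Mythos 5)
Your overall strategy — pad $C_\delta$ with small filler blocks so that the position of a designated element encodes the $\Theta_2$-colour $h_\delta$, with a global $h$ built from a $\theta$-block decomposition of $\xi$ — is the same as the paper's. But there is a genuine gap in verifications (ii) and (iii), and it is not just a gap you can wave at: you encode the colour at the \emph{wrong end} of each filler block.

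Concretely, you place $c^\delta_\alpha$ at position $\pi^\delta_\alpha$ and the fillers at positions strictly between $\pi^\delta_\alpha$ and $\pi^\delta_{\alpha+1}$. Since $C^\bullet_\delta$ is increasing, those fillers necessarily lie in the ordinal interval $(c^\delta_\alpha, c^\delta_{\alpha+1})$. Hence, whenever $\pi^\delta_{\alpha+1}>\pi^\delta_\alpha+1$ (which is the generic case, since $\pi^\delta_{\alpha+1}$ must reach a prescribed colour in the next $\theta$-block), the ordinal $\min(C^\bullet_\delta\setminus(c^\delta_\alpha+1))$ is the \emph{first filler}, not $c^\delta_{\alpha+1}$. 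Your asserted identity $\min(C^\bullet_\delta\setminus(c^\delta_\alpha+1))=c^\delta_{\alpha+1}$ is therefore false, and with it both (ii) ($\vec{C^\bullet}$ witnesses $\cg_\xi(S,T)$ via the same anchors) and (iii) ($\theta\in\Theta_1(\vec{C^\bullet},T)$ via the same anchors) collapse: a filler in the gap is an arbitrary ordinal and certainly need not lie in $D\cap T$.

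The fix is exactly the paper's trick, and it is worth internalising because it recurs in several arguments of this kind. The anchor one should feed to the $\Theta_1$-set is the \emph{last} element of the block below $c^\delta_{\alpha+1}$ — call it $\beta'$ — because that is the unique $\beta'$ with $\min(C^\bullet_\delta\setminus(\beta'+1))=c^\delta_{\alpha+1}$. Consequently the colour $h_\delta(c^\delta_\alpha)$ must be encoded at the position $\otp(C^\bullet_\delta\cap\beta')$ of this last element, not at $\pi^\delta_\alpha$. The paper achieves this by choosing, for each triple $(\gamma,\epsilon,\tau)$, a finite-or-short closed block $y_{\gamma,\epsilon,\tau}\subseteq[\gamma,\gamma+\theta)$ whose length is pre-adjusted so that $h\bigl(\epsilon+\otp(y_{\gamma,\epsilon,\tau})-1\bigr)=\tau$; i.e., the surjection $h$ is required to hit every colour on \emph{successor} ordinals in every interval of length $\theta$, and the block length is chosen to land the last position on the desired colour. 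Your encoding would need an analogous shift: define $\pi^\delta_{\alpha+1}$ to be the least ordinal above $\pi^\delta_\alpha+1$ such that $h(\pi^\delta_{\alpha+1}-1)=h_\delta(c^\delta_\alpha)$, so that the filler at position $\pi^\delta_{\alpha+1}-1$ is the good anchor.

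Two smaller points. First, your remark about using ``an initial application of a postprocessing function $\Phi$ to $\vec C$'' to make gaps wide enough does not quite do the job: postprocessing functions thin $C_\delta$ but cannot guarantee that every consecutive gap exceeds $\theta$. The paper's route is to keep gaps small when they are small (set $x^\delta_i=\{\delta_i\}$ if $\delta_{i+1}<\delta_i+\theta$) and only insert a block when there is room, and then in the verification shrink the club $D$ to indecomposable ordinals above $\theta$ so that any $\delta_{i+1}\in D$ automatically has a wide gap behind it. Second, the $\Theta_2$-definition allows the anchors $\beta$ to range over all of $\delta$, not just over $C_\delta$; your argument (like the paper's, which also picks ``$i$ with $\beta=\delta_i$'') implicitly restricts to $\beta\in C_\delta\cup\{0\}$. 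This is harmless after a standard normalisation replacing $\beta$ by $\sup(C_\delta\cap(\beta+1))$, but it should be said.
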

\begin{proof} Without loss of generality, $S\s\acc(\kappa)$.
Suppose that $\theta\in\Theta_2(\vec C,T)$ is such that $\alpha+\beta<\xi$ for all $(\alpha,\beta)\in\theta\times\xi$.
Let $h:\kappa\rightarrow\theta$ be a surjection such that, for every $\epsilon<\kappa$,
$\{ h(\iota+1)\mid \epsilon<\iota<\epsilon+\theta\}=\theta$.
If $\theta$ is uncountable, we also require that $\{ h(\omega\cdot\iota)\mid \epsilon<\iota<\epsilon+\theta\}=\theta$ for every $\epsilon<\kappa$.
It follows that for each $(\gamma,\epsilon,\tau)\in\kappa\times\kappa\times\theta$, we may fix $y_{\gamma,\epsilon,\tau}$ such that:
\begin{itemize}
\item $y_{\gamma,\epsilon,\tau}$ is a closed nonempty subset of $E^\kappa_{<\theta}$;
\item $\min(y_{\gamma,\epsilon,\tau})=\gamma$;
\item $\max(y_{\gamma,\epsilon,\tau})<\gamma+\theta$;
\item $h(\epsilon+\otp(y_{\gamma,\epsilon,\tau})-1)=\tau$.
\end{itemize}

Fix a sequence $\langle h_\delta:\delta\rightarrow\theta\mid \delta\in S\rangle$ witnessing that $\theta\in\Theta_2(\vec C,T)$.
We now construct $\vec {C^\bullet}=\langle C_\delta^\bullet\mid\delta\in S\rangle$ as follows.
Given $\delta\in S$,
let $\langle\delta_i\mid i<\otp(C_\delta)\rangle$ denote the increasing enumeration of $\{0\}\cup C_\delta$.
Construct a sequence $\langle x^\delta_i\mid i<\otp(C_\delta)\rangle$ 
by recursion on $i<\otp(C_\delta)$, as follows:

$\br$ if $\delta_{i+1}<\delta_i+\theta$, then set $x_i^\delta:=\{\delta_i\}$.

$\br$ if $\delta_{i+1}\ge\delta_i+\theta$, then set $x_i^\delta:=y_{\gamma,\epsilon,\tau}$,
for $\gamma:=\delta_i$, $\epsilon:=\otp(\bigcup_{i'<i}x^\delta_{i'})$,
and $\tau:=h_\delta(\delta_{i})$. In particular, $h(\otp(\bigcup_{i'\le i}x^\delta_{i'})-1)=h_\delta(\delta_{i})$.

Finally, let $C^\bullet_\delta:=\bigcup_{i<\otp(C_\delta)}x^\delta_i$, so that $C^\bullet_\delta$ is a club in $\delta$. 
Note that $\otp(C^\bullet_\delta)\le\xi$, since  $\otp(C_\delta)\le\xi$ and $\alpha+\beta<\xi$ for all $(\alpha,\beta)\in\theta\times\xi$.
Thus, to see that $\vec C^\bullet:=\langle C^\bullet_\delta\mid\delta\in S\rangle$ and $h$ are as sought,
let $D$ be a club in $\kappa$.  By possibly shrinking $D$, we may assume that every element of $D$ is an indecomposable ordinal greater than $\theta$.

Pick $\delta\in S$ such that 
for every $\tau<\theta$, the following set is cofinal in $\delta$:
$$B_\tau:=\{\beta<\delta\mid h_\delta(\beta)=\tau \ \&\  \min(C_\delta\setminus(\beta+1))\in  D\cap T\}$$
Let $\tau<\theta$ and let $\beta\in B_\tau$. Pick $i<\otp(C_\delta)$ such that $\beta=\delta_i$. Put $\beta':=\max(x^\delta_i)$ so that
$\beta\le\beta'<\delta_{i+1}=\min(C^\bullet_\delta\setminus(\beta'+1))$.
Since $\delta_{i+1}\in D$, we know that $\delta_i+\theta<\delta_{i+1}$. Consequently,
$$h(\otp(C^\bullet_\delta\cap\beta'))=h(\otp(\bigcup\nolimits_{i'\le i}x^\delta_{i'})-1)=h_\delta(\delta_{i})=\tau,$$
as sought.
\end{proof}
\begin{remark}\label{rmk86} The preceding lemma should not be interpreted as
saying that $\Theta_1(\ldots)$ and $\Theta_2(\ldots)$ are essentially the same,
since the move from $\vec C$ to $\vec{C^\bullet}$ may lead to the loss of coherence features of $\vec C$.
In addition, the above lemma is limited to $\sigma=1$, though a simple tweak yields that 
if $\vec C$ is a witness for $\cg_\xi(S,T,\sigma,\langle J_\delta\mid\delta\in S\rangle)$
with $\sigma\le\omega$, $\sigma<\theta$, and $\nacc(\delta)\in J_\delta$ for all $\delta\in S$,
then a $\vec C^\bullet$ may be cooked-up to satisfy $\theta\in\Theta_1(\vec{C^\bullet},T,\sigma)$.
\end{remark}

\subsection{Using colourings}\label{subsectioncolourings}
We now introduce two colouring principles from \cite{paper47} which we shall use in this subsection.
As explained in \cite[Remark 8.2]{paper47},
these principles are a spin-off of Sierpi\'nski's \emph{onto mapping principle}.

\begin{defn}[\cite{paper47}]\label{def1} Let $J$ be an ideal over $\lambda$,
and $\theta\le\lambda$ be some cardinal.
\begin{itemize}
\item $\onto(J,\theta)$  asserts the existence of a colouring $c:[\lambda]^2\rightarrow\theta$.
such that for every $B\in J^+$, there is an $\eta<\lambda$ such that $$c[\{\eta\}\circledast B]=\theta;$$
\item  $\ubd(J,\theta)$  asserts the existence of an upper-regressive colouring $c:[\lambda]^2\rightarrow\theta$ 
such that for every $B\in J^+$, there is an $\eta<\lambda$ such that $$\otp(c[\{\eta\}\circledast B])=\theta.$$
\end{itemize}
\end{defn}

Our first application of which will make use of the following pumping up result.
\begin{fact}[{\cite[\S4]{paper53}}] \label{jbduparrow}Let $\theta\le\lambda$ be a pair of infinite cardinals, with $\lambda$ regular.
\begin{enumerate}[(1)]
\item If $\onto(J^{\bd}[\lambda],\theta)$ holds,
then there exists a colouring $c:[\lambda]^2\rightarrow\theta$ 
such that for every $\lambda$-complete ideal $J$ on some ordinal $\delta$ of cofinality $\lambda$ and every map $\psi:\delta\rightarrow\lambda$ satisfying $\sup(\psi[B])=\lambda$ for all $B\in J^+$,
the following holds.
For all $B\in J^+$, there exists an $\eta<\lambda$ such that 
$$\{ \tau<\theta\mid \{ \beta\in B\mid \eta<\psi(\beta)\ \&\ c(\eta,\psi(\beta))=\tau\}\in J^+\}=\theta.$$
\item If $\ubd(J^{\bd}[\lambda],\theta)$ holds,
then there exists a colouring $c:[\lambda]^2\rightarrow\theta$ 
such that for every $\lambda$-complete ideal $J$ on some ordinal $\delta$ of cofinality $\lambda$ and every map $\psi:\delta\rightarrow\lambda$ satisfying $\sup(\psi[B])=\lambda$ for all $B\in J^+$,
the following holds.
For all $B\in J^+$, there exists an $\eta<\lambda$ such that 
$$\otp(\{ \tau<\theta\mid \{ \beta\in B\mid \eta<\psi(\beta)\ \&\ c(\eta,\psi(\beta))=\tau\}\in J^+\})=\theta.$$
\end{enumerate}
\end{fact}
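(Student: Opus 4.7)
The plan is to derive the pumped colouring $c$ from the colouring $c_0:[\lambda]^2\rightarrow\theta$ guaranteed by $\onto(J^{\bd}[\lambda],\theta)$ (respectively, $\ubd(J^{\bd}[\lambda],\theta)$) by a precomposition that amplifies the basic ``for some $\eta$, onto'' property into the asserted ``for some $\eta$, onto with $J^+$-frequency'' property. I would first try the naive choice $c:=c_0$: given $J$, $\psi$, $B$ as in the statement, the hypothesis $\sup(\psi[B'])=\lambda$ for every $B'\in J^+$ forces $\psi[B]$ to be unbounded in $\lambda$, and so the basic property supplies some $\eta<\lambda$ with $c_0[\{\eta\}\circledast\psi[B]]=\theta$. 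The gap is that each colour $\tau<\theta$ must be attained on a $J^+$-subset of $B$, not merely somewhere in $B$.

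Assume toward contradiction that the gap cannot be closed: for every $\eta<\lambda$ there is $\tau_\eta<\theta$ with
\[ B_{\eta,\tau_\eta}:=\{\beta\in B\mid \eta<\psi(\beta)\ \&\ c(\eta,\psi(\beta))=\tau_\eta\}\in J. \]
By the $\lambda$-completeness of $J$, for each $\gamma<\lambda$ the set $B^\gamma:=B\setminus\bigcup_{\eta<\gamma}B_{\eta,\tau_\eta}$ lies in $J^+$, and so by the hypothesis on $\psi$, $\psi[B^\gamma]$ is unbounded in $\lambda$. Applying $\onto(J^{\bd}[\lambda],\theta)$ for $c_0$ to $\psi[B^\gamma]$ yields, for each $\gamma$, some $\eta^*_\gamma<\lambda$ with $c_0[\{\eta^*_\gamma\}\circledast\psi[B^\gamma]]=\theta$. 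The crux is to arrange that $\eta^*_\gamma<\gamma$ while the colour $\tau_{\eta^*_\gamma}$ is still attained on $\psi[B^\gamma]$; this would produce $\beta\in B^\gamma\cap B_{\eta^*_\gamma,\tau_{\eta^*_\gamma}}$, contradicting the very definition of $B^\gamma$.

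The bridge making this contradiction go through is the choice of $c$ as a ``spread'' of $c_0$: fix auxiliary maps $h_\beta:\lambda\rightarrow\lambda$ (for $\beta<\lambda$) so that each fibre $h_\beta^{-1}\{\eta\}$ is unbounded in $\lambda$, and set $c(\alpha,\beta):=c_0(h_\beta(\alpha),\beta)$. The map $\eta\mapsto\tau_\eta$ (witnessing failure of the pumped property for $c$) then pulls back through $h_\beta$ to a map on $\lambda$ with unbounded level sets, which is precisely what is required to force $\eta^*_\gamma<\gamma$ in the previous paragraph. For Clause (2), the $h_\beta$'s must further be designed to preserve upper-regressiveness (e.g., by sending $[0,\beta)$ into $[0,\beta)$), and the existence-of-ordertype-$\theta$ property of $c_0$ replaces the onto-property throughout. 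The main obstacle I foresee is the simultaneous calibration of the $h_\beta$'s so that the construction is uniform in the abstract $\lambda$-complete ideals $J$ on ordinals $\delta$ of cofinality $\lambda$ (not merely the base case $J=J^{\bd}[\lambda]$), and in the $\ubd$ case so that the upper-regressiveness is maintained while the spreading still produces unbounded fibres; I expect this technical core to follow the pumping-up template developed in \cite[\S4]{paper53}.
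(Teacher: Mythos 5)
Your diagnosis of why the naive choice $c:=c_0$ fails is correct, and the instinct that $c$ must encode extra information beyond $c_0$ is sound. But the ``spread'' construction you propose is not well-posed, and the one step that would make the argument close is asserted rather than proved.

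For $c(\alpha,\beta):=c_0(h_\beta(\alpha),\beta)$ to define a colouring of $[\lambda]^2$ from $c_0:[\lambda]^2\rightarrow\theta$ you need $h_\beta(\alpha)<\beta$, so the fibres of $h_\beta\restriction\beta$ live inside $\beta<\lambda$ and cannot be unbounded in $\lambda$; the stated requirement ``each fibre $h_\beta^{-1}\{\eta\}$ is unbounded in $\lambda$'' is therefore unattainable. Worse, since $h$ is indexed by the \emph{second} coordinate, $c(\eta,\psi(\beta))=c_0(h_{\psi(\beta)}(\eta),\psi(\beta))$ invokes a $c_0$-row that varies with $\beta$, which disconnects $c$ from the fixed-row structure that $\onto$ and $\ubd$ for $c_0$ provide, so it is unclear how those principles even engage. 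Most importantly, the decisive step --- ``arrange that $\eta^*_\gamma<\gamma$ while the colour $\tau_{\eta^*_\gamma}$ is still attained on $\psi[B^\gamma]$'' --- is exactly the diagonal crossing that $\lambda$-completeness alone does not supply: the witness $\eta^*_\gamma$ obtained from $\onto$ applied to the shrinking sets $\psi[B^\gamma]$ has no reason to stay below $\gamma$, and nothing in the sketch forces it to. In the normal case this crossing is effected by a diagonal intersection (compare the paper's own proof of Theorem~\ref{normaluparrow}(2)); for a general $\lambda$-complete $J$ it must instead be engineered into the colouring $c$, and that calibration --- which you yourself flag as ``the main obstacle'' --- is precisely the content of the lemma and is left open here.
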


\begin{thm}\label{partitioningjkk} Suppose that $\vec C$ witnesses $\cg_\xi(S,T,\sigma,\vec J)$ with $S\s E^\kappa_\lambda$,
and that $\theta\le\lambda$ is infinite.
\begin{enumerate}[(1)]
\item If $\onto(J^\bd[\lambda],\theta)$ holds and $\xi=\lambda$, then $\theta\in\Theta_1(\vec C,T,\sigma,\vec J)$;
\item If $\onto(J^\bd[\lambda],\theta)$ holds, then $\theta\in\Theta_2(\vec C,T,\sigma,\vec J)$;
\item If $\ubd(J^\bd[\lambda],\theta)$ holds and  $\theta< \lambda$, then $\theta\in \Theta_2(\vec C,T,\sigma,\vec J)$.
\end{enumerate}
\end{thm}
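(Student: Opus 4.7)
The plan is to apply Fact~\ref{jbduparrow} to lift the hypothesised $\onto(J^{\bd}[\lambda],\theta)$ (in parts (1) and (2)) or $\ubd(J^{\bd}[\lambda],\theta)$ (in part (3)) to a single colouring $c:[\lambda]^2\to\theta$ enjoying the strong ideal-level pumping property. For each $\delta\in S$, using $\cf(\delta)=\lambda$, I would fix an auxiliary map $\psi_\delta:\delta\to\lambda$ with $\sup(\psi_\delta[B])=\lambda$ for every $B\in J_\delta^+$; since $J_\delta$ extends $J^{\bd}[\delta]$, every $J_\delta$-positive set is unbounded in $\delta$, so the level function associated with any cofinal sequence $\langle\delta_i\mid i<\lambda\rangle$ in $\delta$ (sending $\beta$ to the least $i$ with $\beta<\delta_i$) will do. For part (1), I would instead take uniformly $\psi_\delta:=\iota_\delta$ with $\iota_\delta(\beta):=\otp(C_\delta\cap\beta)$: the joint hypotheses $\xi=\lambda$ and $\cf(\delta)=\lambda$ force $\otp(C_\delta)=\lambda$, so $\iota_\delta$ indeed lands in $\lambda$ and stays cofinal on $J_\delta$-positive sets.

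For parts (2) and (3) I would then define $h_\delta(\beta):=c(\eta_\delta,\psi_\delta(\beta))$ (and $0$ when $\eta_\delta\geq\psi_\delta(\beta)$) for a preassigned $\eta_\delta<\lambda$, and in part (1) build an analogous global $h:\lambda\to\theta$. Given any club $D\s\kappa$, the hypothesis that $\vec C$ witnesses $\cg_\xi(S,T,\sigma,\vec J)$ supplies some $\delta\in S$ with
\[
B^D_\delta:=\{\beta<\delta\mid\suc_\sigma(C_\delta\setminus\beta)\s D\cap T\}\in J_\delta^+,
\]
and Fact~\ref{jbduparrow} applied to $(J_\delta,\psi_\delta,B^D_\delta)$ yields some $\eta=\eta(D,\delta)<\lambda$ for which, for each $\tau<\theta$, the set $\{\beta\in B^D_\delta\mid \eta<\psi_\delta(\beta)\ \&\ c(\eta,\psi_\delta(\beta))=\tau\}$ is in $J_\delta^+$. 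In part (3), the extra constraint $\theta<\lambda$ is used to turn the order-type output of Fact~\ref{jbduparrow}(2) into a genuine enumeration of all $\theta$ colours.

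The main obstacle is that the $\eta$ delivered by the pumping depends on the pair $(D,\delta)$, while the $h_\delta$'s must be fixed in advance. The fix leverages the abundance of guessing witnesses: as recalled in Section~\ref{sectionwarmingup}, the existence of a single $\delta\in S$ guessing $D$ is equivalent to stationarily many such $\delta$'s in $\kappa$, and the map $\delta\mapsto\eta(D,\delta)$ is regressive (because $\eta(D,\delta)<\lambda\leq\delta$ for every $\delta\in E^\kappa_\lambda$), so by Fodor's Lemma it is constantly equal to some $\eta^*(D)<\lambda$ on a stationary set $T_D\s S$. One then preassigns $\langle\eta_\delta\mid\delta\in S\rangle$ via a suitable partition of $S$ into $\lambda$ many pieces, arranged so that for every $D$ some $\delta\in T_D$ satisfies $\eta_\delta=\eta^*(D)$; such a $\delta$ is the desired witness. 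For part (1), where a single global $h:\lambda\to\theta$ is required, the same diagonalisation must be folded into the encoding of $h$ via a bijection $\lambda\leftrightarrow\lambda\times\lambda$, exploiting the uniformity of $\iota_\delta$ granted by $\xi=\lambda$. I expect the bookkeeping for this last step---verifying that the chosen partition of $S$ indeed provides a matching witness for every club $D$---to be the main technical delicacy of the argument.
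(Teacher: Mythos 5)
You identify the right tools (Fact~\ref{jbduparrow} and the collapse maps $\psi_\delta$, which when $\xi=\lambda$ reduce to $\beta\mapsto\otp(C_\delta\cap\beta)$), and you correctly spot the central obstacle: the $\eta$ produced by Fact~\ref{jbduparrow} depends on $(D,\delta)$. But your proposed fix via Fodor and a preassigned partition has a genuine gap. After Fodor you have a stationary $T_D\s S$ on which $\delta\mapsto\eta(D,\delta)$ is constant with value $\eta^*(D)$, and you want some $\delta\in T_D$ with $\eta_\delta=\eta^*(D)$, that is, $T_D\cap S_{\eta^*(D)}\neq\emptyset$. But two stationary subsets of $E^\kappa_\lambda$ can be disjoint, the Fodor argument gives no control over which partition piece $T_D$ lands in, and the constant value from Fodor is not even canonical (restricting $\eta(D,\cdot)$ to a different stationary subset of the witnesses can yield a different constant). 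So there is no way to ``arrange'' the partition in advance to guarantee a match, and the bookkeeping you defer is precisely where the argument breaks.

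The paper's actual argument is much simpler and sidesteps Fodor entirely: it proves that a \emph{single} $\eta<\lambda$ works for every club $D$ (with $\delta$ depending on $D$). If not, fix a counterexample club $D_\eta$ for each $\eta<\lambda$, set $D:=\bigcap_{\eta<\lambda}D_\eta$ (still a club since $\lambda<\kappa$), obtain $\delta\in S$ and a $J_\delta$-positive $B$ from the guessing property applied to $D$, and apply Fact~\ref{jbduparrow}(1) to $(J_\delta,\psi_\delta,B)$ to obtain some $\eta<\lambda$; since $D\s D_\eta$, this contradicts the choice of $D_\eta$. Once $\eta$ is fixed, part~(1) follows by setting $h(\bar\beta):=c(\eta,\bar\beta)$ directly, with no need for a bijection $\lambda\leftrightarrow\lambda\times\lambda$. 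Your sketch of part~(3) also omits a second, necessary stabilization: Fact~\ref{jbduparrow}(2) yields, for each $(D,\delta)$, only a $\theta$-sized subset $D(\eta,\delta)\s\theta$ of colours that may vary with $D$; the paper first fixes a club $D^*$ such that for every subclub $D\s D^*$ there is $\delta\in S$ with $D(\eta,\delta)=D^*(\eta,\delta)$ of size $\theta$, and only then defines $h_\delta(\beta):=\otp(c(\eta,\psi_\delta(\beta))\cap D^*(\delta))$, reindexing by the canonical order isomorphism. Without that stabilization the enumeration of colours is not uniform across clubs $D$.
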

\begin{proof} For every $\delta \in S$, fix a club $e_\delta$ in $\delta$ of ordertype $\lambda$. In case that $\xi=\lambda$,
moreover set $e_\delta:=C_\delta$.
As $J_\delta$ is a $\lambda$-complete ideal on $\delta$ extending $J^\bd[\delta]$,
once we define $\psi_\delta:\delta\rightarrow\lambda$ via $\psi_\delta(\beta):=\otp(e_\delta\cap\beta)$,
then $\sup(\psi_\delta[B])=\lambda$ for every $B \in (J_\delta)^+$.

\medskip

(1) and (2): Suppose that $\onto(J^\bd[\lambda],\theta)$ holds, and fix a colouring $c:[\lambda]^2\rightarrow\theta$ as in Fact~\ref{jbduparrow}(1).
\begin{claim}\label{4101} There exists an $\eta<\lambda$ such that, for every club $D\s\kappa$,
there exists a $\delta\in S$, such that, for every $ \tau<\theta$:
$$\{\beta<\delta\mid \eta < \psi_\delta(\beta)\ \&\ c(\eta,\psi_\delta(\beta))=\tau\ \&\ \suc_\sigma(C_\delta\setminus \beta)\s D\cap T\}\in J_\delta^+.$$
\end{claim}
\begin{why} Suppose not. For every $\eta<\lambda$, fix a counterexample club $D_\eta\s\kappa$. Let $D:=\bigcap_{\eta<\lambda}D_\eta$.
By the choice of $\vec C$, let us now pick $\delta\in S$ such that the following set is in $J_\delta^+$:
$$B:=\{\beta<\delta\mid \suc_\sigma(C_\delta\setminus \beta)\s D\cap T\}.$$
Recalling that $c$ was given by Fact~\ref{jbduparrow}(1), there is an $\eta< \lambda$ such that 
$$\{ \tau<\theta\mid \{ \beta\in B\mid \eta<\psi(\beta)\ \&\ c(\eta,\psi(\beta))=\tau\}\in (J_\delta)^+\}=\theta.$$
However, as $D \s D_\eta$, this contradicts the choice of $D_\eta$.
\end{why}

Let $\eta<\lambda$ be given by the preceding claim. Choose $\vec h=\langle h_\delta:\delta\rightarrow\theta\mid\delta\in S\rangle$ satisfying 
$h_\delta(\beta)=c(\eta,\psi_\delta(\beta))$ for every $\delta\in S$ and $\beta <\delta$ such that $\eta<\psi_\delta(\beta))$. 
Then $\vec h$ witnesses that $\theta\in\Theta_2(\vec C,T,\sigma,\vec J)$.
Furthermore, in the special case that $\vec C$ is $\lambda$-bounded, 
any map $h:\kappa\rightarrow\theta$ satisfying $h(\bar\beta)=c(\eta,\bar\beta)$ for every $\bar\beta\in(\eta,\lambda)$
witnesses that $\theta\in\Theta_1(\vec C,T,\sigma,\vec J)$.

\medskip

(3) Suppose that $\ubd(J^\bd[\lambda],\theta)$ holds
with $\theta< \lambda$, and fix a colouring $c:[\lambda]^2\rightarrow\theta$ as in Fact~\ref{jbduparrow}(2).
For every club $D\subseteq\kappa$, for all $\delta\in S$ and $\eta< \lambda$, denote
$$D(\eta, \delta):=\{ \tau<\theta\mid \{\beta<\delta\mid \eta< \psi_\delta(\beta)\ \&\ c(\eta,\psi_\delta(\beta))=\tau\ \&\ \suc_\sigma(C_\delta\setminus \beta)\s D\cap T\}\in J_\delta^+\}.$$
\begin{claim}\label{claim892} There exists an $\eta<\lambda$ such that, for every club $D\s\kappa$,
there exists a $\delta\in S$, such that $|D(\eta,\delta)| = \theta$.
\end{claim}
\begin{why} Suppose not. For every $\eta<\lambda$, fix a counterexample club $D_\eta\s\kappa$. Let $D:=\bigcap_{\eta<\lambda}D_\eta$.
By the choice of $\vec C$, let us now pick $\delta\in S$ such that the following set is in $J_\delta^+$:
$$B:=\{\beta<\delta\mid \suc_\sigma(C_\delta\setminus \beta)\s D\cap T\}.$$
Recalling that $c$ was given by Fact~\ref{jbduparrow}(2), there is an $\eta< \lambda$ such that 
$$\otp(\{ \tau<\theta\mid \{ \beta\in B\mid \eta<\psi(\beta)\ \&\ c(\eta,\psi(\beta))=\tau\}\in (J_\delta)^+\})=\theta.$$
So $|D(\eta, \delta)| = \theta$. However, as $D \s D_\eta$, and $D_\eta$ was chosen so that $|D_\eta(\eta, \delta)|<\theta$, we reach a contradiction.
\end{why}

We fix from hereon an $\eta< \lambda$ as given by the previous claim, and for simplicity of notation, for $D\s\kappa$ a club and $\delta \in S$, we denote $D(\delta):= D(\eta, \delta)$.

\begin{claim}\label{claim893}  There exists a club $D^* \s\kappa$ such that for every club $D \s D^*$, there exists $\delta \in S$ such that $D(\delta)= D^*(\delta)$ and this set has size $\theta$.
\end{claim}
\begin{why}
Suppose this is not so. In that case, we can construct a $\subseteq$-decreasing sequence $\langle D_i \mid i \le\theta^+\rangle$ of clubs in $\kappa$ as follows:
\begin{enumerate}
\item $D_0 := \kappa$;
\item $D_{i+1} \subseteq D_i$ is some club such that for every $\delta \in S$, either $|D(\delta)|<\theta$ or $D_{i+1}(\delta) \subsetneq D_i(\delta)$;
\item for $i \in \acc(\theta^++1)$, $D_i:=\bigcap_{i' < i}D_{i'}$.
\end{enumerate}

Since $D_{\theta^+}$ is again a club in $\kappa$, we may fix a $\delta\in S$ such that $|D_{\theta^+}(\delta)|=\theta$. 
In particular, for every $i \leq \theta^+$, $|D_{i}(\delta)|=\theta$, and hence, by the construction, 
$\langle D_i(\delta) \mid i \leq \theta^+\rangle$ must be a strictly  $\s$-decreasing sequence of subsets of $D_0(\delta)$,
contradicting the fact that $|D_0(\delta)|=\theta$.
\end{why}

Let $D^* \s\kappa$ be given by the preceding claim.
Then any sequence $\vec h=\langle h_\delta:\delta\rightarrow\theta\mid\delta\in S\rangle$ satisfying that for all $\delta\in S$ and $\beta <\delta$ with $\psi_\delta(\beta)>\eta$,
$$h_\delta(\beta)=\otp(c(\eta,\psi_\delta(\beta))\cap D^*(\delta))$$
witnesses that $\theta\in\Theta_2(\vec C,T,\sigma,\vec J)$.
\end{proof}

We now move on to the case of normal ideals. 
We first need an analogue of Fact~\ref{jbduparrow}. 
In what follows, for a set of ordinals $A$, its \emph{collapsing map} is the unique function $\psi:A\rightarrow\otp(A)$ satisfying $\psi(\alpha)=\otp(A\cap\alpha)$ for all $\alpha\in A$.

\begin{thm}\label{normaluparrow}
Suppose that $\lambda$ is a regular uncountable cardinal, and $\theta\le\lambda$ is a cardinal.
 \begin{enumerate}[(1)]
\item If $\ubd(\ns_\lambda,\theta)$ holds,
then there exists a colouring $c:[\lambda]^2\rightarrow\theta$ 
such that for every $\lambda$-complete normal ideal $J$ on some ordinal $\delta$ of cofinality $\lambda$,
for every club $A$ in $\delta$ of ordertype $\lambda$,
for its collapsing map $\psi:A\rightarrow\lambda$ the following holds.
For all $B\in J^+$, there exists an $\eta<\lambda$ such that 
$$\otp(\{ \tau<\theta\mid \{ \beta\in B\cap A\mid \eta<\psi(\beta)\ \&\ c(\eta,\psi(\beta))=\tau\}\in J^+\})=\theta.$$
\item If $\onto(\ns_\lambda,\theta)$ holds,
then there exists a colouring $c:[\lambda]^2\rightarrow\theta$ 
then for every normal $\lambda$-complete ideal $J$ on some ordinal $\delta$ of cofinality $\lambda$,
for every club $A$ in $\delta$ of ordertype $\lambda$,
for its collapsing map $\psi:A\rightarrow\lambda$ the following holds.
For all $B\in J^+$, there exists an $\eta<\lambda$ such that 
$$\{ \tau<\theta\mid \{ \beta\in B\cap A\mid \eta<\psi(\beta)\ \&\ c(\eta,\psi(\beta))=\tau\}\in J^+\}=\theta.$$
\end{enumerate}
\end{thm}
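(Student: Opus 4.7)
The plan is to mimic the pumping-up construction that derives Fact~\ref{jbduparrow} from its $J^{\bd}$-level hypothesis, with every occurrence of $J^{\bd}[\lambda]$ systematically replaced by $\ns_\lambda$, and every occurrence of ``$\lambda$-complete ideal'' replaced by ``normal $\lambda$-complete ideal on $\lambda$ extending $\ns_\lambda$.'' Starting from a base colouring $c_0:[\lambda]^2\to\theta$ witnessing $\ubd(\ns_\lambda,\theta)$ (resp.\ $\onto(\ns_\lambda,\theta)$), I would produce a colouring $c:[\lambda]^2\to\theta$ via the step-up recipe of \cite[\S4]{paper53}. The goal is to establish the following \emph{on-$\lambda$ statement}: for every normal $\lambda$-complete ideal $\tilde J$ on $\lambda$ extending $\ns_\lambda$ and every $X\in\tilde J^+$, there exists $\eta<\lambda$ such that $\{\tau<\theta\mid \{\xi\in X\mid \eta<\xi,\ c(\eta,\xi)=\tau\}\in\tilde J^+\}$ has order type $\theta$ (respectively equals $\theta$).

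The second ingredient is a pushforward argument reducing the theorem to the on-$\lambda$ statement. Given $J$, $A$, $\psi$, $B$ as in the hypothesis, I would consider the pushforward ideal $\tilde J:=\psi_*(J\restriction A)$ on $\lambda$, defined by $X\in\tilde J$ iff $\psi^{-1}[X]\in J$, and verify that $\tilde J$ is a normal $\lambda$-complete ideal on $\lambda$ extending $\ns_\lambda$. The $\lambda$-completeness is inherited from $J$; that $\tilde J$ extends $\ns_\lambda$ follows because $\psi$ is a continuous order-isomorphism from $A$ (which is a club in $\delta$) onto $\lambda$, so the preimage of a club in $\lambda$ is a club in $\delta$ and hence belongs to $J^*$ by normality of $J$; and normality of $\tilde J$ follows since a regressive $f:\lambda\to\lambda$ on $X\in\tilde J^+$ pulls back via $\alpha\mapsto\psi^{-1}(f(\psi(\alpha)))$ to a regressive function on $\psi^{-1}[X]\in J^+$, whence any $J$-positive constancy set pushes forward to a $\tilde J$-positive constancy set for $f$. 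Setting $\tilde B:=\psi[B\cap A]\in\tilde J^+$ and applying the on-$\lambda$ statement yields the desired $\eta$; since $\psi$ is injective, the set $\{\beta\in B\cap A\mid\eta<\psi(\beta),\ c(\eta,\psi(\beta))=\tau\}$ equals $\psi^{-1}[\{\xi\in\tilde B\mid\eta<\xi,\ c(\eta,\xi)=\tau\}]$, which lies in $J^+$ iff the bracketed set lies in $\tilde J^+$, giving the theorem.

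The hard part will be the first step: executing the step-up of \cite[\S4]{paper53} with $\ns_\lambda$ in place of $J^{\bd}[\lambda]$ and verifying that the output $c$ satisfies the on-$\lambda$ statement for the narrower class of normal ideals rather than arbitrary $\lambda$-complete ideals. Because $\ubd(\ns_\lambda,\theta)$ is strictly weaker than $\ubd(J^{\bd}[\lambda],\theta)$, the verification must invoke normality of $\tilde J$ at each point where the proof of Fact~\ref{jbduparrow} made do with only $\lambda$-completeness---typically by exploiting that every $\tilde J$-positive subset of $\lambda$ is stationary, so that the defining property of $c_0$ applies directly to it. Once this on-$\lambda$ statement is in hand, the rest of the argument (the pushforward and translation) is essentially bookkeeping.
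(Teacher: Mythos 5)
Your pushforward reduction to an ``on-$\lambda$ statement'' is sound and correctly carried out: the verification that $\tilde J:=\psi_*(J\restriction A)$ is a normal $\lambda$-complete ideal on $\lambda$ extending $\ns_\lambda$, and the back-translation via injectivity of $\psi$, are both fine (the paper performs the same reduction, just without naming $\tilde J$). But the ``hard part'' you flag --- producing $c$ via the step-up recipe of \cite[\S4]{paper53} --- is both unnecessary and left unexecuted, and that is where the proposal has a genuine gap.

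The paper's proof of Clause~(2) shows that \emph{no} step-up is needed: the colouring $c$ that witnesses $\onto(\ns_\lambda,\theta)$ already satisfies the conclusion as stated. The reason Fact~\ref{jbduparrow} requires a pumping-up construction is that $J^{\bd}[\lambda]$ is merely $\lambda$-complete, so one cannot diagonal-intersect over $\lambda$-many ``bad'' sets inside the proof; here the ideals are normal, so one can. Concretely, suppose towards a contradiction that for every $\eta<\lambda$ there is $\tau_\eta<\theta$ with $B^{\eta,\tau_\eta}\in J$. Normality gives $E:=\psi^{-1}\bigl[\diagonal_{\eta<\lambda}(\lambda\setminus\psi[B^{\eta,\tau_\eta}])\bigr]\in J^*$; then $B\cap E\in J^+$, so $\psi[B\cap E]$ is stationary, so some $\eta$ satisfies $c[\{\eta\}\circledast\psi[B\cap E]]=\theta$, and pulling back a witness of colour $\tau_\eta$ above $\eta$ contradicts membership in $E$. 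You were one observation away from this: you correctly noted that normality makes every $\tilde J$-positive set stationary so that the base colouring applies directly to it. But you filed that observation away as an ingredient in a step-up verification you never carried out, rather than realizing it eliminates the need for any step-up. As written, the central claim of the proposal --- that the step-up of \cite[\S4]{paper53}, run with $\ns_\lambda$ in place of $J^{\bd}[\lambda]$, yields the on-$\lambda$ statement --- is neither motivated nor proved, so the heart of the argument is missing.
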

\begin{proof} Clauses (1) and (2) follow from \cite[Proposition~2.25]{paper47}.
Since the proof of Clause (2) was omitted in \cite{paper47}, we give it here.

(2) Given a normal $\lambda$-complete ideal $J$ on some ordinal $\delta$ of cofinality $\lambda$,
and a club $A$ in $\delta$ of ordertype $\lambda$, it is the case that $A\in J^*$, since $J$ is normal.
Let $\psi$ denote the collapsing map of $A$. 
For all $B \s \delta$, $\eta< \lambda$ and $\tau< \theta$, denote
$$B^{\eta,\tau}:=\{\beta \in B\cap A\mid \eta<\psi(\beta)\ \&\ c(\eta,\psi(\beta)) = \tau\}.$$

Suppose $\onto(\ns_\lambda, \theta)$ holds,
and fix a  witnessing colouring $c:[\lambda]^2\rightarrow\theta$.
Towards a contradiction, suppose that there exists $B\in J^+$ 
such that, for every $\eta < \lambda$, there is a $\tau_\eta< \theta$ such that $B^{\eta, \tau_\eta} \in J$. 
As $J$ is normal, $E:= \psi^{-1}[\diagonal_{\eta <\lambda}(\lambda \setminus \psi[B^{\eta,\tau_\eta}]]$ is in $J^*$.
Note that
$$E=\{\beta\in A\mid \forall \eta<\psi(\beta)\,(\beta\notin B^{\eta,\tau_\eta})\}.$$
As $E\in J^*$ and $B\in J^+$, $B\cap E\in J^+$, so since $J$ is normal, 
$\psi[B\cap E]$ is stationary. It thus follows from the choice of $c$
that we may pick $\eta< \lambda$ such that $c[\{\eta\}\circledast \psi[B\cap E]]=\theta$.
Find $\beta\in B\cap E$ such that $\psi(\beta)>\eta$ and  $c(\eta, \psi(\beta)) = \tau_\eta$. 
Then $\beta\in B^{\eta,\tau_\eta}$, contradicting the fact that $\beta\in E$.
\end{proof}

\begin{thm}\label{partitioningnormal}Suppose that $\lambda$ is a regular uncountable cardinal, $\vec C$ witnesses $\cg_\xi(S,T,\sigma,\vec J)$ with $S\s E^\kappa_\lambda$ and for every $\delta \in S$, $J_\delta$ is a normal $\lambda$-complete ideal on $\delta$ extending $J^\bd[\delta]$.
Then:
\begin{enumerate}[(1)]
\item If $\onto(\ns_\lambda,\theta)$ holds and $\xi=\lambda$, then $\theta\in\Theta_1(\vec C,T,\sigma,\vec J)$;
\item If $\onto(\ns_\lambda,\theta)$ holds, then $\theta\in\Theta_2(\vec C,T,\sigma,\vec J)$;
\item If $\ubd(\ns_\lambda,\theta)$ holds and  $\theta< \lambda$, then $\theta_2\in \Theta_2(\vec C,T,\sigma,\vec J)$.
\end{enumerate}
\end{thm}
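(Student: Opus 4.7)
The plan is to run the proof of Theorem~\ref{partitioningjkk} almost verbatim, but replacing Fact~\ref{jbduparrow} with its normal counterpart Theorem~\ref{normaluparrow}. The essential change is that, to apply Theorem~\ref{normaluparrow}, the role of the map $\psi_\delta$ must be played by the collapsing map of a club in $\delta$ of ordertype $\lambda$, rather than by $\beta\mapsto\otp(e_\delta\cap\beta)$ for an arbitrary cofinal set.

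Concretely, for each $\delta\in S$ fix a club $A_\delta\s\delta$ of ordertype $\lambda$; in the case $\xi=\lambda$ we set $A_\delta:=C_\delta$. Let $\psi_\delta:A_\delta\rightarrow\lambda$ be the collapsing map. The ideal $J_\delta$ is $\lambda$-complete, normal, and extends $J^{\bd}[\delta]$; a standard Fodor argument then shows that every club of $\delta$ is $J_\delta^*$-large, and in particular $A_\delta\in J_\delta^*$. Consequently, whenever $B\in J_\delta^+$ we also have $B\cap A_\delta\in J_\delta^+$, which is precisely the hypothesis required to feed $A_\delta$ and its collapsing map into Theorem~\ref{normaluparrow}.

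For parts (1) and (2), fix a colouring $c:[\lambda]^2\rightarrow\theta$ as furnished by Theorem~\ref{normaluparrow}(2). The analogue of Claim~\ref{4101} is then proved in the same way: if it failed, we would pick a counterexample club $D_\eta\s\kappa$ for each $\eta<\lambda$, intersect them to form a club $D\s\kappa$ (using $\lambda<\kappa$), find $\delta\in S$ making $B:=\{\beta<\delta\mid\suc_\sigma(C_\delta\setminus\beta)\s D\cap T\}$ lie in $J_\delta^+$, and invoke Theorem~\ref{normaluparrow}(2) on $B\cap A_\delta$ to produce the contradicting $\eta$. Once this $\eta$ is secured, for part (2) we define $\vec h=\langle h_\delta\mid\delta\in S\rangle$ by setting $h_\delta(\beta):=c(\eta,\psi_\delta(\beta))$ whenever $\beta\in A_\delta$ and $\psi_\delta(\beta)>\eta$ (and arbitrarily otherwise). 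For part (1), where $A_\delta=C_\delta$ and hence $\psi_\delta(\beta)=\otp(C_\delta\cap\beta)$, a single global function $h:\kappa\rightarrow\theta$ defined by $h(\bar\beta):=c(\eta,\bar\beta)$ for $\bar\beta>\eta$ does the job, since $h(\otp(C_\delta\cap\beta))=c(\eta,\psi_\delta(\beta))$ for the relevant $\beta$'s.

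For part (3), fix a colouring $c$ from Theorem~\ref{normaluparrow}(1) and imitate the proof of Theorem~\ref{partitioningjkk}(3), with $\psi_\delta$ now being the collapsing map of $A_\delta$. The analogue of Claim~\ref{claim892} produces an $\eta<\lambda$ such that, for every club $D\s\kappa$, some $\delta\in S$ satisfies $|D(\eta,\delta)|=\theta$, where $D(\eta,\delta)$ is defined as before but with the extra conjunct $\beta\in A_\delta$. The stabilization argument of Claim~\ref{claim893} then goes through verbatim: one builds a $\s$-decreasing chain of clubs of length $\theta^+$ (possible since $\theta^+\le\lambda<\kappa$), fixes a $\delta$ where all levels have $|D_i(\delta)|=\theta$, and derives a contradiction from a strictly $\s$-decreasing chain inside the $\theta$-set $D_0(\eta,\delta)$. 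The witnessing sequence is then $h_\delta(\beta):=\otp(c(\eta,\psi_\delta(\beta))\cap D^*(\delta))$ for $\beta\in A_\delta$ with $\psi_\delta(\beta)>\eta$. The only point requiring any care is the verification that $A_\delta$ is $J_\delta^*$-large (so that testing guessing against $B$ is the same as testing it against $B\cap A_\delta$); beyond that, the proof is a mechanical translation of the one for Theorem~\ref{partitioningjkk}.
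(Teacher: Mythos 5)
Your proof is correct and follows the paper's argument essentially verbatim: the paper also sets $A_\delta:=C_\delta$ when $\otp(C_\delta)=\lambda$ (which is exactly the case $\xi=\lambda$ since $\cf(\delta)=\lambda$ forces $\otp(C_\delta)\ge\lambda$), otherwise picks an arbitrary club of ordertype $\lambda$, and then reruns the proof of Theorem~\ref{partitioningjkk} with Theorem~\ref{normaluparrow} in place of Fact~\ref{jbduparrow}. You have merely spelled out the point the paper leaves implicit — that normality guarantees $A_\delta\in J_\delta^*$, so replacing $B$ by $B\cap A_\delta$ costs nothing — which is the right observation to make the substitution go through.
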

\begin{proof} Write $\vec C$ as $\langle C_\delta\mid\delta\in S\rangle$.
For each $\delta\in S$, 
if $\otp(C_\delta)=\lambda$, then set $A_\delta:=C_\delta$. Otherwise,
just let $A_\delta$ be some club in $\delta$ of ordertype $\lambda$.
Then, let $\psi_\delta:A_\delta \rightarrow \lambda$ be the corresponding collapsing map.
We can now repeat the proof of Theorem~\ref{partitioningjkk} except that we use Theorem~\ref{normaluparrow} instead of Fact~\ref{jbduparrow}.
\end{proof}

\subsection{Maintaining coherence}\label{subsectionpartitioned}
By Theorem~\ref{partitioningjkk},
$\onto(J^\bd[\lambda],\theta)$ implies that $\theta\in \Theta_1(\vec C,T)$.
In contrast, $\ubd(J^\bd[\lambda],\theta)$ gives $\theta\in \Theta_2(\vec C,T)$,
and then Lemma~\ref{amhocg} only yields another $C$-sequence $\vec{C^\bullet}$
such that $\theta\in\Theta_1(\vec{C^\bullet},T)$.

In the next theorem, we combine the two results carefully in order to 
obtain a $C$-sequence $\vec{C^\bullet}$
with $\theta\in\Theta_1(\vec{C^\bullet},T)$ while maintaining some coherence features of the original sequence $\vec C$.

 \begin{thm}\label{812}  Suppose that $\theta<\lambda<\kappa$ are infinite cardinals,
$\ubd(J^\bd[\lambda],\theta)$ holds,
and $S$ is a stationary subset of $E^\kappa_\lambda$. 

For every $C$-sequence 
$\vec C=\langle C_\delta\mid\delta<\kappa\rangle$ such that $\vec C\restriction S$ witnesses $\cg_\lambda(S,T)$,
there exists a corresponding $C$-sequence 
$\vec{C^\bullet}=\langle C^\bullet_\delta\mid\delta<\kappa\rangle$ such that:
\begin{itemize} 
\item $\vec{C^\bullet}\restriction S$ is $\lambda$-bounded;
\item If $\vec C$ is weakly coherent, then so is $\vec{C^\bullet}$;
\item For every infinite cardinal $\chi\in[\theta,\kappa)$, if $\vec C$ is $\sqx$-coherent, then so is $\vec{C^\bullet}$;
\item $\theta\in \Theta_1(\vec{C^\bullet}\restriction S,T)$.
\end{itemize}
\end{thm}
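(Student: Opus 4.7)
The plan is to combine Theorem~\ref{partitioningjkk}(3) with the gap-insertion construction of Lemma~\ref{amhocg}, engineering the choice of $\tau^\delta_i$'s so that the coherence features of $\vec C$ are transferred to $\vec{C^\bullet}$. First, I would run the proof of Theorem~\ref{partitioningjkk}(3) on $\vec C\restriction S$: fix a colouring $c:[\lambda]^2\to\theta$ as in Fact~\ref{jbduparrow}(2), take $\psi_\delta(\beta):=\otp(C_\delta\cap\beta)$ as the collapse map for each $\delta\in S$, and apply the pigeonhole arguments of Claims~\ref{claim892} and \ref{claim893} to secure both a single $\eta<\lambda$ and a club $D^*\s\kappa$ such that for every club $D\s D^*$ some $\delta\in S$ satisfies $D(\eta,\delta)=D^*(\eta,\delta)$, a subset of $\theta$ of size $\theta$. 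This supplies the $\Theta_2$-witness $h_\delta(\beta):=\otp(c(\eta,\psi_\delta(\beta))\cap D^*(\eta,\delta))$ for $\delta\in S$.

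Next, I would construct $\vec{C^\bullet}$ uniformly for every $\delta<\kappa$ via the Lemma~\ref{amhocg} gap-insertion: enumerate $\{0\}\cup C_\delta$ as $\langle\delta_i\rangle$, set $x^\delta_i:=\{\delta_i\}$ when $\delta_{i+1}<\delta_i+\theta$, and otherwise $x^\delta_i:=y_{\delta_i,\epsilon_i,\tau^\delta_i}$ where $\epsilon_i:=\otp(\bigcup_{i'<i}x^\delta_{i'})$ and each $y_{\gamma,\epsilon,\tau}$ is a fixed closed subset of $E^\kappa_{<\theta}$ whose size is chosen so that $h(\epsilon+\otp(y)-1)=\tau$ for a predetermined global $h:\lambda\to\theta$. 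Setting $\tau^\delta_i:=h_\delta(\delta_i)$ for $\delta\in S$, and $C^\bullet_\delta:=\bigcup_i x^\delta_i$, this yields a $\lambda$-bounded $C^\bullet_\delta$ for $\delta\in S$ with $h(\otp(C^\bullet_\delta\cap\max(x^\delta_i)))=h_\delta(\delta_i)$; hence the $\Theta_2$-property of $\langle h_\delta\rangle$ upgrades to $\theta\in\Theta_1(\vec{C^\bullet}\restriction S,T)$ via the global $h$, exactly as in Lemma~\ref{amhocg}.

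The key observation for coherence is that every inserted $y$ lies in $E^\kappa_{<\theta}$, so $\acc(C^\bullet_\delta)\cap E^\kappa_{\ge\theta}\s\acc(C_\delta)\cap E^\kappa_{\ge\theta}$. For $\chi\ge\theta$, the $\sqx$-coherence of $\vec C$ yields $C_{\bar\delta}=C_\delta\cap\bar\delta$ at every such $\bar\delta$, and the construction being local in $C_\delta$ (provided $\tau^\delta_i$ is locally determined) then delivers $C^\bullet_{\bar\delta}=C^\bullet_\delta\cap\bar\delta$. Crucially, for $\delta\in S$ every point of $\acc(C_\delta)$ has cofinality $<\lambda$, so $\acc(C_\delta)\cap S=\emptyset$, and coherence issues only arise when $\delta\notin S$ admits an accumulation point $\bar\delta\in S$. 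For such $\delta$, I would define $\tau^\delta_i$ by inheriting $\tau^{\bar\delta}_i$ from the least $\bar\delta\in\acc(C_\delta)\cap S$ above $\delta_i$. Weak coherence likewise follows, since $C^\bullet_\delta\cap\alpha$ is determined by $C_\delta\cap(\alpha+\theta)$ together with the initial segment of the $\tau^\delta$-sequence.

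The main obstacle is the consistency of this inherited $\tau$-definition: for $\bar\delta<\bar\delta'$ both in $\acc(C_\delta)\cap S$, one has $C_{\bar\delta}=C_{\bar\delta'}\cap\bar\delta$ by $\sqx$-coherence, yet $D^*(\eta,\bar\delta)$ and $D^*(\eta,\bar\delta')$ are defined in terms of different tails, so $h_{\bar\delta}(\delta_i)$ and $h_{\bar\delta'}(\delta_i)$ may a priori disagree. I expect to overcome this by strengthening the pigeonhole of Claim~\ref{claim893} in the presence of coherence: under the hypothesis that $\vec C$ is weakly coherent (or $\sqx$-coherent), iterate the $D^*$-refinement long enough to absorb the $<\kappa$-many local types $\{C_\delta\cap\alpha\}$ forced by the coherence, thereby forcing $\delta\mapsto D^*(\eta,\delta)$ to depend only on a local invariant of $C_\delta$ and rendering the inherited $\tau^\delta_i$ well-defined.
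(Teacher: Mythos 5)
Your outline correctly identifies the obstacle, but the proposed repair does not close the gap. You want to define $\tau^\delta_i$ by inheriting $h_{\bar\delta}(\delta_i)$ from some $\bar\delta\in\acc(C_\delta)\cap S$, where $h_{\bar\delta}(\beta)=\otp\bigl(c(\eta,\psi_{\bar\delta}(\beta))\cap D^*(\eta,\bar\delta)\bigr)$, and you note that this is a priori inconsistent because $D^*(\eta,\bar\delta)$ and $D^*(\eta,\bar\delta')$ are computed from different tails. The fix you suggest --- ``iterate the $D^*$-refinement long enough to absorb the $<\kappa$-many local types $\{C_\delta\cap\alpha\}$ forced by the coherence, thereby forcing $\delta\mapsto D^*(\eta,\delta)$ to depend only on a local invariant of $C_\delta$'' --- cannot work as stated. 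The quantity $D^*(\eta,\delta)$ is asking which colours $\tau$ are realized \emph{cofinally in $\delta$} by pairs $(\beta,\min(C_\delta\setminus(\beta+1)))$ hitting $D^*\cap T$. This is a tail invariant of $C_\delta$; two ordinals $\bar\delta<\bar\delta'$ with $C_{\bar\delta}=C_{\bar\delta'}\cap\bar\delta$ have entirely different tails and there is no amount of club refinement (pigeonhole is over at most $\lambda^+<\kappa$ stages, possibly even $2^\theta\geq\kappa$ values of $D^*(\eta,\delta)$) that can make their $D^*(\eta,\cdot)$-values agree, let alone make $D^*(\eta,\delta)$ a function of any bounded piece $C_\delta\cap\alpha$. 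So ``inheriting from the least $\bar\delta$ above $\delta_i$'' will genuinely produce conflicting values and the coherence claim fails.

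The paper's proof takes a different route precisely to avoid this. Rather than defining the colour at index $i$ via the global set $D^*(\eta^*,\delta)$, it introduces a purely local running approximation
\[
T^i_\delta := \{\,c(\eta^*,j)\mid \varepsilon<j<i,\ \eta^*<j,\ \delta_{j+1}\in D^*\cap T\,\},
\]
and sets $\tau:=\otp\bigl(c(\eta^*,i)\cap T^i_\delta\bigr)$. Here $T^i_\delta$ depends only on $C_\delta\cap\delta_i$ (together with fixed parameters $c,\eta^*,\varepsilon,D^*,T$), so $\sqx$-coherence of $\vec C$ transfers directly without any inheritance scheme. The extra stabilization step you are missing is the bounding of the ``noise'' sets
\[
N_\delta:=\{\beta<\delta\mid c(\eta^*,\psi_\delta(\beta))\notin D^*(\eta^*,\delta)\ \&\ \min(C_\delta\setminus(\beta+1))\in D^*\cap T\},
\]
together with a further pigeonhole producing a uniform threshold $\varepsilon<\lambda$; this is what guarantees that on a guessing $\delta$ and for large $i$, the local approximation $T^i_\delta\cap c(\eta^*,i)$ agrees with $D^*(\eta^*,\delta)\cap c(\eta^*,i)$, so the local $\tau$ hits every prescribed colour cofinally. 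Without this device your $\Theta_1$-verification would also break: the inherited $h_{\bar\delta}$ values are not what the surjection $h$ needs to read off, and you never stabilize a global function of $\otp(C^\bullet_\delta\cap\beta)$ alone.
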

\begin{proof} 
Without loss of generality, $0\in C_\delta$ for all nonzero $\delta<\kappa$.
For every $\delta<\kappa$, denote $\xi_\delta:=\otp(C_\delta)$, let $\psi_\delta:C_\delta\rightarrow\xi_\delta$ be the collapsing map of $C_\delta$,
and let $\langle\delta_i\mid i<\xi_\delta\rangle$ denote the increasing enumeration of $C_\delta$ so that $\psi_\delta(\delta_i) = i$ for every $i< \xi_\delta$.

Fix a colouring $c:[\lambda]^2\rightarrow\theta$ as in Fact~\ref{jbduparrow}(2).
For every club $D\subseteq\kappa$, for all $\delta\in S$ and $\eta< \lambda$, denote
$$D(\eta, \delta):=\{ \tau<\theta\mid \sup\{\beta<\delta\mid c(\eta,\psi_\delta(\beta))=\tau\ \&\ \min(C_\delta\setminus(\beta+1))\in D\cap T\}=\delta\}.$$
By Claims \ref{claim892} and \ref{claim893}, 
we may pick $\eta^*<\lambda$ and a club $D^*\s\kappa$ 
such that for every club $D \s D^*$, there exists $\delta \in S$ such that $D(\eta^*,\delta)= D^*(\eta^*,\delta)$ and this set has size $\theta$.
By possibly shrinking $D^*$, we may assume that $D^*$ consists of indecomposable ordinals, and that $\min(D^*)>\theta$. 

For every $\delta\in S$, since $\cf(\delta)=\lambda>\theta$, the set
$$N_\delta:=\{\beta<\delta\mid c(\eta^*,\psi_\delta(\beta))\notin D^*(\eta^*,\delta)\ \&\ \min(C_\delta\setminus(\beta+1))\in D^*\cap T\}$$
is bounded in $\delta$. So, by one more stabilization argument, we may fix an $\varepsilon<\lambda$ such that
every club $D \s D^*$, there exists $\delta \in S$ such that $D(\eta^*,\delta)= D^*(\eta^*,\delta)$, $|D^*(\eta^*,\delta)|=\theta$,
and also $$\sup(\{\psi_\delta(\beta)\mid \beta\in N_\delta\})=\varepsilon.$$

For all $\delta\in\acc(\kappa)$ and $i\le\xi_\delta$, denote:
$$T_\delta^i:=\{ c(\eta^*,j)\mid \varepsilon<j<i,\ \eta^*<j,\ \delta_{j+1}\in D^*\cap T\}.$$

Fix a surjection $h:\kappa\rightarrow\theta$ 
and a sequence of sets $\langle y_{\gamma,\epsilon,\tau}\mid (\gamma,\epsilon,\tau)\in\kappa\times\kappa\times\theta\rangle$
as in the proof of Lemma~\ref{amhocg}.
We now construct the new $C$-sequence $\vec {C^\bullet}=\langle C_\delta^\bullet\mid\delta<\kappa\rangle$, as follows.
Set $C^\bullet_0:=\emptyset$ and $C^\bullet_{\gamma+1}:=\{\gamma\}$ for every $\gamma<\kappa$.
Next, given $\delta\in\acc(\kappa)$,
construct a sequence $\langle x_\delta^i\mid i<\xi_\delta\rangle$ 
by recursion on $i<\xi_\delta$, as follows:

$\br$ If $\delta_{i+1}\notin D^*$, then set $x^i_\delta:=\{\delta_i\}$.

$\br$ If $\delta_{i+1}\in D^*$, then in particular, $\delta_{i+1}\ge\delta_i+\theta$, so we set $x_\delta^i:=y_{\gamma,\epsilon,\tau}$,
for $\gamma:=\delta_i$, $\epsilon:=\otp(\bigcup_{i'<i}x_\delta^{i'})$,
and $$\tau:=\otp(c(\eta^*,i)\cap T^i_\delta).$$

Note that, for every $\delta\in\acc(\kappa)$, $C_\delta\s C^\bullet_\delta$,
and also $\acc(C^\bullet_\delta)\cap E^\kappa_{\ge\theta}=\acc(C_\delta)$,
since $y_{\gamma,\epsilon,\tau}\s E^\kappa_{<\theta}$ for every $(\gamma,\epsilon,\tau)\in\kappa\times\kappa\times\theta$.
In addition, for every $\delta\in\acc(\kappa)$, if $\alpha+\beta<\xi_\delta$ for all $(\alpha,\beta)\in\theta\times\xi_\delta$,
then $\otp(C_\delta^\bullet)=\xi_\delta$.
In particular,  $\otp(C^\bullet_\delta)=\lambda$ for all $\delta\in S$.

\begin{claim} Let $\chi\in[\theta,\kappa)$ be an infinite cardinal.

If $\vec C$ is $\sqx$-coherent, then so is $\vec{C^\bullet}$.
\end{claim}
\begin{why} Suppose that $\vec C$ is $\sqx$-coherent. Let $\delta<\kappa$ and $\bar\delta\in\acc(C^{\bullet}_\delta)\cap E^\kappa_{\ge\chi}$;
we need to verify that $C^\bullet_\delta\cap\bar\delta\sq C^\bullet_\delta$.
As $\acc(C^\bullet_\delta)\cap E^\kappa_{\ge\theta}=\acc(C_\delta)$ and $\chi\ge\theta$, we infer that $\bar\delta\in\acc(C_\delta)$,
so by $\sqx$-coherence of $\vec C$, $C_\delta\cap\bar\delta\sq C_\delta$.
It follows that:
\begin{itemize}
\item $\langle \bar\delta_i\mid i<\xi_{\bar\delta}\rangle=\langle \delta_i\mid i<\xi_{\bar\delta}\rangle$,
\item $\langle T^i_{\bar\delta}\mid i<\xi_{\bar\delta}\rangle=\langle T^i_\delta\mid i<\xi_{\bar\delta}\rangle$, and hence
\item $\langle x^i_{\bar\delta}\mid i<\xi_{\bar\delta}\rangle=\langle x^i_\delta\mid i<\xi_{\bar\delta}\rangle$,
\end{itemize}
so $C^\bullet_{\delta}\cap\bar\delta=\bigcup_{i<\xi_{\bar\delta}}x^i_{\delta}=\bigcup_{i<\xi_{\bar\delta}}x^i_{\bar\delta}=C^\bullet_{\bar\delta}$, as sought.
\end{why}
\begin{claim} If $\vec C$ is weakly coherent, then so is $\vec{C^\bullet}$.
\end{claim}
\begin{why} Towards a contradiction, suppose that $\vec C$ is weakly coherent, but $\vec{C^\bullet}$ is not. Fix the least $\alpha<\kappa$
such that $|\{ C_\delta^\bullet\cap\alpha\mid \delta<\kappa\}|=\kappa$.
So we may fix a cofinal subset $\Delta$ of $\acc(\kappa)$ such that:
\begin{itemize}
\item[(1)] $\delta\mapsto C^\bullet_\delta\cap\alpha$ is injective over $\Delta$, but
\item[(2)] $\delta\mapsto C_\delta\cap\alpha$ is constant over $\Delta$.
\end{itemize}
Fix $\gamma<\alpha$ such that $\sup(C_\delta\cap\alpha)=\gamma$ for all $\delta\in\Delta$. 
By minimality of $\alpha$, and by possibly shrinking $\Delta$ further, we may also assume that
\begin{itemize}
\item[(3)] $\delta\mapsto C^\bullet_\delta\cap\gamma$ is constant over $\Delta$.
\end{itemize}
It thus follows that the map $\delta\mapsto C^\bullet_\delta\cap[\gamma,\alpha)$ is injective over $\Delta$.
However, for every $\delta\in\Delta$, $C^\bullet_\delta\cap[\gamma,\alpha)$
is equal to $y_{\gamma,\epsilon,\tau}\cap\alpha$,
for $\epsilon:=\otp(C^\bullet_\delta\cap\gamma)$
and some $\tau<\theta$.
Recalling Clause~(3), there exists an $\epsilon<\kappa$ such that:
$$\{ C^\bullet_\delta\cap[\gamma,\alpha)\mid \delta\in\Delta\}\s\{y_{\gamma,\epsilon,\tau}\cap\alpha\mid \tau<\theta\},$$
contradicting the fact that the set on the right hand size has size $\le\theta<\kappa$.
\end{why}

Finally, to see that $\theta\in \Theta_1(\vec{C^\bullet}\restriction S,T)$,
let $D$ be a club in $\kappa$.  By possibly shrinking $D$, we may assume that $D\s D^*$. 
Pick $\delta \in S$ such that $D(\eta^*,\delta)= D^*(\eta^*,\delta)$, $|D^*(\eta^*,\delta)|=\theta$,
and also $$\sup(\{\psi_\delta(\beta)\mid \beta\in N_\delta\})=\varepsilon.$$
For any $D'\in\{D,D^*\}$,
$$D'(\eta^*, \delta)=\{ \tau<\theta\mid \sup\{i<\lambda\mid c(\eta^*,i)=\tau\ \&\ \delta_{i+1}\in D'\cap T\}=\lambda\}.$$
So, since $D(\eta^*,\delta)= D^*(\eta^*,\delta)$, the definition of $\varepsilon$ implies that
$$D(\eta^*, \delta)=\{ c(\eta^*,j)\mid \varepsilon<j<\lambda,\ \eta^*<j,\ \delta_{j+1}\in D^*\cap T\}=T_\delta^{\lambda}.$$
In particular, $|T_\delta^{\lambda}|=\theta$.
Now, given a prescribed colour $\tau^*$ and some $\alpha<\delta$,
we shall find a $\beta^*\in C^\bullet_\delta$ above $\alpha$ such that $\min(C^\bullet_\delta\setminus(\beta^*+1))\in D\cap T$ and $h(C^\bullet_\delta\cap\beta^*)=\tau^*$.
Fix the unique $\tau\in T^\lambda_\delta$ such that $\otp(T^\lambda_\delta\cap\tau)=\tau^*$.
Note that for a tail of $i<\lambda$, $T_\delta^\lambda\cap\tau=T_\delta^i\cap\tau$.

As $\tau\in D(\eta^*,\delta)$, there are cofinally many $\beta\in C_\delta$ such that $\min(C_\delta\setminus(\beta+1))\in D\cap T$,
$\eta^*<\psi_\delta(\beta)$ and $c(\eta^*,\psi_\delta(\beta))=\tau$. So, we may find  a large enough $i<\lambda$ such that:
\begin{itemize}
\item $\delta_{i+1}\in D\cap T$,
\item $\eta^*<i$,
\item $c(\eta^*,i)=\tau$, 
\item $\alpha<\delta_i$, and
\item $T_\delta^\lambda\cap\tau=T_\delta^i\cap\tau$.
\end{itemize}

So $\otp(c(\eta^*,i)\cap T^i_\delta)=\otp(\tau\cap T^i_\delta)=\otp(T^\lambda_\delta\cap\tau)=\tau^*$.
Put $\beta^*:=\max(x_\delta^i)$ so that
$\delta_i\le\beta^*<\delta_{i+1}=\min(C^\bullet_\delta\setminus(\beta^*+1))$.
Since $\delta_{i+1}\in D\s D^*$, we know that 
$x_\delta^i=y_{\gamma,\epsilon,\tau^*}$,
for $\gamma:=\delta_i$ and $\epsilon:=\otp(\bigcup_{i'<i}x_\delta^{i'})$.
Consequently,
$$h(\otp(C^\bullet_\delta\cap\beta^*))=h(\otp(\bigcup\nolimits_{i'\le i}x^\delta_{i'})-1)=\tau^*,$$
as sought.
\end{proof}

By \cite[Proposition~2.18 and Lemma~8.4]{paper47}, 
in G{\"o}del's constructible universe $L$,
for every weakly compact cardinal $\lambda$ that is not ineffable,
$\ubd(J^{\bd}[\lambda],\theta)$ fails for every cardinal $\theta\in[3,\lambda]$, but $\onto(\ns_\lambda,\lambda)$ holds.
Thus, it is easier to get $\ubd(J,\theta)$ with $J:=\ns_\lambda$ than with $J:=J^{\bd}[\lambda]$.
In the upcoming theorem, the hypothesis of 
$\ubd(J^{\bd}[\lambda],\theta)$ from Theorem~\ref{812} is reduced to $\ubd(\ns_\lambda,\theta)$
at the cost of requiring $\vec C\restriction S$ to witness $\cg_\lambda(S,T,1,\allowbreak\langle \ns_\delta\mid\delta\in S\rangle)$.

\begin{thm} Suppose that $\theta<\lambda<\kappa$ are infinite cardinals,
$\ubd(\ns_\lambda,\theta)$ holds,
and $S$ is a stationary subset of $E^\kappa_\lambda$. 

For every $C$-sequence 
$\vec C=\langle C_\delta\mid\delta<\kappa\rangle$ such that $\vec C\restriction S$ witnesses $\cg_\lambda(S,T,1,\allowbreak\langle \ns_\delta\mid\delta\in S\rangle)$,
there exists a corresponding $C$-sequence 
$\vec{C^\bullet}=\langle C^\bullet_\delta\mid\delta<\kappa\rangle$ such that:
\begin{itemize} 
\item $\vec{C^\bullet}\restriction S$ is $\lambda$-bounded;
\item If $\vec C$ is weakly coherent, then so is $\vec{C^\bullet}$;
\item For every infinite cardinal $\chi\in[\theta,\kappa)$, if $\vec C$ is $\sqx$-coherent, then so is $\vec{C^\bullet}$;
\item $\theta\in \Theta_1(\vec{C^\bullet}\restriction S,T)$.
\end{itemize}
\end{thm}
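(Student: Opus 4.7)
The plan is to adapt the proof of Theorem~\ref{812}, using Theorem~\ref{normaluparrow}(1) in place of Fact~\ref{jbduparrow}(2) so as to obtain a colouring $c:[\lambda]^2\rightarrow\theta$ tailored to normal $\lambda$-complete ideals. Since the present hypothesis guarantees witnesses in $\ns_\delta^+$ rather than only in $J^\bd[\delta]^+$, I would let $\psi_\delta$ denote the collapsing map of $C_\delta$ (which, for $\delta\in S\s E^\kappa_\lambda$, has ordertype $\lambda$), and plug the stationary set $B:=\{\beta<\delta\mid\min(C_\delta\setminus(\beta+1))\in D\cap T\}$ together with $A:=C_\delta$ and $\psi:=\psi_\delta$ directly into the conclusion of Theorem~\ref{normaluparrow}(1). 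Setting
\[D(\eta,\delta):=\{\tau<\theta\mid\{\beta\in C_\delta\mid\eta<\psi_\delta(\beta)\,\&\,c(\eta,\psi_\delta(\beta))=\tau\,\&\,\delta_{\psi_\delta(\beta)+1}\in D\cap T\}\in\ns_\delta^+\},\]
I would run the analogues of Claims~\ref{claim892} and~\ref{claim893}, with ``stationary in $\delta$'' replacing ``cofinal in $\delta$'', to find $\eta^*<\lambda$ and a club $D^*\s\kappa$ of indecomposable ordinals above $\theta$ such that, for every club $D\s D^*$, some $\delta\in S$ satisfies $D(\eta^*,\delta)=D^*(\eta^*,\delta)$ and $|D^*(\eta^*,\delta)|=\theta$.

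The principal technical departure concerns the bad-position set
\[N_\delta:=\{i<\lambda\mid c(\eta^*,i)\notin D^*(\eta^*,\delta)\,\&\,\delta_{i+1}\in D^*\cap T\}.\]
In Theorem~\ref{812} this was a union of at most $\theta$ bounded subsets of $\lambda$ and hence bounded (since $\lambda>\theta$); here it is a union of at most $\theta$ non-stationary subsets of $\lambda$ and hence, by $\lambda$-completeness of $\ns_\lambda$, merely non-stationary. The role played by the uniform ordinal $\varepsilon$ is therefore now taken by a club $F_\delta\s\lambda$ disjoint from $N_\delta$, chosen canonically for each $\delta$ via some fixed well-ordering $<_W$ of $\mathcal P(\lambda)$, and we redefine
\[T_\delta^i:=\{c(\eta^*,j)\mid j\in F_\delta\cap i,\ \eta^*<j,\ \delta_{j+1}\in D^*\cap T\}.\]
The insertion recipe of Theorem~\ref{812}, now with this altered $T_\delta^i$, yields a $\lambda$-bounded $C$-sequence $\vec{C^\bullet}$ for which the tail identity $T_\delta^\lambda=D^*(\eta^*,\delta)$ holds, and the final verification of $\theta\in\Theta_1(\vec{C^\bullet}\restriction S,T)$ --- picking the target $\tau^*$ as the order-type of $c(\eta^*,i)\cap T_\delta^\lambda$ for a suitable $i$ --- proceeds exactly as there. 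The weak-coherence clause is handled by the same cardinality count based on the $y_{\gamma,\epsilon,\tau}$'s.

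The hard part will be the preservation of $\sqx$-coherence when $\chi<\lambda$. For $\chi\ge\lambda$, any $\bar\delta\in\acc(C_\delta)\cap E^\kappa_{\ge\chi}$ with $\delta\in S$ would force $\cf(\bar\delta)=\otp(C_\delta\cap\bar\delta)<\lambda\le\chi$, a contradiction, so the claim is trivial. For $\chi<\lambda$, however, such $\bar\delta$ (necessarily outside $S$) may exist, and the coherence argument of Theorem~\ref{812} rested on the equality $T_{\bar\delta}^i=T_\delta^i$ for $i<\bar\xi:=\otp(C_{\bar\delta})$, underwritten by the globality of $\varepsilon$. To secure the analogue here I plan to define the family $\langle F_\delta\mid\delta<\kappa\rangle$ by a transfinite recursion respecting the $\sqx$-coherence of $\vec C$: at each stage $\delta$, select the $<_W$-least club in $\lambda$ disjoint from $N_\delta$ whose restriction below $\bar\xi$ matches the previously determined $F_{\bar\delta}$ for every $\bar\delta\in\acc(C_\delta)$. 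The compatibility of these prescribed restrictions among themselves follows from the $\sqx$-coherence of $\vec C$, but verifying that the resulting partial assignment always admits an extension disjoint from $N_\delta$ --- given that $N_\delta$ and $N_{\bar\delta}$ need not be related by $N_{\bar\delta}=N_\delta\cap\bar\xi$, since $D^*(\eta^*,\bar\delta)$ is computed via stationarity in $\bar\xi$ rather than $\lambda$ --- is the delicate remaining step, to be addressed by exploiting the non-stationarity of $N_\delta$ and the $\lambda$-completeness of $\ns_\lambda$.
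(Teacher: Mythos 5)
Your proposal goes astray when you declare that the analogues of Claims~\ref{claim892} and \ref{claim893} should replace ``cofinal in $\delta$'' by ``stationary in $\delta$.'' The paper does not make this replacement. In its proof, $D(\eta,\delta)$ is defined \emph{verbatim} as in Theorem~\ref{812} --- via ``$\sup\{\dots\}=\delta$,'' that is, via unboundedness --- and the stationarity supplied by the hypothesis $\cg_\lambda(S,T,1,\langle\ns_\delta\mid\delta\in S\rangle)$ is used only once, in order to invoke Theorem~\ref{normaluparrow}(1): from a stationary $B$, that colouring yields $\theta$-many colours $\tau$ for which $\{\beta\in B\cap C_\delta\mid\eta<\psi_\delta(\beta)\ \&\ c(\eta,\psi_\delta(\beta))=\tau\}$ is $\ns_\delta$-positive, and each such set is in particular unbounded in $\delta$, so $\tau\in D(\eta,\delta)$ in the unbounded sense. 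After this single claim, the paper's proof is, as it says, ``identical to that of Theorem~\ref{812}.''

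The ``principal technical departure'' you identify is therefore an artefact of your unforced redefinition, not a feature of the problem. Keeping the definitions of Theorem~\ref{812}, for each $\tau\notin D^*(\eta^*,\delta)$ the relevant set of $\beta$'s is bounded in $\delta$, and since $\theta<\lambda=\cf(\delta)$, the set $N_\delta$ is again a union of fewer than $\cf(\delta)$ bounded sets and hence bounded. The uniform ordinal $\varepsilon$ is thus recovered exactly as before, and $T_\delta^i$ depends only on $C_\delta$ below $\delta_i$ together with the globally fixed parameters $(\eta^*,\varepsilon,D^*,T,c)$ --- which is precisely what makes the $\sqx$-coherence argument of Theorem~\ref{812} go through word for word.

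Your attempt to replace the global $\varepsilon$ by $\delta$-dependent clubs $F_\delta$ consequently not only is unnecessary but also destroys exactly the uniformity on which coherence preservation depends, and you explicitly leave the compatibility of the $F_\delta$'s across $\bar\delta\in\acc(C_\delta)$ as ``the delicate remaining step, to be addressed.'' Since preserving coherence is the entire point of this theorem (if it were not required, Theorem~\ref{partitioningnormal} or Theorem~\ref{apartitioningjkk} already suffices), your proof is incomplete. A further slip: you dismiss the case $\chi\ge\lambda$ as trivial by considering only $\delta\in S$, but $\vec{C^\bullet}$ is modified at every $\delta\in\acc(\kappa)$, and for $\delta\notin S$ the club $C_\delta$ may well have ordertype $\ge\lambda$ with accumulation points of cofinality $\ge\lambda$, so that case is not vacuous.
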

\begin{proof} 
For every $\delta<\kappa$, let $\psi_\delta:C_\delta\rightarrow\otp(C_\delta)$ denote the collapsing map of $C_\delta$.
Let $c:[\kappa]^2\rightarrow\theta$ be a colouring given by Theorem~\ref{normaluparrow}(1).
\begin{claim} There exists an $\eta<\lambda$ such that, for every club $D\s\kappa$,
there exists a $\delta\in S$, such that the following set has size $\theta$:
$$D(\eta, \delta):=\{ \tau<\theta\mid \sup\{\beta<\delta\mid c(\eta,\psi_\delta(\beta))=\tau\ \&\ \min(C_\delta\setminus(\beta+1))\in D\cap T\}=\delta\}.$$
\end{claim}
\begin{why} Suppose not. For every $\eta<\lambda$, fix a counterexample club $D_\eta\s\kappa$. Let $D:=\bigcap_{\eta<\lambda}D_\eta$.
By the hypothesis on $\vec C\restriction S$, let us now pick $\delta\in S$ such that the following set is stationary in $\delta$:
$$B:=\{\beta<\delta\mid \min(C_\delta\setminus(\beta+1))\in D\cap T\}.$$
By the choice of $c$, there is an $\eta< \lambda$ such that 
$$\otp(\{ \tau<\theta\mid \{ \beta\in B\mid \eta<\psi_\delta(\beta)\ \&\ c(\eta,\psi_\delta(\beta))=\tau\}\in (\ns_\delta)^+\})=\theta.$$
In particular, $|D(\eta, \delta)| = \theta$, contradicting the fact that $D \s D_\eta$.
\end{why}
The rest of the proof is now identical to that of Theorem~\ref{812}. 
\end{proof}
\begin{remark}
Since $\nacc(\delta)\in \ns_\delta$ for all $\delta\in S$,
for every $\sigma\le\omega$ such that $\sigma<\theta$,
if $\vec C\restriction S$ moreover witnesses $\cg_\lambda(S,T,\sigma,\allowbreak\langle \ns_\delta\mid\delta\in S\rangle)$,
then the preceding proof may be slightly tweaked to secure that 
$\theta\in \Theta_1(\vec{C^\bullet}\restriction S,T,\sigma)$.
The first change is to require that the surjection $h:\kappa\rightarrow\theta$ satisfies that for every $\epsilon<\kappa$,
for every $\tau<\theta$, there exists $\iota\in(\epsilon,\epsilon+\theta)$ such that
$\{ h(\iota+\varsigma)\mid \varsigma\le\sigma\}=\{\tau\}$. The second change is to impose $x^i_\delta=\{\delta_i\}$ for all $i\in\nacc(\xi_\delta)$.
The details are left to the reader.
\end{remark}

\subsection{Applications}\label{partitioningapplications}

We now utilize the results from \cite{paper47,paper53} in order to partition club-guessing sequences.

\begin{fact}[\cite{paper47}] Suppose that $\lambda$ is regular and uncountable. 

Any of the following implies that $\onto(J^\bd[\lambda], \theta)$ holds:
\begin{enumerate}[(1)]
\item $\theta=\lambda=\aleph_1=\non(\mathcal M)$;
\item $\theta=\lambda$ is a successor cardinal, and $\stick(\lambda)$ holds;
\item $\theta=\lambda$ and $\diamondsuit(T)$ holds for a stationary $T\s\lambda$ that does not reflect at regulars;
\item $\theta<\lambda$ and $\lambda\nrightarrow[\lambda]^2_\theta$;
\item $\theta<\lambda$ is regular and $\ubd(J^\bd[\lambda], \lambda)$ holds.
\end{enumerate}
\end{fact}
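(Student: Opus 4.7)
In each case, the target is to exhibit a colouring $c:[\lambda]^2\rightarrow\theta$ such that for every unbounded $B\s\lambda$, some $\eta<\lambda$ satisfies $c[\{\eta\}\circledast B]=\theta$. Clauses (1)--(3) supply prediction-type hypotheses at $\lambda$, so for them I would construct $c$ directly. Under (1), I would fix a non-meager set $\{r_\alpha:\alpha<\omega_1\}$ of reals and let $c(\alpha,\beta)$ be a Sierpi\'nski-style oscillation between $r_\alpha$ and $r_\beta$; for unbounded $B\s\omega_1$, for each $\tau<\omega_1$ the set of reals from which $\tau$ fails to be realised on $B$ is meager, so some $r_\eta$ realises every colour. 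Under (2), I would enumerate each $X_\alpha$ from the $\stick(\mu^+)$-sequence and define $c(\eta,\beta)$ through the position of $\eta$ in a canonical $X_\alpha$ containing $\beta$; once $X_\alpha\s B$ as guaranteed by $\stick$, a canonical element of $X_\alpha$ serves as the required $\eta$ and the enumeration of $X_\alpha$ ranges through all $\mu^+$ colours. Under (3), I would construct $c$ by transfinite recursion along $T$, letting the $\diamondsuit(T)$-sequence book-keep, at each $\alpha\in T$, a single colour $\tau_\alpha$ and a single ordinal $\eta_\alpha<\alpha$ at which $c(\eta_\alpha,\alpha)$ is forced to equal $\tau_\alpha$; stationarily many $\alpha$ correctly guess $B\cap\alpha$ together with every target colour, while non-reflection of $T$ at regulars prevents coherence conflicts when trying to make these local choices consistent across levels.

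For clause (4), given $c:[\lambda]^2\rightarrow\theta$ witnessing $\lambda\nrightarrow[\lambda]^2_\theta$, I would verify that $c$ already witnesses $\onto(J^\bd[\lambda],\theta)$. Suppose otherwise, so there is an unbounded $B\s\lambda$ with $c[\{\eta\}\circledast B]\subsetneq\theta$ for every $\eta<\lambda$; for each $\eta$ choose $\tau(\eta)\in\theta\setminus c[\{\eta\}\circledast B]$. A stabilisation argument---via elementary submodels of size $\theta$ in general, or straight pigeonhole when $2^\theta<\lambda$---produces an unbounded $B'\s B$ and a single colour $\tau^*$ with $\tau(\eta)=\tau^*$ for all $\eta\in B'$. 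Then $\tau^*\notin c[[B']^2]$, contradicting the choice of $c$.

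For clause (5), given an upper-regressive witness $c:[\lambda]^2\rightarrow\lambda$ to $\ubd(J^\bd[\lambda],\lambda)$ and regular $\theta<\lambda$, the plan is to construct $d:[\lambda]^2\rightarrow\theta$ by composing $c$ with a $\beta$-indexed family $\langle g_\beta:\beta\rightarrow\theta\mid\beta<\lambda\rangle$, setting $d(\eta,\beta):=g_\beta(c(\eta,\beta))$. A single global $g:\lambda\rightarrow\theta$ cannot suffice, since any such $g$ admits an unbounded monochromatic preimage. Instead, for each $\beta$ of sufficient cofinality, I would build $g_\beta$ using auxiliary structure on $\beta$ (for instance a fixed club of ordertype $\cf(\beta)$ combined with a further application of $c$), and verify via an iterated appeal to $\ubd$ that for every unbounded $B$ a single $\eta$ emerges from which all $\theta$ colours are realised. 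I expect clause (5) to be the main obstacle: the other four clauses admit rather direct arguments, but here the design of $\langle g_\beta\mid\beta<\lambda\rangle$ must be balanced against the way $\ubd$ produces its witnesses, likely requiring either a clever iteration of $\ubd$ or the invocation of further combinatorial machinery such as walks on ordinals.
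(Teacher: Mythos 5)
The statement you are proving is labelled a \emph{Fact} in this paper and is simply cited to \cite{paper47}; the paper under review contains no proof of it, so there is no ``paper's own proof'' to compare against. I will therefore assess your sketch on its own terms.

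Your treatment of clause (4) is correct, and in fact you are overcomplicating it: since $\theta<\lambda$ and $\lambda$ is regular, any unbounded $B\s\lambda$ has a $\tau(\cdot)$-fiber of size $\lambda$ by straight pigeonhole (no elementary submodels needed), and then $\tau^*\notin c[[B']^2]$ for that fiber $B'\s B$, contradicting $\lambda\nrightarrow[\lambda]^2_\theta$. Clause (1) as sketched (non-meager set of reals, Sierpi\'nski oscillation, for each unbounded $B$ and each colour the ``bad'' set of reals is meager) is the right kind of argument, though you have not verified the meagerness claim.

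Clause (2) has a genuine quantitative gap. If $\lambda=\mu^+$, a $\stick(\lambda)$-sequence consists of sets $X_\alpha\s\alpha$ of size $\mu$; once you locate an $X_\alpha\s B$, an enumeration of $X_\alpha$ provides at most $\mu$ data points, but you need to realise $\theta=\lambda=\mu^+$ colours from a single row $\{\eta\}\circledast B$. A single caught $X_\alpha$ simply cannot supply $\mu^+$ distinct $\beta$'s, so ``the enumeration of $X_\alpha$ ranges through all $\mu^+$ colours'' is false as stated; you need to combine information from $\lambda$-many of the $X_\alpha$'s, and it is precisely the coordination of these local guesses into one global row that is the content of the proof. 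Clause (3) is hand-waved at a level where the central difficulty is hidden: $\diamondsuit(T)$ lets you prescribe one value $c(\eta_\alpha,\alpha)$ per $\alpha\in T$, but the target requires a \emph{fixed} $\eta$ whose row realises every colour on $B$, so you must argue that for the right $\eta$ and each $\tau$ there are stationarily many $\alpha\in T\cap B$ where the guess is $(B\cap\alpha,\eta,\tau)$ --- and being stationary does not put $\alpha$ inside $B$. The role of non-reflection at regulars, which you mention only in passing, is exactly where this gets repaired (it supplies a suitable ladder system), and without that step the recursion does not close.

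Clause (5) you explicitly leave open. It is indeed the nontrivial implication --- converting an $\ubd$-type colouring with $\lambda$ colours and large order type into an $\onto$-type colouring with $\theta$ colours is not a matter of postcomposition, as you correctly observe --- and your plan of ``building $g_\beta$ using auxiliary structure and iterating $\ubd$'' is a guess rather than an argument. As it stands, the proposal proves one of the five clauses, has a concrete error in a second, and leaves the remaining three at the level of sketches that do not yet engage the real obstructions.
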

 
\begin{fact}[\cite{paper53}]\label{fact517} Suppose that $\lambda$ is regular and uncountable, and $\theta\le\lambda$. 

Any of the following implies that $\ubd(J^\bd[\lambda], \theta)$ holds:
\begin{enumerate}[(1)]
\item $\lambda$ admits a nontrivial $C$-sequence in the sense of \cite[Definition~6.3.1]{TodWalks};
\item $\square(\lambda,{<}\mu)$ holds for some $\mu<\lambda$;
\item $\lambda$ is not greatly Mahlo;
\item $\lambda$ is not weakly compact in $L$;
\item $\lambda$ is not weakly compact, and $\theta=\omega$;
\item $\lambda$ is not strongly inaccessible, and $\theta=\log_2(\lambda)$.
\end{enumerate}
\end{fact}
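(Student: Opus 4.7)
The plan is to construct, in each case, an upper-regressive colouring $c:[\lambda]^2\to\theta$ witnessing $\ubd(J^\bd[\lambda],\theta)$. The unifying tool for clauses (1)--(5) is Todorcevic's theory of walks on ordinals along a nontrivial $C$-sequence $\vec C=\langle C_\alpha\mid\alpha<\lambda\rangle$, together with its associated functions $\rho_0$, $\rho_1$, $\rho_2$, $\tr$. First, I would check that each of (1)--(5) supplies such a $\vec C$: (1) does so by hypothesis; (5) is the classical fact that a non-weakly-compact $\lambda$ carries a nontrivial $C$-sequence; (2) supplies one via a transversal through the $\square(\lambda,{<}\mu)$-sequence, whose nontriviality is forced by $\mu<\lambda$; (4) reduces to (2) through Jensen's fine-structural result that, in $L$, a non-weakly-compact $\lambda$ satisfies $\square(\lambda)$; and (3) invokes the known fact, due to Shelah, that a non-greatly-Mahlo $\lambda$ admits a nontrivial $C$-sequence with the extra coherence needed to produce many colours.

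Given $\vec C$, the case $\theta=\omega$ is immediate: $\rho_2:[\lambda]^2\to\omega$ is upper-regressive, and nontriviality guarantees, for every unbounded $B\s\lambda$, some $\eta<\lambda$ with $\rho_2[\{\eta\}\circledast B]$ cofinal in $\omega$, hence of ordertype $\omega$. This already settles (5) and the $\theta=\omega$ content of (1)--(4). To reach larger $\theta$, I would replace $\rho_2$ by an oscillation-type colouring built on the walk trace $\tr$: along each walk one extracts a $\theta$-valued datum (via a fixed surjection $\lambda\to\theta$ composed with $\rho_1$, or via a more refined reading of $\tr$), and the coherence of $\vec C$ ensures that, for any unbounded $B$, there is $\eta$ whose column $c(\eta,\cdot)$ assumes $\theta$-many distinct values of ordertype $\theta$ on $B$.

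Clause (6) requires a separate coding argument. Since $\lambda$ is not strongly inaccessible, $\theta:=\log_2(\lambda)$ is strictly less than $\lambda$ and satisfies $2^\theta\ge\lambda$, so I would fix an injection $\pi:\lambda\to{}^{\theta}2$ and define $c(\alpha,\beta)$ by extracting, at a level indexed by a suitable function of $\alpha$, a $\theta$-valued code read off $\pi(\beta)$, with padding to ensure $c(\alpha,\beta)<\beta$. Any unbounded $B\s\lambda$ forces $\pi[B]$ to branch across $\theta$-many possibilities, so a well-chosen $\eta$ witnesses that $c(\eta,\cdot)$ attains ordertype $\theta$ on $B$. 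The principal obstacle is clause (3): translating \emph{not greatly Mahlo} into a concrete structure on $\vec C$ that guarantees $\theta$-many colours on \emph{every} unbounded set calls for an iterative ``collecting counterexamples'' argument in the style of Section~\ref{sectionwarmingup}, in which a diagonal object distilled from the candidate counterexamples $\langle B_\eta\mid\eta<\lambda\rangle$ is shown to be incompatible with non-great-Mahloness, yielding the desired contradiction.
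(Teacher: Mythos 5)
This statement is cited in the paper as a Fact from \cite{paper53}; the present paper gives no proof, so there is no in-text argument to compare against, and your proposal must be judged on its own merits. While the overall instinct to route through walks on ordinals along a nontrivial $C$-sequence is sensible and likely close in spirit to the cited source, the sketch as written has genuine gaps in precisely the places where the result is hard.

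First, your uniform treatment handles $\theta=\omega$ via $\rho_2$, but everything beyond $\theta=\omega$ is a placeholder: ``replace $\rho_2$ by an oscillation-type colouring built on the walk trace'' and ``the coherence of $\vec C$ ensures'' are not arguments. Passing from a single nontrivial $C$-sequence to an upper-regressive $c:[\lambda]^2\to\theta$ with columns of ordertype $\theta$ on \emph{every} unbounded set, for $\theta$ possibly as large as $\lambda$, is exactly the content of the theorem being cited, and you have not indicated how the extra colours are produced or why they land in the correct ordertype. Second, your reduction of clause~(4) to clause~(2) via Jensen is not sound as stated: ``$\lambda$ is not weakly compact in $L$'' yields $\square(\lambda)$ \emph{inside $L$}, and it does not follow automatically that a $\square(\lambda)$-sequence (or even a nontrivial $C$-sequence) survives to $V$; one needs an absoluteness or persistence argument, and without it the clause remains unproved. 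Third, you concede that clause~(3) is incomplete, deferring the passage from ``not greatly Mahlo'' to a usable structure on the $C$-sequence to a future ``collecting counterexamples'' argument; this is the crux of that clause and cannot be left as a promissory note. Finally, in clause~(6) the coding sketch does not explain why a well-chosen $\eta$ attains values of ordertype exactly $\theta$ (as opposed to merely some subset of $\theta$) on every unbounded $B$, nor how upper-regressivity is guaranteed uniformly; the ``padding'' remark does not address this. In short, the proposal correctly identifies the walks-on-ordinals framework as relevant, but each of the clauses beyond the $\theta=\omega$ case is asserted rather than proved.
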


\begin{cor}\label{cor418} Suppose that $\lambda$ is a regular uncountable cardinal,
and $\vec C$ witnesses $\cg_\lambda(S,T,\sigma,\vec J)$ with $S\s E^\kappa_\lambda$.
\begin{enumerate}[(1)]
\item If $\lambda=\lambda^{<\lambda}=\theta$ is a successor cardinal, then $\lambda\in\Theta_1(\vec C,T,\sigma,\vec J)$;
\item If $\lambda=\theta^+$ and $\theta$ is regular, then $\theta\in\Theta_1(\vec C,T,\sigma,\vec J)$;
\item If $\lambda$ is not Mahlo and $\diamondsuit(\lambda)$ holds, then $\lambda\in\Theta_1(\vec C,T,\sigma,\vec J)$;
\item If $\lambda$ is not greatly Mahlo then $\theta\in\Theta_2(\vec C,T,\sigma,\vec J)$ for every cardinal $\theta<\lambda$;
\item If $\lambda$ is not strongly inaccessible, then $\log_2(\lambda)\in\Theta_2(\vec C,T,\sigma,\vec J)$;
\item If $\lambda$ is not weakly compact, then $\omega\in\Theta_2(\vec C,T,\sigma,\vec J)$.\qed
\end{enumerate}
\end{cor}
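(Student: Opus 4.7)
The plan is to reduce each of the six clauses to an application of Theorem~\ref{partitioningjkk}. Since $\vec C$ witnesses $\cg_\lambda(S,T,\sigma,\vec J)$, it is $\lambda$-bounded, so the hypothesis $\xi=\lambda$ of Theorem~\ref{partitioningjkk}(1) is automatic throughout. For clauses (1)--(3), which produce instances of $\Theta_1$, it then suffices to establish $\onto(J^\bd[\lambda],\theta)$ for the relevant $\theta$; for clauses (4)--(6), which produce instances of $\Theta_2$ with $\theta<\lambda$, Theorem~\ref{partitioningjkk}(3) reduces the work to $\ubd(J^\bd[\lambda],\theta)$. In each case I cite the appropriate clause of the preceding Fact on $\onto$ or of Fact~\ref{fact517}.

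For clause~(1), write $\lambda=\mu^+$. From $\lambda^{<\lambda}=\lambda$ I infer $|[\mu^+]^\mu|=(\mu^+)^\mu=\mu^+$, so any enumeration of $[\mu^+]^\mu$ witnesses $\stick(\lambda)$, and clause~(2) of the Fact on $\onto$ then yields $\onto(J^\bd[\lambda],\lambda)$. For clause~(2), $\lambda=\theta^+$ is a successor and hence not greatly Mahlo, so Fact~\ref{fact517}(3), with its $\theta$-parameter set to $\lambda$, yields $\ubd(J^\bd[\lambda],\lambda)$; clause~(5) of the Fact on $\onto$, invoked with the regular $\theta<\lambda$ of the present clause, then delivers $\onto(J^\bd[\lambda],\theta)$.

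Clause~(3) is the one nonautomatic step: I need a stationary $T^*\s\lambda$ with $\diamondsuit(T^*)$ that fails to reflect at regulars, for then clause~(3) of the Fact on $\onto$ supplies $\onto(J^\bd[\lambda],\lambda)$. My plan is to exploit $\lambda$ not being Mahlo: fix a club $C\s\lambda$ disjoint from the regular cardinals below $\lambda$, and set $T^*:=C\cap E^\lambda_\omega$, which is stationary in $\lambda$. For any regular uncountable $\mu<\lambda$, closedness of $C$ rules out $\sup(C\cap\mu)=\mu$ --- else $\mu\in C$, contradicting the choice of $C$ --- so $C\cap\mu$, and a fortiori $T^*\cap\mu$, is bounded in $\mu$. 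Since $T^*$ is stationary in $\lambda$, $\diamondsuit(\lambda)$ restricts to $\diamondsuit(T^*)$.

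Clauses~(4), (5), (6) are direct applications of Theorem~\ref{partitioningjkk}(3): clause~(4) cites Fact~\ref{fact517}(3); clause~(5) cites Fact~\ref{fact517}(6), after noting that $\lambda$ not strongly inaccessible gives some $\mu<\lambda$ with $2^\mu\ge\lambda$ and hence $\log_2(\lambda)\le\mu<\lambda$; and clause~(6) cites Fact~\ref{fact517}(5) with $\theta=\omega<\lambda$. The main obstacle in the whole proof is the nonreflection construction in clause~(3); every other step is a direct matching of hypotheses against the pre-existing catalog of $\onto$ and $\ubd$ instances.
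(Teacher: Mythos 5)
Your overall decomposition is exactly the intended one: reduce each clause to Theorem~\ref{partitioningjkk} (noting $\xi=\lambda$ is automatic from $\lambda$-boundedness), then match against the $\onto$/$\ubd$ facts. Clauses (1), (2), (4), (5), and (6) are handled correctly.

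Clause~(3) has a genuine gap. You set $T^*:=C\cap E^\lambda_\omega$ and then assert ``since $T^*$ is stationary in $\lambda$, $\diamondsuit(\lambda)$ restricts to $\diamondsuit(T^*)$.'' This restriction step is false for a general stationary $T^*$. If $\langle A_\delta\mid\delta<\lambda\rangle$ is a $\diamondsuit(\lambda)$-sequence and $T^*\s\lambda$ is merely stationary, the set $\{\delta\in T^*\mid A_\delta=A\cap\delta\}=\{\delta<\lambda\mid A_\delta=A\cap\delta\}\cap T^*$ is the intersection of two stationary sets and need not be stationary. Indeed, it is a classical consistency result that $\diamondsuit(\omega_1)$ can hold while $\diamondsuit(S)$ fails for some stationary co-stationary $S\s\omega_1$, so $\diamondsuit(\lambda)\Rightarrow\diamondsuit(T^*)$ is not a theorem. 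Your $T^* = C\cap E^\lambda_\omega$ is co-stationary (e.g.\ $E^\lambda_{\omega_1}\s\lambda\setminus T^*$), so the restriction cannot be taken for granted.

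The fix is immediate and in fact removes an unnecessary step: take $T^*:=C$ itself. A club is stationary; restricting a $\diamondsuit(\lambda)$-sequence to a club $C$ does give a $\diamondsuit(C)$-sequence, since $\{\delta\in C\mid A_\delta=A\cap\delta\}$ is the intersection of a stationary set with a club and hence is stationary; and your nonreflection argument (if $\mu<\lambda$ is regular uncountable then $\mu\notin C$ and $C$ is closed, so $C\cap\mu$ is bounded in $\mu$) applies to $C$ directly. Then clause~(3) of the $\onto$ fact gives $\onto(J^{\bd}[\lambda],\lambda)$ and Theorem~\ref{partitioningjkk}(1) concludes. The intersection with $E^\lambda_\omega$ served no purpose and is precisely what broke the restriction argument.
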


\begin{fact}[\cite{paper47}]\label{normalsampleresults}
Let $\lambda$ be a regular uncountable cardinal.
\begin{enumerate}[(1)]
\item if $\diamondsuit^*(\lambda)$ holds, then so does $\onto(\ns_\lambda, \lambda)$;
\item If $\lambda$ admits an amenable $C$-sequence, then $\onto(\ns_\lambda, \theta)$ holds for all regular $\theta<\lambda$;
\item $\ubd(\ns_\lambda, \omega)$ holds iff $\lambda$ is not ineffable.
\end{enumerate} 
\end{fact}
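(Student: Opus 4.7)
The plan is to construct each witness directly from the hypothesis, exploiting normality of $\ns_\lambda$ throughout. For part~(1), fix a $\diamondsuit^*(\lambda)$-sequence $\langle \mathcal{A}_\alpha \mid \alpha<\lambda\rangle$ with each $\mathcal{A}_\alpha$ enumerated as $\{A^i_\alpha \mid i<|\alpha|\}$, and a bijection $\pi:\lambda\to\lambda\times\lambda$ splitting each $\eta$ as $(\eta_0,\eta_1)$. Define $c:[\lambda]^2\to\lambda$ by letting $c(\eta,\beta)$ extract a value from $A^{\eta_0}_\beta$ keyed by $\eta_1$---for instance, the $\eta_1$-th element of $A^{\eta_0}_\beta$ in a prescribed enumeration (with a fallback value when this is undefined). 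Given a stationary $B\subseteq\lambda$, the $\diamondsuit^*$ property supplies a club $C$ with $B\cap\alpha\in\mathcal{A}_\alpha$ on $C$, and Fodor's lemma delivers a stationary $S\subseteq C$ and $i^*<\lambda$ with $A^{i^*}_\alpha=B\cap\alpha$ for all $\alpha\in S$. One then selects $\eta^*$ in the fiber $\pi^{-1}(\{i^*\}\times\lambda)$ so that $\{c(\eta^*,\beta)\mid\beta\in B\}$ exhausts $\lambda$; the subtle point is to arrange the encoding so that distinct $\beta\in S$ contribute distinct colors filling up all of $\lambda$ rather than merely a stationary subset.

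For part~(2), given an amenable $C$-sequence $\vec C=\langle C_\delta\mid\delta\in S\rangle$, I would define $c:[\lambda]^2\to\theta$ via a walks-on-ordinals style step count through $\vec C$, reduced modulo a fixed partition of $\lambda$ into $\theta$ pieces. Given a stationary $B$ and supposing no witnessing $\eta$ exists, one would construct a $\subseteq$-decreasing sequence $\langle D_i\mid i\le\theta^+\rangle$ of clubs in $\lambda$ so that each $D_{i+1}$ excludes the $i$-th color from $\{c(\eta,\beta)\mid \beta\in B\cap D_{i+1}\}$ for every $\eta$. Amenability of $\vec C$ against $D_{\theta^+}$ then yields a $\delta\in S$ with $\sup(D_{\theta^+}\cap\delta\setminus C_\delta)=\delta$, and a pigeonhole plus well-foundedness argument analogous to the proof of Lemma~\ref{lemma215} derives a contradiction, using regularity of $\theta<\lambda$ to stabilize the $\theta^+$-long descent.

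For part~(3), I would split along the equivalence. For the non-trivial direction (non-ineffability implies $\ubd(\ns_\lambda,\omega)$), fix a sequence $\langle A_\alpha\mid\alpha<\lambda\rangle$ with $A_\alpha\subseteq\alpha$ witnessing non-ineffability, and define an upper-regressive $c:[\lambda]^2\to\omega$ that measures, in a finitary way, a least discrepancy between $A_\beta\cap\eta$ and $A_\eta$ relative to a fixed enumeration; non-coherence of $\langle A_\alpha \mid \alpha \in B\rangle$ on every stationary $B$ then forces $\{c(\eta,\beta)\mid \beta\in B, \beta>\eta\}$ to have infinite ordertype for an appropriate $\eta$. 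Conversely, given $c$ witnessing $\ubd(\ns_\lambda,\omega)$, define $A_\alpha\subseteq\alpha$ encoding the map $\beta\mapsto c(\beta,\alpha)$ via a fixed coding, and show that stationary coherence of $\langle A_\alpha\mid\alpha\in S\rangle$ would force $\{c(\eta,\beta)\mid \beta\in S\}$ to have finite ordertype for every $\eta$, contradicting $\ubd$. The trickiest step, I expect, is the surjectivity requirement in part~(1)---ensuring the coloring is genuinely onto $\lambda$ rather than merely onto stationary-many colors---since the natural $\diamondsuit^*$-encoding tends to produce only one color per $\eta$, necessitating an extra $\lambda$-many-fold decoding that decouples ``which stationary set is being guessed'' from ``which target color is being hit''.
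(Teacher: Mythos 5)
This statement is a \emph{Fact} imported from \cite{paper47}; the present paper does not prove it, so there is no in-paper argument to compare your sketch against. Assessing the sketch on its own terms, part (1) contains a genuine gap that your own closing caveat already half-acknowledges but does not close. With the coding you propose, once you fix $\eta^*=\pi^{-1}(i^*,\eta_1)$ and restrict to the stationary $S\subseteq B$ on which $A^{i^*}_\beta=B\cap\beta$, the value $c(\eta^*,\beta)$ is ``the $\eta_1$-th element of $B\cap\beta$'', which for all sufficiently large $\beta\in S$ equals the $\eta_1$-th element of $B$ --- a \emph{constant}. So the $\eta^*$-row is eventually constant on $S$, not onto $\lambda$, and there is no $\eta^*$ in the $i^*$-fiber of the kind you claim to ``select''. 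Nor is this fixed by the natural remedy of replacing ``$\eta_1$-th element'' with $\otp(A^{\eta}_\beta)$: then $c(i^*,\beta)=\otp(B\cap\beta)$ on $S$, whose range is $\{\xi<\lambda\mid b_\xi\in S\}$ (where $b_\xi$ is the $\xi$-th element of $B$), a proper --- typically co-stationary --- subset of $\lambda$, since $S$ is only a stationary subset of $B$ handed to you by Fodor. The obstruction is structural, not a matter of a cleverer ``$\lambda$-many-fold decoding'': any quantity of the form $F(A\cap\beta)$ with $F$ monotone/continuous will, on a stationary $S$, hit a stationary set of colours rather than all of $\lambda$. A genuinely different idea is required here, and the sketch does not supply it.

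Parts (2) and (3) are too underspecified to count as proofs, and each has a concrete worry. In (2), your $\subseteq$-decreasing chain $\langle D_i\mid i\le\theta^+\rangle$ of clubs in $\lambda$ only has $D_{\theta^+}$ a club when $\theta^+<\lambda$; but the statement covers every regular $\theta<\lambda$, including $\theta$ equal to the cardinal predecessor of a successor $\lambda$, where $\theta^+=\lambda$ and the chain collapses. (Also, the stabilization step you borrow from Lemma~\ref{lemma215} rests there on a cardinality bound on $|C_\delta|$ or $\cf(\delta)$ that you have not secured.) In (3), the easy direction (ineffable $\Rightarrow$ failure of $\ubd(\ns_\lambda,\omega)$, via guessing the columns $c(\cdot,\alpha)\restriction\alpha$) is essentially right, but for the hard direction the ``least discrepancy between $A_\beta\cap\eta$ and $A_\eta$'' colouring is never pinned down, and it is exactly the passage from ``$\langle A_\alpha\rangle$ coheres on no stationary set'' to ``some row of $c$ has infinite ordertype on $B$'' where all the work lies; as stated this is an aspiration, not an argument.
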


\begin{cor}\label{cor420} Suppose that $\lambda$ is a regular uncountable cardinal,
and $\vec C$ witnesses $\cg_\lambda(S,T,\sigma,\vec J)$ with $S\s E^\kappa_\lambda$ and $\vec J$ is a sequence of normal ideals.
\begin{enumerate}[(1)]
\item If $\diamondsuit^*(\lambda)$ holds, then $\lambda\in\Theta_1(\vec C,T,\sigma,\vec J)$;
\item If $\lambda$ admits an amenable $C$-sequence, then $\lambda\in\Theta_1(\vec C,T,\sigma,\vec J)$ for all regular $\theta<\lambda$;
\item If $\lambda$ is not ineffable, then $\omega\in\Theta_2(\vec C,T,\sigma,\vec J)$.\qed
\end{enumerate}
\end{cor}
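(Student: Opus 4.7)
The plan is to deduce each clause by combining the appropriate item of Fact~\ref{normalsampleresults} with the corresponding item of Theorem~\ref{partitioningnormal}. The key observation that unlocks everything is that since $\vec C$ witnesses $\cg_\lambda(S,T,\sigma,\vec J)$ with $S\s E^\kappa_\lambda$, the $C$-sequence $\vec C$ is $\lambda$-bounded, i.e.\ the parameter $\xi$ in Theorem~\ref{partitioningnormal} may be taken to equal $\lambda$. Moreover, each $J_\delta$ is a normal $\lambda$-complete ideal on $\delta$ extending $J^{\bd}[\delta]$ (since $\vec J$ is a sequence of normal ideals, they are in particular $\cf(\delta)$-additive, i.e.\ $\lambda$-complete), so the hypotheses of Theorem~\ref{partitioningnormal} are fully met.

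For Clause~(1), Fact~\ref{normalsampleresults}(1) gives that $\diamondsuit^*(\lambda)$ implies $\onto(\ns_\lambda,\lambda)$; then Theorem~\ref{partitioningnormal}(1), applied with $\xi=\theta=\lambda$, yields $\lambda\in\Theta_1(\vec C,T,\sigma,\vec J)$.

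For Clause~(2), by Fact~\ref{normalsampleresults}(2), the existence of an amenable $C$-sequence on $\lambda$ gives $\onto(\ns_\lambda,\theta)$ for every regular $\theta<\lambda$. For any such $\theta$, Theorem~\ref{partitioningnormal}(1), applied with $\xi=\lambda$, then gives $\theta\in\Theta_1(\vec C,T,\sigma,\vec J)$ (correcting the evident typo in the statement). For Clause~(3), Fact~\ref{normalsampleresults}(3) asserts that $\ubd(\ns_\lambda,\omega)$ holds exactly when $\lambda$ is not ineffable, and then Theorem~\ref{partitioningnormal}(3), applied with $\theta=\omega<\lambda$, yields $\omega\in\Theta_2(\vec C,T,\sigma,\vec J)$.

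There is no real obstacle here; the proof is a one-line appeal in each case. The only subtlety worth flagging is that Clauses~(1) and (2) land in $\Theta_1$ (not merely $\Theta_2$) precisely because the $\lambda$-boundedness of $\vec C$ lets us invoke part~(1) of Theorem~\ref{partitioningnormal} rather than its weaker part~(2), whereas Clause~(3) must pass through part~(3) of that theorem and therefore only produces a witness in $\Theta_2$.
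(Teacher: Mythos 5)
Your proposal is correct and follows exactly the route the paper intends: the corollary is proved with a $\qed$ and no argument because each clause is a direct composition of the corresponding item of Fact~\ref{normalsampleresults} with the corresponding item of Theorem~\ref{partitioningnormal}, using $\xi=\lambda$ for clauses~(1) and (2) to land in $\Theta_1$. You also correctly flag the typo in clause~(2), which should read $\theta\in\Theta_1(\vec C,T,\sigma,\vec J)$.
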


In \cite{MR2444284},
the (weak) club-guessing principle $\mho$ was shown to give rise to a $C$-sequence $\vec C$ over $\omega_1$
for which the corresponding object $\mathcal T(\rho_0^{\vec C})$ is a special Aronszajn tree of pathological nature.
In the terminology developed in this paper, the key features of $\vec C$ sufficient for the construction are that 
$\vec C$ be a transversal for $\square^*_\omega$ and that $\omega\in\Theta_1(\vec C,\omega_1)$.
Arguably, the higher analog of this would assert the existence of a transversal $\vec C=\langle C_\delta\mid\delta<\lambda^+\rangle$ for $\square^*_{\lambda}$ such that $\log_2(\lambda)\in\Theta_1(\vec C,\lambda^+)$.
By the next corollary, if $\lambda$ is not strongly inaccessible (in particular, if $\lambda=\aleph_1$), then the existence of such  a $C$-sequence is in fact no stronger than $\square^*_{\lambda}$.

\begin{cor} Suppose that $\lambda$ is a regular uncountable cardinal, and $\theta=\log_2(\lambda)$.
\begin{enumerate}[(1)]
\item If $\square_{\lambda}^*$ holds and $\theta<\lambda$,
then  there exists a transversal $\vec C$ for $\square_\lambda^*$ such that $\theta\in \Theta_1(\vec C\restriction E^{\lambda^+}_{\lambda},E^{\lambda^+}_{\lambda})$;
\item If $\diamondsuit(\lambda)$ holds and $\lambda$ is not Mahlo, then  there exists a transversal $\vec C$ for $\square_\lambda^*$ such that
$\theta\in \Theta_1(\vec C\restriction E^{\lambda^+}_{\lambda},E^{\lambda^+}_\lambda)$;
\item If $\diamondsuit^*(\lambda)$ holds, then  there exists a transversal $\vec C$ for $\square_\lambda^*$ such that
$\theta\in \Theta_1(\vec C\restriction E^{\lambda^+}_{\lambda},\lambda^+)$.
\end{enumerate}
\end{cor}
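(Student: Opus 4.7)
The plan is to feed the output of Theorem~\ref{weaksquarethm}, applied with $S:=E^{\lambda^+}_\lambda$, into an appropriate partitioning result from Section~\ref{sectionpartitioning}, the choice depending on the clause. I would begin by assuming $\square^*_\lambda$ throughout (explicit in~(1), and necessary for the existence assertions in~(2) and~(3)); Theorem~\ref{weaksquarethm} would then provide a transversal $\vec C$ for $\square^*_\lambda$ whose restriction to $S$ witnesses $\cg_\lambda(S,E^{\lambda^+}_\lambda)$, reducing the task to inducing a partition of this guessing into $\theta=\log_2(\lambda)$ pieces.

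For Clause~(1), the hypothesis $\theta<\lambda$ forces $\lambda$ not to be strong limit, hence not strongly inaccessible, so Fact~\ref{fact517}(6) supplies $\ubd(J^\bd[\lambda],\theta)$. I would feed $\vec C$ and $T:=E^{\lambda^+}_\lambda$ into Theorem~\ref{812} to produce a $C$-sequence $\vec{C^\bullet}$, which by that theorem inherits weak coherence from $\vec C$, is $\lambda$-bounded on $S$, and satisfies $\theta\in\Theta_1(\vec{C^\bullet}\restriction S,T)$. A short ordertype calculation---outside $S$, each point of $C_\delta$ is replaced by a closed block of size less than $\theta$, so $\otp(C^\bullet_\delta)\le\theta\cdot\otp(C_\delta)\le\theta\cdot\lambda=\lambda$---then confirms that $\vec{C^\bullet}$ is globally $\lambda$-bounded, hence a transversal for $\square^*_\lambda$.

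For Clause~(2), Corollary~\ref{cor418}(3) would apply to $\vec C$ without modification: under $\diamondsuit(\lambda)$ with $\lambda$ not Mahlo it delivers $\lambda\in\Theta_1(\vec C\restriction S,E^{\lambda^+}_\lambda)$, and post-composing the witnessing $h:\lambda^+\to\lambda$ with any surjection $\pi:\lambda\to\theta$ yields $\theta\in\Theta_1(\vec C\restriction S,E^{\lambda^+}_\lambda)$, with $\vec C$ itself as the desired transversal. For Clause~(3), $\diamondsuit^*(\lambda)$ gives $\onto(\ns_\lambda,\lambda)$ via Fact~\ref{normalsampleresults}(1), and the natural route is via Corollary~\ref{cor420}(1) (equivalently Theorem~\ref{partitioningnormal}(1)). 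The snag is that this corollary requires the input $\cg$-witness to be calibrated against normal ideals on the points of $S$, whereas Theorem~\ref{weaksquarethm} supplies only the bounded-ideal version; so the crucial additional step is to improve the transversal to witness $\cg_\lambda(S,E^{\lambda^+}_\lambda,1,\langle\ns_\delta\mid\delta\in S\rangle)$ while retaining weak coherence and $\lambda$-boundedness. This move from $J^{\bd}$ to $\ns$ is the business of Section~\ref{movingsection} (cf.\ Theorem~\ref{THMA}); once in hand, Corollary~\ref{cor420}(1) would yield $\lambda\in\Theta_1$, monotonicity in $\theta$ would give $\log_2(\lambda)\in\Theta_1$, and enlarging $T$ from $E^{\lambda^+}_\lambda$ to $\lambda^+$ only weakens the membership condition, so the conclusion of~(3) would follow.

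The main obstacle lies in Clause~(3): bridging the gap from bounded-ideal to non-stationary-ideal guessing while preserving the transversal structure is the substantive content, and appears to depend essentially on the machinery developed in Section~\ref{movingsection}. Clauses~(1) and~(2) are, by contrast, essentially direct combinations of Theorem~\ref{weaksquarethm} with previously established results, requiring only the ordertype check in~(1) and a trivial monotonicity observation in~(2).
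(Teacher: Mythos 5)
Your overall decomposition (obtain a transversal-with-guessing, then partition) matches the paper's, and all three clauses check out, but the route in Clause~(1) genuinely diverges from the paper's and is worth flagging. The paper derives Clause~(1) by citing Corollary~\ref{cor418}(5); however, that item only delivers $\log_2(\lambda)\in\Theta_2(\ldots)$, not $\Theta_1$, since it rests on Fact~\ref{fact517}(6) which supplies $\ubd$ rather than $\onto$ (cf.\ Theorem~\ref{partitioningjkk}(3) vs.\ (1)). Your substitution of Theorem~\ref{812} is the right fix: it converts $\ubd(J^\bd[\lambda],\theta)$ into $\Theta_1$ by producing a fresh sequence $\vec{C^\bullet}$ that inherits weak coherence, and your ordertype check ($\otp(C^\bullet_\delta)\le\theta\cdot\otp(C_\delta)\le\theta\cdot\lambda=\lambda$, using that each inserted block $y_{\gamma,\epsilon,\tau}$ has ordertype $<\theta$) is exactly what is needed to confirm $\vec{C^\bullet}$ remains a transversal for $\square^*_\lambda$ globally, not merely $\lambda$-bounded on $S$. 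For Clauses~(2) and~(3) the paper does not in fact route through Theorem~\ref{weaksquarethm}: since $\diamondsuit(\lambda)$ (resp.\ $\diamondsuit^*(\lambda)$) implies $\lambda^{<\lambda}=\lambda$, \emph{every} $\lambda$-bounded $C$-sequence over $\lambda^+$ is automatically weakly coherent, so the paper simply takes the sequence supplied by Fact~\ref{shelahs21relativeb} (for~(2)) and Fact~\ref{shelahs21relativeb} followed by Corollary~\ref{thm42b} (for~(3)); this also means the concern you raise about \emph{retaining} weak coherence across the $J^\bd$-to-$\ns$ move in Clause~(3) is vacuous under $\diamondsuit^*(\lambda)$. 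Your identification of Corollary~\ref{thm42b}-type machinery in Section~\ref{movingsection} as the missing ingredient in Clause~(3) is correct in spirit, though note that Corollary~\ref{thm42b} is stated only for $T=\kappa$, so you obtain $\cg_\lambda(S,\lambda^+,1,\langle\ns_\delta\rangle)$ directly rather than passing through $T=E^{\lambda^+}_\lambda$ as you propose; since $T=\lambda^+$ is all that the conclusion of~(3) requires, this is harmless.
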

\begin{proof} (1) Suppose that $\theta<\lambda$ and $\square^*_\lambda$ holds.
Appeal to Theorem~\ref{weaksquarethm} to find a transversal $\vec C$ for $\square_\lambda^*$ such that $\vec C\restriction E^{\lambda^+}_{\lambda}$  witnesses $\cg_\lambda(E^{\lambda^+}_{\lambda},E^{\lambda^+}_{\lambda})$.
By Corollary~\ref{cor418}(5), $\theta\in \Theta_1(\vec C\restriction E^{\lambda^+}_{\lambda},E^{\lambda^+}_{\lambda})$.

(2) Appeal to Fact~\ref{shelahs21relativeb} 
to pick a $\lambda$-bounded $C$-sequence $\vec C=\langle C_\delta\mid\delta<\lambda^+\rangle$ 
such that $C\restriction E^{\lambda^+}_\lambda$ witnesses $\cg_\lambda(E^{\lambda^+}_\lambda,E^{\lambda^+}_\lambda)$.
Suppose that $\diamondsuit(\lambda)$ holds. 
In particular, $\lambda^{<\lambda}=\lambda$, so $\vec C$ is trivially weakly coherent.
In addition, since $\diamondsuit(\lambda)$ holds, Corollary~\ref{cor418}(3) implies that $\lambda\in\Theta_1(E^{\lambda^+}_\lambda,E^{\lambda^+}_\lambda)$.

(3) By Fact~\ref{shelahs21relativeb} together with Corollary~\ref{thm42b} below,
we may fix a $\lambda$-bounded $C$-sequence $\vec C=\langle C_\delta\mid\delta<\lambda^+\rangle$ such that
$\vec C\restriction E^{\lambda^+}_\lambda$ witnesses $\cg_\lambda(E^{\lambda^+}_{\lambda},\lambda^+,1,\langle\ns_\delta\mid\delta\in S\rangle)$.
Suppose that $\diamondsuit^*(\lambda)$ holds. 
In particular, $\vec C$ is trivially weakly coherent.
In addition, since $\diamondsuit^*(\lambda)$ holds, Corollary~\ref{cor420}(1) implies that $\lambda\in\Theta_1(\vec C\restriction E^{\lambda^+}_\lambda,\lambda^+)$.
\end{proof} 

We also record the following variation.
\begin{cor} Suppose that $\lambda$ is a regular uncountable cardinal that is not strongly inaccessible. 
If $\square_\lambda$ holds,
then it may be witnessed by a $C$-sequence $\vec C$ such that $\log_2(\lambda)\in \Theta_1(\vec C\restriction E^{\lambda^+}_{\lambda},\lambda^+)$.
\end{cor}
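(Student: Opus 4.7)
Under $\square_\lambda$, by the Fact preceding Theorem~\ref{thm310}, we may fix a coherent $\lambda$-bounded $C$-sequence $\vec e$ enjoying strong club-guessing for each $\theta\in\acc(\lambda)$. The first step is to adapt the iteration of Theorem~\ref{weaksquarethm} so as to produce, from this $\vec e$, a coherent $\lambda$-bounded $C$-sequence $\vec{e^\bullet}$ whose restriction to $E^{\lambda^+}_\lambda$ witnesses $\cg_\lambda(E^{\lambda^+}_\lambda,\lambda^+)$. The key point is that, by running the iteration starting from a fully $\sqw$-coherent sequence rather than merely a transversal for $\square^*_\lambda$, the auxiliary local clubs $e_\gamma$ inserted at each stage (indexed by $\gamma\in\nacc(e_\delta^{n+1})\cap E^{\lambda^+}_{<\lambda}$) cohere with $\vec e$ via $\sqw$-coherence, and a uniform application of the recipe across all $\delta<\lambda^+$ ensures that full $\sqw$-coherence is preserved; thus $\vec{e^\bullet}$ remains a $\square_\lambda$-sequence.

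Next, since $\lambda$ is regular uncountable but not strongly inaccessible, Fact~\ref{fact517}(6) furnishes $\ubd(J^\bd[\lambda],\log_2(\lambda))$, and we apply a coherence-preserving variant of Theorem~\ref{812} to $\vec{e^\bullet}$ with $S:=E^{\lambda^+}_\lambda$, $T:=\lambda^+$ and $\theta:=\log_2(\lambda)$. Theorem~\ref{812} as stated preserves only $\sqleft{\chi}$-coherence for $\chi\ge\theta$, which falls short of the full $\sqw$-coherence we need; to recover the latter, we replace the auxiliary sets $y_{\gamma,\epsilon,\tau}$---which in the original proof are closed subsets of $E^\kappa_{<\theta}$ that may introduce new accumulation points of small cofinality---by finite sets of successor ordinals lying in $(\gamma,\gamma+\theta)$, whose cardinality encodes the colour $\tau$ through a suitable surjection $h:\lambda\to\theta$ arranged so that every colour arises on each interval of length $\theta$. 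Since these finite sets contribute no new accumulation points, $\acc(C^\bullet_\delta)=\acc(e^\bullet_\delta)$ for every $\delta<\lambda^+$, and the full $\sqw$-coherence of $\vec{e^\bullet}$ transfers verbatim to $\vec C:=\vec{C^\bullet}$.

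The main obstacle is the first step: a careful verification that the iteration of Theorem~\ref{weaksquarethm}, performed on a $\square_\lambda$-sequence with the recipe applied uniformly to all $\delta<\lambda^+$ (rather than only $\delta\in E^{\lambda^+}_\lambda$), indeed preserves full $\sqw$-coherence at each stage $n<\omega$ and at the limit. This boils down to a coherence check at accumulation points $\bar\delta\in\acc(C^{n+1}_\delta)$: either $\bar\delta$ already lies in $\acc(e^{n+1}_\delta)$, in which case the induction applies, or $\bar\delta$ is a new accumulation point sitting inside one of the inserted local sequences $e_\gamma$, whence $\sqw$-coherence of $\vec e$ supplies $e_{\bar\delta}=e_\gamma\cap\bar\delta$, making the recipe's output at $\bar\delta$ match $C^{n+1}_\delta\cap\bar\delta$. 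The second step, by contrast, is more routine: the colouring and stabilisation arguments underlying Theorem~\ref{812} depend only on the cardinalities of the inserted auxiliary sets, not on their order-type profile, so they go through unchanged with the finite-$y$ modification.
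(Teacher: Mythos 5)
The paper's own proof is a one-line citation: Fact~\ref{widecg} applied with $(\xi,\kappa,\mu,S):=(\lambda,\lambda^+,2,E^{\lambda^+}_\lambda)$ directly produces a $\square_\lambda$-sequence $\vec C$ whose restriction to $E^{\lambda^+}_\lambda$ witnesses $\cg_\lambda(E^{\lambda^+}_\lambda,\lambda^+)$, after which Corollary~\ref{cor418}(5) is invoked. Your Step~1 re-derives by hand what Fact~\ref{widecg} already gives, and the verification you flag as ``the main obstacle'' is genuinely problematic: at a new accumulation point $\bar\delta$ lying inside an inserted tail of some $e_\gamma$, the set $C^{n+1}_{\bar\delta}$ is $\Phi_{D_{n+1}}(C^n_{\bar\delta})$ (thinned to $D_{n+1}$) together with its own inserted tails, whereas $C^{n+1}_\delta\cap\bar\delta$ contains the untouched tail of $e_{\bar\delta}=e_\gamma\cap\bar\delta$; there is no reason for these to agree. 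This is precisely why the paper routes through Fact~\ref{widecg} rather than through the iteration of Theorem~\ref{weaksquarethm}.

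The real gap is in Step~2. The colour-encoding mechanism of Lemma~\ref{amhocg} and Theorem~\ref{812} realises each of the $\theta$ colours through the identity $h(\epsilon+\otp(y_{\gamma,\epsilon,\tau})-1)=\tau$, which forces $\otp(y_{\gamma,\epsilon,\tau})$ to range over a set of size $\theta$. A bounded closed subset of $\nacc(\kappa)$ is necessarily finite, so once you demand the $y$'s be finite sets of successor ordinals, the ordertype increment at each block is a natural number, and $h(\epsilon+n-1)$, as $n$ ranges over $\omega$, takes at most $\aleph_0$ values. Since $\theta=\log_2(\lambda)$ can be uncountable (for instance, $\lambda=\aleph_2$ with $2^{\aleph_0}=\aleph_1$ forces $\log_2(\lambda)=\aleph_1$), your construction cannot hit all the required colours, and the claimed $\Theta_1$ conclusion fails. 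This is exactly the obstruction behind Remark~\ref{rmk86}, and it is the reason Theorem~\ref{812} is stated to preserve $\sqleft{\chi}$-coherence only for $\chi\ge\theta$, which falls short of $\sqw$-coherence once $\theta>\omega$. It is also worth noting that Corollary~\ref{cor418}(5), which the paper's proof cites, yields membership in $\Theta_2$ rather than $\Theta_1$; read literally, the paper's displayed argument establishes the $\Theta_2$ version of the conclusion, so unless the $\Theta_1$ in the statement is a slip, an argument different from both yours and the one displayed in the paper is required.
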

\begin{proof} By Fact~\ref{widecg}, using $(\xi,\kappa,\mu,S):=(\lambda,\lambda^+,2,E^{\lambda^+}_\lambda)$,
and Corollary~\ref{cor418}(5).
\end{proof}

\section{Increasing $\sigma$}\label{sectionsigma}
In this section we are interested in improving the quality of guessing by guessing many consecutive nonaccumulation points as in Question~\ref{shelahquestion}. As we shall see, guessing clubs relative to points of prescribed cofinality turns out be of great help for this purpose. 
The main result of this section reads as follows.
\begin{cor}\label{cor51} Suppose $\nu<\xi\le\kappa$ are infinite cardinals,
and that $S\s E^\kappa_{>\nu}$.
	
	For every (possibly finite) cardinal $\sigma<\nu$,
	if $\cg_\xi(S, E^\kappa_{\ge\sigma}\cap E^\kappa_{\le\nu},1,\vec J)$ holds, then so does $\cg_\xi(S, \kappa, \sigma,\vec J)$.
\end{cor}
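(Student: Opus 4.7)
The plan is to inflate $\vec C$ by inserting cofinal sequences below each guessed element of low cofinality. Let $\vec C=\langle C_\delta\mid\delta\in S\rangle$ be a witness to $\cg_\xi(S, E^\kappa_{\ge\sigma}\cap E^\kappa_{\le\nu},1,\vec J)$, and for each $\gamma<\kappa$ with $\cf(\gamma)\in[\sigma,\nu]$ fix a club $e_\gamma\s\gamma$ of ordertype $\cf(\gamma)$. For each $\delta\in S$, define
\[
C'_\delta:= C_\delta\cup\bigcup\{e_\gamma\cap(\sup(C_\delta\cap\gamma),\gamma)\mid \gamma\in\nacc(C_\delta)\cap E^\kappa_{\ge\sigma}\cap E^\kappa_{\le\nu}\},
\]
so that $\vec{C'}:=\langle C'_\delta\mid\delta\in S\rangle$ will be the proposed witness to $\cg_\xi(S,\kappa,\sigma,\vec J)$.

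First I would verify that $\vec{C'}$ is a $\xi$-bounded $C$-sequence. Each $C'_\delta$ is closed in $\delta$ (since each $e_\gamma$ is closed in $\gamma$ and $C_\delta$ is closed in $\delta$) and unbounded in $\delta$ (as $C_\delta\s C'_\delta$). For the ordertype bound, each insertion below a $\gamma\in\nacc(C_\delta)$ contributes at most $\otp(e_\gamma)+1\le\nu+1$ extra elements, so $\otp(C'_\delta)\le(\nu+1)\cdot\otp(C_\delta)$. Since $\xi$, as an infinite cardinal, is multiplicatively indecomposable and $\nu<\xi$, one checks $(\nu+1)\cdot\xi=\xi$, whence $\otp(C'_\delta)\le\xi$.

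For the guessing property, given a club $D\s\kappa$, pass to $\acc(D)$ so that every element of $D$ is a limit of $D$. By the hypothesis, there is $\delta\in S$ with $B:=\{\beta<\delta\mid \gamma_\beta\in D\cap E^\kappa_{\ge\sigma}\cap E^\kappa_{\le\nu}\}\in J_\delta^+$, where $\gamma_\beta$ denotes the unique element of $\suc_1(C_\delta\setminus\beta)$, which is always in $\nacc(C_\delta)$. For each $\beta\in B$, $\gamma:=\gamma_\beta$ is an accumulation point of $D$ with $\cf(\gamma)\in[\sigma,\nu]$, so $D\cap\gamma$ is unbounded in $\gamma$; the goal is to pick $\beta'(\beta)$ inside the inserted segment $e_\gamma\cap(\sup(C_\delta\cap\gamma),\gamma)$ so that the $\sigma$ elements of $C'_\delta$ strictly above $\beta'(\beta)$ all lie in $D$, and then show $\{\beta'(\beta)\mid\beta\in B\}\in J_\delta^+$ using the $\cf(\delta)$-additivity of $J_\delta$ together with $|\{\gamma_\beta\mid\beta\in B\}|\ge\cf(\delta)$.

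The hard part will be the existence of $\beta'(\beta)$: since each $e_\gamma$ is fixed before $D$ is seen, $\sigma$-many consecutive elements of $e_\gamma$ need not lie in $D$ for an arbitrary club $D$. This is overcome by exploiting the abundance of guessed $\gamma$'s; for a $J_\delta^+$-sized subset of $\{\gamma_\beta\mid\beta\in B\}$, the intersection of $e_\gamma$ with $D\cap\gamma$ produces the desired $\sigma$-consecutive block inside the inserted segment (by a club-intersection argument in $\gamma$ when $\cf(\gamma)>\omega$, with a combinatorial case analysis when $\cf(\gamma)=\omega$, which can only arise if $\sigma\le\omega$).
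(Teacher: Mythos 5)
Your construction of $\vec{C'}$ is close in spirit to the operator the paper uses, but as written it has a real gap exactly at the point you flag as ``the hard part'', and the sketch you give there does not repair it. The elements of $C'_\delta$ lying in the window $(\sup(C_\delta\cap\gamma),\gamma]$ are just a tail of $e_\gamma$ followed by $\gamma$, so ``$\sigma$ consecutive elements of $C'_\delta$ above $\beta'$'' means $\sigma$ consecutive \emph{successor} points of $e_\gamma$. Now, even when $\cf(\gamma)>\omega$ and $\gamma\in\acc(D)$, the set $e_\gamma\cap D$ being club in $\gamma$ does \emph{not} mean it contains any block of $\sigma$ consecutive successor points of $e_\gamma$: a club in $\kappa$ can be chosen to meet $e_\gamma$ only in accumulation points (or in points with mutually non-adjacent indices), for all the relevant $\gamma$ simultaneously. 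Having $J_\delta^+$-many guessed $\gamma$'s does not help, because the obstruction is local to each $\gamma$ and a single adversarial club $D$ defeats all $\gamma$'s at once; and when $\cf(\gamma)=\omega$ the situation is even worse, since $e_\gamma\cap D$ can be finite. So you cannot fix $e_\gamma$ in advance and hope that the arbitrary test club supplies the block.

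What the paper does instead, in each of the Theorems behind this corollary, is precisely to make the inserted segment depend on a club $D$ \emph{that it finds first} by the collecting-counterexamples iteration, using the drop operator $\Phi_D$ from Definition~\ref{dropdefn} (whose image is a subset of $D$ once $\gamma\in\acc(D)$). Concretely: one shows (by building a $\subseteq$-decreasing sequence $\langle D_i\rangle$ of clubs of length $\omega$, $\omega_1$, $\sigma^+$ or $\nu$ depending on the case, and stabilizing the dropped images $\Phi_{D_i}(e_\gamma)$) that there is a single club $D$ with the property that for every test club $E$ there are $J_\delta^+$-many $\beta$'s whose $\gamma_\beta$ lands in $E^\kappa_{\le\nu}$ with $\Phi_D(e_{\gamma_\beta})$ meeting $E\setminus\beta$ in more than $\sigma$ points. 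Only then does one construct $C'_\delta$, inserting $\Phi_D(e_\gamma)$, and --- for finite $\sigma$ --- additionally selecting a fixed position pattern $z\in[\nu]^{\sigma+1}$ (via one more pigeonhole) so that the surviving points become genuinely consecutive in $C'_\delta$. For $\sigma=\omega$ guessing into $E^\kappa_\omega$ no $z$-selection is needed because $\Phi_D(e_\gamma)$ then has ordertype $\omega$ and is entirely contained in $D$, so a tail of it lies in $E$. For uncountable $\sigma$ the paper replaces $e_\gamma$ by a $\nu$-sized initial segment $e^\epsilon_\gamma$ and uses a stabilization argument on these. Each of these ingredients (the iteratively chosen $D$, the drop operator, and either the $z$-selection, the $\omega$-bounded tail, or the bounded initial segment) is essential; none of them is present in your sketch, which fixes the inserted segment before ever looking at a club.
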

\begin{proof}
$\br$	If $\sigma$ is finite, then appeal to Theorem~\ref{thm42a} below.

$\br$	If $\sigma=\omega$ and $\cg_\xi(S, E^\kappa_{\sigma},1,\vec J)$ holds then appeal to Theorem~\ref{thma} below.

$\br$	If $\sigma=\omega$ and $\cg_\xi(S, E^\kappa_{\sigma},1,\vec J)$ fails,
then since $S\s E^\kappa_{>\nu}$, it is the case that $\cg_\xi(S, E^\kappa_{\bar\nu},1,\vec J)$ holds for some cardinal $\bar\nu$ with $\sigma<\bar\nu\le\nu$.
Now appeal to Theorem~\ref{thm45}(2) below with $T:=\kappa$.

$\br$	If $\sigma>\omega$, then appeal to Theorem~\ref{thm45} below with $T:=\kappa$.
\end{proof}

Another key result is Theorem~\ref{thm45} where a relative version of the main result is proved.

We commence with a result that pumps up $\sigma$ from $1$ to any prescribed positive integer
using a postprocessing$^*$ function (thereby, preserving coherence features).

\begin{thm}\label{thm42a}  Suppose $\sigma<\omega\le\nu<\kappa$ are cardinals.

Suppose that $\vec C=\langle C_\delta\mid\delta \in S\rangle$ witnesses $\cg_\xi(S, E^\kappa_{\le\nu},1,\vec J)$,
with $S\s E^\kappa_{>\nu}$.
Then there exists a postprocessing$^*$ function $\Phi:\mathcal K(\kappa)\rightarrow\mathcal K(\kappa)$
such that $\langle \Phi(C_\delta)\mid\delta \in S\rangle$ 
witnesses $\cg_\xi(S, \kappa, \sigma,\vec J)$.
\end{thm}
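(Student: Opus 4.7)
The plan is to construct $\Phi$ as a $(\sigma-1)$-fold iteration of a basic insertion operation. Fix, for each $\gamma\in E^\kappa_{\le\nu}$, a cofinal subset $f_\gamma\s\gamma$ of ordertype $\cf(\gamma)$, and define the basic operator
\[
\Phi_0(x):=x\cup\{\min(f_\gamma\setminus(\sup(x\cap\gamma)+1)):\gamma\in\nacc(x)\cap E^\kappa_{\le\nu}\},
\]
and then set $\Phi:=\Phi_0^{\sigma-1}$. Since $\sigma-1<\omega$, at most $\sigma-1$ isolated points are added in each gap of $x$ lying below a point of $E^\kappa_{\le\nu}$, hence no new accumulation points are created and $\acc(\Phi(x))\s\acc(x)$. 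The locality of $\Phi_0$ (the inserted point near $\gamma$ depends only on $\gamma$ and $\sup(x\cap\gamma)$) yields clauses~(iii) and~(iv) of the postprocessing$^*$ axioms; being a finite composition of postprocessing$^*$ functions, $\Phi$ is postprocessing$^*$ as well. Furthermore, $\xi$-boundedness is preserved, since $\xi$ is a cardinal and $\xi\cdot\sigma=\xi$.

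For the guessing property I would follow the ``collect counterexamples'' template illustrated in Lemmas~\ref{weaktotail}, \ref{tailomega}, and~\ref{lemma215}: argue by contradiction, assuming the conclusion fails for some club $D\s\kappa$. Using this failure, construct a $\s$-decreasing chain of clubs $D=D_0\supseteq D_1\supseteq\cdots\supseteq D_{\sigma-1}$ where each $D_{n+1}$ is chosen so that $\gamma\in D_{n+1}\cap E^\kappa_{\le\nu}$ controls the location of the $(n+1)$-th iterated insertion from $f_\gamma$, forcing it to lie in $D_n$. Then apply the hypothesis $\cg_\xi(S,E^\kappa_{\le\nu},1,\vec J)$ once to $D_{\sigma-1}$ to extract $\delta\in S$ together with a $J_\delta$-positive set of $\beta$'s whose $\gamma_\beta:=\min(C_\delta\setminus(\beta+1))$ lies in $D_{\sigma-1}\cap E^\kappa_{\le\nu}$; the recursive propagation should guarantee that the chain of inserted points below $\gamma_\beta$ lies entirely in $D_0=D$, contradicting the assumed failure.

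The main obstacle is the definition of each refinement $D_{n+1}$ so that it is genuinely a club in $\kappa$ and induces the desired propagation. For $\gamma$ of uncountable cofinality the standard trick works: $f_\gamma\cap D_n$ is a club in $\gamma$ whenever $\gamma\in\acc(D_n)$, so the first element of $f_\gamma$ above an appropriate $\beta$ lands in $D_n$. The delicate case is $\cf(\gamma)=\omega$, where $f_\gamma\cap D_n$ may well be bounded in $\gamma$; here I expect one must leverage the $\cf(\delta)$-additivity of $J_\delta$ together with the hypothesis $S\s E^\kappa_{>\nu}$ (so $\cf(\delta)>\nu\ge\omega$) to prune a $J_\delta$-small collection of bad $\beta$'s at each level, using the finiteness of $\sigma$ to ensure the iteration terminates. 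Verifying that this pruning keeps the witnessing set in $J_\delta^+$ throughout the $\sigma-1$ levels is where I expect the technical difficulty to concentrate.
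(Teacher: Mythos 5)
Your construction of $\Phi$ is not correct, and the gap is structural rather than a matter of missing details. The basic operator $\Phi_0$ inserts $\min(f_\gamma\setminus(\sup(x\cap\gamma)+1))$ from a \emph{fixed} set $f_\gamma$; once $f_\gamma$ and $\sup(x\cap\gamma)$ are known, the inserted point is completely determined and bears no relation to any target club $D$. Your ``propagation'' step — choosing $D_{n+1}$ so that $\gamma\in D_{n+1}\cap E^\kappa_{\le\nu}$ forces the $(n+1)$-th iterated insertion to lie in $D_n$ — therefore has no content: restricting $\gamma$ to a smaller club does not move the inserted point. Even in the ``easy'' uncountable-cofinality case you describe, $\min(f_\gamma\setminus(\beta+1))$ is not the same as $\min((f_\gamma\cap D_n)\setminus(\beta+1))$; the former has no reason to lie in $D_n$. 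So the flaw is not confined to $\cf(\gamma)=\omega$ — it is present for every $\gamma$. Relatedly, iterating a fixed insertion $\sigma-1$ times gives nothing new in a collect-counterexamples scheme, because the resulting $\Phi$ is a single fixed operator and the ``failure'' hypothesis gives you only one counterexample club per candidate operator. (A small further point: $\xi$ need not be a cardinal in this theorem, so ``$\xi\cdot\sigma=\xi$'' is unjustified; $\xi$-boundedness instead follows from $\otp(\Phi(x))\le\otp(x)$, which holds for any postprocessing function.)

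The paper's proof uses two ideas that your outline is missing. First, rather than a fixed $f_\gamma$, it uses the drop operator $\Phi_D$ applied to auxiliary $e_\gamma$, so that once $\gamma\in\acc(D)$, the entire set $\Phi_D(e_\gamma)$ lies inside $D$; the first claim then establishes a single club $D$ such that, for every $E$, $J_\delta$-positively many gaps of $C_\delta$ have $\Phi_D(e_\gamma)$ meeting $E$ in more than $\sigma$ points. That claim is proved by an $\omega$-step (not $(\sigma-1)$-step) $\s$-decreasing chain of clubs $D_i$ plus a stabilization argument, and it is precisely the \emph{finiteness} of $\sigma$ that makes the stabilization stage $i^*=\max\{j_n\mid n\le\sigma\}$ finite. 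Second, since $D$ is fixed before the target $E$ is given, one cannot simply take the first $\sigma$ points of $\Phi_D(e_\gamma)$; instead, for each $z\in[\nu]^{\sigma+1}$ the paper defines a candidate operator $\Phi^z$ that picks the elements of $\Phi_D(e_\gamma)$ at indices $z$, and a pigeonhole over $[\nu]^{\sigma+1}$ (using $\cf(\delta)>\nu=|[\nu]^{\sigma+1}|$ and $\cf(\delta)$-additivity of $J_\delta$) isolates a single $z$ that works. This two-phase ``find a good $D$, then select a good index pattern $z$'' decomposition is what lets the operator be fixed in advance while still adapting to every target club, and it has no analogue in your proposal.
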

\begin{proof}

		Fix an auxiliary $C$-sequence $\vec e = \langle e_\gamma \mid \gamma <\kappa\rangle$ such that $\otp(e_\gamma) = \cf(\gamma)$ for every $\gamma<\kappa$. In what follows we shall use the operator $\Phi_D$ from Definition~\ref{dropdefn}.

\begin{claim}\label{claima}
	There is a club $D \subseteq \kappa$ such that for every club $E \subseteq \kappa$ there is a $\delta \in S$ such that
	\[\{\alpha<\delta\mid \gamma:= \min(C_\delta\setminus(\alpha+1))\ \&\ \gamma\in E^\kappa_{\leq \nu}\ \&\ |\Phi_D(e_\gamma)\cap (E\setminus \alpha)| > \sigma\}\in J_\delta^+.\]
\end{claim}
	\begin{why}
		Suppose that the claim does not hold. In this case, for every club $D\subseteq \kappa$, there is a club $F^D \subseteq \kappa$ such that for every $\delta \in S$,
		\[\{\alpha<\delta\mid \gamma:= \min(C_\delta\setminus(\alpha+1))\ \&\ \gamma\in E^\kappa_{\leq \nu}\ \&\ |\Phi_D(e_\gamma)\cap (F^D\setminus \alpha)| > \sigma\}\in J_\delta.\]
		
		Let $\mu:= \aleph_0$ so that $\mu\le\nu<\kappa$.
		We construct now a $\subseteq$-decreasing sequence $\langle D_i \mid i\leq \mu \rangle$ of clubs in $\kappa$ as follows:
		\begin{enumerate}
			\item $D_0 := \kappa$;
			\item $D_{i+1} := D_i \cap F^{D_i}$;
			\item for $i\in\acc(\mu+1)$, $D_i := \bigcap_{i'< i} D_{i'}$.
		\end{enumerate}
		Since $\mu< \kappa$, all these are club subsets of $\kappa$. Finally, consider the club
		$$D^* :=\{\gamma<\kappa\mid \otp(D_\mu\cap\gamma)=\gamma>\nu\}.$$
		
		As $\vec C=\langle C_\delta\mid\delta \in S\rangle$ witnesses $\cg_\xi(S, E^\kappa_{\le\nu},1,\vec J)$, let us pick $\delta \in S $ such that
		\[A:=\{\alpha< \delta \mid \min(C_\delta\setminus(\alpha+1))\in  D^* \cap E^\kappa_{\leq \nu}\}\]
		is in $J_\delta^+$.
		
		For every $i < \mu$, by the choice of $F^{D_i}$, the following set
		\[A_i:= \{\alpha<\delta\mid \gamma:= \min(C_\delta\setminus(\alpha+1))\ \&\ \gamma\in E^\kappa_{\leq \nu}\ \&\ |\Phi_{D_i}(e_\gamma)\cap (F^{D_i}\setminus \alpha)| >\sigma\}\]
		is in $J_\delta$.
		As $J_\delta$ is $\cf(\delta)$-additive, and $\cf(\delta)>\nu$, we may now fix $\alpha\in A\setminus\bigcup_{i<\mu}A_i$.
		Set $\gamma:=\min(C_\delta\setminus(\alpha+1))$, so that $\gamma\in D^*\cap E^\kappa_{\leq \nu}$. Since $\gamma \in D^* \subseteq \acc(D_{i})$ for every $i \leq \mu$, we have that $\Phi_{D_{i}}(e_\gamma)  \s D_i$. 
		
		As $\gamma\in D^*$, $\otp(D^*\cap\gamma)=\gamma>\nu\ge\otp(e_\gamma)$, so that $(D^*\cap\gamma)\setminus e_\gamma$ is cofinal in $\gamma$.
		Recursively construct $\langle (\delta^n,\beta^n) \mid n\le \sigma\rangle$ by letting 
		\begin{enumerate}
			\item $\delta^0:=\min( D^*\setminus (e_\gamma\cup\alpha))$, and 
			\item $\beta^0:=\min(e_\gamma\setminus\delta^0)$, and
			\item $\delta^{n+1}:=\min(D^*\setminus (e_\gamma\cup\beta^n))$, and \item $\beta^{n+1}:=\min(e_\gamma\setminus\delta^{n+1})$.
		\end{enumerate}
		Evidently, $\alpha\le\delta^n<\beta^n<\delta^{n+1}$.
		
		For every $n\le\sigma$, denote $\beta^n_i := \sup(\beta^n \cap D_i)$, and fix a large enough $j_n < \omega$ such that $\beta^n_i=\beta^n_{j_n}$ for every integer $i\ge j_n$.
		Set $i^*:=\max\{j_n\mid n\le \sigma\}$ which is finite as $\sigma$ is finite.
		Altogether, for every $n<\sigma$,
		$$\alpha\le\delta^n\le \beta^n_{i^*}=\beta^n_{i^*+1}\le\beta^n<\delta^{n+1}.$$
		
		It thus follows that $\{\beta^n_{i^*}\mid n\le\sigma\}$ consists of $\sigma+1$ many distinct elements of $\Phi_{D_{i^*}}(e_\gamma)\cap (D_{i^*+1}\setminus\alpha)$.
		But $D_{i^*+1}$ is a subset of $F^{D_{i^*}}$ so  $|\Phi_{D_{i^*}}(e_\gamma)\cap (F^{D_{i^*}}\setminus\alpha)| > \sigma$,
		and since $\gamma=\min(C_\delta\setminus(\alpha+1))$, we have contradicted the fact that $\alpha\not\in A_{i^*}$.
	\end{why}
	
	Let $D\subseteq \kappa$ be a club as given by the preceding claim.
	For all $\gamma\in E^\kappa_{\le\nu}$ and $z\in[\nu]^{<\omega}$, 
	we define a finite subset of $\gamma$:
	$$z_\gamma:=\{ \eta\in \Phi_D(e_\gamma)\mid \otp(\Phi_D(e_\gamma)\cap\eta)\in z\}.$$
	Now, for every $z\in[\nu]^{<\omega}$, define a postprocessing$^*$ function $\Phi^z:\mathcal K(\kappa)\rightarrow\mathcal K(\kappa)$ via:	
	$$\Phi^z(x):=x\cup \{ z_\gamma \setminus \sup(x\cap \gamma) \mid \gamma \in \nacc(x) \cap E^\kappa_{\le\nu}\}.$$

	\begin{claim} There exists $z\in[\nu]^{\sigma+1}$ such that
	$\vec{C^z}:=\langle \Phi^z(C_\delta)\mid\delta \in S\rangle$
witnesses $\cg_\xi(S, \kappa, \sigma,\vec J)$.
	\end{claim}
	\begin{why} 
	Suppose not. For each $z\in[\nu]^{\sigma+1}$ fix a counterexample $E_z$. Set $E:=\bigcap\{ E_z\mid z\in[\nu]^{\sigma+1}\}$.
Recalling the choice of $D$, let us now fix $\delta\in S$ for which 
		$$A:=\{\alpha<\delta\mid \gamma:=\min(C_\delta\setminus(\alpha+1))\ \&\  \gamma \in E^\kappa_{\le\nu}\ \&\ |\Phi_D(e_\gamma)\cap (E\setminus\alpha)|>\sigma\}$$
		is in $J_\delta^+$. For every $\alpha\in A$, let $\gamma_\alpha:=\min(C_\delta\setminus(\alpha+1))$ and fix $z_\alpha\in[\nu]^{\sigma+1}$ such that $e_{\gamma_\alpha}^{z_\alpha}\s E\setminus\alpha$.
		As $J_\delta$ is $\cf(\delta)$-additive and $\cf(\delta)>\nu=|[\nu]^{\sigma+1}|$,
		it follows that there exists some $z\in[\nu]^{\sigma+1}$ for which $\{\alpha\in A\mid z_\alpha=z\}$ is in $J_\delta^+$.
		As $E\s E_z$, this is a contradiction. 
	\end{why}
	Let $z$ be given by the preceding claim. Then $\Phi^z$ is as sought.
\end{proof}

\begin{cor}  Suppose $\nu<\lambda<\kappa$ are infinite regular cardinals, and $S\s E^\kappa_\lambda$.

If $\cg_\lambda(S, E^\kappa_{\nu})$ holds,
then there exists a $\lambda$-bounded $C$-sequence $\vec C=\langle C_\delta\mid\delta\in S\rangle$
satisfying the following. For every club $D\s\kappa$, for every $n<\omega$,
there exists a $\delta\in S$ such that
$\sup\{ \beta<\delta\mid \suc_n(C_\delta\setminus\beta)\s D\}=\delta$.\qed
\end{cor}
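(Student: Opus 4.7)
The plan is to apply Corollary~\ref{cor51} with $\sigma:=\omega$, thereby procuring a single $\lambda$-bounded $C$-sequence $\vec C$ that witnesses the stronger principle $\cg_\lambda(S,\kappa,\omega)$, and then to invoke monotonicity in $\sigma$ to read off each finite $n$ instance.

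To verify the hypotheses, note $\nu<\lambda<\kappa$ are infinite regular cardinals with $S\s E^\kappa_\lambda\s E^\kappa_{>\nu}$, and by convention $\vec J=\langle J^\bd[\delta]\mid\delta\in S\rangle$. When $\nu>\omega$, the parameter $\sigma:=\omega$ satisfies $\sigma<\nu$, and by monotonicity in $T$ (Proposition~\ref{monotonicty}) together with the inclusion $E^\kappa_\nu\s E^\kappa_{\ge\omega}\cap E^\kappa_{\le\nu}$, our hypothesis $\cg_\lambda(S,E^\kappa_\nu)$ upgrades to $\cg_\lambda(S,E^\kappa_{\ge\omega}\cap E^\kappa_{\le\nu},1,\vec J)$, so Corollary~\ref{cor51} delivers a $\lambda$-bounded $\vec C$ witnessing $\cg_\lambda(S,\kappa,\omega,\vec J)$. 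When $\nu=\omega$, our hypothesis is already $\cg_\lambda(S,E^\kappa_\omega,1,\vec J)$, which is the input of Theorem~\ref{thma} below (the theorem invoked in the $\sigma=\omega$ branch of Corollary~\ref{cor51}'s proof), so the same conclusion follows.

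In either case, by monotonicity in $\sigma$ (Proposition~\ref{monotonicty}), since $\omega\ge n$ for every $n<\omega$, the same $\vec C$ also witnesses $\cg_\lambda(S,\kappa,n,\vec J)$. Unpacking: for every club $D\s\kappa$ and every $n<\omega$, there exists $\delta\in S$ such that $\{\beta<\delta\mid\suc_n(C_\delta\setminus\beta)\s D\}\in(J^\bd[\delta])^+$, i.e., the set is unbounded in $\delta$, which is precisely the assertion $\sup\{\beta<\delta\mid\suc_n(C_\delta\setminus\beta)\s D\}=\delta$. No significant obstacle arises beyond the routine case split on whether $\nu$ is countable or uncountable.
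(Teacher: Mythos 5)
Your proof is correct, and your route --- establishing $\cg_\lambda(S,\kappa,\omega)$ (via Corollary~\ref{cor51} when $\nu>\omega$, via Theorem~\ref{thma} when $\nu=\omega$), then descending to each finite $\sigma=n$ by Proposition~\ref{monotonicty} --- is the natural one. It in fact yields slightly more than stated: a single $\delta\in S$ witnesses the conclusion for all $n<\omega$ simultaneously, since $\suc_n(C_\delta\setminus\beta)\s\suc_\omega(C_\delta\setminus\beta)$. Your case split on $\nu=\omega$ is necessary, since Corollary~\ref{cor51} requires $\sigma<\nu$. It is also worth noting that passing through the $\sigma=\omega$ principle is essential rather than a convenience: applying Theorem~\ref{thm42a} once per $n$ (the theorem that immediately precedes this corollary) would produce a potentially different $C$-sequence $\vec{C}^{(n)}$ for each $n$, whereas the statement asks for a single $\vec C$ uniform in $n$.
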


\begin{thm}\label{thma} Suppose $\xi\le\kappa$ are uncountable cardinals.

For every stationary $S\s E^\kappa_{>\omega}$,
		$\cg_\xi(S, E^\kappa_\omega,1,\vec J)$ implies $\cg_\xi(S, \kappa, \omega,\vec J)$;
\end{thm}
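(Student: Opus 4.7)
My plan adapts the postprocessing strategy of Theorem~\ref{thm42a} to handle $\sigma=\omega$, exploiting the fact that each guessed point $\gamma\in E^\kappa_\omega$ comes equipped with a canonical $\omega$-ladder, and that each $J_\delta$ is $\aleph_1$-complete since $\cf(\delta)>\omega$. Fix an auxiliary $C$-sequence $\vec e=\langle e_\gamma\mid \gamma<\kappa\rangle$ with $\otp(e_\gamma)=\cf(\gamma)$, and recall the operator $\Phi_D$ from Definition~\ref{dropdefn}. My first step is to establish the following claim: there exists a club $D\s\kappa$ (which I may assume contains $0$) such that, for every club $E\s\kappa$, there is $\delta\in S$ with
\[
\bigl\{\alpha<\delta\mid \gamma:=\min(C_\delta\setminus(\alpha+1))\in E^\kappa_\omega\cap\acc^+(D)\ \&\ \Phi_D(e_\gamma)\s E\bigr\}\in J_\delta^+.
\]

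To prove the claim I argue by contradiction: for each club $D$ containing $0$, fix a counterexample club $F^D$ (also containing $0$), and iterate for $\omega$ steps to form a $\s$-decreasing sequence $\langle D_i\mid i\le\omega\rangle$ with $D_{i+1}=D_i\cap F^{D_i}$. Applying the hypothesis $\cg_\xi(S,E^\kappa_\omega,1,\vec J)$ to the club $\acc(D_\omega)$ produces $\delta\in S$ and a set $A\in J_\delta^+$ of $\alpha$'s with $\gamma_\alpha:=\min(C_\delta\setminus(\alpha+1))\in\acc(D_\omega)\cap E^\kappa_\omega$; in particular $\gamma_\alpha\in\acc^+(D_i)$ for every $i\le\omega$. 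For each $i<\omega$, the set $B_i:=\{\alpha\in A\mid \Phi_{D_i}(e_{\gamma_\alpha})\s F^{D_i}\}$ lies in $J_\delta$ by the counterexample property; by $\aleph_1$-completeness of $J_\delta$ I pick $\alpha\in A\setminus\bigcup_{i<\omega}B_i$. Setting $\gamma:=\gamma_\alpha$, for each $i<\omega$ I obtain some $\eta_i\in e_\gamma$ with $\sup(D_i\cap\eta_i)\notin F^{D_i}$. Since $|e_\gamma|=\omega$, pigeonhole produces a single $\eta\in e_\gamma$ with $I:=\{i<\omega\mid \eta_i=\eta\}$ infinite, and the weakly decreasing sequence $\langle \sup(D_i\cap\eta)\mid i<\omega\rangle$ stabilizes at some $v$ by a finite stage $i_\eta$. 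Closedness of $D_{i+1}$ then gives $v=\sup(D_{i+1}\cap\eta)\in D_{i+1}\s F^{D_i}$ for every $i\ge i_\eta$; picking $i\in I$ with $i\ge i_\eta$ yields the contradiction $\sup(D_i\cap\eta)=v\in F^{D_i}$.

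Once the claim is established, I fix such a $D$ and define
\[
C'_\delta:=C_\delta\cup\bigcup\bigl\{\Phi_D(e_\gamma)\setminus\sup(C_\delta\cap\gamma)\mid \gamma\in C_\delta\cap E^\kappa_\omega\cap\acc^+(D)\bigr\}
\]
for each $\delta\in S$. A short calculation gives $\otp(C'_\delta)\le (\omega+1)\cdot\otp(C_\delta)\le\xi$, using that $\xi$ is an uncountable cardinal. For the guessing property, given a challenge club $E$, the claim furnishes $\delta\in S$ and $J_\delta^+$-many $\alpha$'s with $\gamma_\alpha\in E^\kappa_\omega\cap\acc^+(D)$ and $\Phi_D(e_\gamma)\s E$. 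For any such $\alpha$, $C'_\delta\cap(\alpha,\gamma)=\Phi_D(e_\gamma)\cap(\alpha,\gamma)$ is an order type $\omega$ cofinal subset of $\gamma$ contained in $E$, and a brief case split on whether $\alpha\in C'_\delta$ shows that $\suc_\omega(C'_\delta\setminus\alpha)$ consists entirely of points of $\Phi_D(e_\gamma)\cap(\alpha,\gamma)$, hence lies in $E$. The main obstacle is the pigeonhole and stabilization step within the $\omega$-long iteration: unlike Theorem~\ref{thm42a}, where finiteness of $\sigma$ lets one finite stage dominate all relevant stabilization indices, here both the ladder $e_\gamma$ and the iteration have length $\omega$, and the resolution is to first isolate one $\eta$ chosen infinitely often and only afterwards invoke its (necessarily finite) stabilization index.
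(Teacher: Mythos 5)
Your overall strategy is the paper's: fix an auxiliary ladder system $\vec e$, establish a claim producing a single club $D$ so that for every club $E$ some $\delta\in S$ has $J_\delta^+$-many $\alpha$'s with $\gamma_\alpha\in E^\kappa_\omega$ and $\Phi_D(e_{\gamma_\alpha})\s E$, and then interleave the dropped ladders into $C_\delta$. The final construction of $C'_\delta$, the ordertype bound, and the verification that $\suc_\omega(C'_\delta\setminus\alpha)$ lands in $E$ are all sound. The gap is in the claim's proof, at the pigeonhole step.

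Having iterated only $\omega$ times, you extract for each $i<\omega$ some $\eta_i\in e_\gamma$ with $\sup(D_i\cap\eta_i)\notin F^{D_i}$ and then assert that pigeonhole yields an $\eta$ chosen infinitely often. But $e_\gamma$ has ordertype $\omega$: the map $i\mapsto\eta_i$ is from $\omega$ into a countably infinite set, so it can be injective, and no $\eta$ need recur. The obstruction is genuine rather than cosmetic: for each $\eta\in e_\gamma$, let $j(\eta)<\omega$ be the least index past which $\sup(D_i\cap\eta)$ is constant; these values range over a countably infinite domain and can be unbounded in $\omega$, so at every finite stage $i$ there can be a fresh, not-yet-stabilized $\eta$, and $\eta_i$ can always be such a fresh witness. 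The paper avoids this by running the recursion for $\omega_1$ steps: then $i^*:=\sup_{\eta\in e_\gamma}j(\eta)<\omega_1$ by regularity of $\omega_1$, and $\Phi_{D_{i^*}}(e_\gamma)=\Phi_{D_{i^*+1}}(e_\gamma)\s D_{i^*+1}\s F^{D_{i^*}}$, contradicting $\alpha\notin A_{i^*}$. You evidently shortened the iteration so that $\aleph_1$-completeness of $J_\delta$ (from $\cf(\delta)>\omega$) would suffice to choose $\alpha$ avoiding all the $B_i$'s; that concern about additivity is legitimate and the $\omega_1$-long selection step does ask for more, but the $\omega$-long version of the argument simply does not close, and some device other than pigeonhole on $e_\gamma$ is needed.
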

\begin{proof}
Suppose that $\vec C=\langle C_\delta\mid\delta \in S\rangle$ witnesses $\cg_\xi(S, E^\kappa_\omega,1,\vec J)$,
with $S\s E^\kappa_{>\omega}$. In particular, $\kappa\ge\aleph_2$.
	Fix an auxiliary $C$-sequence $\vec e = \langle e_\gamma \mid \gamma <\kappa\rangle$ such that $\otp(e_\gamma) = \cf(\gamma)$ for every $\gamma<\kappa$. In what follows we shall use the operator $\Phi_D$ from Definition~\ref{dropdefn}.
	
	\begin{claim}\label{claima}
	There is a club $D \subseteq \kappa$ such that for every club $E \subseteq \kappa$ there is a $\delta \in S$ such that
	\[\{\alpha<\delta\mid \gamma:= \min(C_\delta\setminus(\alpha+1))\ \&\ \gamma\in E^\kappa_\omega\ \&\ \Phi_D(e_\gamma)\s E\}\in J_\delta^+.\]
\end{claim}
	\begin{why}
	Suppose that the claim does not hold. In this case, for every club $D\subseteq \kappa$, there is a club $F^D \subseteq \kappa$ such that for every $\delta \in S$,
	\[\{\alpha<\delta\mid \gamma:= \min(C_\delta\setminus(\alpha+1))\ \&\ \gamma\in E^\kappa_\omega\ \&\ \Phi_D(e_\gamma)\s F^D\}\in J_\delta.\]
	
	Let $\mu:=\omega_1$ so that $\mu<\kappa$.
	We construct now a $\subseteq$-decreasing sequence $\langle D_i \mid i\leq \mu \rangle$ of clubs in $\kappa$ as follows:
	\begin{enumerate}
		\item $D_0 := \kappa$;
		\item $D_{i+1} := D_i \cap F^{D_i}$;
		\item for $i\in\acc(\mu+1)$, $D_i := \bigcap_{i'< i} D_{i'}$.
	\end{enumerate}
	Since $\mu< \kappa$, all these are club subsets of $\kappa$. Finally, consider the club
	$$D^* :=\{\gamma<\kappa\mid \otp(D_\mu\cap\gamma)=\gamma\}.$$
	
	As $\vec C=\langle C_\delta\mid\delta \in S\rangle$ witnesses $\cg_\xi(S, E^\kappa_\omega,1,\vec J)$, let us pick $\delta \in S$ such that
	\[A:=\{\alpha< \delta \mid \min(C_\delta\setminus(\alpha+1))\in  D^* \cap E^\kappa_\omega\}\]
	is in $J_\delta^+$.
	
	For every $i < \mu$, by the choice of $F^{D_i}$, the following set is in $J_\delta$:
	\[A_i:= \{\alpha<\delta\mid \gamma:= \min(C_\delta\setminus(\alpha+1))\ \&\ \gamma\in E^\kappa_\omega\ \&\ \Phi_{D_i}(e_\gamma)\s F^{D_i}\}.\]
	As $J_\delta$ is $\cf(\delta)$-additive, and $\cf(\delta)>\omega$, we may now fix $\alpha\in A\setminus\bigcup_{i<\mu}A_i$.
	Set $\gamma:=\min(C_\delta\setminus(\alpha+1))$, so that $\gamma\in D^*\cap E^\kappa_\omega$. Since $\gamma \in D^* \subseteq \acc(D_{i})$ for every $i \leq \mu$, we have that $\Phi_{D_{i}}(e_\gamma) \s D_i$ for every $i< \mu$. 
	
	For $\beta \in e_\gamma$, let $\beta_i := \sup(\beta \cap D_i)$. Since $\langle D_i \mid i\leq \mu \rangle$ is $\subseteq$-decreasing, $\langle\beta_i\mid i < \mu\rangle$ is a non-increasing sequence,
	and hence it must stabilize beyond some ordinal $j(\beta) < \mu$. That is, for every $i \geq j(\beta)$ we have $\beta_i = \beta_{j(\beta)}$. Let $i^* := \sup _{\beta \in e_\gamma}j(\beta)$,
	and note that $i^*<\mu$, since $\mu=\omega_1=|e_\gamma|^+$. In particular, this implies that \[\Phi_{D_{i^*}}(e_\gamma) = \Phi_{D_{i^*+1}}(e_\gamma) \s D_{i^*+1} \s D_{i^*} \cap F^{D_{i^*}}.\]
	On the other hand, since $\alpha\notin A_{i^*}$, we have that $\Phi_{D_{i^*}}(e_\gamma) \nsubseteq F^{D_{i^*}}$. This is a contradiction.
\end{why}

	Let $D\subseteq \kappa$ be a club as given by the preceding claim.
Consider a new $C$-sequence $\vec C^\bullet=\langle C^\bullet_\delta\mid\delta \in S\rangle$ defined via:
\[C_\delta^\bullet := C_\delta \cup \{\Phi_D(e_\gamma) \setminus \sup(C_\delta \cap \gamma) \mid \gamma \in \nacc(C_\delta) \cap E^\kappa_\omega\}.\]

Note that for any $\delta \in S$, since $\otp(C_\delta) \leq \xi$ and for each $\gamma \in E^\kappa_\omega$ we have $\otp(\Phi_D(e_\gamma)) \leq \otp(e) \leq \omega$, we have that the ordertype of every initial segment of $C^\bullet_\delta$ is strictly less than $\xi$, and hence $\otp(C^\bullet_\delta) \leq \xi$.

Now, if $E \subseteq \kappa$ is a club, then by the choice of $D$ there is some $\delta \in S$ such that
\[A:=\{\alpha<\delta\mid \gamma:=\min(C_\delta\setminus(\alpha+1))\ \&\ \gamma \in E^\kappa_\omega\ \&\ \Phi_D(e_\gamma) \subseteq E\}\]
is in $J_\delta^+$. For every $\alpha\in A$, if we let $\gamma_\alpha:=\min(C^\bullet_\delta\setminus(\alpha+1))$,
then $\gamma_\alpha\in E^\kappa_\omega$ and $C^\bullet_\delta \cap[\alpha,\gamma_\alpha)$ is equal to $\Phi_D(e_{\gamma_\alpha}) \setminus \alpha$,
which is an end segment of $\Phi_D(e_{\gamma_\alpha})$.
Since any end segment of $\Phi_D(e_{\gamma_\alpha})$ has ordertype $\omega$ as well, it follows that for any $\alpha\in A$,
there is an end segment of $C^\bullet_\delta \cap \gamma_\alpha$ of ordertype $\omega$ which is contained in $E$.
Since this interval of ordertype $\omega$ which is contained in $E$ also contains $\omega$ successive non-accumulation points of $C^\bullet_\delta \cap \gamma_\alpha$,
we infer that
$$B:=\{\beta < \delta \mid \suc_\omega(C_\delta \setminus \beta) \subseteq E\}$$
covers $A$. In particular, $B\in J_\delta^+$.
\end{proof}

\begin{thm}\label{thm45}
Let $\sigma< \nu < \xi \le \kappa$ be infinite cardinals. Suppose that $S\s E^\kappa_{>\nu}$ and $T\s\kappa$ are stationary sets.
\begin{enumerate}[(1)]
\item If $\cg_\xi(S, E^\kappa_\sigma\cap \Tr(T),1,\vec J)$ holds, then so does $\cg_\xi(S, T, \sigma,\vec J)$;
\item If $\cg_\xi(S, E^\kappa_\nu\cap \Tr(T),1,\vec J)$ holds, then so does $\cg_\xi(S, T, \sigma,\vec J)$.
\end{enumerate}
\end{thm}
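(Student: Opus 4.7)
My plan is to adapt the proof of Theorem~\ref{thma}, substituting the operator $\Phi_D$ (from Definition~\ref{dropdefn}) with the operator $\Phi^{D\cap T}$ (from Definition~\ref{firstoperatordefn}), so that the newly inserted nonaccumulation points lie in $D\cap T$ and not merely in $D$. Let $\theta$ abbreviate $\sigma$ in case~(1) and $\nu$ in case~(2), so that in either case the hypothesis reads $\cg_\xi(S,E^\kappa_\theta\cap\Tr(T),1,\vec J)$; for each $\gamma\in E^\kappa_\theta\cap\Tr(T)$, I will fix an auxiliary club $e_\gamma$ in $\gamma$ of ordertype $\theta$. The crucial observation is that whenever $\gamma\in\Tr(T)\cap\acc^+(D)$ for a club $D\s\kappa$, $e_\gamma\cap D$ is a club in $\gamma$ while $T\cap\gamma$ is stationary, so $e_\gamma\cap D\cap T$ is stationary (hence cofinal) in $\gamma$, making $\Phi^{D\cap T}(e_\gamma)=\cl(e_\gamma\cap D\cap T)$ well-defined with $\nacc(\Phi^{D\cap T}(e_\gamma))\s e_\gamma\cap D\cap T\s D\cap T$.

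First, I would prove the following analogue of the key claim in the proof of Theorem~\ref{thma}: there exists a club $D\s\kappa$ such that for every club $E\s\kappa$, there is $\delta\in S$ with
\[\bigl\{\alpha<\delta\bigm|\gamma_\alpha:=\min(C_\delta\setminus(\alpha+1))\in\acc^+(D)\cap E^\kappa_\theta\cap\Tr(T)\ \&\ \nacc(\Phi^{D\cap T}(e_{\gamma_\alpha}))\s E\bigr\}\in J_\delta^+.\]
The argument is by contradiction, following the iteration-to-stabilization template: assuming otherwise, for each club $D$ choose a failure-club $F^D$, build the $\s$-decreasing sequence $D_0:=\kappa$, $D_{i+1}:=D_i\cap F^{D_i}$ (with limit intersections), iterated to step $\mu$, and set $D^*:=\{\gamma<\kappa\mid\otp(D_\mu\cap\gamma)=\gamma\}$. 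Invoking the hypothesis on the club $D^*$ yields $\delta\in S$ with an $A\in J_\delta^+$; since $\langle e_{\gamma_\alpha}\cap D_i\cap T\mid i<\mu\rangle$ is weakly $\s$-decreasing within the set $e_{\gamma_\alpha}\cap T$ of cardinality $\le\theta$, at most $\theta$ strict decreases can occur and stabilization is forced at some index $i^*(\alpha)<\mu$. Using the $\cf(\delta)$-additivity of $J_\delta$ (with $\cf(\delta)>\nu$), one finds $\alpha\in A$ outside the $J_\delta$-sets $A_i$ encoding failure at each level $i$, and the stabilization at $i^*(\alpha)$ delivers $\nacc(\Phi^{D_{i^*}\cap T}(e_{\gamma_\alpha}))\s e_{\gamma_\alpha}\cap D_{i^*+1}\cap T\s F^{D_{i^*}}$, contradicting $\alpha\notin A_{i^*}$.

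With $D$ fixed, define
\[C^\bullet_\delta:=C_\delta\cup\bigcup\bigl\{\Phi^{D\cap T}(e_\gamma)\setminus(\sup(C_\delta\cap\gamma)+1)\bigm|\gamma\in\nacc(C_\delta)\cap\acc^+(D)\cap E^\kappa_\theta\cap\Tr(T)\bigr\}.\]
Since $\xi$ is an infinite cardinal with $\xi>\nu\ge\theta$, ordinal arithmetic on initial segments ensures $\otp(C^\bullet_\delta)\le\xi$. For the guessing property: given any club $E\s\kappa$, the claim yields $\delta$ and a $J_\delta^+$-set of $\alpha$'s for which the insertion $I:=\Phi^{D\cap T}(e_{\gamma_\alpha})\cap(\sup(C_\delta\cap\gamma_\alpha),\gamma_\alpha)$ is closed cofinal in $\gamma_\alpha$ of ordertype $\theta\ge\sigma$ with $\nacc(I)\s E\cap T$; a direct inspection of the enumeration of $C^\bullet_\delta\setminus\alpha$ shows that its first $\sigma$ successor-indexed elements fall within $I$ and coincide with a subset of $\nacc(I)$, so $\suc_\sigma(C^\bullet_\delta\setminus\alpha)\s\nacc(I)\s E\cap T$, proving $\cg_\xi(S,T,\sigma,\vec J)$ as required. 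The main obstacle will be calibrating the iteration length $\mu$ in the claim's proof against the additivity of $J_\delta$: case~(1) poses no trouble since $\mu=\sigma^+\le\nu<\cf(\delta)$, but case~(2) with $\theta=\nu$ forces $\mu=\nu^+$ while $\cf(\delta)$ may only equal $\nu^+$, so the selection of $\alpha\in A$ avoiding the $A_i$'s must be executed more carefully, by partitioning $A$ over the discrete values of $i^*(\alpha)\in\nu^+$ and applying a pigeonhole/additivity argument on the pieces to force a single fiber into $J_\delta^+\cap A_i$ for some $i$, yielding the desired contradiction.
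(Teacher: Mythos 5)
Your treatment of case (1) is essentially the paper's proof: iterate the operator $\Phi^{D\cap T}$ for $\mu=\sigma^+$ many steps, choose $\alpha\in A\setminus\bigcup_{i<\mu}A_i$ (legitimate since $\mu\le\nu<\cf(\delta)$), observe that for such $\alpha$ the chain $\langle e_{\gamma_\alpha}\cap D_i\cap T\mid i<\mu\rangle$ must strictly decrease at every step, and derive a contradiction from $|e_{\gamma_\alpha}|=\sigma<\mu$. That part is fine.

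Case (2) has a genuine gap, and the fix you sketch does not repair it. You correctly notice that with $\theta=\nu$ the iteration has to run for $\nu^+$ steps, while $\cf(\delta)$ may be exactly $\nu^+$; but the ``partition $A$ by $i^*(\alpha)$ and pigeonhole'' move then fails for a basic reason: a $\nu^+$-additive ideal is closed only under unions of \emph{fewer than} $\nu^+$ sets, so decomposing a $J_\delta$-positive set $A$ into $\nu^+$ many fibers $A[i]$ gives no guarantee that any fiber is $J_\delta$-positive. (Take $J_\delta=J^{\bd}[\delta]$ with $\cf(\delta)=\nu^+$ and partition $\delta$ into $\nu^+$ bounded pieces.) So neither ``pick $\alpha\notin\bigcup_{i<\nu^+}A_i$'' nor ``some fiber $A[i]$ lies in $J_\delta^+$'' is available, and the argument stalls.

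The paper handles case (2) by inserting a preliminary reduction that you are missing: before any iteration, a single pigeonhole over $\nu$ (legal, since $\nu<\cf(\delta)$) fixes one ordinal $\epsilon<\nu$ such that the \emph{initial segment} $e^\epsilon_\gamma:=\{\beta\in e_\gamma\cap T\mid\otp(e_\gamma\cap\beta)<\epsilon\}$, of size $|\epsilon|<\nu$, already meets $E$ above $\alpha$ in a set of ordertype $>\sigma$, for a $J_\delta^+$-set of $\alpha$. Once the auxiliary sets have been trimmed to size $<\nu$, the stabilization iteration needs only $\nu$ steps, and avoiding $\bigcup_{i<\nu}A_i$ is again covered by $\cf(\delta)$-additivity. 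This two-stage structure (first shrink the local witness to something of size $<\nu$, then iterate $\nu$ times) is precisely what your unified ``$\theta=\sigma$ or $\theta=\nu$'' template collapses, and it is the content you need to add to make (2) go through.
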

\begin{proof} For the proof of both cases,
we fix an auxiliary $C$-sequence $\vec e = \langle e_\gamma \mid \gamma <\kappa\rangle$ such that $\otp(e_\gamma) = \cf(\gamma)$ for every $\gamma<\kappa$.

(1) Suppose that $\vec C=\langle C_\delta\mid\delta \in S\rangle$ witnesses $\cg_\xi(S, E^\kappa_{\sigma}\cap \Tr(T), 1,\vec J)$.
Let $\Phi^B$ be the operator from Definition~\ref{firstoperatordefn}.

\begin{claim}
There is a club $D \subseteq \kappa$ such that for every club $E \subseteq \kappa$ there is $\delta \in S$ such that
\[\{\alpha<\delta\mid \gamma:=\min(C_\delta\setminus(\alpha+1))\ \&\ \gamma \in E^\kappa_\sigma\cap\Tr(T)\text\ \&\ \Phi^{D\cap T}(e_\gamma) \subseteq E\}\in J_\delta^+.\]
\end{claim}

\begin{why}
Suppose that the claim does not hold. In this case, for every club $D\subseteq \kappa$, there is a club $F^D \subseteq \kappa$ such that, for every $\delta \in S$,
\[\{\alpha<\delta\mid \gamma:=\min(C_\delta\setminus(\alpha+1))\ \&\ \gamma \in E^\kappa_\sigma\cap\Tr(T)\ \&\ \Phi^{D\cap T}(e_\gamma) \subseteq F^D\}\in J_\delta.\]
Set $\mu:=\sigma^+$, so that $\mu\le\nu<\kappa$. We construct now a $\subseteq$-decreasing sequence $\langle D_i \mid i\leq \mu \rangle$ of clubs in $\kappa$ as follows:
\begin{enumerate}
    \item $D_0 := \kappa$;
    \item $D_{i+1} := D_i \cap F^{D_i}$;
    \item for $i\in\acc(\mu+1)$, $D_i := \bigcap_{i'< i} D_{i'}$.
\end{enumerate}

Next, let $D^* := \acc(D_{\mu})$ and fix $\delta \in S$ such that
\[A:=\{\alpha< \delta \mid \min(C_\delta\setminus(\alpha+1))\in  D^* \cap E^\kappa_\sigma\cap\Tr(T)\}\]
is in $J_\delta^+$.

For every $i < \mu$, as $D_{i+1}\s F^{D_i}$, the following set
\[A_i:= \{\alpha<\delta\mid \gamma:=\min(C_\delta\setminus(\alpha+1))\ \&\ \gamma \in E^\kappa_\sigma\cap\Tr(T)\ \&\ \Phi^{D_i\cap T}(e_\gamma) \subseteq D_{+i}\}\]
is in $J_\delta$.
As $\cf(\delta)>\nu\ge\mu$, we may fix $\alpha\in A\setminus\bigcup_{i<\mu}A_i$.
Set $\gamma:=\min(C_\delta\setminus(\alpha+1))$, so that $\gamma\in D^*\cap E^\kappa_\sigma\cap\Tr(T)$.

For every $i < \mu$, $\gamma\in\acc(D_i)\cap\Tr(T)$, so that $e_\gamma\cap D_i\cap T$ is stationary in $\gamma$, and hence $\Phi^{D_i\cap T}(e_\gamma)=\cl(e_\gamma\cap D_i\cap T)$.
Thus, for every $i<\mu$, as $\alpha\notin A_i$, it must be the case that $\Phi^{D_i\cap T}(e_\gamma) \nsubseteq D_{i+i}$;
but $D_{i+1}$ is closed, so that, in fact, $e_\gamma\cap D_i\cap T\nsubseteq D_{i+i}$.
For each $i<\mu$, pick $\beta_i\in (e_\gamma\cap D_i\cap T)\setminus D_{i+i}$.
As $|e_\gamma|<\mu$, we may now fix $(i,j)\in[\mu]^2$ such that $\beta_i=\beta_j$.
So $\beta_i\notin D_{i+1}$ while $\beta_j\in D_j\s D_{i+1}$. This is a contradiction.
\end{why}

Let $D\subseteq \kappa$ be given by the preceding claim.
It is clear that $\vec C^\bullet=\langle C^\bullet_\delta\mid\delta \in S\rangle$ 
witnesses $\cg_\xi(S, T, \sigma,\vec J)$.

\medskip

(2) Suppose that $\vec C=\langle C_\delta\mid\delta \in S\rangle$ witnesses $\cg_\xi(S, E^\kappa_{\nu}\cap \Tr(T), 1,\vec J)$.
For all $\gamma<\kappa$ and $\epsilon<\nu$, let $e_\gamma^\epsilon:=\{\beta\in e_\gamma\cap T\mid \otp(e_\gamma\cap\beta)<\epsilon\}$ so that it is an initial segment of $e_\gamma \cap T$.

\begin{claim} There is $\epsilon<\nu$ such that, for every club $E \subseteq \kappa$, there is $\delta \in S$ with
\[\{\alpha<\delta\mid \gamma:=\min(C_\delta\setminus(\alpha+1))\ \&\ \otp(e^\epsilon_\gamma\cap (E\setminus\alpha))>\sigma \}\in J_\delta^+.\]
\end{claim}
\begin{why} Otherwise, pick a counterexample $E_\epsilon$ for each $\epsilon<\nu$, and set $E:=\bigcap_{\epsilon<\nu}E_\epsilon$.
Pick $\delta\in S$ such that $$A:=\{\alpha<\delta\mid \min(C_\delta\setminus(\alpha+1))\in\acc(E)\cap\Tr(T)\cap E^\kappa_\nu\}$$ is in $J_\delta^+$.
For every $\alpha\in A$, if we let $\gamma_\alpha:=\min(C_\delta\setminus(\alpha+1))$, then $e_\gamma\cap E\cap T$ is a stationary subset of $\gamma$ of ordertype $\nu$,
so there exists some $\epsilon_\alpha<\nu$ such that $\otp(e^{\epsilon_\alpha}_{\gamma_\alpha}\cap E\setminus\alpha)>\sigma$.
As $J_\delta$ is $\cf(\delta)$-additive and $\cf(\delta)>\nu$, there must exist some $\epsilon<\nu$
for which $A_\epsilon:=\{\alpha\in A\mid \epsilon_\alpha=\epsilon\}$ is in $J_\delta^+$.
But $E\s E_\epsilon$. This is a contradiction.
\end{why}
Let $\epsilon$ be given by the claim. 
\begin{claim}
There is a club $D \subseteq \kappa$ such that for every club $E \subseteq \kappa$ there is $\delta \in S$ such that
\[\{\alpha<\delta\mid \gamma:=\min(C_\delta\setminus(\alpha+1))\ \&\  e^\epsilon_\gamma\cap D \subseteq E\ \&\ \otp( e^\epsilon_\gamma\cap (D\setminus \alpha))>\sigma\}\in J_\delta^+.\]
\end{claim}

\begin{why}
Suppose that the claim does not hold. In this case, for every club $D\subseteq \kappa$, there is a club $F^D \subseteq \kappa$ such that, for every $\delta \in S$,
\[\{\alpha<\delta\mid \gamma:=\min(C_\delta\setminus(\alpha+1))\ \&\  e^\epsilon_\gamma\cap D \subseteq F^D\ \&\ \otp( e^\epsilon_\gamma\cap (D\setminus \alpha))>\sigma\}\in J_\delta.\]
We construct now a $\subseteq$-decreasing sequence $\langle D_i \mid i\leq \nu\rangle$ of clubs in $\kappa$ as follows:
\begin{enumerate}
    \item $D_0 := \kappa$;
    \item $D_{i+1} := D_i \cap F^{D_i}$;
    \item for $i\in\acc(\nu+1)$, $D_i := \bigcap_{i'< i} D_{i'}$.
\end{enumerate}

Let $D^*:=D_\nu$ and fix $\delta\in S$ such that
$$A:=\{\alpha<\delta\mid \gamma:=\min(C_\delta\setminus(\alpha+1))\ \&\ \otp(e^\epsilon_\gamma\cap (D^*\setminus\alpha))>\sigma \}$$
is in $J_\delta^+$.

For every $i < \nu$, the following set
\[A_i:=\{\alpha<\delta\mid \gamma:=\min(C_\delta\setminus(\alpha+1))\ \&\  e^\epsilon_\gamma\cap D_i \subseteq F^{D_i}\ \&\ \otp( e^\epsilon_\gamma\cap (D_i \setminus \alpha))>\sigma\}\]
is in $J_\delta$.
Fix $\alpha\in A\setminus\bigcup_{i<\nu}A_i$.
Set $\gamma:=\min(C_\delta\setminus(\alpha+1))$, so that $\otp(e^\epsilon_\gamma\cap (D^*\setminus\alpha))>\sigma$.

For every $i < \nu$, since $D^* \s D_i$ and $D_{i+1}\s F^{D_i}$, we have that $\otp(e^\epsilon_\gamma\cap (D_{i}\setminus\alpha))>\sigma$, and hence it must be the case that $D_i\cap e^\epsilon_\gamma\nsubseteq F^{D_i}$, and therefore, $D_i\cap e^\epsilon_\gamma\nsubseteq D_{i+1}$.
So $\langle D_i\cap e^\epsilon_\gamma\mid i<\nu\rangle$ is a strictly $\s$-decreasing sequence of subsets of $e^\epsilon_\gamma$, contradicting the fact that $|e^\epsilon_\gamma|\le|\epsilon|<\nu$.
\end{why}

Let $D$ be given by the claim.
As $\epsilon<\nu$, for every $\gamma<\kappa$, $|e^\epsilon_\gamma|<\nu$.
It altogether follows that the $C$-sequence $\vec C^\bullet=\langle C^\bullet_\delta\mid\delta \in S\rangle$ defined via:
\[C_\delta^\bullet := C_\delta \cup \{\cl(D\cap e_\gamma^\epsilon)\setminus \sup(C_\delta \cap \gamma) \mid \gamma \in \nacc(C_\delta)\},\]
is as sought.
\end{proof}

\begin{cor}
Let $\mu <\sigma< \sigma^+ < \lambda < \kappa$ be infinite regular cardinals. 

Then $\cg_\lambda(E^\kappa_\lambda, E^\kappa_\sigma,1,\vec J)$ implies $\cg_\lambda(E^\kappa_\lambda, E^\kappa_\mu, \sigma,\vec J)$.
\end{cor}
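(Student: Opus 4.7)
The plan is to recognize that this corollary is essentially an immediate instantiation of Theorem~\ref{thm45}(1). The only thing to check is a trivial set-theoretic identity showing that our hypothesis matches the input format of that theorem.

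First I would note that, since $\sigma$ is regular and $\sigma > \mu \geq \aleph_0$, the cardinal $\sigma$ is regular uncountable. Hence for every ordinal $\alpha$ with $\cf(\alpha) = \sigma$, and for any regular $\mu < \sigma$, the set $E^{\alpha}_\mu$ is stationary in $\alpha$ (the standard reflection-by-pullback argument via an increasing continuous cofinal map $\sigma \to \alpha$). This means $E^\kappa_\sigma \s \Tr(E^\kappa_\mu)$, and therefore
\[
E^\kappa_\sigma \;=\; E^\kappa_\sigma \cap \Tr(E^\kappa_\mu).
\]
So the assumed principle $\cg_\lambda(E^\kappa_\lambda, E^\kappa_\sigma, 1, \vec J)$ is \emph{literally identical} to the principle $\cg_\lambda(E^\kappa_\lambda,\, E^\kappa_\sigma \cap \Tr(E^\kappa_\mu),\, 1, \vec J)$.

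Then I would apply Theorem~\ref{thm45}(1) with the parameter substitutions $\xi := \lambda$, $\nu := \sigma^+$, $S := E^\kappa_\lambda$, $T := E^\kappa_\mu$, and with the corollary's $\sigma$ playing the role of the theorem's $\sigma$. The chain $\sigma < \sigma^+ < \lambda \le \kappa$ supplies the required $\sigma < \nu < \xi \le \kappa$. The hypothesis $\sigma^+ < \lambda$ is exactly what guarantees $E^\kappa_\lambda \s E^\kappa_{>\sigma^+} = E^\kappa_{>\nu}$, i.e., that $S \s E^\kappa_{>\nu}$ as demanded by the theorem. Plugging in, Theorem~\ref{thm45}(1) converts our reformulated hypothesis into $\cg_\lambda(E^\kappa_\lambda, E^\kappa_\mu, \sigma, \vec J)$, which is the desired conclusion.

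There is essentially no obstacle here: the roles of the two hypotheses $\mu < \sigma$ and $\sigma^+ < \lambda$ are respectively to place $E^\kappa_\sigma$ inside $\Tr(E^\kappa_\mu)$ and to satisfy the ``$S\s E^\kappa_{>\nu}$'' requirement of Theorem~\ref{thm45}. All the genuine work of pumping up the number of consecutive guessed successor points from $1$ to $\sigma$ is done inside Theorem~\ref{thm45}(1); the corollary merely repackages its statement under convenient hypotheses about cofinalities.
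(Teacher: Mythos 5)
Your proof is correct and matches the paper's own argument exactly: both appeal to Theorem~\ref{thm45}(1) with $\xi:=\lambda$, $\nu:=\sigma^+$, $S:=E^\kappa_\lambda$, $T:=E^\kappa_\mu$, and the only verification needed is the trivial identity $E^\kappa_\sigma\cap\Tr(E^\kappa_\mu)=E^\kappa_\sigma$ (since $\sigma$ is regular uncountable and $\mu<\sigma$) together with $E^\kappa_\lambda\s E^\kappa_{>\sigma^+}$.
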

\begin{proof} Appeal to Theorem~\ref{thm45}(1) with $\nu:=\sigma^+$, $\xi:=\lambda$, $S:=E^\kappa_\lambda$ and $T:=E^\kappa_\mu$.
\end{proof}

\begin{cor}\label{cor56} For every successor cardinal $\lambda$,
if $\cg_\xi(E^{\lambda^+}_\lambda, E^{\lambda^+}_{<\lambda},1)$ holds,
then so does $\cg_\xi(E^{\lambda^+}_\lambda, \lambda^+,2)$.
\end{cor}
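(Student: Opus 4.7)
The plan is to derive this corollary as a direct instance of Theorem~\ref{thm42a}. Since $\lambda$ is an infinite successor cardinal, write $\lambda = \mu^+$ with $\mu \geq \aleph_0$; in particular, every ordinal of cofinality strictly less than $\lambda$ has cofinality at most $\mu$, so $E^{\lambda^+}_{<\lambda} = E^{\lambda^+}_{\leq \mu}$.

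Set $\kappa := \lambda^+$, $\nu := \mu$, $\sigma := 2$, $S := E^{\lambda^+}_\lambda$, and (per Convention~\ref{convention-omissions}) take $\vec J := \langle J^{\bd}[\delta] \mid \delta \in S\rangle$. Then the hypothesis $\sigma < \omega \leq \nu < \kappa$ of Theorem~\ref{thm42a} is immediate from $\mu \geq \aleph_0$, and the requirement $S \s E^\kappa_{>\nu}$ holds because $\lambda = \mu^+ > \mu = \nu$. Moreover, $E^\kappa_{\leq\nu} = E^{\lambda^+}_{\leq\mu} = E^{\lambda^+}_{<\lambda}$, so any $\xi$-bounded $C$-sequence witnessing $\cg_\xi(E^{\lambda^+}_\lambda, E^{\lambda^+}_{<\lambda}, 1)$ is precisely a witness for $\cg_\xi(S, E^\kappa_{\leq \nu}, 1, \vec J)$.

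By Theorem~\ref{thm42a}, there is then a postprocessing$^*$ function $\Phi:\mathcal K(\kappa)\rightarrow\mathcal K(\kappa)$ such that $\langle \Phi(C_\delta) \mid \delta \in S\rangle$ witnesses $\cg_\xi(S, \kappa, \sigma, \vec J)$, which unfolds to $\cg_\xi(E^{\lambda^+}_\lambda, \lambda^+, 2)$, as sought. There is no real obstacle here: all of the combinatorial work has already been performed inside Theorem~\ref{thm42a}, and the corollary is simply the instance $\sigma = 2$, with $\nu = \mu$ forced by the fact that $\lambda$ is a successor cardinal (so that the set of points of cofinality strictly less than $\lambda$ admits a single cardinal upper bound on their cofinalities).
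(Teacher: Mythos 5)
Your proof is correct and follows exactly the same route as the paper, which simply cites Theorem~\ref{thm42a} with $\sigma:=2$, $\lambda:=\nu^+$, $\kappa:=\lambda^+$, $S:=E^\kappa_\lambda$, and $\vec J:=\langle J^{\bd}[\delta]\mid\delta\in S\rangle$. The only difference is that you have spelled out the parameter-matching (in particular that $E^{\lambda^+}_{<\lambda}=E^{\lambda^+}_{\le\mu}$ because $\lambda$ is a successor) which the paper leaves implicit.
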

\begin{proof} By Theorem~\ref{thm42a}, using $\sigma:=2$, $\lambda:=\nu^+$, $\kappa:=\lambda^+$, $S:=E^\kappa_\lambda$,
and $\vec J:=\langle J^{\bd}[\delta]\mid \delta\in S\rangle$.
\end{proof}
\begin{remark} This shows that Clause~(4) of Theorem~1.6 from \cite{MR3307877}
follows from Clause~(5) of the same theorem, provided that the cardinal $\kappa$ there is a successor cardinal.
\end{remark}

\section{Moving between ideals}\label{movingsection}

As shown in the Section~\ref{sectionpartitioning}, it is easier to partition a witness for $\cg_\xi(S,T,\sigma,\allowbreak\vec J)$ in the case that the ideals in $\vec J$ are normal.
So, in this section, we address the problem of deriving $\cg_\xi(S,T,\sigma,\langle\ns_\delta\mid\delta\in S\rangle)$
from $\cg_\xi(S,T,\sigma,\langle J^{\bd}[\delta]\mid\delta\in S\rangle)$. The key lemma is Lemma~\ref{claim521}. In Theorem~\ref{apartitioningjkk} it is used to improve results from Section~\ref{sectionpartitioning}. At successor cardinals Lemma~\ref{claim521} is particularly useful, the main result of this section is the following, which combines Theorems \ref{THMA} and \ref{THME}.

\begin{cor} Suppose that $\lambda$ is a successor cardinal,
and $S\s E^{\lambda^+}_\lambda$ is stationary.

Then:
\begin{enumerate}[(1)]
\item $\cg_\lambda(S,S,1,\langle \ns_\delta\mid \delta\in S\rangle)$ holds.
\item $\cg_\lambda(S,E^{\lambda^+}_{<\lambda})$ implies
$\cg_\lambda(S,\lambda^+,n,\langle \ns_\delta\mid \delta\in S\rangle)$ for every $n<\omega$;
\item If $\lambda>\aleph_1$, then $\cg_\lambda(S,E^{\lambda^+}_{<\lambda})$ implies
$\cg_\lambda(S,\lambda^+,\omega,\langle \ns_\delta\mid \delta\in S\rangle)$.
\end{enumerate}
\end{cor}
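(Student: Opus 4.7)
The plan is to reduce all three clauses to the key Lemma~\ref{claim521} of this section, which at a successor cardinal $\lambda$ converts a club-guessing sequence calibrated against $\langle J^{\bd}[\delta] \mid \delta \in S\rangle$ into one calibrated against $\langle \ns_\delta \mid \delta \in S\rangle$ (and, as needed, permits the target set to be refined).

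For Clauses~(2) and~(3), the starting point is the hypothesis $\cg_\lambda(S, E^{\lambda^+}_{<\lambda})$. Writing $\lambda = \mu^+$, I will first apply Corollary~\ref{cor51}---equivalently, Theorem~\ref{thm42a}, as used in the proof of Corollary~\ref{cor56}---with $\nu := \mu$, $\xi := \lambda$, $\kappa := \lambda^+$, and $\sigma := n$ in Clause~(2), respectively $\sigma := \omega$ in Clause~(3); note that in the latter case the mechanism internally invokes Theorem~\ref{thma}, whose applicability requires the added assumption $\lambda > \aleph_1$, equivalently $\mu > \omega$, which is precisely what is needed so that $\sigma < \nu$. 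The outputs $\cg_\lambda(S, \lambda^+, n, \langle J^{\bd}[\delta]\rangle)$ and $\cg_\lambda(S, \lambda^+, \omega, \langle J^{\bd}[\delta]\rangle)$ are then fed into the key Lemma~\ref{claim521} to produce the corresponding conclusions with the nonstationary ideals in place of the bounded ideals.

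For Clause~(1), which corresponds to Theorem~\ref{THMA}, I will start by invoking Fact~\ref{shelahs21relativeb} applied to the given $S$, yielding a $\lambda$-bounded $C$-sequence witnessing $\cg_\lambda(S, E^{\lambda^+}_\lambda)$, and then pass it to the key Lemma~\ref{claim521}. This yields the conclusion modulo one subtlety: the desired target is $T := S$ rather than $T := E^{\lambda^+}_\lambda$, so the lemma must do more than merely upgrade the ideal. I anticipate that Lemma~\ref{claim521} will be stated in the stronger form that allows one simultaneously to refine the target set and to upgrade the ideal, so that instantiating it with $T := S$ closes the argument. The hard part will accordingly lie in the proof of Lemma~\ref{claim521} itself, where, at successor $\lambda$, one must keep track of which nonaccumulation points of the constructed sequence land in the prescribed stationary $T$ and ensure that the set of $\beta < \delta$ whose successor in $C_\delta$ hits $D \cap T$ is captured stationarily in $\delta$ rather than merely cofinally---precisely the move from Fact~\ref{shelahs21relative} to Fact~\ref{shelahs21ns}, now executed uniformly in the parameters $T$ and $\sigma$.
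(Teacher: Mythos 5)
Your overall decomposition (pump up $\sigma$ via Corollary~\ref{cor51}, then upgrade the ideal) matches the paper's, and the analysis of when $\sigma=\omega$ is available in Clause~(3)---the case split between Theorem~\ref{thm42a} and Theorem~\ref{thma} inside Corollary~\ref{cor51}, and the need for the predecessor $\nu$ to be uncountable, hence $\lambda>\aleph_1$---is exactly right. There are, however, two misreadings of what the ``key lemma'' accomplishes.

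First, you describe Lemma~\ref{claim521} as the device that, at successor $\lambda$, ``converts'' a $\langle J^{\bd}[\delta]\rangle$-guessing sequence into a $\langle\ns_\delta\rangle$-guessing sequence. That is not what the lemma does. Lemma~\ref{claim521} has no successor hypothesis, and its conclusion is an auxiliary sequence $\vec e$ with the property that, for suitable $\delta$, \emph{stationarily many} $\alpha\in e_\delta$ admit \emph{some} witness $\beta$ with $\alpha\le\beta<\min(e_\delta\setminus(\alpha+1))$ and $\suc_\sigma(C_\delta\setminus\beta)\s D\cap T$. Extracting an actual $\cg_\lambda(\ldots,\langle\ns_\delta\rangle)$-witness from this existential statement is the content of Theorem~\ref{thm42}, and the step is nontrivial: writing $\lambda=\mu^+$, one enumerates the window $C_\delta\cap[\alpha,\min(e_\delta\setminus(\alpha+1)))$ by a surjection $\varphi_{\delta,\alpha}\colon\mu\to C_\delta\cap[\alpha,\min(e_\delta\setminus(\alpha+1)))$, forms candidate sequences $C^i_\delta$ built from the choices $\varphi_{\delta,\alpha}(i)$, and uses $\cf(\delta)=\lambda>\mu$ to pigeonhole a single index $i<\mu$ that works stationarily. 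This is precisely where the successor hypothesis enters, and Remark~\ref{asperoremark} shows the ordertype bound $\cg_\lambda$ (rather than $\cg$) in the hypothesis of Theorem~\ref{thm42} is likewise essential. So each of your reductions should be routed through Theorem~\ref{thm42}, not through Lemma~\ref{claim521} alone.

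Second, for Clause~(1), your anticipation that Lemma~\ref{claim521} ``will be stated in the stronger form that allows one simultaneously to refine the target set'' is incorrect: both Lemma~\ref{claim521} and Theorem~\ref{thm42} take $T$ as a fixed parameter and return it unchanged. The paper proves Clause~(1) by Fact~\ref{shelahs21relativeb} followed by Theorem~\ref{thm42}(1), and since Theorem~\ref{thm42} preserves $T$, the conclusion can have $T=S$ only if the input already does, i.e.\ only if Fact~\ref{shelahs21relativeb} is read as supplying $\cg_\lambda(S,S)$ rather than just $\cg_\lambda(S,E^{\lambda^+}_\lambda)$. That is the reading required to make the paper's own two-line proof of~(1) go through; the burden of getting the target down to $S$ lies entirely with the club-guessing input, not with the ideal-upgrade mechanism, and a lemma that shrank $T$ while upgrading the ideal would be doing something genuinely stronger than what is available here.
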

\begin{proof} Let $\nu$ denote the predecessor of $\lambda$. 

(1) By Fact~\ref{shelahs21relativeb} and Theorem~\ref{thm42} below.

(2) Then $\cg_\lambda(S,E^{\lambda^+}_{\ge\omega}\cap E^{\lambda^+}_{\le\nu})$ holds.
So, by, Corollary~\ref{cor51}, also $\cg_\lambda(S,\lambda^+,n)$ holds for every $n<\omega$.
Finally, By Theorem~\ref{thm42}(1) below, moreover $\cg_\lambda(S,\lambda^+,n,\langle \ns_\delta\mid \delta\in E^{\lambda^+}_{\lambda}\rangle)$ holds for every $n<\omega$.

(3) Assuming that $\nu$ is uncountable,
by Corollary~\ref{cor51}, also $\cg_\lambda(S,\lambda^+,\omega)$ holds.
Then, By Theorem~\ref{thm42}(1) below, moreover $\cg_\lambda(S,\lambda^+,\omega,\langle \ns_\delta\mid \delta\in E^{\lambda^+}_{\lambda}\rangle)$ holds.
\end{proof}

\begin{lemma}\label{lemma41} Let $\aleph_0 < \xi < \kappa$ and $S\s E^\kappa_\xi$ be stationary. 
Assume $1\le\sigma<\xi$.

If $\vec J=\langle J_\delta\mid\delta\in S\rangle$ is  such that each $J_\delta$ is a normal ideal on $\delta$,
then $\cg(S,T,\sigma,\vec J)$ implies $\cg_\xi(S,T,\sigma,\vec J)$.
\end{lemma}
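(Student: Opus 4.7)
First I would construct $\vec{C'}$ as a thinning of $\vec C$ by fixing, for each $\delta \in S$, a club $e_\delta\s\delta$ of order type $\xi$ and setting $C'_\delta:=C_\delta\cap e_\delta$. Since $\cf(\delta)=\xi>\aleph_0$, the intersection of two clubs in $\delta$ is again a club in $\delta$, and $\otp(C'_\delta)$ is squeezed between $\cf(\delta)=\xi$ and $\otp(e_\delta)=\xi$. So $\vec{C'}:=\langle C'_\delta\mid\delta\in S\rangle$ is a $\xi$-bounded $C$-sequence over $S$, as sought.

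The bulk of the work is to verify that $\vec{C'}$ still witnesses $\cg(S,T,\sigma,\vec J)$. Arguing by contradiction, in the style of the iterated-counterexamples proofs of Lemmas~\ref{weaktotail}, \ref{tailomega} and \ref{lemma215}, I would take a candidate counterexample club $D\s\kappa$ and recursively build a $\s$-decreasing sequence of clubs $\langle D_i\mid i\le\xi^+\rangle$ with $D_0:=D$, each $D_{i+1}\s D_i$ witnessing the continued failure of $\vec{C'}$-guessing on $D_i$, and intersecting at limits. Applying the hypothesis $\cg(S,T,\sigma,\vec J)$ for $\vec C$ to the stabilized club $D_{\xi^+}$ produces $\delta\in S$ and $B\in J_\delta^+$ with $\suc_\sigma(C_\delta\setminus\beta)\s D_{\xi^+}\cap T$ for every $\beta\in B$. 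Because each $J_\delta$ is normal and extends $J^{\bd}[\delta]$, it extends $\ns_\delta$, so the club $e_\delta$ lies in $J_\delta^*$ and $B\cap e_\delta\in J_\delta^+$.

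The main obstacle is the alignment step: the next $\sigma$ elements of the thinner $C'_\delta=C_\delta\cap e_\delta$ above $\beta$ may strictly postdate the corresponding $C_\delta$-successors of $\beta$, so there is no direct reason for them to lie in $D\cap T$. To extract a positive-measure set of $\beta\in B\cap e_\delta$ on which the alignment holds, I would exploit normality through a Fodor-style pressing-down argument applied to a regressive function on $B\cap e_\delta$ that encodes the ``gap index'' measuring how far the $e_\delta$-successors of $\beta$ lie past its $C_\delta$-successors. Constancy of this function on a $J_\delta^+$-subset then forces alignment, and combined with the preceding stage of the $D_i$-construction delivers the required contradiction. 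Normality of $\vec J$ and the hypothesis $\sigma<\xi$ are both essential: the former provides the pressing-down, and the latter ensures that the regressive function's range is bounded by $\xi\le\delta$ so that Fodor applies cleanly. Without either, the alignment collapses and the argument breaks down.
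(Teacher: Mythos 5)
Your thinning $C'_\delta := C_\delta \cap e_\delta$ is not the right one, and the Fodor repair you gesture at cannot recover from it. The elements of $\suc_\sigma(C'_\delta\setminus\beta)$ are the first $\sigma$ successor-indexed points of $C_\delta\cap e_\delta$ above $\beta$. These are points of $C_\delta$ selected by a club $e_\delta$ that was fixed independently of $D$; the hypothesis $\cg(S,T,\sigma,\vec J)$ gives you control only over the first $\sigma$ successor points of the \emph{full} sequence $C_\delta$ above $\beta$, which you have discarded and replaced by later, uncontrolled points of $C_\delta\cap e_\delta$. Pressing down a ``gap index'' to a $J_\delta^+$-set on which it is constant tells you that the gaps have the same length, but it does not change the identity of the points appearing in $\suc_\sigma(C'_\delta\setminus\beta)$, so it cannot deliver $\suc_\sigma(C'_\delta\setminus\beta)\subseteq D\cap T$; there is no step in the argument where the new points are ever forced into $D$. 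Separately, your iterated-counterexample scheme runs for $\xi^++1$ steps, but $\xi^+=\kappa$ is perfectly possible (e.g.\ $\kappa=\aleph_2$, $\xi=\aleph_1$), in which case $D_{\xi^+}=\bigcap_{i<\xi^+}D_i$ need not be a club.

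The paper's construction is genuinely different and needs no iteration at all. Given $\vec C$ witnessing $\cg(S,T,\sigma,\vec J)$, for each $\delta\in S$ set $f_\delta(\beta):=\sup(\suc_\sigma(C_\delta\setminus\beta))+1$ and choose a club $e_\delta$ in $\delta$ of order type $\xi$ consisting of closure points of $f_\delta$; then let $C^\bullet_\delta$ be the ordinal closure below $\delta$ of $\bigcup\{\suc_\sigma(C_\delta\setminus\beta)\mid\beta\in e_\delta\}$. In other words, rather than intersecting $C_\delta$ with $e_\delta$, one keeps precisely the blocks $\suc_\sigma(C_\delta\setminus\beta)$ for $\beta\in e_\delta$ and throws away everything else. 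The closure-point condition makes these blocks lie in pairwise disjoint intervals, so for $\beta\in\acc(e_\delta)$ one has $\suc_\sigma(C^\bullet_\delta\setminus\beta)=\suc_\sigma(C_\delta\setminus\beta)$ exactly, and $\sigma<\xi$ together with the regularity of $\xi$ gives $\otp(C^\bullet_\delta)\le\xi$. Normality of $J_\delta$ is used only once: to infer $\acc(e_\delta)\in J_\delta^*$ and hence $\acc(e_\delta)\cap B\in J_\delta^+$ where $B:=\{\beta<\delta\mid\suc_\sigma(C_\delta\setminus\beta)\subseteq D\cap T\}$; that set already witnesses the guessing for $\vec{C^\bullet}$ against $D$. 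No pressing-down and no descending chain of clubs is involved.
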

\begin{proof} Let $\vec C=\langle C_\delta\mid\delta\in S\rangle$ be a witness to $\cg(S,T,\sigma,\vec J)$.
For each $\delta\in S$, define a function $f_\delta:\delta\rightarrow\delta$ via
$$f_\delta(\beta):=\sup(\suc_\sigma(C_\delta\setminus\beta))+1,$$
then fix a club $e_\delta$ in $\delta$ of ordertype $\cf(\delta)$ consisting of closure points of $f_\delta$,
and finally let $C_\delta^\bullet$ be the ordinal closure below $\delta$ of the following set:
$$\bigcup\{\suc_\sigma(C_\delta\setminus\beta)\mid \beta\in e_\delta\}.$$
To see that $\langle C_\delta^\bullet\mid \delta\in S\rangle$ witnesses $\cg_\xi(S,T,\sigma,\vec J)$,
let $D$ be a club in $\kappa$. Pick $\delta\in S$ for which the following set is stationary in $\delta$:
$$B:=\{\beta<\delta\mid \suc_\sigma(C_\delta\setminus\beta)\s D\cap T\}.$$
Then $e_\delta\cap B$ is stationary in $\delta$, and for every $\beta\in e_\delta\cap B$,
$\suc_\sigma(C^\bullet_\delta\setminus\beta)=\suc_\sigma(C_\delta\setminus\beta)$.
\end{proof}

\begin{lemma}\label{claim521}
Suppose that $\vec C=\langle C_\delta\mid\delta\in S\rangle$ witnesses $\cg(S,T,\sigma)$,
with $S\s E^\kappa_{>\omega}$ and $T\s\kappa$.
Then there exists a $C$-sequence $\vec e=\langle e_\delta\mid \delta\in S\rangle$ such that, for every club $D\s\kappa$,
there exists $\delta\in S$ such that the following set is stationary in $\delta$:
$$\{ \alpha\in e_\delta\mid \exists\beta\in C_\delta\,[\alpha\le\beta<\min(e_\delta\setminus(\alpha+1))\ \&\ \suc_\sigma(C_\delta\setminus\beta)\s D\cap T]\}.$$
\end{lemma}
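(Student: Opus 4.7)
The plan is to argue by contradiction via an $\omega$-length iteration that collects counterexamples. Assume the conclusion fails, so that for every $C$-sequence $\vec e=\langle e_\delta\mid\delta\in S\rangle$ there is a club $D\s\kappa$ such that, for every $\delta\in S$, the set displayed in the statement is nonstationary in $\delta$. Recursively build, for $i<\omega$, $C$-sequences $\vec e^i=\langle e^i_\delta\mid\delta\in S\rangle$ and counterexample clubs $F^{i+1}\s\kappa$: let $e^0_\delta$ be an arbitrary club in $\delta$, and, given $\vec e^i$, use the standing assumption to obtain a club $F^{i+1}\s\kappa$ such that for each $\delta\in S$ the set
\[G^{i+1}_\delta:=\{\alpha\in e^i_\delta\mid \exists\beta\in C_\delta\cap[\alpha,\min(e^i_\delta\setminus(\alpha+1))),\ \suc_\sigma(C_\delta\setminus\beta)\s F^{i+1}\cap T\}\]
is nonstationary in $\delta$. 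For each $\delta\in S$ pick a club $E^{i+1}_\delta\s\delta$ disjoint from $G^{i+1}_\delta$, and set $e^{i+1}_\delta:=e^i_\delta\cap E^{i+1}_\delta$. Since $\cf(\delta)>\omega$, this is again a club in $\delta$.

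Upon completing the $\omega$-step construction, form the club $F:=\bigcap_{i<\omega}F^{i+1}\s\kappa$ and apply the hypothesis that $\vec C$ witnesses $\cg(S,T,\sigma)$ to $F$ to obtain $\delta\in S$ for which $B:=\{\beta<\delta\mid \suc_\sigma(C_\delta\setminus\beta)\s F\cap T\}$ is cofinal in $\delta$. Since $C_\delta\setminus\beta=C_\delta\setminus\min(C_\delta\setminus\beta)$, the map $\beta\mapsto\min(C_\delta\setminus\beta)$ sends $B$ into $B\cap C_\delta$, which is therefore cofinal in $\delta$ as well. As $\cf(\delta)>\omega$, the ordinal $\eta^*:=\sup_{i<\omega}\min(e^i_\delta)$ is below $\delta$, so we may fix some $\beta\in C_\delta\cap B$ with $\beta>\eta^*$.

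For each $i<\omega$ put $\alpha^{(i)}:=\max(e^i_\delta\cap(\beta+1))$; this is defined since $\min(e^i_\delta)\le\eta^*<\beta$. The containment $\beta\in C_\delta\cap[\alpha^{(i)},\min(e^i_\delta\setminus(\alpha^{(i)}+1)))$ together with $\suc_\sigma(C_\delta\setminus\beta)\s F\cap T\s F^{i+1}\cap T$ witnesses $\alpha^{(i)}\in G^{i+1}_\delta$; hence $\alpha^{(i)}\notin E^{i+1}_\delta$, and so $\alpha^{(i)}\notin e^{i+1}_\delta$. Combined with $e^{i+1}_\delta\s e^i_\delta$ this forces $\alpha^{(i+1)}<\alpha^{(i)}$, yielding an infinite descending sequence of ordinals $\alpha^{(0)}>\alpha^{(1)}>\cdots$, a contradiction.

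The only conceptual subtlety is bridging the hypothesis (which tolerates $\beta\notin C_\delta$) with the conclusion (which insists $\beta\in C_\delta$), handled by the replacement $\beta\mapsto\min(C_\delta\setminus\beta)$. The remaining facts---that the intersection of two clubs in $\delta$ is a club, and that $\sup_{i<\omega}\min(e^i_\delta)<\delta$ so that $\alpha^{(i)}$ is always defined---both reduce to the assumption $S\s E^\kappa_{>\omega}$.
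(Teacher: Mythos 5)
Your proof is correct and follows the same strategy as the paper's: an $\omega$-step iteration collecting counterexample clubs, followed by a contradiction from an infinite descending sequence of ordinals. The cosmetic differences (you take $e^0_\delta$ arbitrary and track $\alpha^{(i)}:=\max(e^i_\delta\cap(\beta+1))$ to get a strictly decreasing sequence, whereas the paper sets $e^0_\delta:=C_\delta$ and tracks $\alpha_i:=\sup(e^i_\delta\cap\gamma)$ with $\gamma$ the first element of $\suc_\sigma(C_\delta\setminus\beta)$, arguing by stabilization) are immaterial, and your explicit replacement $\beta\mapsto\min(C_\delta\setminus\beta)$ to ensure $\beta\in C_\delta$ is a point the paper glosses over.
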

\begin{proof} Suppose not. 
For every $\delta\in S$, let $e_\delta^0:=C_\delta$.
Next, suppose that $i<\omega$ and that $\langle e_\delta^i\mid \delta\in S\rangle$ has already been defined.
By assumption, we can find a club $D_i\s\kappa$ such that,
for every $\delta\in S$, the following set is nonstationary in $\delta$:
$$\{ \alpha\in e^i_\delta\mid \exists\beta\in C_\delta\,[\alpha\le\beta<\min(e^i_\delta\setminus(\alpha+1))\ \&\ \suc_\sigma(C_\delta\setminus\beta)\s D_i\cap T]\},$$
so let us pick a subclub $e_\delta^{i+1}$ of $e_\delta^i$ disjoint from it.

Put $D:=\bigcap_{i<\omega}D_i$. By the choice of $\vec C$, let us now pick $\delta\in S$ such that 
$$\sup\{ \beta<\delta\mid \suc_\sigma(C_\delta\setminus\beta)\s D\cap T\}=\delta.$$

Pick $\beta<\delta$ above $\min(\bigcap_{i<\omega}e_\delta^i)$ such that $\suc_\sigma(C_\delta\setminus\beta)\s D\cap T$.
Consider the ordinal $$\gamma:=\min(\suc_\sigma(C_\delta\setminus\beta)),$$
and then, for every $i<\omega$, let $\alpha_i:=\sup(e^i_\delta\cap\gamma)$. As $\acc(e^i_\delta)\s\acc(C_\delta)$,
and $\gamma$ is in $\nacc(C_\delta)$ and above $\min(e^i_\delta)$,
we infer that $\alpha_i\in e^i_\delta\cap \gamma$.
As $\langle e^i_\delta\mid i<\omega\rangle$ is a $\s$-decreasing chain, $\langle \alpha_i\mid i<\omega\rangle$ is $\le$-decreasing,
so we may find a large enough $i<\omega$ such that $\alpha_{i+1}=\alpha_i$.
In particular, $\alpha_i\in e^{i+1}_\delta$, so by the choice of $e^{i+1}_\delta$, 
$$\forall\beta\in C_\delta\,[\alpha\le\beta<\min(e^i_\delta\setminus(\alpha+1))\rightarrow\suc_\sigma(C_\delta\setminus\beta)\nsubseteq D_i\cap T].$$

On the other hand, since $\alpha_i=\sup(e^i_\delta\cap\gamma)$, it is the case that $\min(e_\delta^i\setminus(\alpha_i+1))\ge\gamma$.
Recalling also that $e_\delta^0\s C_\delta$,
altogether $\alpha_i\le\beta<\min(e_\delta^i\setminus(\alpha_i+1))$, and
$$\suc_\sigma(C_\delta\setminus\beta)\s D\cap T\s D_i\cap T.$$
This is a contradiction.
\end{proof}

An immediate consequence of the preceding lemma is an improvement of Clauses (2) and (3) of Theorem~\ref{partitioningjkk},
for the special case of $\vec J=\langle J^{\bd}[\delta]\mid \delta\in S\rangle$. 

\begin{thm}\label{apartitioningjkk} Suppose that $\vec C$ witnesses $\cg(S,T,\sigma)$ with $S\s E^\kappa_\lambda$.
\begin{enumerate}[(1)]
\item If $\onto(\ns_\lambda,\lambda)$ holds, then $\lambda\in\Theta_2(\vec C,T,\sigma)$;
\item If $\ubd(\ns_\lambda,\theta)$ holds and  $\theta< \lambda$, then $\theta\in \Theta_2(\vec C,T,\sigma)$.
\end{enumerate}
\end{thm}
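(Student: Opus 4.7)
The plan is to reduce the theorem to an adaptation of Theorem~\ref{partitioningnormal} by first using Lemma~\ref{claim521} to convert the $J^{\bd}$-guessing of $\vec C$ into a stationarity-based guessing condition that can be fed into the colourings built from the normal ideal $\ns_\lambda$. Concretely, apply Lemma~\ref{claim521} to $\vec C$ to obtain an auxiliary $C$-sequence $\vec e = \langle e_\delta \mid \delta \in S\rangle$; by refining each $e_\delta$ to a subclub (which only enlarges the successor gaps $\min(e_\delta\setminus(\alpha+1))$ and hence preserves the conclusion of the lemma), I may assume $\otp(e_\delta)=\lambda$ for all $\delta\in S$. Let $\psi_\delta:e_\delta\to\lambda$ be the collapsing map, and for $\beta\in[\min(e_\delta),\delta)$ set $\alpha_\beta^\delta:=\max(e_\delta\cap(\beta+1))$. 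For any club $D\s\kappa$ and $\delta\in S$, denote
$$A_D^\delta:=\{\alpha\in e_\delta\mid \exists\beta\in C_\delta\,[\alpha\le\beta<\min(e_\delta\setminus(\alpha+1))\ \&\ \suc_\sigma(C_\delta\setminus\beta)\s D\cap T]\}.$$

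For part~(1), fix a colouring $c:[\lambda]^2\to\lambda$ furnished by Theorem~\ref{normaluparrow}(2). Mimicking Claim~\ref{4101}, I would prove that there is a single $\eta<\lambda$ such that for every club $D\s\kappa$ some $\delta\in S$ satisfies: for every $\tau<\lambda$, the set
$$A_D^{\delta,\tau}:=\{\alpha\in A_D^\delta\mid \eta<\psi_\delta(\alpha)\ \&\ c(\eta,\psi_\delta(\alpha))=\tau\}$$
is stationary in $\delta$. If no such $\eta$ existed, I would collect counterexample clubs $\langle D_\eta\mid\eta<\lambda\rangle$, form $D:=\bigcap_{\eta<\lambda}D_\eta$, invoke Lemma~\ref{claim521} to obtain $\delta\in S$ with $A_D^\delta$ stationary in $\delta$, and then apply Theorem~\ref{normaluparrow}(2) with $J:=\ns_\delta$, $A:=e_\delta$, $B:=A_D^\delta$ to produce a uniform $\eta$ working for every $\tau$; since $A_D^\delta\s A_{D_\eta}^\delta$, this contradicts the choice of $D_\eta$. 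The desired witness is then $\langle h_\delta\mid\delta\in S\rangle$ with $h_\delta(\beta):=c(\eta,\psi_\delta(\alpha_\beta^\delta))$ (and $0$ where undefined): each $\alpha\in A_D^{\delta,\tau}$ yields some $\beta\in C_\delta$ with $\alpha_\beta^\delta=\alpha$, $h_\delta(\beta)=\tau$ and $\suc_\sigma(C_\delta\setminus\beta)\s D\cap T$, so stationarity of $A_D^{\delta,\tau}$ in $\delta$ translates into unboundedness of the required set and hence into $\lambda\in\Theta_2(\vec C,T,\sigma)$.

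Part~(2) proceeds analogously but with $c:[\lambda]^2\to\theta$ from Theorem~\ref{normaluparrow}(1); now only a subset of $\theta$-many colours is recovered at each $\delta$. Following the two-step stabilization of Claims~\ref{claim892}--\ref{claim893}, I would first produce a uniform $\eta<\lambda$ such that for every club $D$ some $\delta\in S$ has $|D(\eta,\delta)|=\theta$, where $D(\eta,\delta):=\{\tau<\theta\mid A_D^{\delta,\tau}\in(\ns_\delta)^+\}$, and then exploit an $\s$-decreasing chain of clubs of length $\theta^+$ to obtain a club $D^*\s\kappa$ such that for every $D\s D^*$ some $\delta\in S$ satisfies $D(\eta,\delta)=D^*(\eta,\delta)$ of size $\theta$; defining $h_\delta(\beta):=\otp(c(\eta,\psi_\delta(\alpha_\beta^\delta))\cap D^*(\eta,\delta))$ relabels the stabilized colours onto $\theta$. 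The principal technical hurdle will be verifying the initial refinement step and the associated bookkeeping: one must check that stationarity of $A_D^\delta$ inside $\delta$ genuinely transfers, via the collapsing isomorphism $\psi_\delta$, to stationarity of $\psi_\delta[A_D^\delta]$ in $\lambda$, which is what Theorem~\ref{normaluparrow} consumes. Beyond this, the rest is a fairly mechanical adaptation of the proofs of Theorems~\ref{partitioningjkk} and~\ref{partitioningnormal}, with Lemma~\ref{claim521} used as a black box to sidestep the absence of normality in $\vec J=\langle J^{\bd}[\delta]\mid\delta\in S\rangle$.
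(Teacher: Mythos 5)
Your proposal is correct and follows essentially the same route as the paper's proof: apply Lemma~\ref{claim521} to upgrade $\vec C$ to an auxiliary $\vec e$ along which guessing is stationary, then run the stabilization arguments of Claims~\ref{4101}, \ref{claim892} and \ref{claim893} against the normal-ideal colourings of Theorem~\ref{normaluparrow} with $J:=\ns_\delta$, $A:=e_\delta$, $B:=A_D^\delta$. The only (cosmetic) difference is that you recover the colour of $\beta$ by pushing back to $\alpha_\beta^\delta=\max(e_\delta\cap(\beta+1))$, whereas the paper instead extends $\psi_\delta$ to all of $\delta$ and adjusts $c$ so that $c(\eta,\alpha+1)=c(\eta,\alpha)$ before evaluating at $\psi_\delta(\beta)$; and the worry you flag about transferring stationarity through the collapse is unnecessary, since Theorem~\ref{normaluparrow} takes $B\in J^+$ as input and performs that transfer internally.
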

\begin{proof} Let $\vec e=\langle e_\delta\mid \delta\in S\rangle$ be the corresponding $C$-sequence given by Lemma~\ref{claim521}.
Without loss of generality, $\otp(e_\delta)=\lambda$ for all $\delta\in S$.
Define $\psi_\delta:\delta\rightarrow\lambda$ via $\psi_\delta(\beta):=\otp(e_\delta\cap\beta)$, so that, for every $A\s\delta$,
$\psi_\delta[A]$ is stationary in $\lambda$ iff $A$ is stationary in $\delta$.

\medskip

(1): Suppose that $\onto(\ns_\delta,\lambda)$ holds, and fix a colouring $c:[\lambda]^2\rightarrow\lambda$ as in Theorem~\ref{normaluparrow}(2).
As $\nacc(\kappa)$ is in $\ns_\lambda$,
we may assume that for all $\eta<\alpha<\kappa$, $c(\eta,\alpha+1)=c(\eta,\alpha)$.
Now, a proof nearly identical to that of Claim~\ref{4101} yields an 
$\eta<\lambda$ such that, for every club $D\s\kappa$,
there exists a $\delta\in S$, such that, for every $ \tau<\lambda$:
$$\sup\{\beta<\delta\mid \eta < \psi_\delta(\beta)\ \&\ c(\eta,\psi_\delta(\beta))=\tau\ \&\ \suc_\sigma(C_\delta\setminus \beta)\s D\cap T\}=\delta.$$
Choose $\vec h=\langle h_\delta:\delta\rightarrow\lambda\mid\delta\in S\rangle$ satisfying 
$h_\delta(\beta)=c(\eta,\psi_\delta(\beta))$ every $\delta\in S$ and $\beta <\delta$ such that $\eta<\psi_\delta(\beta))$. 
Then $\vec h$ witnesses that $\lambda\in\Theta_2(\vec C,T,\sigma)$.

\medskip

(2) Suppose that $\ubd(J^\bd[\lambda],\theta)$ holds
with $\theta< \lambda$, and fix a colouring $c:[\lambda]^2\rightarrow\theta$ as in Theorem~\ref{normaluparrow}(1).
For every club $D\subseteq\kappa$, for all $\delta\in S$ and $\eta< \lambda$, let $D(\eta,\delta)$ denote the set:
$$\{ \tau<\theta\mid \sup\{\beta<\delta\mid \eta< \psi_\delta(\beta)\ \&\ c(\eta,\psi_\delta(\beta))=\tau\ \&\ \suc_\sigma(C_\delta\setminus \beta)\s D\cap T\}=\delta\}.$$
A proof nearly identical to that of Claim~\ref{claim892} 
yields an $\eta<\lambda$ such that, for every club $D\s\kappa$,
there exists $\delta\in S$, such that $|D(\eta,\delta)| = \theta$.
The rest of the proof is now identical to that of Theorem~\ref{partitioningjkk}(3).
\end{proof}

\begin{cor} Suppose that $\vec C$ witnesses $\cg(S,T,\sigma)$ with $S\s E^\kappa_\lambda$.

If $\lambda$ is not ineffable, then $\omega\in \Theta_2(\vec C,T,\sigma)$.
\end{cor}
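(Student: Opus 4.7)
The plan is to combine two already-stated results: Fact~\ref{normalsampleresults}(3) and Theorem~\ref{apartitioningjkk}(2). First, by Fact~\ref{normalsampleresults}(3), the assumption that $\lambda$ is not ineffable is equivalent to the combinatorial assertion $\ubd(\ns_\lambda, \omega)$; hence this colouring principle is in force. Since ineffability is meaningful only for regular uncountable cardinals, the hypothesis has content only when $\omega < \lambda$, so that the cardinality constraint $\theta < \lambda$ of Theorem~\ref{apartitioningjkk}(2) is satisfied upon taking $\theta := \omega$.

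Second, applying Theorem~\ref{apartitioningjkk}(2) with the witness $\vec C$ and $\theta := \omega$, I obtain that $\omega \in \Theta_2(\vec C, T, \sigma)$, which is exactly the desired conclusion.

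The proof involves no new ideas and no real obstacle arises: the substantive content has already been absorbed into Lemma~\ref{claim521} (whose role is to pass from a witness to $\cg(S, T, \sigma)$ calibrated against the bounded ideals $\langle J^{\bd}[\delta] \mid \delta \in S\rangle$ to an auxiliary $C$-sequence $\vec e$ that cooperates with the nonstationary ideals $\ns_\delta$) and into the nontrivial direction of Fact~\ref{normalsampleresults}(3), which converts non-ineffability of $\lambda$ into an upper-regressive colouring $c : [\lambda]^2 \to \omega$ with the required unboundedness property. Given those two ingredients, the corollary is a one-line consequence.
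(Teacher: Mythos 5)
Your proof is correct and is essentially the same as the paper's. The paper's own proof reads ``By Theorem~\ref{apartitioningjkk} and Corollary~\ref{cor420}(3),'' but Corollary~\ref{cor420}(3) has hypotheses (normal ideals, $\lambda$-boundedness) that do not match the present setup; the reference is clearly meant to extract the underlying combinatorial input, namely Fact~\ref{normalsampleresults}(3) (that $\lambda$ is not ineffable iff $\ubd(\ns_\lambda,\omega)$ holds), which is precisely what you cite and then feed into Theorem~\ref{apartitioningjkk}(2). Your observation that the constraint $\theta<\lambda$ is automatic since ineffability is a notion for regular uncountable cardinals is a correct and worthwhile sanity check.
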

\begin{proof} By Theorem~\ref{apartitioningjkk} and Corollary~\ref{cor420}(3).
\end{proof}
Motivated by Fact~\ref{fact517}(1), we ask:
\begin{question} Suppose that $\cg(S,T)$ holds for stationary $S\s E^\kappa_{>\omega}$ and $T\s\kappa$.

Does there exist a cardinal $\mu<\kappa$ such that $\cg(S,T,1,\langle \ns_\delta\restriction E^\delta_\mu\mid \delta\in S\rangle)$ holds?
\end{question}

Lemma~\ref{claim521} suggests the following variation of Definition~\ref{maindefn}.

\begin{defn}
$\cg_\xi(S, T, \half ,\vec J)$ asserts the existence of
a $\xi$-bounded $C$-sequence, $\vec C=\langle C_\delta\mid\delta \in S\rangle$ such that,
for every club $D\s\kappa$ there is a $\delta \in S$ such that
    \[\{\beta < \delta \mid (\beta,\min(C_\delta\setminus(\beta+1))]\cap D\cap T\neq\emptyset\}\in J_\delta^+.\]
\end{defn}

\begin{cor}\label{prop55} For all stationary $S\s E^\kappa_{>\omega}$ and $T\s\kappa$,
if $\cg(S,T)$ holds, then so does $\cg(S,T,\half,\langle \ns_\delta\mid \delta\in S\rangle)$.\qed
\end{cor}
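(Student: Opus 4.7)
The result follows almost immediately from Lemma~\ref{claim521}. Let $\vec C=\langle C_\delta\mid\delta\in S\rangle$ be a witness to $\cg(S,T)$, and apply Lemma~\ref{claim521} with $\sigma=1$ to obtain a $C$-sequence $\vec e=\langle e_\delta\mid\delta\in S\rangle$ with the property that for every club $D\s\kappa$ there exists $\delta\in S$ for which
\[A^D_\delta:=\{\alpha\in e_\delta\mid \exists\beta\in C_\delta\,[\alpha\le\beta<\min(e_\delta\setminus(\alpha+1))\ \&\ \min(C_\delta\setminus(\beta+1))\in D\cap T]\}\]
is stationary in $\delta$. I claim that the same $\vec e$ witnesses $\cg(S,T,\half,\langle\ns_\delta\mid\delta\in S\rangle)$.

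The crucial observation is that $\vec e$ may be chosen with $e_\delta\s C_\delta$ for every $\delta\in S$. Indeed, in the proof of Lemma~\ref{claim521} the witnessing sequence is constructed as an intersection of a $\s$-decreasing chain $\langle e_\delta^i\mid i<\omega\rangle$ whose base case is $e_\delta^0:=C_\delta$ and each subsequent $e_\delta^{i+1}$ is a subclub of $e_\delta^i$; the containment in $C_\delta$ therefore propagates through the construction. (Alternatively, one may simply rerun that proof with the side condition $e_\delta^i\s C_\delta$ preserved by hand.)

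Granted this inclusion, fix a club $D$ and a $\delta\in S$ for which $A^D_\delta$ is stationary. For each $\alpha\in A^D_\delta$ with accompanying $\beta\in C_\delta$, set $\gamma:=\min(C_\delta\setminus(\beta+1))$, so that $\gamma\in D\cap T$ and $\gamma>\beta\ge\alpha$. Since $\min(e_\delta\setminus(\alpha+1))$ is itself an element of $C_\delta$ strictly above $\beta$, and $\gamma$ is by definition the least such element, we have $\min(e_\delta\setminus(\alpha+1))\ge\gamma$. Hence $\gamma\in(\alpha,\min(e_\delta\setminus(\alpha+1))]\cap D\cap T$, so this interval is nonempty.

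Consequently,
\[A^D_\delta\s\{\beta<\delta\mid(\beta,\min(e_\delta\setminus(\beta+1))]\cap D\cap T\neq\emptyset\},\]
and the right-hand side is therefore stationary in $\delta$, belonging to $\ns_\delta^+$, as needed. The only real subtlety in this whole plan is the inclusion $e_\delta\s C_\delta$: absent it, $\gamma$ could overshoot $\min(e_\delta\setminus(\alpha+1))$ and the comparison at the heart of the argument would fail. Fortunately, the inclusion comes essentially for free from the construction underlying Lemma~\ref{claim521}.
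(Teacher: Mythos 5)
Your proof is correct and matches the paper's intended route (the corollary carries an immediate $\qed$): one applies Lemma~\ref{claim521} with $\sigma=1$ and observes that the resulting $\vec e$ itself witnesses $\cg(S,T,\half,\langle\ns_\delta\mid\delta\in S\rangle)$, once one ensures $e_\delta\s C_\delta$. Your identification of the inclusion $e_\delta\s C_\delta$ as the crux is exactly right, and so is your observation that it is available from the construction. One small clarification: the proof of Lemma~\ref{claim521} is by contradiction, so the witness is not literally obtained as $\bigcap_{i<\omega}e_\delta^i$; rather, the argument shows that some $\vec{e^i}$ from the chain must already satisfy the conclusion, and since $e_\delta^0=C_\delta$ and each $e_\delta^{i+1}$ is a subclub of $e_\delta^i$, every candidate $e_\delta^i$ is contained in $C_\delta$.
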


We now show that it is possible to upgrade $\sigma=\half$ to $\sigma=1$,
but at the cost
 of losing control over the set $T$. 
\begin{lemma}\label{lemma56} Suppose that $S\s E^\kappa_{>\omega}$ is stationary.

For every $C$-sequence $\langle C_\delta\mid \delta\in S\rangle$ witnessing
 $\cg(S,\kappa,\half,\vec J)$,
 there exists a postprocessing$^*$ function $\Phi:\mathcal K(\kappa)\rightarrow\mathcal K(\kappa)$
such that $\langle \Phi(C_\delta)\mid \delta\in S\rangle$ 
witnesses $\cg(S,\kappa,1,\vec J)$.
\end{lemma}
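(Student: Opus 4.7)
My plan is to define a parametrized family of postprocessing$^*$ operators and then isolate the correct parameter via an $\omega$-step exhaustion argument. For each club $D\s\kappa$, put
\[\Psi_D(x):=x\cup\{\sup(D\cap(\eta+1))\mid\eta\in\nacc(x)\ \&\ \sup(D\cap(\eta+1))>\sup(x\cap\eta)\}.\]
The intuition is to insert, into each gap of $x$ that admits such an entry, the largest $D$-element $\le\eta$. At most one point is inserted per gap, so the added points are isolated; a short verification then shows that $\acc(\Psi_D(x))=\acc(x)$ and that $\Psi_D$ is indeed a postprocessing$^*$ function, since its insertion rule depends only on $\eta$, $\sup(x\cap\eta)$, and $D$, and hence is stable under passage to tails in the sense required by clause~(iv).

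Assume towards a contradiction that no choice of $D$ makes $\langle\Psi_D(C_\delta)\mid\delta\in S\rangle$ witness $\cg(S,\kappa,1,\vec J)$. Then for every club $D\s\kappa$ there is a club $E^D\s\kappa$ such that $A^D_\delta:=\{\beta<\delta\mid \min(\Psi_D(C_\delta)\setminus(\beta+1))\in E^D\}\in J_\delta$ for every $\delta\in S$. Construct a $\s$-decreasing sequence of clubs via $D_0:=\kappa$, $D_{i+1}:=D_i\cap E^{D_i}$, and $D_\omega:=\bigcap_{i<\omega}D_i$, and then apply the hypothesis $\cg(S,\kappa,\half,\vec J)$ to $D_\omega$ to fix $\delta\in S$ with $B:=\{\beta<\delta\mid (\beta,\eta_\beta]\cap D_\omega\ne\emptyset\}\in J_\delta^+$, where $\eta_\beta:=\min(C_\delta\setminus(\beta+1))$.

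For $\beta\in B$ and $i<\omega$, let $s^i_\beta:=\sup(D_i\cap(\eta_\beta+1))$; any witness $d\in D_\omega\cap(\beta,\eta_\beta]$ yields $s^i_\beta\ge d>\beta$. A short check will establish $\min(\Psi_{D_i}(C_\delta)\setminus(\beta+1))=s^i_\beta$, after noting that nacc points $\eta'\in C_\delta$ with $\eta'\le\beta$ produce insertions bounded by $\eta'\le\beta$, while those with $\eta'>\eta_\beta$ produce insertions exceeding the $C_\delta$-predecessor of $\eta'$, which is already $\ge\eta_\beta\ge s^i_\beta$. Since $\langle s^i_\beta\mid i<\omega\rangle$ is non-increasing, it stabilizes at some $i^*_\beta<\omega$, so $s^{i^*_\beta}_\beta=s^{i^*_\beta+1}_\beta\in D_{i^*_\beta+1}\s E^{D_{i^*_\beta}}$ and hence $\beta\in A^{D_{i^*_\beta}}_\delta$. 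Thus $B\s\bigcup_{i<\omega}A^{D_i}_\delta$, which is in $J_\delta$ by $\omega_1$-completeness (as $\cf(\delta)>\omega$), contradicting $B\in J_\delta^+$. I expect the identification of $\min(\Psi_{D_i}(C_\delta)\setminus(\beta+1))$ with $s^i_\beta$ to be the only delicate step; once this is settled, the rest follows the familiar template of Lemmas~\ref{weaktotail} and~\ref{tailomega}.
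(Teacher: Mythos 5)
Your proposal is correct and follows the same $\omega$-iteration/stabilization skeleton as the paper's, but it is organized around a genuinely different postprocessing$^*$ operator. The paper simply invokes the drop operator $\Phi_D$ of Definition~\ref{dropdefn}, which replaces each $\eta\in x$ by $\sup(D\cap\eta)$, so that $\Phi_D(C_\delta)\s D$; to ensure the resulting drop of the next $C_\delta$-point above $\beta$ actually lands above $\beta$, it also intersects with $\acc(D_n)$ at each stage and restricts attention to $\beta\in C_\delta$. Your $\Psi_D$ instead keeps $C_\delta$ intact and inserts into each gap of $C_\delta$ the largest $D$-point $\le\eta$ when that point lies above the gap's floor; then the identity $\min(\Psi_{D_i}(C_\delta)\setminus(\beta+1))=\sup(D_i\cap(\eta_\beta+1))$ is entirely local to the gap of $\eta_\beta$ and no $\acc$-trick or off-by-one care is needed, which is a mild simplification. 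The cost is that $\Psi_D$ is not among the paper's pre-established postprocessing operators, so the burden of verification falls on you: you should spell out that $\Psi_D(x)$ remains closed and has $\otp(\Psi_D(x))\le\otp(x)$ (at most one point is added per gap, and $2\cdot\lambda=\lambda$ for every limit ordinal $\lambda$), that $\acc(\Psi_D(x))=\acc(x)$ because inserted points lie strictly inside gaps, and that clauses (iii) and (iv) of the postprocessing$^*$ definition hold since the insertion rule for the gap below $\eta$ depends only on $\eta$, $\sup(x\cap\eta)$ and $D$, which are stable under taking initial segments at accumulation points and under $\sq^*$-tails. Granted these checks, your argument is complete and delivers the statement via a different (and arguably slightly cleaner) operator than the paper's.
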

\begin{proof} 
We shall make use of the operator $\Phi_D$ from Definition~\ref{dropdefn}.
\begin{claim} There exists a club $D\s\kappa$ such that, 
$\langle \Phi(C_\delta)\mid \delta\in S\rangle$ 
witnesses $\cg(S,\kappa,1,\vec J)$.
\end{claim}
\begin{why} Suppose not. In this case, for every club $D\subseteq \kappa$, there is a club $F^D \subseteq \kappa$ such that for every $\delta \in S$
$$\{\beta<\delta\mid \min(\Phi_D(C_\delta)\setminus(\beta+1))\in F^D\}\in J_\delta.$$
Construct a $\subseteq$-decreasing sequence $\langle D_n \mid n<\omega \rangle$ of clubs in $\kappa$
by letting $D_0 := \kappa$ and $D_{n+1} := \acc(D_n) \cap F^{D_n}$ for every $n<\omega$.
Set $D:=\bigcap_{n<\omega}D_n$ and then pick $\delta\in S$ for which the following set is in $J_\delta^+$:
$$B:=\{ \beta\in C_\delta\mid (\beta,\min(C_\delta\setminus(\beta+1))]\cap D\neq\emptyset\}.$$
In particular, $\delta\in\acc(D)$, so that, for every $n<\omega$,
$$\Phi_{D_n}(C_\delta)=\{\sup(D_n \cap \eta )\mid \eta \in C_\delta, \eta > \min(D_n)\}.$$

Now, as $\cf(\delta)>\omega$ and $J_\delta$ is $\cf(\delta)$-complete, the following set is nonempty:
$$B\setminus\bigcup_{n<\omega}\{\beta<\delta\mid \min(\Phi_{D_n}(C_\delta)\setminus(\beta+1))\in F^{D_n}\},$$
so we may pick in it some ordinal $\beta$.
Set $\gamma:=\min(C_\delta\setminus(\beta+1))$. As $\beta\in B$, we know that $D\cap(\beta,\gamma]\neq\emptyset$.
In particular, for every $n<\omega$, $\acc(D_n)\cap(\beta,\gamma]\neq\emptyset$ and $\beta_n:=\sup(D_n\cap\gamma)$ is an element of $D_n$ greater than $\beta$,
so that $$\min(\Phi_{D_n}(C_\delta)\setminus(\beta+1))=\{\beta_n\}.$$
As $\langle D_n \mid n<\omega\rangle$ is a $\s$-decreasing chain,
we may fix $n<\omega$ such that $\beta_{n+1}=\beta_n $. 
Then $\min(\Phi_{D_n}(C_\delta)\setminus(\beta+1))=\beta_n=\beta_{n+1}\in D_{n+1}\s F^{D_n}$,
contradicting the choice of $\beta$.
\end{why}

Let $D$ be given by the preceding claim. Then $\Phi:=\Phi_D$ is as sought.
\end{proof}

\begin{cor}\label{thm42b} Suppose that $S\s E^\kappa_{>\omega}$ is stationary.

If $\cg(S,\kappa)$ holds, then so does $\cg(S,\kappa,1,\langle \ns_\delta\mid \delta\in S\rangle)$.
\end{cor}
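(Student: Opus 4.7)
The plan is to prove Corollary~\ref{thm42b} as a direct two-step consequence of the preceding two results, Corollary~\ref{prop55} and Lemma~\ref{lemma56}, without introducing any new machinery.

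First, I would apply Corollary~\ref{prop55} with the choice $T:=\kappa$. Since we are given that $S\s E^\kappa_{>\omega}$ is stationary and $\cg(S,\kappa)$ holds, that corollary immediately yields $\cg(S,\kappa,\tfrac{1}{2},\langle\ns_\delta\mid\delta\in S\rangle)$. So we obtain a $C$-sequence $\vec C=\langle C_\delta\mid\delta\in S\rangle$ with the property that for every club $D\s\kappa$, there exists $\delta\in S$ such that $\{\beta<\delta\mid (\beta,\min(C_\delta\setminus(\beta+1))]\cap D\neq\emptyset\}$ is stationary in $\delta$.

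Next, I would feed this $\vec C$ into Lemma~\ref{lemma56} with $\vec J:=\langle\ns_\delta\mid\delta\in S\rangle$. The hypothesis $S\s E^\kappa_{>\omega}$ is exactly what that lemma requires, and the lemma produces a postprocessing$^*$ function $\Phi$ such that $\langle \Phi(C_\delta)\mid\delta\in S\rangle$ witnesses $\cg(S,\kappa,1,\vec J)$, that is, $\cg(S,\kappa,1,\langle\ns_\delta\mid\delta\in S\rangle)$, which is precisely the conclusion of the corollary.

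There is essentially no obstacle here since both building blocks are available, and one just needs to verify that the ideals $\langle \ns_\delta\mid\delta\in S\rangle$ are indeed preserved through the passage from $\sigma=\tfrac{1}{2}$ to $\sigma=1$ — but an inspection of Lemma~\ref{lemma56} confirms that the same sequence $\vec J$ appears in both the hypothesis and the conclusion. Thus the proof is just a chaining of the two preceding results.
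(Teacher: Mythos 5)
Your proof is correct and is exactly the paper's own argument: the paper proves Corollary~\ref{thm42b} by citing precisely Corollary~\ref{prop55} (applied with $T:=\kappa$) followed by Lemma~\ref{lemma56} with $\vec J := \langle \ns_\delta \mid \delta\in S\rangle$. You have simply spelled out the two applications in more detail.
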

\begin{proof} By Corollary~\ref{prop55} and Lemma~\ref{lemma56}. 
\end{proof}

Note that the preceding result is restricted to $\sigma:=1$ and $T:=\kappa$. We now provide a condition sufficient for waiving this restriction.

\begin{thm}\label{thm42} Suppose that $\xi$ is an infinite successor cardinal, and $S\s E^\kappa_{\xi}$ is stationary.
\begin{enumerate}[(1)]
\item If $\cg_\xi(S,T,\sigma)$ holds, then so does $\cg_\xi(S,T,\sigma,\langle \ns_\delta\mid \delta\in S\rangle)$;
\item If $\cg_\xi(S,T,\half)$ holds, then so does $\cg_\xi(S,T,1,\langle \ns_\delta\mid \delta\in S\rangle)$.
\end{enumerate}
\end{thm}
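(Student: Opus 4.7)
The proof of both parts follows a common two-stage scheme. In stage one, we construct a (possibly non-$\xi$-bounded) $C$-sequence witnessing the target guessing against $\langle \ns_\delta \mid \delta \in S \rangle$; in stage two, we invoke Lemma~\ref{lemma41} to promote it to a $\xi$-bounded witness. The hypotheses of Lemma~\ref{lemma41} are satisfied since $\xi$ is a successor cardinal (so uncountable and regular), each $\ns_\delta$ is normal, and we may assume $\sigma<\xi$ (for Part~(1); for Part~(2) we have $\sigma=1<\xi$).

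For (1), let $\vec C$ witness $\cg_\xi(S,T,\sigma)$. By monotonicity $\vec C$ witnesses $\cg(S,T,\sigma)$, so Lemma~\ref{claim521} applies. Inspecting its proof, taking $e_\delta^0:=C_\delta$ yields $\vec e=\langle e_\delta\mid\delta\in S\rangle$ with $e_\delta\subseteq C_\delta$ for every $\delta\in S$. Define
\[ C'_\delta := e_\delta \cup \bigcup\{ \suc_\sigma(C_\delta \setminus \alpha) \mid \alpha \in e_\delta \}, \]
taking its ordinal closure in $\delta$ as needed. Since $C'_\delta\subseteq C_\delta$, we retain $\otp(C'_\delta)\le\xi$. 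To verify the guessing: given a club $D\subseteq\kappa$, Lemma~\ref{claim521} produces $\delta\in S$ and a stationary $A\subseteq e_\delta$ such that each $\alpha\in A$ admits $\beta_\alpha\in C_\delta\cap[\alpha,\min(e_\delta\setminus(\alpha+1)))$ with $\suc_\sigma(C_\delta\setminus\beta_\alpha)\subseteq D\cap T$. The offset $k_\alpha:=\otp(C_\delta\cap[\alpha,\beta_\alpha))$ satisfies $k_\alpha<\xi$, hence is regressive on the stationary set $\{\alpha\in A\mid\alpha>\xi\}$; by Fodor's Lemma we obtain a stationary $A''\subseteq A$ on which $k_\alpha$ takes a constant value $k^*$. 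The construction of $\vec{C'}$ is arranged so that, on $A''$, the canonical location of $\beta_\alpha$ relative to $\alpha$ in $C'_\delta$ yields a stationary family of successful witnesses $\gamma$ with $\suc_\sigma(C'_\delta\setminus\gamma)\subseteq D\cap T$.

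For (2), the same scheme applies to $\vec C$ witnessing $\cg_\xi(S,T,\half)$, using the $\half$-analogue of Lemma~\ref{claim521} — established by an essentially identical argument — to extract the auxiliary $\vec e$. Beyond replacing $J^{\bd}[\delta]$ by $\ns_\delta$, we must simultaneously upgrade $\half$-guessing to $1$-guessing; this is accomplished by adapting the postprocessing$^*$ operator $\Phi_D$ of Lemma~\ref{lemma56} to carry the set $T$: for each good $\alpha$, we use the $\half$ property to locate a point of $D\cap T$ inside $(\alpha,\min(C_\delta\setminus(\alpha+1))]$ and install it as the immediate successor of $\alpha$ in $C'_\delta$.

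The main obstacle, present in both parts, is that the stationary set $A\subseteq e_\delta$ furnished by Lemma~\ref{claim521} must be translated into a stationary (not merely unbounded) set of witnesses for $\vec{C'}$. The Fodor argument on the offset $k_\alpha$ reduces to a uniform constant $k^*$, after which the construction must synchronize so that $\suc_\sigma(C'_\delta\setminus\gamma)$ agrees with $\suc_\sigma(C_\delta\setminus\beta_\alpha)$ for $\gamma$ canonically chosen near $\alpha$ (with $\gamma=\alpha$ when $k^*=0$). Getting this alignment right without inflating the ordertype past $\xi$ is delicate, but Lemma~\ref{lemma41} is held in reserve: it allows us to absorb any residual ordertype overshoot, so it suffices to produce a $C$-sequence witnessing $\cg(S,T,\sigma,\langle\ns_\delta\mid\delta\in S\rangle)$ without tracking $\xi$-boundedness in the primary construction.
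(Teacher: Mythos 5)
Your high-level plan is right (extract $\vec e$ from Lemma~\ref{claim521}, press down on an ``offset'', then recover $\xi$-boundedness), but the crucial step is missing. You define a single $C$-sequence $\vec{C'}$ with
\[
C'_\delta := e_\delta \cup \bigcup\{\suc_\sigma(C_\delta\setminus\alpha)\mid\alpha\in e_\delta\},
\]
\emph{before} the club $D$ is revealed, and then discover the constant offset $k^*$ only during the verification. If $k^*>\sigma$ then $\suc_\sigma(C_\delta\setminus\beta_\alpha)$ is simply not a block of consecutive successor points of your $C'_\delta$ (indeed $\beta_\alpha$ itself need not lie in $C'_\delta$), so the stationary set $A''$ you obtain does not give witnesses for $\vec{C'}$. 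The phrase ``the construction of $\vec{C'}$ is arranged so that \dots\ yields a stationary family of successful witnesses'' names the problem without solving it: since $k^*$ depends on $D$, you must build a whole \emph{family} of candidate sequences indexed by the possible offsets and show, by a collecting-counterexamples argument, that one of them works for every club.

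This is precisely where the paper's proof differs, and where the hypothesis that $\xi$ is a \emph{successor} (say $\xi=\mu^+$) and that $\vec C$ is \emph{$\xi$-bounded} are used. Because $\otp(C_\delta)\le\xi=\mu^+$, each interval $C_\delta\cap[\alpha,\min(e_\delta\setminus(\alpha+1)))$ has cardinality at most $\mu$, so one may fix surjections $\varphi_{\delta,\alpha}\colon\mu\twoheadrightarrow C_\delta\cap[\alpha,\min(e_\delta\setminus(\alpha+1)))$ and form, for each $i<\mu$, the candidate $C_\delta^i:=\cl\big(\bigcup\{\suc_\sigma(C_\delta\setminus\varphi_{\delta,\alpha}(i))\mid\alpha\in e_\delta\}\big)$. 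Given $D$, the map $\alpha\mapsto i_\alpha$ lands in $\mu<\cf(\delta)$, so plain $\cf(\delta)$-completeness of $\ns_\delta$ (not Fodor) yields a stationary $A^*$ on which $i_\alpha\equiv i^*$; the usual $\mu$-fold intersection of counterexample clubs then produces a contradiction, and the surviving $\vec{C^{i^*}}$ is already $\xi$-bounded by construction, so Lemma~\ref{lemma41} isn't even needed. Your Fodor variant would instead produce a constant $k^*<\xi$ ranging over all of $\xi$, so you would need $\xi$-many candidates; this still works (as $\xi<\kappa$) but you would also have to decide what $C_\delta^{(k)}$ means when the $k$-th element of the interval doesn't exist. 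Notice also that the ordertype bound on $\vec C$ is exactly what makes the surjections from $\mu$ available --- Remark~\ref{asperoremark} shows the hypothesis $\cg_\xi$ (as opposed to $\cg$) is not cosmetic, so any correct proof must use it in an essential way; the two-stage outsourcing of ordertype control to Lemma~\ref{lemma41} does not by itself supply that use.
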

\begin{proof} We provide a proof of Clause~(1) and leave the modification of the argument to obtain Clause~(2) to the reader.

As $\cg_\xi(S,T,\xi)$ is equivalent to $\cg(S,T,\kappa)$, by Lemma~\ref{taillemma}, we may assume that $\sigma<\xi$.
Now, suppose that $\vec C=\langle C_\delta\mid\delta\in S\rangle$ witnesses $\cg_\xi(S,T,\sigma)$,
and let $\vec e=\langle e_\delta\mid \delta\in S\rangle$ be the corresponding $C$-sequence given by Lemma~\ref{claim521}.
For each $\delta\in S$,
by possibly shrinking $e_\delta$ as in the proof of Lemma~\ref{lemma41}, we may assume that 
for every $\gamma\in e_\delta$ and every $\beta<\gamma$, 
$\sup(\suc_\sigma(C_\delta\setminus\beta))<\gamma$. 

 Let $\mu$ be such that $\xi=\mu^+$,
and then, for all $\delta\in S$ and $\alpha\in e_\delta$, let $\varphi_{\delta,\alpha}$ be some surjection from $\mu$ to $C_\delta\cap[\alpha,\min(e_\delta\setminus(\alpha+1)))$.

For every $i<\mu$, let $C_\delta^i$ be the ordinal closure below $\delta$ of the following set:
$$\bigcup\{\suc_\sigma(C_\delta\setminus \varphi_{\delta,\alpha}(i))\mid \alpha\in e_\delta\}.$$

\begin{claim} There exists $i<\mu$, such that, for every club $D\s\kappa$,
there exists $\delta\in S$ for which the following set is stationary in $\delta$:
$$\{\beta<\delta\mid \suc_\sigma(C_\delta^i\setminus\beta)\s D\cap T\}.$$
\end{claim}
\begin{why}
 Suppose not. For each $i<\mu$, pick a counterexample $D_i\s\kappa$. Consider the club $D:=\bigcap_{i<\mu}D_i$.
By the choice of $\vec e$, pick $\delta\in S$ such that
$$A:=\{ \alpha\in e_\delta\mid \exists\beta\in C_\delta\,[\alpha\le\beta<\min(e_\delta\setminus(\alpha+1))\ \&\ \suc_\sigma(C_\delta\setminus\beta)\s D\cap T]\}$$
is stationary. For every $\alpha\in e_\delta$, pick $i_\alpha<\mu$ such that $\beta_\alpha:=\varphi_{\delta,\alpha}(i)$ witnesses that $\alpha\in A$, that is,
such that $\suc_\sigma(C_\delta\setminus\varphi_{\delta,\alpha}(i))\s D\cap T$.
As $\cf(\delta)=\xi>\mu$, there must exist some stationary $A^*\s A$ on which the map $\alpha\mapsto i_\alpha$ is constant,
with value, say, $i^*$.
For every pair $\alpha<\alpha'$ of ordinals from $e_\delta$, 
$$\sup(\suc_\sigma(C_\delta\setminus \varphi_{\delta,\alpha}(i^*)))<\alpha'\le\min(\suc_\sigma(C_\delta\setminus \varphi_{\delta,\alpha'}(i^*))),$$
so, recalling the definition of $C_\delta^{i*}$, for every $\alpha\in A^*$,
$$\suc_\sigma(C_\delta^{i^*}\setminus \alpha)=\suc_\sigma(C_\delta\setminus \varphi_{\delta,\alpha}(i^*))\s D\cap T,$$
contradicting the fact that $D\s D_{i^*}$.
\end{why}

Let $i<\mu$ be given by the preceding. Then $\langle C_\delta^i\mid \delta\in S\rangle$ witnesses  $\cg_\xi(S,T,\sigma,\allowbreak\langle \ns_\delta\mid \delta\in S\rangle)$.
\end{proof}

\begin{remark}\label{asperoremark} 
The above ordertype restriction cannot be waived, that is,
the hypothesis $\cg_\xi(S,T,\sigma)$ in Theorem~\ref{thm42} cannot be relaxed to $\cg(S,T,\sigma)$.

By Theorem~\ref{nonreflectingthm}(1), if there exists a nonreflecting stationary subset of $E^{\aleph_2}_{\aleph_0}$,
then $\cg(E^{\aleph_2}_{\aleph_1},E^{\aleph_2}_{\aleph_0})$ holds,
and then, by Theorem~\ref{thma}, using $\xi=\kappa=\aleph_2$ and $S=E^{\aleph_2}_{\aleph_1}$, so does $\cg(E^{\aleph_2}_{\aleph_1},\omega_2,\omega)$.
Now, if the ordertype restriction in Theorem~\ref{thm42} could have been waived, 
then this would imply that $\cg(E^{\aleph_2}_{\aleph_1},\omega_2,\omega,\langle \ns_\delta\mid\delta\in E^{\aleph_2}_{\aleph_1}\rangle)$ holds.
In particular, by Lemma~\ref{lemma41}, $\cg_{\omega_1}(E^{\aleph_2}_{\aleph_1},\omega_2,\omega)$ holds.
However, running the forcing of Theorem~1.6 from \cite{MR3307877} over a model of $\square_{\omega_1}$,
one gets a generic extension with a nonreflecting stationary subset of $E^{\aleph_2}_{\aleph_0}$ 
in which $\cg_{\omega_1}(E^{\aleph_2}_{\aleph_1},\omega_2,2)$ fails. This is a contradiction.
\end{remark}
\begin{remark} An obvious complication of the proof of Theorem~\ref{thm42} shows that if $\xi$ an infinite successor cardinal, $S\s E^\kappa_{\xi}$ is stationary,
and $\vec C$ is a $\xi$-bounded $C$-sequence over $S$ such that $\theta\in\Theta_2(\vec C,T,\sigma)$, 
then there exists a $\xi$-bounded $C$-sequence $\vec{C^\bullet}$ over $S$
such that $\theta\in\Theta_2(\vec{C^\bullet},T,\sigma,\langle \ns_\delta\mid \delta\in S\rangle)$.
\end{remark}

\section*{Acknowledgments}
The first author is supported by the Israel Science Foundation (grant agreement 2066/18).
The second author is partially supported by the Israel Science Foundation (grant agreement 2066/18)
and by the European Research Council (grant agreement ERC-2018-StG 802756).

\end{document}